\theoremstyle{change}
\newtheorem{prop}{Proposition:}[section]
\newtheorem{theo}[prop]{Theorem:}
\newtheorem{lem}[prop]{Lemma:}
\newtheorem{defi}[prop]{Definition:}
\newtheorem{coro}[prop]{Corollary:}
\newtheorem{conv}[prop]{Convention:}
\newtheorem{exam}[prop]{Example:}
\newtheorem{leer}[prop]{}
\newtheorem{rema}[prop]{Remark:}
\newtheorem{nota}[prop]{Notation:}
\newenvironment{proof}{\par\noindent\textbf{Proof}}{\hfill\ensuremath{\Box}}
\def\cC{\mathcal{C}}
\def\Top{\mathcal{T}\!op}
\def\cM{\mathcal{M}}
\def\cR{\mathcal{R}}
\def\cN{\mathcal{N}}
\def\cK{\mathcal{K}}
\def\cV{\mathcal{V}}
\def\cE{\mathcal{E}}
\def\cS{\mathcal{S}}
\def\cP{\mathcal{P}}
\def\cQ{\mathcal{Q}}
\def\cL{\mathcal{L}}
\def\cB{\mathcal{B}}
\def\cA{\mathcal{A}}
\def\cD{\mathcal{D}}
\def\cC{\mathcal{C}}
\def\cE{\mathcal{E}}
\def\cX{\mathcal{X}}
\def\mC{\mathbb{C}}
\def\mM{\mathbb{M}}
\def\mE{\mathbb{E}}
\def\mS{\mathbb{S}}
\def\colim{\textrm{colim}}
\def\lim{\textrm{lim}}
\def\id{{\rm id}}
\def\Sing{\textrm{Sing}}
\def\hocolim{\textrm{hocolim}}
\def\colim{\textrm{colim}}
\def\Id{{\textrm{Id}}}
\def\Func{\mathcal{F}\textrm{unc}}
\def\op{{\textrm op}}
\def\Cat{\mathcal{C}\!at}
\def\Lax{{\textrm Lax}}
\def\hocolim{\textrm{hocolim}}
\def\ev{\textrm{ev}}
\def\SSets{\mathcal{SS}ets}
\def\uM{\underline{\mathcal{M}}}
\def\uV{\underline{\mathcal{V}}}
\def\uC{\underline{\mathcal{C}}}
\def\ob{\textrm{ob}}
\def\cat{\textrm{cat}}
\def\const{\textrm{const}}
\def\mor{\textrm{mor}}
\def\we{\textrm{we}}
\def\produ{\textrm{prod}}
\def\alg{\textrm{alg}}
\def\diag{\textrm{diag}}
\def\str{\textrm{str}}
\def\orb{\textrm{orb}}
\def\Reedy{\textrm{Reedy}}
\newcommand{\Sets}{\mathcal{S}ets}
\newcommand{\scatm}{\mathcal{SC}\!at^{\mathbb{M}}}
\newcommand{\catm}{\mathcal{C}\!at^{\mathbb{M}}}
\newcommand{\stopbm}{\mathcal{ST}\!op^{B\mathbb{M}}}
\newcommand{\Mon}{\mathcal{M}on}
\begin{document}
\title{\textbf{Homotopy Colimits of Algebras Over $\Cat$-Operads and Iterated Loop Spaces}}
\author{Z. Fiedorowicz}
\address{Department of Mathematics, The Ohio State University\\ Columbus, OH 43210-1174, USA}
\ead{fiedorow@math.osu.edu}

\author{M. Stelzer}
\address{Universit\"at Osnabr\"uck, Fachbereich Mathematik/Informatik\\ Albrechtstr. 28a, 49069 Osnabr\"uck, Germany}
\ead{mstelzer@uos.de}

\author{R.M.~Vogt}
\address{Universit\"at Osnabr\"uck, Fachbereich Mathematik/Informatik\\ Albrechtstr. 28a, 49069 Osnabr\"uck, Germany}
\ead{rvogt@uos.de}

\begin{abstract}
We extend Thomason's homotopy colimit construction in the category
of permutative categories to categories of algebras over an arbitrary
$\Cat$ operad and analyze its properties. We then use this homotopy colimit to 
prove that the classifying space functor induces an equivalence
between the category of $n$-fold monoidal categories and the category
of $\cC_n$-spaces after formally inverting certain classes of weak equivalences,
where $\cC_n$ is the little $n$-cubes operad. As a consequence we obtain
an equivalence of the categories of $n$-fold monoidal categories and the
category of $n$-fold loop spaces and loop maps after localization with 
respect to some other class of weak equivalences. We recover
Thomason's corresponding result about infinite loop spaces and
obtain related results about braided monoidal categories and $2$-fold
loop spaces.
\end{abstract}
\begin{keyword}
\MSC[2010]{55P35, 55P47, 55P48, 18D10, 18D50, 18C20}
\end{keyword}

\maketitle

\section{Introduction}
In the last two decades there has been an increasing interest in algebraic
structures on a category such as a monoidal structure, a symmetric
monoidal one, a braiding or a ribbon structure, motivated by questions
arising from knot theory or mathematical physics. This paper deals with
structures encoded by an operad.

Our motivation is the problem to determine those structures on a category
which correspond to $n$-fold loop spaces: it has been known for quite some
time that the classifying space of a monoidal category is an
$A_\infty$-monoid, whose group completion is weakly equivalent to a loop
space. In the same way, a symmetric monoidal category gives rise to an
infinite loop space \cite{May1}, and a braided monoidal category to a
double loop space \cite{Fied}. 
In \cite{BFSV} we introduced the notion of an $n$-fold
monoidal category. The structure of such a category is encoded by a
$\Sigma$-free operad $\mathcal{M}_n$ in $\mathcal{C}at$, the category of
small categories. The classifying space functor turns $\mathcal{M}_n$ into a
$\Sigma$-free topological operad $B\mathcal{M}_n$, and  we could show that
there is a sequence of weak equivalences of operads between
$B\mathcal{M}_n$ and the little $n$-cubes operad $\mathcal{C}_n$. Hence
each $n$-fold monoidal category gives rise to an $n$-fold loop space.

Now one would like to know
 whether each $n$-fold loop space can be obtained in this way
up to weak equivalence. There has been evidence for this: it has been known
for about forty years that each topological space is weakly equivalent to the
classifying space of a category, and in 1995 Thomason proved that each
infinite loop space comes from a permutative category and hence from a symmetric 
monoidal category, because a symmetric monoidal category is equivalent to 
a permutative category  \cite{Thom2}.
 The result is obtained in two major steps. Using
categorical coherence theory extensively, Thomason first shows that each
infinite loop space arises from a simplicial symmetric monoidal category.
He then applies the homotopy colimit construction in the category of
symmetric monoidal categories to get rid of the simplicial parameter. The
construction and analysis of this homotopy colimit is an essential part of
\cite{Thom1}. 

Refining Thomason's argument and combining categorical with homotopical
coherence theory we were able to achieve the first step for $n$-fold monoidal
categories: in \cite{FV} we proved that each $n$-fold loop space comes from
a simplicial $n$-fold monoidal category and it remains to bridge the gap from
simplicial $n$-fold monoidal categories to $n$-fold monoidal categories.

For this we extend Thomason's homotopy colimit construction for symmetric
monoidal categories to algebras over an arbitrary $\Cat$-operad. Thomason
constructed a Quillen model category structure on $\Cat$ whose weak equvalences
are those functors $F:\cC\to \cD$ for which $BF:B\cC \to B\cD$ is a weak
homotopy equivalence \cite{Thom3}. Unfortunately this model structure
is not monoidal so that
the rich literature on algebras in model categories cannot be applied to
$\Cat$, and, as a consequence, the results on homotopy colimits in model
categories are not available either. It is our intention to close this gap.

We define and investigate homotopy colimits $\hocolim^{\cM}X$ of diagrams $X$
of algebras over an arbitrary $\Sigma$-free
$\Cat$-operad $\cM$ and prove that 
there is a natural weak equivalence
$$\hocolim^{B\cM} BX\to B(\hocolim^{\cM}X)$$
(Theorem \ref{6_9}) if $\cM$ satisfies a factorization condition (Definition \ref{6_7}) . 
In the process we correct a minor
flaw in Thomason's argument (see Remark \ref{8_24}). Although we only use
the homotopy colimit construction to prove results about iterated loop spaces,
we believe that it is of separate interest. In particular, it sheds some light
into Thomason's original construction, where an operad cannot be seen explicitly.

We use this result to prove that for a $\Sigma$-free operad, which satisfies the
factorization condition, the classifying space functor
induces an equivalence of categories
$$
B:\mathcal{C}at^{\mathcal{M}}[{\we}^{-1}]\to 
\Top^{B\mathcal{M}}[{\we}^{-1}]
$$
where $\mathcal{C}at^{\mathcal{M}}$ and $\Top^{B\mathcal{M}}$ are the categories of
$\cM$-algebras in $\Cat$, respectively $B\cM$-algebras in $\Top$, and the weak
equivalences are those morphisms of algebras whose underlying morphisms are
weak equivalences in $\Cat$, respectively weak homotopy equivalences in $\Top$.
Since we do not have a model type structure on $\Cat^{\cM}$ it is not clear that
the localization $\mathcal{C}at^{\mathcal{M}}[{\we}^{-1}]$ exists. To make the above
equivalence hold we can use Grothendieck's language of universes, where
$\mathcal{C}at^{\mathcal{M}}[{\we}^{-1}]$ exists in some higher universe. 
In the foundational setting of G\"odel-Bernay we can show that the localization up to
equivalence exists (Definition \ref{7_4}) which we will denote by 
$\mathcal{C}at^{\mathcal{M}}[\widetilde{{\we}^{-1}}]$.
 
The operads $\cM_n$ and the operads $\cB r$ and $\widetilde{\Sigma}$ codifying the structures of braided monoidal
respectively  permutative categories satisfy the factorization condition. Combining the equivalence above
with the equivalence induced by the change of operad
functors, we obtain equivalences of categories
$$\begin{array}{rcccl}
\mathcal{C}at^{\mathcal{M}_n}[{\we}^{-1}] &\simeq &
\mathcal{T}op^{B\mathcal{M}_n}[{\we}^{-1}] &\simeq &
\mathcal{T}op^{\mathcal{C}_n}[{\we}^{-1}]\\
\mathcal{C}at^{\mathcal{B}r}[{\we}^{-1}] &\simeq &
\mathcal{T}op^{B\mathcal{B}r}[{\we}^{-1}] &\simeq &
\mathcal{T}op^{\mathcal{C}_2}[{\we}^{-1}]\\
\mathcal{C}at^{\widetilde{\Sigma}}[{\we}^{-1}] &\simeq &
\mathcal{T}op^{B\widetilde{\Sigma}}[{\we}^{-1}] &\simeq &
\mathcal{T}op^{\mathcal{C}_\infty}[{\we}^{-1}]
\end{array}
$$
In particular, each $n$-fold loop space arises up to weak equivalence and group completion from
an $n$-fold monoidal category, a double loop space from a braided monoidal category, an an infinte loop
space from a permutative category. So we obtain Thomason's
result before group completion. 

The paper is organized as follows: In the following two sections we introduce
the category theoretical tools we need for this paper. Sections 4 and 5 contain the construction
of the homotopy colimit and compare it with known constructions in model categories. In Section 6 we
determine the homotopy type of our homotopy colimit in the case of a $\Sigma$-free $\Cat$-operad which
satisfies the factorization condition. The proof of a
crucial technical lemma is deferred to Appendix B.
In Sections 7 and 8 we establish the equivalences of categories listed and explicitly show their relations to
iterated loop spaces by applying suitable group completion constructions. We have added three appendices: Appendix A
gives a detailed description of the input category of our homotopy colimit construction, Appendix B contains the proof 
mentioned above, and Appendix C the proof of the existence of various localizations up to equivalence.

\textit{Acknowledgement:} We are indepted to the referee for his diligent work. His detailed questions made us 
 realize that we had been
missing a relation in the construction of the homotopy colimit and helped us 
clarify a number of points. He also noted that our original proof of Lemma \ref{8_12} was incorrect.
 We want to thank Mirjam Solberg who found the same mistake in the proof of
Lemma \ref{8_12} and indicated how it can be fixed.
We also want to thank the Deutsche Forschungsgemeinschaft for 
support.

\section{Categorical prerequisites}
Throughout this paper we will work with categories enriched over a symmetric monoidal category
$\mathcal{V}$. We will start with a general $\mathcal{V}$, but from Definition \ref{2_1} onwards $\mathcal{V}$
will always be one of the categories  $\Cat$ of small categories, $\Sets$ of sets, $\SSets$ of simplical sets, and $\Top$ of 
$k$-spaces in the sense of \cite[5(ii)]{Vogt}. For basic concepts we refer to the books of Kelly \cite{Kelly} and Borceux \cite{Bor2}.

Recall that $(\mathcal{V},\square,\Phi)$ is a \textit{symmetric monoidal category} if we are given a bifunctor 
$\square:\mathcal{V}\times\mathcal{V}\to\mathcal{V}$ which is associative and commutative up to coherent natural isomorphisms and has a 
coherent unit object $\Phi$ \cite[6.1.2]{Bor2}. If the associativity and unit isomorphisms are the identities we call $(\mathcal{V},\square,\Phi)$ 
 a \textit{permutative category}. $\mathcal{V}$ is \textit{closed} if it has an internal Hom-functor, i.e. a bifunctor 
$\uV (-,-):\mathcal{V}^{\op}\times \mathcal{V} \to\mathcal{V}$ together with a natural isomorphism
$$
\mathcal{V}(X\square Y, Z)\cong\mathcal{V}(X,\uV (Y,Z)).
$$
A functor $F:\mathcal{V}\to\mathcal{V}'$ between symmetric monoidal categories
 is called \textit{symmetric monoidal} if there are natural transformations 
$$\Phi_{\mathcal{V}'}\to F(\Phi_{\mathcal{V}})\quad \textrm{ and }\quad F(-)\square_{\mathcal{V}'}
F(-)\to F(-\square_{\mathcal{V}}-)$$
 compatible with the coherence isomorphisms. If these maps are isomorphisms $F$ is called \textit{strong
symmetric monoidal}.

A \textit{$\cV$-category} or \textit{$\cV$-enriched category} is a category $\mathcal{C}$ equipped with an enrichment functor
$$
\uC (-,-):\mathcal{C}^{\op}\times \cC\to \cV
$$
and composition morphisms
$$
\uC (B,C)\ \square \ \uC (A,B)\to \uC (A,C)
$$
in $\mathcal{V}$ with identities $j_A:\Phi\to\uC (A,A)$, subject to the obvious conditions. $\mathcal{V}$-functors and $\mathcal{V}$-natural 
transformations are defined in the obvious way. 

A $\mathcal{V}$-category $\mathcal{C}$ is \textit{tensored over $\mathcal{V}$} if there is a functor
$$
\mathcal{V}\times\mathcal{C}\to\mathcal{C}, \quad (K,X)\mapsto K\otimes X
$$
such that 
\begin{enumerate}
 \item there are natural isomorphisms
$$ K\otimes (L\otimes X)\cong (K\square L)\otimes X \quad \textrm{and}\quad \Phi \otimes X\cong X,$$
satisfying the standard associativity and unit coherence conditions,
\item there are natural isomorphisms
$$ 
\cC(K\otimes X,Y)\cong \cV(K,\uC (X,Y)).
$$
\end{enumerate}
Setting $K=\Phi$, we obtain $\cC(X,Y)\cong \cV(\Phi,\uC (X,Y))$, thus recovering the underlying category $\cC$ from its
enriched version. It is easy to show that condition (2) implies an enriched version: there are natural isomorphisms
$$ \uC(K\otimes X,Y)\cong \uV(K,\uC (X,Y))$$
in $\mathcal{V}$.

Dually, a $\mathcal{V}$-category $\mathcal{C}$ is \textit{cotensored over $\mathcal{V}$} if there is a functor
$$
\mathcal{V}^{\op}\times\mathcal{C}\to\mathcal{C}, \quad (K,X)\mapsto X^K
$$
such that
\begin{enumerate}
 \item there are natural isomorphisms
$$ X^{K\square L} \cong (X^L)^K\quad \textrm{and}\quad X^{\Phi}\cong X,$$
satisfying the standard associativity and unit coherence conditions,
\item there are natural isomorphisms
$$ 
\cC(X,Y^K)\cong \cV(K,\uC (X,Y)).
$$
\end{enumerate}
Again, the last condition implies an enriched version: there are natural isomorphisms
$$\uC(X,Y^K)\cong \uV(K,\uC (X,Y))$$ in $\cV$.

\begin{leer}\label{2_0}
Let $\mathcal{L}$ be a small category, $\mathcal{C}$ a $\mathcal{V}$-category tensored over $\mathcal{V}$ and let 
$$
D:\mathcal{L}^\op\to\mathcal{V}\quad \textrm{ and } \quad E:\mathcal{L}\to\mathcal{C}
$$
be functors. We define $D\otimes_\mathcal{L} E$ be the coend (see \cite[IX.6]{MacLane}) of the functor 
$$
\mathcal{L}^\op\times\mathcal{L}\stackrel{D\times E}{\longrightarrow}\mathcal{V}\times\mathcal{C}
\stackrel{-\otimes-}{\longrightarrow}\mathcal{C}
$$
\end{leer}

A tensor is a special case of an indexed colimit and a cotensor a special case of an indexed limit (for a definition see \cite[pp. 71-73]{Kelly}). 
We call a $\mathcal{V}$-category \textit{$\mathcal{V}$-complete} if it has all small indexed limits and \textit{$\mathcal{V}$-cocomplete} if it has 
all small indexed colimits. Recall that $\mathcal{C}$ is $\mathcal{V}$-complete if it is complete in the ordinary sense and cotensored, and
$\mathcal{V}$-cocomplete if it is cocomplete in the ordinary sense and tensored \cite[Thm. 3.7.3]{Kelly}.

As mentioned in the beginning of this section, henceforth  $\mathcal{V}$ will be one of the categories $\Cat$, $\Sets$, $\SSets$, or $\Top$.
The symmetric monoidal structure of each of these categories is given by the product functor. The categories are closed, 
their internal Hom-functors are the functor categories, the sets of all maps, the simplicial Hom-functor, and the $k$-function spaces respectively. 
They are tensored and cotensored over themselves by the product and the internal Hom-functors.

Observe that $\Sets$ can be considered as a symmetric monoidal subcategory of each of the others if we interpret a set as a discrete category, 
discrete simplicial set,
 or discrete space respectively. Hence the following definition makes sense, where $\Sigma_m$ is considered a set with $\Sigma_m$-action.

\begin{defi}\label{2_1}
Let $\mathcal{V}$ be one of the categories $\Cat$, $\Sets$, $\SSets$, or $\Top$. A \textit{$\cV$-operad} is a permutative $\mathcal{V}$-enriched
category $(\mathcal{M},\oplus,0)$ such that $\ob \mathcal{M}=\mathbb{N},\ m\oplus n=m+n$, and the maps
$$
\coprod\limits_{r_1+\ldots+r_n=m} \uM (r_1,1)\times\ldots\times\uM (r_n,1) \times_{\Sigma_{r_1}\times\ldots\times\Sigma_{r_n}}\Sigma_m\to\uM(m,n)
$$
sending $(f_1,\ldots,f_n;\sigma)$ to $(f_1\oplus\ldots\oplus f_n)\circ \sigma$ are isomorphisms in $\mathcal{V}$ (note
 that $\Sigma_n$ 
operates on $n=1\oplus\ldots\oplus1$).

An \textit{$\cM$-algebra} in $\mathcal{V}$ is a strong symmetric monoidal functor $A:\mathcal{M}\to\mathcal{V}$.
 A \textit{map of $\mathcal{M}$-algebras} 
is a natural transformation $\alpha:A\to B$ compatible with the symmetric monoidal structure; in particular, 
it is determined by $\alpha(1):A(1)\to B(1)$.
\end{defi}

\textbf{Notation:} The morphism objects $\uM (m,n)$ of a $\mathcal{V}$-operad $\mathcal{M}$ are uniquely determined by the objects $\uM(r,1)$. 
As is common usage we denote $\uM (r,1)$ by $\uM (r)$. We call $\cM$ $\Sigma$-\textit{free} if for each $n$ the right action of $\Sigma_r$ on 
$\uM (r)$ is free. For an operad in $\Cat$ this means that $\Sigma_r$ acts freely on objects and morphisms of $\uM (r)$.

Let $\mathcal{V}^{\mathcal{M}}$ be the category of $\mathcal{M}$-algebras in $\mathcal{V}$.
\vspace{1ex}

Recall that a \textit{monad} on a category $\mathcal{C}$ is a functor
$$
\mathbb{M}:\mathcal{C}\to\mathcal{C}
$$
together with natural transformations
$$
\mu:\mathbb{M}\circ\mathbb{M}\to\mathbb{M}\quad \textrm{ and } \quad \iota:\Id_\mathcal{C}\to\mathbb{M}
$$
satisfying the associativity condition $\mu\circ\mu\mathbb{M}=\mu\circ\mathbb{M}\mu$ and the unit conditions 
$\mu\circ\iota\mathbb{M}=\mu\circ\mathbb{M}\iota$.

An \textit{algebra over} $\mathbb{M}$ or $\mathbb{M}$-\textit{algebra} in $\mathcal{C}$ is a pair $(A,\xi)$ consisting of an object 
$A$ of $\mathcal{C}$ and a map $\xi:\mathbb{M}A\to A$ satisfying
$$
\xi\circ\iota(A)=\id_A \quad \textrm{ and } \quad \xi\circ\mu(A)=\xi\circ\mathbb{M}\xi.
$$
A \textit{map of $\mathbb{M}$-algebras} $f:(A,\xi_A)\to(B,\xi_B)$ is a map $f:A\to B$ in $\mathcal{C}$ such that
$$
\xi_B\circ\mathbb{M}f=f\circ \xi_A.
$$
We denote the category of $\mathbb{M}$-algebras in $\mathcal{C}$ by $\mathcal{C}^\mathbb{M}$.

\begin{leer}\label{2_2a}
For a $\cV$-operad $\cM$ we have an adjoint pair of $\mathcal{V}$-functors
$$
F:\mathcal{V}\leftrightarrows\mathcal{V}^\mathcal{M}:U
$$
consisting of the free algebra functor $F$ and the underlying object functor $U$ given by $U(A)=A(1)$. The former is defined by 
$$
F(X)=\coprod\limits_{n\ge 0}\mathcal{M}(n)\times_{\Sigma_n}X^n.
$$
Let $\iota:\Id\to UF$ and $\varepsilon:FU\to\Id$ be the unit and conunit of this adjunction. Then $(\mathbb{M},\mu,\iota)=(UF,
U\varepsilon F,\iota)$ defines a monad on $\mathcal{V}$. Explicitly, the natural transformation $\iota$ is given by 
$$
\iota(X): X\cong \{ \id \}\times X\subset \cM (1)\times X\subset \coprod\limits_{n\ge 0}\mathcal{M}(n)\times_{\Sigma_n}X^n=\mathbb{M} X.
$$
If $f:X\to Y$ is a morphism to an $\cM$-algebra in $\cV$, there is a unique map of $\cM$-algebras $\bar{f}: \mathbb{M}(X)\to Y$,
the \textit{adjoint of} $f$, such that 
$\bar{f}\circ \iota(X)=f$. We also note that $\mu : \mathbb{M}^2(X)\to \mathbb{M}(X)$ is the adjoint of
 $\id_{\mathbb{M}(X)}$.
\end{leer}

\begin{prop}\label{2_2}
Let $\mathcal{M}$ be a $\mathcal{V}$-operad. Then $\mathcal{V}^\mathcal{M}$ and $\mathcal{V}^\mathbb{M}$ are isomorphic.
\end{prop}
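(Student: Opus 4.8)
The plan is to identify the Eilenberg--Moore comparison functor of the adjunction $F\dashv U$ of \ref{2_2a} as the desired isomorphism and to produce an explicit inverse. Recall that $U(A)=A(1)$ and that the monad $(\mM,\mu,\iota)=(UF,U\varepsilon F,\iota)$ is built precisely so that $\mM X=\coprod_{n}\cM(n)\times_{\Sigma_n}X^n$. The comparison functor
$$K:\cV^{\cM}\to\cV^{\mM},\qquad A\mapsto(UA,\,U\varepsilon_A),$$
sends a map $\alpha$ of $\cM$-algebras to $\alpha(1)$; since such maps are already determined by $\alpha(1)$, $K$ is faithful, and fullness is seen by reconstructing a monoidal natural transformation from a given map of $\mM$-algebras. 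It remains to build an inverse $L$ on objects and to match the two families of axioms.

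First I would make $K$ explicit. A $\cV$-functor $A$ supplies enrichment morphisms $\uM(m,n)\to\uV(A(m),A(n))$ in $\cV$; taking $n=1$ and composing with the strong-monoidal isomorphisms $A(m)\cong A(1)^m$ (using $A(0)\cong\Phi$ and $m=1\oplus\cdots\oplus 1$) yields $\uM(m)\to\uV(X^m,X)$, where $X=A(1)$. By the tensor--cotensor adjunction (condition (2) of the tensoring of $\cV$ over itself, with $\otimes=\times$) these correspond to maps $\theta_m\colon\cM(m)\times X^m\to X$. Because $\Sigma_m$ sits inside $\uM(m,m)$ and $A$ is a functor, the $\theta_m$ are $\Sigma_m$-equivariant, so they descend to $\cM(m)\times_{\Sigma_m}X^m\to X$ and assemble into a single map $\xi\colon\mM X\to X$. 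That $A$ preserves identities gives $\xi\circ\iota(X)=\id_X$, and that $A$ preserves composition---together with the fact (noted in \ref{2_2a}) that $\mu$, the adjoint of $\id_{\mM X}$, encodes the operadic composition of $\cM$---gives $\xi\circ\mu(X)=\xi\circ\mM\xi$. Thus $(X,\xi)$ is an $\mM$-algebra.

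For the inverse $L\colon\cV^{\mM}\to\cV^{\cM}$, given $(X,\xi)$ I would set $A(n)=X^n$ (with $A(0)=\Phi$) and define the action of $A$ on hom-objects through the defining isomorphism of \ref{2_1},
$$\uM(m,n)\cong\coprod_{r_1+\cdots+r_n=m}\cM(r_1)\times\cdots\times\cM(r_n)\times_{\Sigma_{r_1}\times\cdots\times\Sigma_{r_n}}\Sigma_m,$$
letting a representative $(f_1,\dots,f_n;\sigma)$ act on $X^m$ by first permuting the $m$ factors via $\sigma$ and then applying to the consecutive blocks the operations $X^{r_i}\to X$ induced by $\xi$, landing in $X^n$. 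Equivariance of $\xi$ makes this well defined on the quotient, the unit axiom sends identities to identities, the associativity axiom yields functoriality, and the symmetric monoidal structure is the evident one on powers of $X$. On morphisms $L$ sends $f$ to the natural transformation with components $f^n\colon X^n\to Y^n$. One then checks that $K$ and $L$ are mutually inverse, with $\alpha\leftrightarrow\alpha(1)$ and $f\leftrightarrow(f^n)_n$ on morphisms (treating the strong-monoidal isomorphisms $A(n)\cong A(1)^n$ as the identifications that realize $L\circ K=\id$).

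The main obstacle is the verification that $L(X,\xi)$ is a genuine $\cV$-\emph{functor}, i.e.\ that the assignment on hom-objects respects composition in $\cM$: this is exactly where the monad-associativity $\xi\circ\mu=\xi\circ\mM\xi$ must be translated, through the explicit decomposition of $\uM(m,n)$ above, into compatibility with the operadic composition, with care taken over the bookkeeping of the symmetric-group quotients. Note that $\Sigma$-freeness of $\cM$ plays no role here. Conceptually, the same fact can be read off from Beck's theorem: $U$ has the left adjoint $F$, reflects isomorphisms, and creates coequalizers of $U$-split pairs, so $U$ is monadic and $K$ is the asserted isomorphism; the explicit functors $K$ and $L$ above simply make this identification concrete.
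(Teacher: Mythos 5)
Your proposal is correct and takes essentially the same route as the paper: the paper's proof is exactly the correspondence $A\mapsto (A(1),\xi)$ obtained by factoring the adjoints of the structure maps $\cM(n)\to \cV(A(1)^n,A(1))$ through the projections $\cM(n)\times A(1)^n\to \cM(n)\times_{\Sigma_n}A(1)^n$, with the inverse functor recovering the enriched functor from $(X,\xi)$ via these same adjoints, just as you construct $K$ and $L$. Your version merely spells out the equivariance, unit/associativity translations, and the hom-object decomposition that the paper leaves implicit (and the closing monadicity remark is a valid conceptual aside, not needed for the argument).
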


\begin{proof}
If $A$ is an $\mathcal{M}$-algebra in $\mathcal{V}$ the adjoint of the structure maps 
$$
A:\mathcal{M}(n)\to \mathcal{V}(A(1)^n, A(1)) \eqno(\ast)
$$
factor as
$$
\mathcal{M}(n)\times A(1)^n\stackrel{\textrm{proj.}}{\longrightarrow} \mathcal{M}(n)\times_{\Sigma_n} A(1)^n
\stackrel{\xi_n}{\longrightarrow}A(1)  \eqno(\ast\ast)
$$
and the $\xi_n$ define an $\mathbb{M}$-algebra structure $\xi$ on $A(1)$. The correspondence $A\mapsto (A(1),\xi)$ extends
 to a functor $\mathcal{V}^\mathcal{M}\to\mathcal{V}^\mathbb{M}$. Conversely, given an $\mathbb{M}$-algebra $(A(1),\xi)$,
 the adjoints $(\ast)$ of the maps $(\ast\ast)$ define an $\mathcal{M}$-algebra $A$ in $\mathcal{V}$. This correspondence 
defines the inverse functor.
\end{proof}

If $\mathcal{M}$ is a $\mathcal{V}$-operad, we work in either of the categories $\mathcal{V}^\mathbb{M}$ and $\mathcal{V}^\mathcal{M}$
 with a preference for $\mathcal{V}^\mathbb{M}$.

\begin{leer}\label{2_3}
 Let $A$ be an $\mathcal{M}$-algebra in $\cV$ and $K\in\mathcal{V}$. Then the cotensor $A(1)^K$ in $\cV$ has a canonical 
$\mathcal{M}$-algebra structure
 whose structure maps are the adjoints of
$$
\mathcal{M}(n)\times(A(1)^K)^n\times K\cong\mathcal{M}(n)\times(A(1)^n)^K\times K
\stackrel{\id\times e}{\longrightarrow}  \mathcal{M}(n)\times A(1)^n\stackrel{a}{\longrightarrow} A(1)
$$
where $e$ is the evaluation and $a$ is the structure map of $A$.
\end{leer}

\begin{leer}\label{2_4}
If $\mathbb{M}$ is the monad on $\mathcal{V}$ associated with $\cM$ we enrich $\mathcal{V}^\mathbb{M}$ and hence $\cV^{\cM}$ over $\mathcal{V}$ as follows:

If $\mathcal{V}=\Cat$ then $\mM$ is a 2-monad. If $f,g:(A,\xi_A)\to(B,\xi_B)$ are maps of $\mathbb{M}$-algebras, a 2-morphism $s:f\Rightarrow g$ 
in $\Cat^\mathbb{M}$ is a natural transformation satisfying
$$
\xi_B\circ\mathbb{M}s=s\circ\xi_A
$$
If $\mathcal{V}=\SSets$ and $(A,\xi_A)$ and $(B,\xi_B)$ are $\mathbb{M}$-algebras in $\mathcal{V}$, then $B^{\Delta^n}$ is an 
$\mathbb{M}$-algebra by \ref{2_3},
and we define $\SSets^\mathbb{M}
(A,B)_n$ to be the set of all algebra maps $f:A\to B^{\Delta^n}$. 

If $\mathcal{V}=\Top$, we define $\Top^\mathbb{M}(A,B)$ to be the subspace of $\Top(A,B)$ of all maps of $\mathbb{M}$-algebras.
\end{leer}

Let $\mathcal{M}$ be a $\mathcal{V}$-operad and $\mathbb{M}$ its associated monad. The forgetful functor 
$U:\mathcal{V}^\mathcal{M}\to\mathcal{V}$ is 
faithful. If we consider $\mathcal{V}^\mathcal{M}(A,B)$ as a subobject of $\mathcal{V}(A(1),B(1))$ we obtain the $\mathcal{V}$-enrichment of 
$\mathcal{V}^\mathcal{M}$ corresponding to the $\mathcal{V}$-structure of $\mathcal{V}^\mathbb{M}$ under the isomorphism of Proposition \ref{2_2}.

\begin{prop}\label{2_5}
Let $\mathcal{M}$ be a $\mathcal{V}$-operad and $\mathbb{M}$ its associated monad. Then $\mathcal{V}^\mathcal{M}$ and $\mathcal{V}^\mathbb{M}$ 
are $\mathcal{V}$-complete and $\mathcal{V}$-cocomplete.
\end{prop}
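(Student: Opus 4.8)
\emph{Strategy.} The plan is to apply the criterion recalled before the statement: a $\cV$-category is $\cV$-complete iff it is complete in the ordinary sense and cotensored, and $\cV$-cocomplete iff it is cocomplete in the ordinary sense and tensored \cite[Thm.~3.7.3]{Kelly}. By Proposition~\ref{2_2} it suffices to treat $\cV^\mM$. Throughout I use that each of $\Cat$, $\Sets$, $\SSets$, $\Top$ is complete, cocomplete and cartesian closed, and that the free--forgetful adjunction $F\dashv U$ of \ref{2_2a} is a $\cV$-adjunction.

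\emph{Completeness and the cotensor.} For any monad the forgetful functor $U\colon\cV^\mM\to\cV$ creates all limits that exist in $\cV$; since $\cV$ is complete, $\cV^\mM$ is complete with limits computed on underlying objects. For the cotensor I take the object $A^K$ carrying the $\mM$-algebra structure constructed in \ref{2_3}. It remains to verify the defining isomorphism $\cV^\mM(X,A^K)\cong\cV(K,\cV^\mM(X,A))$ (and its enriched form): since $U$ is faithful, an algebra map $X\to A^K$ is a map $X\to A^K$ of underlying objects compatible with the structure maps, and under the closed structure of $\cV$ this corresponds precisely to a map $K\to\cV^\mM(X,A)$ of the enriched hom object. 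Hence $\cV^\mM$ is $\cV$-complete.

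\emph{Cocompleteness.} This is the substantial step, whose key input is that $\mM$ preserves reflexive coequalizers. Writing $\mM X=\coprod_{n\ge0}\cM(n)\times_{\Sigma_n}X^n$, the coproduct over $n$, the quotient by $\Sigma_n$, and $\cM(n)\times(-)$ all preserve colimits (the last because $\cV$ is cartesian closed), so it suffices that the $n$-fold power $X\mapsto X^n$ preserve reflexive coequalizers. In each of $\Cat$, $\Sets$, $\SSets$, $\Top$ finite products preserve reflexive coequalizers (these are sifted colimits; for $\SSets$ this is checked degreewise, for $\Top$ it is a standard property of $k$-spaces). Consequently $U$ creates reflexive coequalizers, so $\cV^\mM$ has them. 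Since $F$ is a left adjoint it preserves coproducts, so coproducts of free algebras exist and are again free; and every algebra $A$ is the reflexive coequalizer of free algebras $\mM\mM A\rightrightarrows\mM A\to A$, whence an arbitrary coproduct $\coprod_i A_i$ exists as the reflexive coequalizer of $\coprod_i\mM\mM A_i\rightrightarrows\coprod_i\mM A_i$. Finally any parallel pair $f,g\colon A\rightrightarrows B$ has a coequalizer, namely that of the reflexive pair $(f,\id_B),(g,\id_B)\colon A\amalg B\rightrightarrows B$. Thus $\cV^\mM$ has all coproducts and coequalizers and is cocomplete.

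\emph{The tensor and the main obstacle.} For a free algebra set $K\otimes\mM X:=\mM(K\times X)$; the $\cV$-adjunction $\cV^\mM(\mM X,B)\cong\cV(X,B)$ together with closedness of $\cV$ yields the tensor adjunction on free algebras. For a general $A$ define $K\otimes A$ as the coequalizer in the (now cocomplete) category $\cV^\mM$ of the reflexive pair $\mM(K\times\mM A)\rightrightarrows\mM(K\times A)$ obtained by applying $K\otimes(-)$ to the presentation $\mM\mM A\rightrightarrows\mM A\to A$. Since $\cV^\mM(-,B)$ turns this coequalizer into an equalizer, $\cV(K,-)$ preserves that equalizer, and the equalizer of $\cV(K,\uV(A,B))\rightrightarrows\cV(K,\uV(\mM A,B))$ is exactly $\cV(K,\cV^\mM(A,B))$ via the standard equalizer description of the algebra-hom object; the required isomorphism $\cV^\mM(K\otimes A,B)\cong\cV(K,\cV^\mM(A,B))$ follows. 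This makes $\cV^\mM$ tensored, hence $\cV$-cocomplete. I expect the main obstacle to be the cocompleteness step, and inside it the point that finite products preserve reflexive coequalizers in $\Cat$ and in $\Top$; once this is secured the monad-theoretic assembly of colimits and the construction of tensors are routine.
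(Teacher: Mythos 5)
Your proof is correct, and its overall skeleton matches the paper's: limits and cotensors are created by the forgetful functor (with the cotensor structure coming from \ref{2_3}), and $\cV$-cocompleteness is reduced to the statement that $\mM$ preserves reflexive coequalizers. Where you genuinely diverge is in how that key statement is proved and in how much of the surrounding machinery you reprove rather than cite. The paper cites \cite[VII.2.10]{EKMM} for the implication ``$\mM$ preserves reflexive coequalizers $\Rightarrow$ $\cV^\mM$ is $\cV$-cocomplete'' (the resulting tensor coequalizer is recalled in \ref{2_6a}), and then spends its effort on a hands-on verification in $\Cat$: given a reflexive pair of algebras, it constructs by hand, on objects and on generating morphisms, the unique $\cM$-structure on the coequalizer of the underlying categories making the projection an algebra map, choosing preimages and using the common section $s$ to show independence of choices --- i.e.\ it shows directly that $U$ creates reflexive coequalizers, whence $\mM=U\circ F$ preserves them --- and asserts that the arguments for $\SSets$ and $\Top$ are analogous. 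You invert this: you deduce that $\mM$ preserves reflexive coequalizers abstractly from the formula $\mM X=\coprod_{n\ge 0}\cM(n)\times_{\Sigma_n}X^n$, using that reflexive coequalizers are sifted colimits and that finite powers preserve sifted colimits in a cartesian closed cocomplete category (the $\Sigma_n$-quotient and the coproduct commute with all colimits), and you then reprove the EKMM input yourself: Linton's construction of coproducts and general coequalizers from reflexive coequalizers via the canonical presentations $\mM\mM A\rightrightarrows\mM A\to A$, and the tensor as the reflexive coequalizer $\mM(K\times\mM A)\rightrightarrows\mM(K\times A)$, which is exactly the formula of \ref{2_6a}, with the adjunction isomorphism checked via the equalizer description of the enriched hom. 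Your route buys uniformity (one argument covers $\Cat$, $\Sets$, $\SSets$, $\Top$ at once, making explicit that cartesian closedness is the real hypothesis) and self-containedness; the paper's route buys an explicit description of the lifted algebra structure on a coequalizer of categories, which is the kind of concrete control it needs later when working with quotient categories such as the homotopy colimit, at the cost of outsourcing the cocompleteness-and-tensor formalism to \cite{EKMM}. I see no gaps in your argument.
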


\begin{proof}
We first show that $U:\mathcal{V}^\mathbb{M}\to\mathcal{V}$ creates all limits and cotensors. Let $D:\cL\to \mathcal{V}^\mathbb{M}$ 
be a diagram. Then
$$
\lim\ U\circ D\subset \prod_{L\in \cL} D(L)(1).
$$

Coordinatewise operations give the product an $\mathcal{M}$-structure which is inherited by $\lim\ U\circ D$. 
Hence $\lim\ D$ exists in $\mathcal{V}^\mathbb{M}$.

Let $A$ be an $\mathcal{M}$-algebra and $K\in\mathcal{V}$. Then $A(1)^K$ is the underlying object of the $\mathcal{M}$-algebra 
by \ref{2_3} which defines the cotensor
in $\mathcal{V}^\mathbb{M}$.

Next we prove that $\mathcal{V}^\mathbb{M}$ is $\mathcal{V}$-cocomplete. By \cite[VII.2.10]{EKMM} it suffices to show that $\mathcal{M}$ 
preserves reflexive coequalizers. The statement of \cite[VII.2.10]{EKMM} is proved for categories enriched over the category of weak Hausdorff
$k$-spaces, but the proof holds verbatim also for categories enriched over $\Cat,\SSets$, or $\Top$.

A coequalizer diagram
$$
\xymatrix{
X \ar@<0.5ex>[r]^f\ar@<-0.5ex>[r]_g & Y\ar[r]^h & Z
}
$$
is called \textit{reflexive} if there is a map $s:Y\to X$ such that $f\circ s=\id_Y=g\circ s$.

We show that $\mathbb{M}=U\circ F:\Cat\to\Cat$ preserves reflexive coequalizers.
The proofs for $\SSets$ and $\Top$ are analogous.

Since $F$ has a right adjoint it suffices to prove that $U$ preserves reflexive coequalizers. So let
$$
\xymatrix{
X \ar@<0.5ex>[r]^f\ar@<-0.5ex>[r]_g & Y\ar[r]^s & X
}
$$
be a diagram in $\Cat^\mathcal{M}$ such that $f\circ s=\id_Y=g\circ s$ and let 
$$
\xymatrix{
UX \ar@<0.5ex>[r]^{Uf}\ar@<-0.5ex>[r]_{Ug} & UY\ar[r]^h & Z
}
$$
be a coequalizer diagram in $\Cat$. We have to show that $Z$ has a unique $\mathcal{M}$-structure making $h$ a map of $\mathcal{M}$-algebras. The maps 
$$
\xi:\mathcal{M}(n)\times Z^n\to Z
$$
are defined on objects $(A; z_1,\ldots,z_n)$ by choosing $y_1,\ldots,y_n\in UY$ such that $h(y_i)=z_i$ and setting
$$
\xi(A;z_1,\ldots,z_n)=h(\beta(A;y_1,\ldots,y_n))
$$
where $\beta$ is the $\mathcal{M}$-structure map of $Y$. This definition is independent of the choice of the $y_i$: for let 
$x$ be an object in $UX$ such that $f(x)=y_1$; let $\overline{y}_1=g(x)$ and let $\overline{x}=\alpha(A;x,s(y_2),\ldots,s(y_n))$,
where $\alpha$ is the $\mathcal{M}$-structure map of $X$. Since $f,g$, and $s$ are maps of $\mathcal{M}$-algebras\\
\begin{align*}
f(\overline{x}) & =\beta(A;f(x), f\circ s(y_2),\ldots,f\circ s(y_n)) =\beta(A;y_1,y_2,\ldots,y_n))\\
g(\overline{x}) & =\beta(A;g(x), g\circ s(y_2),\ldots,g\circ s(y_n)) =\beta(A;\overline{y}_1,y_2,\ldots,y_n))
\end{align*}
Hence 
$$
h(\beta(A;y_1,y_2,\ldots,y_n))=h(\beta(A;\overline{y}_1,y_2,\ldots,y_n))
$$
On morphisms we proceed analogously: if suffices to define $\xi$ for the generating morphisms
$$
(w:z\to z')=h(v:y\to y')
$$
which can be done in the same way as for objects.
\end{proof}

\begin{coro}\label{2_6}
Let $\mathcal{M}$ be a $\mathcal{V}$-operad and $\mathbb{M}$ its associated monad. Then $\mathcal{V}^\mathcal{M}$ and $\mathcal{V}^\mathbb{M}$ 
are tensored and cotensored over $\mathcal{V}$.
\end{coro}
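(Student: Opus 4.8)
The plan is to obtain this as an immediate consequence of Proposition~\ref{2_5}. Recall the observation made right after \ref{2_0}, that a tensor is a special case of an indexed colimit and a cotensor a special case of an indexed limit. Proposition~\ref{2_5} asserts that $\mathcal{V}^\mathcal{M}$ and $\mathcal{V}^\mathbb{M}$ are $\mathcal{V}$-complete and $\mathcal{V}$-cocomplete, i.e. they possess all small indexed limits and colimits. In particular they possess all cotensors and all tensors, which is precisely the assertion of the corollary. Since $\mathcal{V}^\mathcal{M}$ and $\mathcal{V}^\mathbb{M}$ are isomorphic as $\mathcal{V}$-categories by Proposition~\ref{2_2}, it would in any case suffice to treat only one of them, and I would work with $\mathcal{V}^\mathbb{M}$.

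If one prefers an explicit description rather than an appeal to the general machinery, the cotensor has in fact already been produced in the proof of Proposition~\ref{2_5}: for an $\mathcal{M}$-algebra $A$ and $K\in\mathcal{V}$, the object $A(1)^K$ carries the canonical $\mathcal{M}$-structure of \ref{2_3}, and this is the cotensor $A^K$ in $\mathcal{V}^\mathbb{M}$. The tensor $K\otimes A$ exists by $\mathcal{V}$-cocompleteness; concretely it can be computed as a reflexive coequalizer of free $\mathcal{M}$-algebras, whose existence is guaranteed by the preservation of reflexive coequalizers established in the proof of Proposition~\ref{2_5}.

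The verification is entirely formal and I do not expect a genuine obstacle. The only point requiring a moment's care is to check that the defining adjunction isomorphisms of a tensor and a cotensor (condition (2) in each of the two definitions) hold for the $\mathcal{V}$-enrichment of $\mathcal{V}^\mathbb{M}$ introduced in \ref{2_4}, and not merely on the level of underlying sets. This compatibility, however, is built into the notion of an indexed (co)limit, so it is automatic once Proposition~\ref{2_5} is invoked.
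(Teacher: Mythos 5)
Your proposal is correct and follows exactly the paper's (implicit) argument: the corollary is an immediate consequence of Proposition \ref{2_5} together with the earlier remark that tensors and cotensors are special indexed colimits and limits, which is why the paper gives no separate proof. Your explicit identifications also match the paper: the cotensor is the $\mathcal{M}$-algebra structure on $A(1)^K$ from \ref{2_3} as in the proof of Proposition \ref{2_5}, and the tensor as a reflexive coequalizer of free algebras is precisely the description the paper records in \ref{2_6a}.
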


\begin{leer}\label{2_6a}
We recall the explicit construction of the tensor in $\mathcal{V}^\mathcal{M}$ from the proof of \cite[VII.2.10]{EKMM}. Note that the tensor in our categories $\cV$ is the product.
Let $\cM$ be a $\cV$-operad and $(\mM,\mu,\iota)$ its
associated monad. For $K$ in $\cV$ and $(X,\xi)$ in $\cV^{\mM}$ the tensor $K\otimes X$ is the coequalizer
$$
\xymatrix{
\mM(K\times \mM X) \ar@<0.5ex>[rr]^{\mM(\id\times \xi)}\ar@<-0.5ex>[rr]_{\mu\circ \mM \nu}
&& \mM(K\times X)\ar[rr] && K\otimes X
}
$$
in $\cV$, where $\nu$ is induced by the diagonal on $K$:
$$
\nu: K\times \mM X \cong \coprod\limits_{n\ge 0} K\times \mathcal{M}(n)\times_{\Sigma_n}X^n
\to \coprod\limits_{n\ge 0}\mathcal{M}(n)\times_{\Sigma_n}(K\times X)^n
$$
\end{leer}

\begin{leer}\label{2_7}
We have a sequence of adjunctions where $N$ is the nerve functor, $\Sing$ is the singular functor, $|-|$ the topological realization, and
$\cat$ the categorification functor. For details of the latter see \cite[Chapter II.4]{GZ}, where it is also proved that
$\cat\circ N=\Id$. The lower arrows are the left adjoints.
$$
\xymatrix
{
\Cat \ar@<0.5ex>[rr]^N && \SSets \ar@<0.5ex>[ll]^{\cat} \ar@<-0.5ex>[rr]_{|-|} && \Top \ar@<-0.5ex>[ll]_{\Sing}
}
$$
Since all these functors preserve products, they are strong monoidal functors. This is well-known for $\Sing,\ N, |-|$, and we now include 
a short proof for $\cat$. Consider each simplicial set as a discrete simplicial category. Let $\Delta$ denote the standard simplicial indexing
category and $\nabla:\Delta\to \Cat$ the functor sending $[n]$ to the category, also denoted by $[n]$, with objects $0,1,\ldots ,n$ and a unique morphism $k\to l$
whenever $k\leq l$. Then $\cat$ is the categorical realization functor
$$
\cat (K_\bullet)=K_\bullet\times_\Delta \nabla
$$
(see \ref{2_0} for the definition of $\times_\Delta$). Now
$$\begin{array}{rcl}
\cat(K_\bullet\times L_\bullet) &= &(K_\bullet\times L_\bullet)\times_\Delta \nabla \cong (K_\bullet\times L_\bullet)\times_{\Delta\times \Delta}
(\Delta\times \Delta)\times_\Delta\nabla \\
&\cong &(K_\bullet\times L_\bullet)\times_(\Delta\times \Delta) (\nabla\times \nabla)\cong
(K_\bullet\times_\Delta\nabla)\times (L_\bullet\times_\Delta\nabla) \\
& = &\cat(K_\bullet)\times\cat(L_\bullet)
\end{array}
$$
provided $(\Delta\times \Delta)\times_\Delta\nabla \cong \nabla\times \nabla$ as $(\Delta\times \Delta)$-diagram in $\Cat$, where $\Delta$
operates diagonally on $\Delta\times \Delta$ by right composition. As a simplicial set $\Delta(-,[k])$ is
the standard simplicial $k$-simplex, hence $\Delta(-,[k])=N([k])$. It follows that
$$(\Delta(-,[k])\times \Delta(-,[l]))\times_\Delta\nabla= \cat(N([k])\times N([l]))\cong \cat(N([k]\times[l]))=[k]\times[l]$$
natural in $[k]$ and $[l]$.

Since all functors are strong monoidal, any $\Cat$-enriched category $\cA$ is
$\SSets$-enriched via the nerve functor $N$ and each $\Top$-enriched category $\cB$ is $\SSets$-enriched via the singular functor $\Sing$
\cite[6.4.3]{Bor2}. Moreover, $\cA$ as a $\SSets$-category is tensored over $\SSets$ by
$$K\otimes^s A=\cat(K)\otimes A$$
and $\cB$ as a $\SSets$-category is tensored over $\SSets$ by
$$K\otimes^s B=|K|\otimes B,$$
where $\otimes^s$ stands for the tensor over $\SSets$ while $\otimes$ stands for the tensor over $\Cat$ respectively $\Top$.
\end{leer}

\begin{leer}\label{2_7a}
 For later use we investigate the behavior of the tensors in our categories of algebras under the nerve and the topological
 realization functor. 
 
Let $\cM$ be an operad in $\Cat$ and $(\mM,\mu_C,\iota_C)$ its associated monad. Let $X$ be an $\cM$-algebra.
Since the nerve functor preserves products and sums $N\cM$
is a simplicial operad and $NX$ is an $N\cM$-algebra. We obtain an induced functor
$$
N^{\alg}: \Cat^{\cM}\to \SSets^{N\cM}.
$$
Let $(\mS,\mu_S,\iota_S)$ be the monad associated to $N\cM$.
From the explicit definition of the free functor \ref{2_2a} we deduce that for $C$ in $\Cat$ there is a unique natural transformation
$$
\alpha(C): \mS(NC)\to N(\mM C)
$$
such that $\alpha(C)\circ \iota_S(NC)=N(\iota_C(C))$. If $\cM$ is $\Sigma$-free, it is easy to show that $\alpha$ is
a natural equivalence. In general, this is not true. In the diagram

$$
\xymatrix{
\mS NC\ar[rr]^{\iota_S(\mS NC)}\ar[d]_{\alpha(C)} & & \mS^2 NC\ar[rr]^{\mu_S(NC)}\ar[d]^{\mS\alpha(C)} && \mS NC \ar[dd]^{\alpha(C)}\\
N\mM C\ar[rr]^{\iota_S(N\mM C)} \ar[rrd]_{N(\iota_C(\mM C))} && \mS N\mM C \ar[d]^{\alpha(\mM C)}& &\\
 && N\mM^2 C \ar[rr]^{N\mu_C (C)} &&  N\mM C
 }
 $$
 the upper left square and the triangle commute by naturality of $\iota_S$ and the definition of $\alpha$. Since 
 $\mu_S(NC)\circ \iota_S(\mS NC)=\id$ and $N\mu_C (C)\circ N(\iota_C(\mM C))=\id$ the big outer diagram commutes.
 Hence, by the universal property of $\iota_S(\mS NC)$, the right square commutes.
 
 Let $\nu_C(K,C): K\times \mM C\to \mM(K\times C)$ and $\nu_S(L,X): L\times \mS X\to \mS(L\times X)$ be the maps used in
 the explicit description of the tensors $\otimes^c$ in $\Cat^{\mM}$ and $\otimes^s$ in $\SSets^{\mS}$ respectively.
 Then, by construction, $\alpha(K\times C)\circ \nu_S(NK, NC)=N\nu_C(K,C)\circ (\id_{NK}\times \alpha(C))$. If $(C,\xi)$
 is an $\mM$-algebra, the $\mS$-algebra structure on $NC$ is given by $\mS NC\xrightarrow{\alpha(C)} N\mM C\xrightarrow{N\xi}
 NC$. Using these observations it is now easy to show that the maps
 $$\mS(NK\times \mS NC)\xrightarrow{\mS(\id_{NK}\times \alpha(C))} \mS(NK\times N\mM C) \xrightarrow{\alpha(K\times \mM C)}
 N\mM(K\times \mM C)$$
 and
 $$ \mS(NK\times NC)\xrightarrow{\alpha(K\times C)} N\mM(K\times C)$$
 define a map from the coequalizer diagram \ref{2_6a} for $NK\otimes ^s NC$ to the diagram obtained from the 
 coequalizer diagram for $K\otimes^c C$ by applying the nerve functor $N$, and we obtain a natural transformation
 $$\tau(K,C): NK\otimes^s NC\to N(K\otimes^c C).$$
 
The topological realization functor behaves better than the nerve, because it preserves products and colimits. So our
arguments for the nerve functor carry over verbatim to the realization functor with the nice addition that $\alpha$ is
a natural isomorphism, and so is the induced map on the coequalizers defining the tensors. We obtain: there is a 
natural isomorphism
$$\omega(L,X) :\ |L|\otimes^t |X|\cong |L\otimes^s X|$$
where $\otimes^t$ is the tensor in $\Top^{|\cM|}$, where $\cM$ is an operad in $\SSets$ and $(X,\xi)$ is an $\mM$-algebra.
\end{leer}

\begin{leer}\label{2_8}
In the analysis of the homotopy type of the homotopy colimit we will make use of lax functors and their rectifications. Let $\cL$ be a small
category. A \textit{lax functor} $F:\cL \to \Cat$ is a pair of functions assigning a category
$F(L)$ to each $L\in \ob \cL$ and a functor $F(f):F(K)\to
F(L)$ to each morphism $f:K\to L$ of $\cL$ together with natural
transformations
$$ \rho(L):F(id_L) \to id_{F(L)} \qquad  \sigma(f,g):F(f\circ g)
\to F(f)\circ F(g)$$
such that the following diagrams commute:
$$\xymatrix{
F(f) \circ F(id_K) \ar[rrd]_{F(f)\rho(K)} && F(f)\ar[ll]_(0.4){\sigma(f,id_K)} \ar@{=}[d]\ar[rr]^(0.4){\sigma(id_L,f)} && F(id_L) \circ F(f)\ar[lld]^{\rho(L)F(f)}\\
&&F(f) &&
}
$$
$$
\xymatrix{
F(f\circ g\circ h)\ar[rr]^{\sigma(f\circ g,h)} \ar[d]_{\sigma(f,g\circ h)} && F(f\circ g)\circ F(h)\ar[d]^ {\sigma(f,g)F(h)} \\
F(f)\circ F(g\circ h) \ar[rr]^ {F(f)\sigma(g,h)} && F(f)\circ F(g)\circ F(h)
}$$

Let $F,\ G:\cL \to \Cat$ be lax
functors. A (left) \textit{lax natural transformation} $d: F\to G$ is a pair of
functions assigning a functor $d(L): F(L) \to G(L)$ to each $L\in
\ob\cL$ and a natural transformation $d(f):G(f)\circ
d(K)\to d(L)\circ F(f)$ to each morphism $f:K\to L$ of $\cL$
such that the following diagrams of natural transformations commute
$$\xymatrix{
G(id_L)\circ d(L) \ar[rr]^{d(id_L)}\ar[rd]_ {\rho(L)d(L)}&& d(L) \circ F(id_L) \ar[ld]^{d(L)\rho(L)}\\
& d(L) & 
}
$$
and for $g\circ f: K\to L\to M$
$$\xymatrix{
G(g\circ f)\circ d(K)\ar[rr]^{d(g\circ f)}\ar[dd]^{\sigma(g,f)d(K)}  && d(M)\circ F(g\circ f)\ar[rd]^{d(M)\sigma(g,f)}\\
&& & d(M)\circ F(g)\circ F(f)\\
G(g)\circ G(f)\circ d(K) \ar[rr]^{G(g)d(f)} & & G(g)\circ d(L)\circ F(f)\ar[ru]_{d(g)F(f)} &
}
$$
We can compose lax natural transformations in the obvious way and obtain a category of lax functors and lax natural
transformations. 
\end{leer}

\begin{rema}
Our lax functors and lax natural transformations are dual to Street's notions in \cite{Street}, \cite{Street} and are
called op-lax functors and op-lax transformations by Thomason in \cite{Thom0}.
\end{rema}

Lax functors can be rectified by Street's rectification constructions \cite{Street}. We will use his first one:

\begin{prop}\label{2_9}
Let $\cL$ be a small category. There is a functor $F\mapsto SF$ from the category of of lax functors $F:\cL \to \Cat$
and lax natural transformations to the category of strict functors $\cL\to \Cat$
and strict natural transformations with the following
properties:
\begin{itemize}
\item[(1)] For each $L\in \ob\cL$ there is a pair of adjoint functors
$$
\varepsilon(L):SF(L) \leftrightarrows F(L):\eta(L)
$$
The $\eta(L)$ combine to a lax natural transformation $\eta:F\to SF$ such that
for a lax natural transformation $d: F\to G$ the diagram
$$\xymatrix{
F(L)\ar[r]^{d(L)}\ar[d]_{\eta(L)} & G(L)\ar[d]^{\eta(L)}\\
SF(L)\ar[r]^{Sd(L)} & SG(L)
}
$$
commutes. If $F$ is a genuine functor then the $\varepsilon(L)$ combine to a
strict natural transformation $\varepsilon:SF \to F.$
\item[(2)] If $k:\cK\to \cL$ is a functor, then $F\circ k: \cK\to \Cat$
 is a lax functor and there
is a natural transformation $\xi:S(F\circ k)\to SF\circ k$ such
that
$$\xymatrix
{
&& S(F\circ k)(K)\ar[rrd]^{\varepsilon_{F\circ k}(K)} \ar[dd]^{\xi(K)}&& \\
F\circ k(K) \ar[rru]_{\eta_{F\circ k}(K)}\ar[rrd]_{\eta_F(k(K))}&&&& F\circ k(K)\\
&& SF\circ k(K)\ar[rru]^{\varepsilon_F(k(K))} &&
}
$$
commutes for all $K\in \ob \cK$. Moreover, $\xi$ is natural
with respect to lax natural transformations $F\to G.$
\end{itemize}
\end{prop}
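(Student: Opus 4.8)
\section*{Proof proposal}

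The plan is to realize $SF(L)$ as a Grothendieck-type construction over the slice $\cL/L$. For each $L\in\ob\cL$ I would take $SF(L)$ to be the category whose objects are pairs $(f,x)$ with $f:K\to L$ a morphism of $\cL$ and $x\in\ob F(K)$, and whose morphisms $(f,x)\to(g,y)$, for $f:K\to L$ and $g:K'\to L$, are pairs $(h,\phi)$ consisting of $h:K\to K'$ in $\cL$ with $g\circ h=f$ together with a morphism $\phi:F(h)(x)\to y$ in $F(K')$. Composition of $(h,\phi)$ with $(h',\psi)$ is $(h'\circ h,\chi)$, where $\chi$ is assembled from $\psi$, from $F(h')(\phi)$, and from the structure map $\sigma(h',h)_x:F(h'\circ h)(x)\to F(h')F(h)(x)$; the identity of $(f,x)$ uses $\rho(K)_x:F(\id_K)(x)\to x$. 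The first thing to verify is that $SF(L)$ is indeed a category, and this is exactly where the two coherence diagrams for a lax functor are consumed: the associativity coherence (the commuting square for $\sigma$) yields associativity of composition, while the unit coherence relating $\sigma(\id,h)$, $\sigma(h,\id)$ and $\rho$, together with naturality of $\rho$, yields the identity laws.

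Functoriality of $SF$ in $\cL$ is then immediate and strict: a morphism $\ell:L\to L'$ acts by $(f,x)\mapsto(\ell\circ f,x)$, and since this only post-composes the index arrow, $SF(\ell'\circ\ell)=SF(\ell')\circ SF(\ell)$ holds on the nose, so $SF:\cL\to\Cat$ is a genuine functor. For the adjunction I would set $\varepsilon(L)(f,x)=F(f)(x)$ and $\eta(L)(y)=(\id_L,y)$; the bijection $SF(L)\big((f,x),\eta(L)(y)\big)\cong F(L)\big(F(f)(x),y\big)$ holds essentially by unravelling what a morphism into an object of the form $(\id_L,y)$ is, exhibiting $\varepsilon(L)\dashv\eta(L)$ with counit $\rho(L)$. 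For a lax natural transformation $d:F\to G$ I would define $Sd(L)(f,x)=(f,d(K)(x))$, transporting a morphism $(h,\phi)$ by means of the naturality $2$-cell $d(h)$ and $d(K')(\phi)$; that $Sd$ preserves composition is the second point at which genuine work is needed, namely invoking the coherence hexagon for $d$. The square in (1) is then immediate, both composites sending $x$ to $(\id_L,d(L)(x))$, and the assertions that $\eta$ is lax natural, and that $\varepsilon$ is strictly natural when $F$ is a genuine functor, are direct computations with post-composition (the latter using strictness of $F$ to identify $F(\ell)F(f)=F(\ell\circ f)$).

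For part (2) I would define $\xi(K):S(F\circ k)(K)\to SF(k(K))$ by $(g,x)\mapsto(k(g),x)$, leaving the fibre data untouched. This is visibly a strict natural transformation in $K$, and the triangle in (2) splits into two halves: the $\eta$-half reduces to $k(\id_K)=\id_{k(K)}$ and the $\varepsilon$-half to $(F\circ k)(g)=F(k(g))$, both holding strictly, while the outer region is the single identity $\varepsilon\circ\eta=F(\id_{k(K)})(-)$ read off on either side. Naturality of $\xi$ with respect to a lax natural transformation $d:F\to G$ is the routine comparison of $\xi_G\circ S(d\,k)$ with $(Sd\circ k)\circ\xi_F$.

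The main obstacle is not a conceptual one but the bookkeeping in the two coherence verifications flagged above: the associativity and unit laws for composition in $SF(L)$, which force a careful use of the $\sigma$-square and the $\rho$-unit triangle, and the functoriality of $Sd$, which rests on the lax-naturality hexagon for $d$. Once these two checks are in place, everything else---the strictness of $SF$ on $\cL$, the adjunction $\varepsilon(L)\dashv\eta(L)$, the commuting square of (1), and the triangle and naturality statements of (2)---follows formally.
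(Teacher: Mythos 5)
Your construction is correct and is precisely Street's first rectification construction, which the paper invokes by citation rather than proving: taking $SF(L)$ to be the lax slice with objects $(f:K\to L,\,x\in F(K))$, with $\varepsilon(L)(f,x)=F(f)(x)$ left adjoint to $\eta(L)(y)=(\id_L,y)$ and counit $\rho(L)$, is exactly the construction of \cite{Street} transcribed into the op-lax conventions of \ref{2_8}. The coherence verifications you flag are the right ones and do go through --- associativity and the unit laws for composition in $SF(L)$ consume the $\sigma$-square and the two $\rho$-triangles, and functoriality of $Sd(L)$ consumes the lax-naturality hexagon for $d$ --- so there is nothing to add beyond that bookkeeping.
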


\section{Lax morphisms of $\mM$-algebras}
Throughout this section let $\cM$ be an operad in $\Cat$ and $(\mM,\mu,\iota)$ its associated monad.
In most cases functors between $\mM$-algebras do not preserve the algebra structure in the 
strict sense. Therefore
we are interested in a lax version of $\Cat^\mathbb{M}$. We first fix some notation. 
\begin{nota}\label{3_1}
Let
$$
\input{7_1.tex}
$$
be a diagram of 1- and 2-cells in a 2-category $\cK$. We denote by 
$\sigma_2\circ_2\sigma_1$ and $\tau\circ_1\sigma_1$ the composite 2-cells
$$
\sigma_2\circ_2\sigma_1: f_1 \stackrel{\sigma_1}{\Longrightarrow} f_2\stackrel{\sigma_2}{\Longrightarrow}f_3
$$
$$
\tau\circ_1\sigma_1:g_1\circ f_1 \Longrightarrow g_2\circ f_2
$$
We use the convention that $g_1\circ_1\sigma_1=\id_{g_1}\circ_1\sigma_1$.
\end{nota}

\begin{defi}\label{3_2} A
\textit{lax morphism} 
$$
(f,\overline{f}):(A,\xi_A)\to (B,\xi_B)
$$
of $\mM$-algebras is a pair consisting of a functor $f:A\to B$ and a natural transformation 
$\overline{f}:\xi_B\circ\mathbb{M}f\Rightarrow f\circ\xi_A$ 
$$
\xymatrix{
\mM A \ar[rr]^{\mM f} \ar[dd]_{\xi_A} && 
\mM B \ar[dd]^{\xi_B}
\\
&\Downarrow \bar{f}
\\
A \ar[rr]_f && B
}
$$
satisfying 
\begin{enumerate}
\item[(1)] $\overline{f}\circ_1 \mu(A)= (\overline{f}\circ_1(\mathbb{M}\xi_A))\circ_2 (\xi_B\circ_1\mathbb{M}\overline{f})$
$$
\xymatrix{
\mM^2A \ar[rr]^{\mM^2f} \ar[dd]_{\mu A} &&
\mM^2B \ar[dd]^{\mu B}
&&
\mM^2A \ar[rr]^{\mM^2f} \ar[dd]_{\mM\xi_A} &&
\mM^2B \ar[dd]^{\mM\xi_B}
\\
&&&&& \Downarrow \mM\bar{f}
\\
\mM A \ar[rr]^{\mM f} \ar[dd]_{\xi_A} && 
\mM B \ar[dd]^{\xi_B}
&=&
\mM A \ar[rr]^{\mathbb{M}f} \ar[dd]_{ \xi_A}&& 
\mathbb{M}B \ar[dd]^{\xi_B}
\\
&\Downarrow \bar{f} &&&& \Downarrow \bar{f}
\\
A \ar[rr]_f && B && A \ar[rr]_f && B
}
$$
(recall that $\xi_A\circ\mathbb{M}\xi_A=\xi_A\circ\mu(A)$)
\item[(2)] $\overline{f}\circ_1 \iota(A)=\id_f$

$$
\xymatrix{
A \ar[rr]^{f} \ar[d]_{\iota A} &&
B \ar[d]^{\iota B}
&& A \ar[rrr]^f \ar[dd]_{\id} &&& B \ar[dd]^{\id} 
\\
\mathbb{M}A \ar[rr]^{\mathbb{M}f} \ar[dd]_{\xi_A} && 
\mathbb{M}B \ar[dd]^{\xi_B}
&=&
\\
&\Downarrow \bar{f} &&& A\ar[rrr]_f &&&B
\\
A \ar[rr]_f &&B 
}
$$
\end{enumerate}

A \textit{2-cell} $s:(f,\overline{f})\Rightarrow (g,\overline{g})$ between two lax morphisms
$(f,\overline{f}),(g,\overline{g}):A\to B$ is a natural transformation $s:f\Rightarrow g$ satisfying
$$\overline{g}\circ_2 (\xi_B\circ_1 \mathbb{M}s)\ =\ (s\circ_1 \xi_A)\circ_2 \overline{f}$$
\end{defi}

We can compose lax morphisms and $2$-cells between lax morphisms in the obvious way and obtain a 2-category $\Cat^{\mM}$Lax.

For later use it is convenient to have a more explicit description of a lax morphism.

\begin{leer}\label{3_3}
A lax morphism $(f,\overline{f}):\mathcal{A}\to\mathcal{B}$ of $\mM$-algebras is explicitly given by a functor 
$f:\mathcal{A}\to\mathcal{B}$ and morphisms
$$
\overline{f}(A;K_1,\ldots,K_n): A(fK_1,\ldots,fK_n)\to f(A(K_1,\ldots,K_n))
$$
in $\mathcal{B}$, where $A\in\mathcal{M}(n)$, $K_1,\ldots,K_n\in\mathcal{A}$, such that

\begin{enumerate}
\item[(1)] for $\alpha:A_1\to A_2$ in $\mathcal{M}(n)$ and $k_i:K_i\to K'_i$ in $\mathcal{A}$, the diagram 
$$
\xymatrix{
A_1(fK_1,\ldots,fK_n)\ar[rr]^{\overline{f}(A_1;K_1,\ldots,K_n)} \ar[d]^{\alpha(f(k_1),\ldots,f(k_n))}&&
f(A_1(K_1,\ldots,K_n)) \ar[d]^{f(\alpha(k_1,\ldots,k_n))}
\\
A_2(fK'_1,\ldots,fK'_n)\ar[rr]^{\overline{f}(A_2;K'_1,\ldots,K'_n)}&&
f(A_2(K'_1,\ldots,K'_n)) 
}
$$
commutes.
\item[(2)]
for $A\in\mathcal{M}(k)$, $B_i\in\mathcal{M}(r_i)$, $K_{ij}\in\mathcal{A}$
\begin{multline*}
\overline{f}(A\ast(B_1\oplus\ldots\oplus B_k); K_{11},\ldots,K_{1r_1},\ldots,K_{k1},\ldots,K_{kr_k}) \\
=\overline{f}(A;B_1(K_{11},\ldots,K_{1r_1}),\ldots, B_k(K_{k1},\ldots,K_{kr_k}))\\ 
\circ A(\overline{f}(B_1;K_{11},\ldots,K_{1r_1}),\ldots,\overline{f}(B_k;K_{k1},\ldots,K_{kr_k}))
\end{multline*}
where $\ast$ is the composition in the operad $\mathcal{M}$.
\item[(3)]
$\overline{f}(\id;K)=\id_{f(K)}$
\end{enumerate}
\end{leer}

\begin{rema}\label{3_3a}
Our definition of a lax morphism coincides for monoidal, permutative, and braided monoidal categories with the ones in the literature.
Lax $2$-fold monoidal functors as defined in \cite{BFSV} are lax morphisms in our sense with the additional
condition that the units 0 are strictly preserved.
We will provide details in Section 8.
\end{rema}

Like in the topological case, our homotopy colimits of $\cL$-diagrams will have a universal property which cannot
 be formulated in the category of functors from $\cL$ to  $\Cat^{\mM}$Lax and natural transformations. We have
to allow ``lax morphisms'' between diagrams.  We now introduce the relevant functor 2-category 
$\Func(\mathcal{L}, \Cat^{\mM}\Lax)$. The homotopy colimit functor will be defined as a functor from
 $\Func(\mathcal{L}, \Cat^{\mM}\Lax)$ to $\Cat^{\mM}$, which is left adjoint to the constant diagram functor.

\begin{leer}\label{3_4}
Let $\cL$ be a small category and $\cK$ be an arbitrary 2-category.

The objects of $\Func(\mathcal{L},\mathcal{K})$ are functors $D:\mathcal{L}\to\mathcal{K}$ of 1-categories, i.e. $\mathcal{L}$-diagrams in $\cK$. 
A morphism or 1-cell $j:D\to F$ of $\mathcal{L}$-diagrams  is a \textit{homotopy morphism} 
(or an op-lax natural transformation in Thomason's terminology \cite{Thom1},\cite{Thom2}). 
It assigns to each object $L$ in $\mathcal{L}$ a 1-cell
$$
j_L:DL\to FL
$$
and to each morphism $\lambda:L_0\to L_1$ a 2-cell
$$
j_{\lambda}:F\lambda\circ j_{L_0}\Rightarrow j_{L_1}\circ D\lambda
$$
in $\mathcal{K}$ such that $j_{id}=\id$ and for $L_0\stackrel{\lambda_0}{\to} L_1\stackrel{\lambda_1}{\to}L_2$
$$
j_{\lambda_1\circ\lambda_2} =(j_{\lambda_1}\circ_1 D\lambda_0)\circ_2 (F\lambda_1\circ_1j_{\lambda_0})
$$
$$
\xymatrix{\ar @{} [drr] |{\stackrel{j_{\lambda_0}}{\Rightarrow}}
DL_0 \ar[rr]^{D{\lambda_0}} \ar[d]^{j_{L_0}} 
&& \ar @{} [drr] |{\stackrel{j_{\lambda_1}}{\Rightarrow}}
DL_1 \ar[rr]^{D{\lambda_1}} \ar[d]^{j_{L_1}} 
&& 
DL_2  \ar[d]^{j_{L_2}}
\\
FL_0 \ar[rr]_{F{\lambda_0}}
&&
FL_1 \ar[rr]_{F{\lambda_1}}
&&
FL_2
}
$$

A 2-cell 
$\xymatrix{
D\ar@/_1pc/[rr]_{k} \ar@/^1pc/[rr]^{j}
& {\Downarrow s}
&
F
}$ in $\Func(\mathcal{L},\mathcal{K})$ assigns to each object $L$ in $\mathcal{L}$ a 2-cell
$$
\xymatrix{
DL\ar@/_2pc/[rr]_{k_L} \ar@/^2pc/[rr]^{j_L}
& {\Downarrow s_L}
&
FL
}
$$
in $\mathcal{K}$ such that for $\lambda : L_1\to L_2$ in $\mathcal{L}$
$$
k_\lambda\circ_2(F\lambda\circ_1 s_{L_1})=(s_{L_2}\circ_1 D\lambda)\circ_2 j_\lambda
$$
Composition in $\Func(\mathcal{L},\mathcal{K})$ is the canonical one.
\end{leer}
If $\cK$ is $\Cat^{\mM}\Lax$ the explicit coherence conditions of homotopy morphisms in $\Func(\mathcal{L},\mathcal{K})$
are quite involved because the morphisms in each diagram are lax. For the reader's convenience we formulate them in Appendix 1.

Let $\mathcal{K}^\mathcal{L}$ be the subcategory of $\Func(\mathcal{L},\mathcal{K})$ where the 1-cells are strict
 morphisms $j:D\to F$, i.e. for $\lambda:L_0\to L_1$
$$
F\lambda\circ j_{L_0}=j_{L_1}\circ D\lambda \quad\textrm{ and }\quad j_\lambda=\id.
$$

Now suppose that $\cK$ is a cocomplete, tensored 2-category. We are going to construct a rectification 2-functor
$$
\Func(\mathcal{L},\mathcal{K})\to \mathcal{K}^\mathcal{L}
$$
left adjoint to the inclusion functor. For this purpose we construct an isomorphism from the
category $\Func(\mathcal{L},\mathcal{K})(D,F)$ of 1- and 2-cells between the diagrams $D$ and $F$ in 
$\Func(\mathcal{L},\mathcal{K})$ to the category of 1- and 2-cells in the 2-category $\Cat^{\mathcal{L}^{op}\times\mathcal{L}}$
between two $ \mathcal{L}^{op}\times\mathcal{L}$-diagrams in $\Cat$  we are now going to define. 

If $D,F:\mathcal{L}\to\mathcal{K}$ are $\mathcal{L}$-diagrams in $\mathcal{K}$ we have an induced diagram 
$$
\mathcal{K}(D,F):\mathcal{L}^{op}\times\mathcal{L}\to\Cat, \quad (L,L')\mapsto\mathcal{K}(DL, FL').
$$
Let $L/\mathcal{L}/L'$ denote the category whose object are diagrams
$$
(L_0,f_0,g_0):L\stackrel{f_0}{\longrightarrow} L_0\stackrel{g_0}{\longrightarrow} L'
$$
and whose morphisms $\lambda_1:(L_0,f_0,g_0)\to(L_1,f_1,g_1)$ are morphisms $\lambda_1:L_0\to L_1$ in $\mathcal{L}$ making
$$
\xymatrix{
&& L_0 \ar[drr]^{g_0}\ar[dd]^{\lambda_1} && 
\\
L \ar[urr]^{f_0} \ar[rrd]_{f_1} && && L'
\\
&& L_1 \ar[urr]_{g_1}
}
$$
commute. We define
$$
-/\mathcal{L}/-: \mathcal{L}^{op}\times\mathcal{L}\to\Cat \quad\textrm{as}\quad  (L,L')\mapsto L/\mathcal{L}/L'
$$
The following result will imply a rectification result if $\mathcal{K}=\Cat^{\mM}$.

\begin{prop}\label{3_5}
There is an isomorphism of categories
$$
\alpha:\Func(\mathcal{L},\mathcal{K})(D,F)\cong\Cat^{\mathcal{L}^{op}\times\mathcal{L}}((-/\mathcal{L}/-),\mathcal{K}(D,F))
$$
2-natural in $F$ with respect to 1- and 2-cells in $\cK^{\cL}$.
\end{prop}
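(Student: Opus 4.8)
The plan is to write down the comparison functor $\alpha$ by an explicit formula and to construct its inverse by evaluating natural transformations at distinguished objects of the categories $L/\mathcal{L}/L'$, the whole statement being essentially a co-Yoneda computation dressed up with lax $2$-categorical bookkeeping.

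First I would define $\alpha$ on objects. A homotopy morphism $j:D\to F$ is sent to the strict natural transformation $\alpha(j)$ whose component at $(L,L')$ is the functor $L/\mathcal{L}/L'\to\mathcal{K}(DL,FL')$ given on an object $(L_0,f_0,g_0)$ by $\alpha(j)_{L,L'}(f_0,g_0)=Fg_0\circ j_{L_0}\circ Df_0$ and on a morphism $\lambda:(L_0,f_0,g_0)\to(L_1,f_1,g_1)$ (so $f_1=\lambda\circ f_0$ and $g_0=g_1\circ\lambda$) by the whiskered $2$-cell $Fg_1\circ_1 j_\lambda\circ_1 Df_0$; one checks its source and target are the two values just defined. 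Functoriality in the $L/\mathcal{L}/L'$-variable uses $j_{\id}=\id$ for identities and the cocycle condition of \ref{3_4} with the interchange law for composites, while strict naturality in $(L,L')$ is immediate, since reindexing by $(a,b)\in\mathcal{L}^{op}\times\mathcal{L}$ simply pre- and post-composes with $Da$ and $Fb$. On a $2$-cell $s:j\Rightarrow k$ I set $\alpha(s)$ to have components $Fg_0\circ_1 s_{L_0}\circ_1 Df_0$; naturality of this family in $(f_0,g_0)$ is exactly the $2$-cell compatibility condition of \ref{3_4} after whiskering, so $\alpha(s)$ is a modification.

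For the inverse $\beta$ I would read the homotopy-morphism data off a natural transformation $\Phi$ by evaluating at canonical objects: put $j_L=\Phi_{L,L}(\id_L,\id_L)$, and for $\lambda:L_0\to L_1$ let $j_\lambda$ be the image under $\Phi_{L_0,L_1}$ of the canonical morphism $(\id_{L_0},\lambda)\to(\lambda,\id_{L_1})$ of $L_0/\mathcal{L}/L_1$. Naturality of $\Phi$ with respect to the maps $(\id_{L_0},\lambda)$ and $(\lambda,\id_{L_1})$ of $\mathcal{L}^{op}\times\mathcal{L}$ identifies the source and target of this morphism with $F\lambda\circ j_{L_0}$ and $j_{L_1}\circ D\lambda$, so $j_\lambda$ is a $2$-cell of the required type, and preservation of identities by $\Phi$ gives $j_{\id}=\id$. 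The step I expect to be the real work is the cocycle condition for $\beta(\Phi)$: for $\lambda_0:L_0\to L_1$ and $\lambda_1:L_1\to L_2$ I would factor the canonical morphism in $L_0/\mathcal{L}/L_2$ through the intermediate object $(\lambda_0,\lambda_1)$ as $(\id_{L_0},\lambda_1\lambda_0)\xrightarrow{\lambda_0}(\lambda_0,\lambda_1)\xrightarrow{\lambda_1}(\lambda_1\lambda_0,\id_{L_2})$. Functoriality of $\Phi_{L_0,L_2}$ turns this into a vertical composite, and two applications of naturality of $\Phi$ (with respect to $(\id_{L_0},\lambda_1)$ and $(\lambda_0,\id_{L_2})$) identify the two factors with $F\lambda_1\circ_1 j_{\lambda_0}$ and $j_{\lambda_1}\circ_1 D\lambda_0$; their composite is precisely $(j_{\lambda_1}\circ_1 D\lambda_0)\circ_2(F\lambda_1\circ_1 j_{\lambda_0})$, which is the cocycle identity. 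Keeping the variances of $\mathcal{L}^{op}\times\mathcal{L}$ and the left/right whiskerings straight is the delicate part of this argument.

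It then remains to check that $\alpha$ and $\beta$ are mutually inverse and respect $2$-cells. The identity $\beta\alpha=\id$ is read off directly by evaluating $\alpha(j)$ at $(\id,\id)$ and at the canonical morphisms. For $\alpha\beta=\id$ I would note that every object $(f_0,g_0)$ and every morphism of $L/\mathcal{L}/L'$ is the image of $(\id,\id)$, respectively of a canonical morphism, under a reindexing functor $(f_0/\mathcal{L}/g_1)$, so naturality of $\Phi$ recovers $\Phi_{L,L'}(f_0,g_0)=Fg_0\circ j_{L_0}\circ Df_0$ and the analogous formula on morphisms; the correspondence on $2$-cells is inverse in the same way via $s_L=\theta_{L,L}(\id_L,\id_L)$ for a modification $\theta$. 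Finally, $2$-naturality in $F$ with respect to $1$- and $2$-cells of $\cK^{\cL}$ is routine, since a strict morphism $F\to F'$ acts by post-composition on both sides and the defining formula for $\alpha$ visibly commutes with it.
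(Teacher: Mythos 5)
Your proposal is correct and follows essentially the same route as the paper's proof: the same whiskering formula $\alpha(j)_{L,L'}(L_0,f_0,g_0)=Fg_0\circ j_{L_0}\circ Df_0$ with $\alpha(s)=Fg_0\circ_1 s_{L_0}\circ_1 Df_0$ on $2$-cells, the same inverse $\beta$ by evaluation at $(L,\id_L,\id_L)$ and at the canonical morphism $(\id,\lambda)\to(\lambda,\id)$, and the same verification of the cocycle condition by factoring the canonical morphism in $L_0/\mathcal{L}/L_2$ through $(L_1,\lambda_0,\lambda_1)$ and applying naturality twice. If anything, you are slightly more complete than the paper, which leaves the check that $\alpha$ and $\beta$ are mutually inverse implicit.
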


\begin{proof}
Let $j:D\to F$ be an object in $\Func(\mathcal{L},\mathcal{K})(D,F)$, i.e. for each $\lambda_1:L_0\to L_1$ we are given
$$
\xymatrix{\ar @{} [drr] |{\stackrel{j_{\lambda_1}}{\Rightarrow}}
DL_0 \ar[rr]^{D\lambda_1}\ar[d]^{j_{L_0}} &&  DL_1\ar[d]^{j_{L_1}}
\\
F L_0 \ar[rr]^{F\lambda_1} && FL_1
}
$$
If $\lambda_1:(L_0,f_0,g_0)\to(L_1,f_1,g_1)$ is a morphism in $L/\mathcal{L}/L'$ we have
a diagram
$$
\xymatrix{
& DL_0 \ar[rr]^{j_{L_0}}\ar[dd]^{D_{\lambda_1}} \ar @{} [ddrr] |{\Downarrow j_{\lambda_1}}
&&  FL_0\ar[dd]^{F_{\lambda_1}} \ar[dr]^{Fg_0}
\\
DL\ar[ur]^{Df_0}\ar[dr]_{Df_1} 
&&&& FL'
\\
& DL_1 \ar[rr]^{j_{L_1}} 
&& FL_1\ar[ur]_{Fg_1}
}\eqno(A)
$$
with commutative triangles. We define the functor
$$
\alpha(j)_{L,L'}:L/\mathcal{L}/L'\to\mathcal{K}(DL,FL')
$$
on objects by $\alpha(j)_{L,L'}(L_0,f_0,g_0)=Fg_0\circ j_{L_0}\circ Df_0$, and on the morphism $\lambda_1$ to be the 2-cell 
depicted by diagram (A). The conditions on $j$ make this a functor.

By construction, the collection of the $\alpha(j)_{L,L'}$ define a morphism
$$
\alpha(j):(-/\cL/-)\to\mathcal{K}(D,F)
$$
of $(\mathcal{L}^{op}\times\mathcal{L})$-diagrams in $\Cat$.

Now let $s:i\Rightarrow j$ be a morphism in $\Func(\mathcal{L},\mathcal{K})(D,F)$. By definition, $s$ consists of 2-cells
$$
\xymatrix{
DL\ar@/_2pc/[rrr]_{j_L} \ar@/^2pc/[rrr]^{i_L} 
&& {\Downarrow^{s_{L}}}
&
FL
}
$$
satisfying

\qquad\qquad $j_\lambda\circ_2(F\lambda\circ_1 s_{L_0})=(s_{L_1}\circ_1D\lambda)\circ_2 i_\lambda$\quad for $\lambda:L_0\to L_1$.
\hfill{(B1)}

Then $\alpha(s)$ is required to be a modification
$$
\xymatrix{
L/\mathcal{L}/L'\ar@/_2pc/[rrr]_{\alpha(j)_{L,L'}} \ar@/^2pc/[rrr]^{\alpha(i)_{L,L'}} 
&& {\Downarrow \alpha(s)_{L,L'}}
&
\mathcal{K}(DL,FL')
}
$$
satisfying

(B2)
$$
\xymatrix{
\overline{L}/\mathcal{L}/\overline{L}'\ar@/_2pc/[rrr]^{\alpha(j)_{\overline{L},\overline{L}'}} 
\ar@/^2pc/[rrr]^{\alpha(i)_{\overline{L},\overline{L}'}} \ar[dd]_{\lambda^\ast\circ\mu_\ast}
&& {\Downarrow \alpha(s)_{\overline{L},\overline{L}'}}
&
\mathcal{K}(D\overline{L},F\overline{L}')\ar[dd]^{(D\lambda)^\ast\circ(F\mu)_\ast}
\\
\\
L/\mathcal{L}/L'\ar@/_2pc/[rrr]_{\alpha(j)_{L,L'}} \ar@/^2pc/[rrr]^{\alpha(i)_{L,L'}}
&& {\Downarrow \alpha(s)_{L,L'}}
&
\mathcal{K}(DL,FL')
}
$$
$$
((D\lambda)^\ast\circ(F\mu)_\ast)\circ_1\alpha(s)_{\overline{L},\overline{L}'}
=\alpha(s)_{L,L'}\circ_1 (\lambda^\ast\circ\mu_\ast)
$$
for $\lambda:L\to\overline{L}$ and $\mu:\overline{L}'\to L'$ in $\mathcal{L}$.

For an object $(L_0,f_0,g_0)$ in $L/\mathcal{L}/L'$ we define the natural transformation $\alpha(s)_{{L},{L}'}$ by
$$
\alpha(s)_{{L},{L}'}(L_0,f_0,g_0)=F(g_0)\circ_1 s_{L_0}\circ_1 D(f_0)
$$
which satisfies condition (B2).

This defines the functor $\alpha$. By construction it is 2-natural in $F$. 

We now define the inverse functor
$$
\beta:\Cat^{\mathcal{L}^{\op}\times\mathcal{L}}((-/\mathcal{L}/-), \mathcal{K}(D,F))\to \Func(\mathcal{L},\mathcal{K})(D,F)
$$
Let $G,H:(-/\mathcal{L}/-)\to \mathcal{K}(D,F)$ be strict morphisms of $(\mathcal{L}^{\op}\times\mathcal{L})$-diagrams in $\Cat$.
 For each pair $(L,L')\in\mathcal{L}^{\op}\times\mathcal{L}$ we are given a functor
$$
G_{L,L'}:L/\mathcal{L}/L'\to\mathcal{K}(DL, FL')
$$
and for each pair of morphisms $\lambda:L\to\overline{L}$ and $\mu:\overline{L}'\to L'$ in $\mathcal{L}$ a commutative diagram
$$
\xymatrix{
\overline{L}/\mathcal{L}/\overline{L}' \ar[rr]^{G_{\overline{L},\overline{L}'}} \ar[d]_{\lambda^\ast\circ\mu_\ast}
&& 
\mathcal{K}(D\overline{L}, F\overline{L}') \ar[d]^{(D\lambda)^\ast\circ(F_\mu)_\ast}
\\
L/\mathcal{L}/L' \ar[rr]^{G_{L,L'}}
&&
\mathcal{K}(DL, FL) 
}
$$
so that
$$
G_{L,L'}(L_0, f_0\circ\lambda,\mu\circ g_0) =F\mu\circ G_{\overline{L},\overline{L}'}(L_0,f_0,g_0)\circ D\lambda,
$$
and for a morphism $\lambda_1:(L_0,f_0,g_0)\to (L_1,f_1,g_1)$ in $\overline{L}/\mathcal{L}/\overline{L}'$
$$
\begin{array}{rcl}
F\mu\circ G_{\overline{L},\overline{L}'}(\lambda_1:(L_0,f_0,g_0) 
&\to & (L_1,f_1,g_1))\circ D\lambda\\
= G_{L,L'}(\lambda_1:(L_0,f_0\circ\lambda,\mu\circ g_0) 
&\to& (L_1,f_1\circ\lambda, \mu\circ g_1))
\end{array}
$$
We define 
$$
\beta(G)_L=G_{L,{L}}(L,\id_L,\id_L):DL\to FL
$$
and for $\lambda:L\to \overline{L}$
$$
\beta(G)_\lambda=G_{{L},\overline{L}}(\lambda:(L,\id,\lambda)\to(\overline{L},\lambda,\id))
$$
Since $G_{{L},\overline{L}}(L,\id,\lambda)=F\lambda\circ G_{{L},{L}}(L,\id,\id)$ and 
$G_{{L},\overline{L}}(\overline{L},\lambda,\id)= G_{\overline{L},\overline{L}}(\overline{L},\id,\id)\circ D\lambda$, we have
$$
\beta(G)_\lambda: F\lambda\circ\beta(G)_L\Rightarrow\beta(G)_{\overline{L}}\circ D\lambda
$$
Given $L_0\stackrel{\lambda_1}{\longrightarrow} L_1\stackrel{\lambda_2}{\longrightarrow} L_2$, then
$$
F\lambda_2\circ_1 G_{L_0,L_1}(\lambda_1:(L_0,\id,\lambda_1)\to (L_1,\lambda_1,\id))=
G_{L_0,L_2}(\lambda_1:(L_0,\id,\lambda_2\circ\lambda_1)\to (L_1,\lambda_1,\lambda_2))
$$
and
$$
G_{L_1,L_2}(\lambda_2:(L_1,\id,\lambda_2)\to(L_2,\lambda_2,\id))\circ_1 D\lambda_1 =G_{L_0,L_2}(\lambda_2:(L_1,\lambda_1,\lambda_2)
\to (L_2,\lambda_2\circ\lambda_1,\id))
$$
so that the coherence conditions for the $\beta(G)_{\lambda}$ hold.

Now let $s:G\Rightarrow H$ be a modification satisfying condition (B2), i.e. for $\lambda:L\to \overline{L}$ and $\mu:\overline{L}'\to L'$
$$
F\mu\circ_1 s_{\overline{L},\overline{L}'}(L_0,f_0,g_0)\circ_1 D\lambda=s_{L,L'}(L_0,f_0\circ\lambda,\mu\circ g_0).
$$
We have to find a 2-cell
$$
\beta(s):\beta(G)\Rightarrow \beta(H)
$$
in $\Func(\mathcal{L},\mathcal{K})(D,F)$. We define
$$
\beta(s)_L=s_{L,L}(L,\id,\id):\beta(G)_L\Rightarrow \beta(H)_L.
$$
Since $s$ is a modification the following diagram commutes for $\lambda:L_0\to L_1$
$$
\xymatrix{
G_{L_0,L_1}(L_0,\id,\lambda) \ar[rr]^{s_{L_0,L_1}(L_0,\id,\lambda)} \ar[d]_{G_{L_0,L_1}(\lambda)}
&& H_{L_0,L_1}(L_0,\id,\lambda) \ar[d]_{H_{L_0,L_1}(\lambda)}
\\
G_{L_0,L_1}(L_1,\lambda,\id) \ar[rr]^{s_{L_0,L_1}(L_1,\lambda,\id)} 
&& H_{L_0,L_1}(L_1,\lambda,\id) 
}
$$
and we obtain for $\lambda:(L_0,\id,\lambda)\to (L_1,\lambda,\id)$
$$
\begin{array}{rcl}
\beta(H)_\lambda\circ_2(F\lambda\circ_1\beta(s)_{L_0})
&=& H_{L_0,L_1}(\lambda) \circ_2(F\lambda\circ_1 s_{L_0,L_0}(L_0,\id,\id))\\
&=& H_{L_0,L_1}(\lambda) \circ_2(s_{L_0,L_1}(L_0,\id,\lambda))\\
&=& s_{L_0,L_1}(L_1,\lambda,\id)\circ_2G_{L_0,L_1}(\lambda)\\
&=&(s_{L_1,L_1}(L_1,\id,\id)\circ_1 D\lambda)\circ_2\beta(G)_\lambda\\
&=& (\beta(s)_{L_1}\circ_1 D\lambda)\circ_2\beta(G)_\lambda
\end{array}
$$
Hence $\beta(s)$ satisfies (B1).
\end{proof}

Since $\cK$ is tensored over $\Cat$, Proposition \ref{3_5} and the properties of the tensor provide isomorphisms
2-natural with respect to strict morphisms $F\to F'$

\begin{leer}\label{3_6}
$$
\begin{array}{rcl}
\Func(\mathcal{L},\cK)(D,F)
& \cong & \Cat^{\mathcal{L}^\op\times\mathcal{L}}((-/\mathcal{L}/-),\cK (D,F))\\
& \cong & \cK^\mathcal{L}((-/\mathcal{L}/-)\otimes_\mathcal{L} D,F)
\end{array}
$$
\end{leer}
We obtain (see \cite[6.7.2]{Bor2})

\begin{prop}\label{3_7}
Let $\cK$ be a cocomplete, tensored 2-category, The there is a $\Cat$-enriched adjunction
$$
\xymatrix{
R:\Func(\mathcal{L},\cK) \ar@<0.5ex>[r] & \cK^\mathcal{L}:i \ar@<0.5ex>[l] 
}
$$
with $R(D)=(-/\mathcal{L}/-)\otimes_\mathcal{L}D$ and $i$ the inclusion functor.
\hfill$\square$
\end{prop}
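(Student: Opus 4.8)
The plan is to read the two-step chain of isomorphisms in \ref{3_6} as the hom-isomorphism of the desired adjunction and then invoke the enriched universal-arrow criterion. First I would check that $R(D)=(-/\mathcal{L}/-)\otimes_\mathcal{L}D$ genuinely lands in $\cK^\mathcal{L}$: since $\cK$ is tensored over $\Cat$ the tensors $(L/\mathcal{L}/L')\otimes D(L)$ exist, and since $\cK$ is cocomplete the coend over the contravariant variable $L$ exists for each fixed value of the remaining covariant variable $L'$; because $(-/\mathcal{L}/-)$ and $D$ are strict functors, this coend is strictly functorial in $L'$, so $R(D)$ is a bona fide strict $\mathcal{L}$-diagram in $\cK$, i.e. an object of $\cK^\mathcal{L}$.

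Next I would unwind the second isomorphism of \ref{3_6}, which is the only place the tensoring and cocompleteness of $\cK$ enter. For fixed $L'$ the universal property of the coend identifies a morphism $R(D)(L')\to F(L')$ in $\cK$ with a family of morphisms $(L/\mathcal{L}/L')\otimes D(L)\to F(L')$ dinatural in $L$; the tensor adjunction in $\cK$ rewrites each such map as a functor $(L/\mathcal{L}/L')\to\cK(D(L),F(L'))$, and combining the dinaturality in $L$ with the naturality in $L'$ yields exactly a $\Cat$-natural transformation of $(\mathcal{L}^\op\times\mathcal{L})$-diagrams $(-/\mathcal{L}/-)\to\cK(D,F)$. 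Since the coend and tensor adjunctions are $\Cat$-enriched this is an isomorphism of hom-categories, 2-natural in $F$ by naturality of those adjunctions. Composing it with the isomorphism $\alpha$ of Proposition \ref{3_5}, which is 2-natural in $F$, produces a 2-natural isomorphism of categories
$$\Phi_{D,F}:\cK^\mathcal{L}(R(D),F)\xrightarrow{\ \cong\ }\Func(\mathcal{L},\cK)(D,iF),$$
where $iF=F$ under the inclusion.

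Finally I would conclude by the $\Cat$-enriched version of the universal-arrow formulation of adjunctions \cite[6.7.2]{Bor2}. For each $D$ the family $\Phi_{D,-}$, being a $\Cat$-natural isomorphism in $F$, exhibits $R(D)$ as a representation of the $\Cat$-functor $F\mapsto\Func(\mathcal{L},\cK)(D,iF)$; equivalently it singles out a universal arrow $D\to iR(D)$ as the image of $\id_{R(D)}$ under $\Phi_{D,R(D)}^{-1}$. The enriched adjoint-functor correspondence then determines a unique $\Cat$-functor $R:\Func(\mathcal{L},\cK)\to\cK^\mathcal{L}$ on 1- and 2-cells, left adjoint to $i$, and forces $\Phi_{D,F}$ to be 2-natural in $D$ as well. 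I expect the only genuine obstacle to be the 2-naturality in $F$ of the composite isomorphism, which is exactly the content of the already-proved Proposition \ref{3_5}; the remaining assembly into a $\Cat$-enriched adjunction is formal enriched category theory.
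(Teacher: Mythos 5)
Your proposal is correct and follows essentially the same route as the paper: the paper likewise obtains the adjunction by composing the isomorphism of Proposition \ref{3_5} with the tensor/coend identification to get the chain of 2-natural (in $F$) isomorphisms \ref{3_6}, and then cites the enriched universal-arrow criterion \cite[6.7.2]{Bor2} to assemble these objectwise representations into a $\Cat$-enriched adjunction. The extra details you supply --- that $R(D)$ is a strict diagram and that 2-naturality in $D$ comes for free from the representability argument --- are correct elaborations of steps the paper leaves implicit.
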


We are interested in the case $\mathcal{K}=\Cat^{\mathbb{M}}$ where $\mathbb{M}$ is the monad associated with a $\Cat$-operad. 
We next analyze the adjunction in this situation.
In Section 5 we will deal with the case $\mathcal{K}=\Cat^{\mathbb{M}}\Lax$.
Let $B: \Cat \to \Top$ be the classifying space functor, i.e. the composite
of the nerve and the topological realization functor.

\begin{defi}\label{3_8}
A functor $F:\mathcal{A}\to\mathcal{B}$ of small categories is called a \textit{weak equivalence}
 if the induced map of classifying spaces $BF:B\mathcal{A}\to B\mathcal{B}$ is a weak homotopy equivalence.
 A homomorphism of $\mathbb{M}$-algebras $f:X\to Y$ is called a \textit{weak equivalence} if the underlying 
morphism of categories is a weak equivalence.
\end{defi}

We will need the following lemma.

\begin{lem}\label{3_9}
Let $\mathcal{M}$ be a $\Cat$-operad and $\mathbb{M}$ its associated monad. Let $F,G:\mathcal{L}^{\op}\to\Cat$ 
and $X:\mathcal{L}\to\Cat^{\mathbb{M}}$ be diagrams. Let $f,g:F\to G$ be strict maps of diagrams and suppose we are given
 a natural transformation $\tau_L:f(L)\Rightarrow g(L)$ for each $L$ in $\mathcal{L}$ such that 
$$
G(\lambda)\circ_1\tau_{L_0}=\tau_{L_1}\circ_1 F(\lambda)
$$
for each morphism $\lambda:L_0\to L_1$ in $\mathcal{L}$. Then the induced maps of classifying spaces
$$
B(f\otimes_\mathcal{L} \id), B(g\otimes_\mathcal{L} \id): B(F\otimes_\mathcal{L} X)\to B(G\otimes_\mathcal{L} X)
$$
are homotopic as homomorphisms of $B\mathbb{M}$-algebras.
\end{lem}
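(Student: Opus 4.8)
The plan is to produce the required homotopy from a single strict map of diagrams built out of the $\tau_L$, and then transport it to classifying spaces using the comparison maps of \ref{2_7a}. Write $[1]$ for the arrow category $0\to 1$, so that $N[1]=\Delta^1$ and $B[1]=I$, the unit interval. First I would repackage each natural transformation $\tau_L\colon f(L)\Rightarrow g(L)$ as a functor $H_L\colon F(L)\times[1]\to G(L)$ which restricts to $f(L)$ on $F(L)\times\{0\}$, to $g(L)$ on $F(L)\times\{1\}$, and sends the morphism $(x,0\to1)$ to $\tau_L(x)$. The compatibility hypothesis $G(\lambda)\circ_1\tau_{L_0}=\tau_{L_1}\circ_1 F(\lambda)$ is precisely the condition ensuring that the $H_L$ assemble into a strict map of $\mathcal{L}^{\op}$-diagrams
$$H\colon F\times[1]\longrightarrow G, \qquad (F\times[1])(L)=F(L)\times[1],$$
whose two ``endpoint'' restrictions along $\{0\},\{1\}\hookrightarrow[1]$ are the given strict maps $f$ and $g$.

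Next I would apply the coend $-\otimes_{\mathcal{L}}X$ to $H$. Since $\Cat^{\mM}$ is tensored and cotensored over $\Cat$ (Corollary \ref{2_6}), the functor $[1]\otimes^c-$ is a left adjoint (to $(-)^{[1]}$) and hence preserves the coend; combined with the associativity coherence $K\otimes^c(L\otimes^c X)\cong(K\times L)\otimes^c X$ of the tensor and the symmetry of $\times$ in $\Cat$, this yields a natural isomorphism
$$(F\times[1])\otimes_{\mathcal{L}}X\;\cong\;[1]\otimes^c(F\otimes_{\mathcal{L}}X)$$
in $\Cat^{\mM}$. Thus $H\otimes_{\mathcal{L}}\id$ becomes a map of $\mM$-algebras
$$\widetilde H\colon [1]\otimes^c(F\otimes_{\mathcal{L}}X)\longrightarrow G\otimes_{\mathcal{L}}X,$$
and, because the endpoint inclusions $\{i\}\hookrightarrow[1]$ are compatible with the isomorphism above and $[0]\otimes^c(F\otimes_{\mathcal{L}}X)\cong F\otimes_{\mathcal{L}}X$, the restrictions of $\widetilde H$ to the two endpoints are exactly $f\otimes_{\mathcal{L}}\id$ and $g\otimes_{\mathcal{L}}\id$.

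Finally I would pass to classifying spaces. Applying $N$ and $|-|$ and invoking the comparison maps of \ref{2_7a} — the natural isomorphism $\omega$ for the realization and the natural transformation $\tau(K,C)\colon NK\otimes^s NC\to N(K\otimes^c C)$ for the nerve (in the notation of \ref{2_7a}) — gives, with $K=[1]$ and $C=F\otimes_{\mathcal{L}}X$, a map of $B\mM$-algebras
$$I\otimes^t B(F\otimes_{\mathcal{L}}X)\;\xrightarrow{\ \omega\ }\;\bigl|N[1]\otimes^s N(F\otimes_{\mathcal{L}}X)\bigr|\;\xrightarrow{\,|\tau|\,}\;B\bigl([1]\otimes^c(F\otimes_{\mathcal{L}}X)\bigr)\;\xrightarrow{B\widetilde H}\;B(G\otimes_{\mathcal{L}}X).$$
Each arrow is a morphism in $\Top^{B\mM}$, so the composite is too. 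By the adjunction $\Top^{B\mM}(I\otimes^t Y,Z)\cong\Top\bigl(I,\Top^{B\mM}(Y,Z)\bigr)$ defining the tensor, such a map is precisely a path in the space of $B\mM$-algebra maps from $B(F\otimes_{\mathcal{L}}X)$ to $B(G\otimes_{\mathcal{L}}X)$, i.e. a homotopy through homomorphisms of $B\mM$-algebras; its endpoints are $B(f\otimes_{\mathcal{L}}\id)$ and $B(g\otimes_{\mathcal{L}}\id)$ by the endpoint statement of the previous step together with the naturality of $\omega$ and $\tau$.

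I expect the main obstacle to lie in this last step rather than in the formal construction: one must ensure that the categorically defined tensor homotopy really descends to a $B\mM$-algebra homotopy on classifying spaces. The delicate point is that the nerve comparison $\tau$ of \ref{2_7a} is \emph{not} an isomorphism for a general (non-$\Sigma$-free) operad, so it is essential that it is used here only in the direction $NK\otimes^s NC\to N(K\otimes^c C)$, where it merely needs to exist; combined with the genuine isomorphism $\omega$ for the realization, this suffices to build the comparison map into $B([1]\otimes^c(F\otimes_{\mathcal{L}}X))$ without having to invert $\tau$. The remaining care is the routine but notationally heavy verification that the two endpoints of the resulting path are indeed $B(f\otimes_{\mathcal{L}}\id)$ and $B(g\otimes_{\mathcal{L}}\id)$, which follows by chasing the inclusions $\{i\}\hookrightarrow[1]$ through $\omega$, $\tau$ and $B\widetilde H$.
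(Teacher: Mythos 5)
Your proposal is correct and follows essentially the same route as the paper: the paper likewise packages the $\tau_L$ into a strict diagram map $T:[1]\times F\to G$, identifies $([1]\times F)\otimes_{\mathcal{L}}X$ with $[1]\otimes(F\otimes_{\mathcal{L}}X)$, and uses the comparison maps of \ref{2_7a} to obtain the homotopy $B([1])\otimes B(F\otimes_{\mathcal{L}}X)\to B([1]\otimes(F\otimes_{\mathcal{L}}X))\to B(G\otimes_{\mathcal{L}}X)$ in $\Top^{B\mathbb{M}}$. Your added observation that the nerve comparison $\tau$ is only needed in the direction $NK\otimes^s NC\to N(K\otimes^c C)$, so it need not be invertible, is exactly the point the paper's terser argument relies on.
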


\begin{proof}
Let [1] denote the category $0\to 1$, and let $i_0,i_1:\ast\to[1]$ be the inclusion of the 0 and the 1 respectively. 
Each $\tau_L$ defines a functor $\tau_L:[1]\times F(L)\to G(L)$ such that $\tau_L\circ(i_0\times\id)=f(L)$ and 
$\tau_L(i_1\times\id)=g(L)$. These functors define a strict morphism $T:[1]\times F\to G$ of $\mathcal{L}^{\op}$-diagrams.
 By \ref{2_7a} we have a natural map
$$
\begin{array}{rcl}
[0,1]\otimes B(F\otimes_\mathcal{L} X)
&=& B([1])\otimes B(F\otimes_\mathcal{L} X)\to B([1]\otimes(F\otimes_\mathcal{L} X))\\
&=& B(([1]\times F)\otimes_\mathcal{L} X)\stackrel{B(T\otimes_\mathcal{L}\id)}{\longrightarrow} B(G\otimes_\mathcal{L} X)
\end{array}
$$
which defines the required homotopy in $\Top^{B\mathbb{M}}$.
\end{proof}

\vspace{1ex}
The unit $\alpha(D):D\to R(D)$ of the adjunction \ref{3_7} is a homotopy morphism and 
the counit $\beta(D):R(D)\to D$ a strict morphism of $\mathcal{L}$-diagrams.

We compare $(-/\mathcal{L}/-)$ with the functor
$$
\mathcal{L}:\mathcal{L}^{\op}\times\mathcal{L}\to\Cat,\quad (L_0,L_1)\mapsto\mathcal{L}(L_0,L_1)
$$
where we consider the set $\mathcal{L}(L_0,L_1)$ as a discrete category. Let
$$
p^{L_0,L_1}:L_0/\mathcal{L}/L_1 \to \mathcal{L}(L_0,L_1)\quad \textrm{ and } \quad
q^{L_0,L_1}:\mathcal{L}(L_0,L_1)\to L_0/\mathcal{L}/L_1
$$
be the functors defined by
$$
p^{L_0,L_1}(L_2,f,g)=g\circ f\quad \textrm{ and }\quad q^{L_0,L_1}(h)=(L_1,h,\id).
$$
Then $p^{L_0,L_1}\circ q^{L_0,L_1}=\id$, and there is a natural transformation
$$
\tau^{L_0,L_1}:\Id_{L_0/\mathcal{L}/L_1}\to q^{L_0,L_1}\circ p^{L_0,L_1}
$$
defined by
$$
\tau^{L_0,L_1}(L_2,f,g)=g:(L_2,f,g)\to (L_1,g\circ f,\id).
$$
The $p^{L_0,L_1}$ induce a morphism of $\mathcal{L}$-diagrams
$$
p:RD\to D
$$
and tracing $\id_D$ through the two natural isomorphisms of \ref{3_6} shows that $p=\beta(D)$.

The $q^{L_0,L_1}$ induce a morphism
$$
q^{L_1}:D(L_1)=\mathcal{L}(-,L_1)\otimes_\mathcal{L} D\to (-/\mathcal{L}/L_1)\otimes_\mathcal{L} D=RD(L_1)
$$
and for $\lambda:L_1\to L_2$ we have a natural transformation 
$\rho_\lambda:RD(\lambda)\circ q^{L_1}\Rightarrow q^{L_2}\circ D(\lambda)$
 induced by the natural transformation

$$
\xymatrix{
\mathcal{L}(L_0,L_1) \ar[rr]^{\lambda_\ast} \ar[dd]_{q^{L_0,L_1}}
&& \mathcal{L}(L_0,L_2) \ar[dd]^{q^{L_0,L_2}} \\
& \stackrel{\rho^{L_0}_\lambda}{\Rightarrow} & \\
L_0/\mathcal{L}/L_1 \ar[rr]^{\lambda_\ast} && L_0/\mathcal{L}/L_2 
}
$$
where
$$
\rho^{L_0}_\lambda(f)=\lambda:(L_1,f,\lambda)\to (L_2,\lambda\circ f,\id)
$$
The $q^L$ together with the $\rho_\lambda$ define a strict morphism
$$
q:(-/\mathcal{L}/-)\to \Cat^{\mathbb{M}}(D,RD)
$$
of $(\mathcal{L}^{\op}\times\mathcal{L})$-diagrams defined by the functors
$$
q(L_0,L_1):(L_0/\mathcal{L}/L_1)\to Cat^{\mathbb{M}}(D(L_0),RD(L_1))
$$
mapping an object $(L_2, f, g)$ to $\xymatrix{DL_0\ar[r]^{Df} & DL_2\ar[r]^{q^{L_2}} & RD(L_2)\ar[r]^{RD(g)} & RD(L_1)}$
 and a morphism $\lambda:(L_2, f, h\circ\lambda)\to (L_3,\lambda\circ f,h)$ to the 2-cell $RD(h)\circ_1\rho_\lambda\circ_1 D(f)$.
 The map of $\mathcal{L}$-diagrams $RD\to RD$ induced by $q$ is the identity, because $q$ takes values in the 
canonical maps $D(L_0)\to RD(L_1)$. Hence $q$ induces the unit $\alpha(D):D\to R(D)$.

If we fix $L_1$ the natural transformations $\tau^{L_0,L_1}:\Id_{L_0/\mathcal{L}/L_1}\to q^{L_0,L_1}\circ p^{L_0,L_1}$ satisfy 
the requirements of Lemma \ref{3_9} so that
$$
q:(-/\mathcal{L}/-)\otimes_\mathcal{L}D\to D
$$
is objectwise a weak equivalence for each diagram $D:\mathcal{L}\to\Cat^{\mathbb{M}}$. 

We summarize

\begin{leer}\label{3_10}
\textbf{Rectification Theorem:} Let $\mathbb{M}$ be the monad associated with a $\Cat$-operad $\mathcal{M}$.
 There is a $\Cat$-enriched adjunction
$$
\xymatrix{
R:\Func(\mathcal{L},\Cat^{\mathbb{M}})\ar@<0.5ex>[r] & (\Cat^{\mathbb{M}})^\mathcal{L}:i \ar@<0.5ex>[l] 
}
$$
with $R(D)=(-/\mathcal{L}/-)\otimes_\mathcal{L} D$ and $i$ the inclusion. The unit $\alpha(D):D\to R(D)$ is 
induced by the functors $q^{L_0,L_1}:\mathcal{L}(L_0,L_1)\to L_0/\mathcal{L}/L_1$, the counit $\beta(D):RD\to D$
 by the functors $p^{L_0,L_1}: L_0/\mathcal{L}/L_1\to\mathcal{L}(L_0,L_1)$. The morphism $\beta(D):R(D)\to D$ 
of diagrams is objectwise a weak equivalence; a homotopy inverse of $\beta(D)(L):RD(L)\to D(L)$ is given by
 $\alpha(D)(L): D(L)\to RD(L)$, which satisfies $\beta(D)(L)\circ \alpha(D)(L)=\id_{D(L)}$.
\hfill$\square$
\end{leer}

\section{The homotopy colimit functor}
We will construct our homotopy colimit functor for diagrams of algebras over a $\Sigma$-free operad in $\Cat$. 
$\Sigma$-freeness is not too much of a constrain because for any operad $\mathcal{M}$ in $\Cat$ there is a $\Sigma$-free operad
$\widetilde{\mathcal{M}}$ and a map of operads $\widetilde{\mathcal{M}}\to \mathcal{M}$ which is a weak equivalence 
of underlying categories.

So let $\mathcal{M}$ be a $\Sigma$-free operad in $\Cat$ and 
$$
X:\mathcal{L}\to \Cat^{\mathbb{M}}\Lax
$$ an $\mathcal{L}$-diagram in $\Cat^{\mathbb{M}}\Lax$.

\textbf{The category of representatives}

The homotopy colimit $\hocolim\ X$ will be the quotient category of a category $\cR_X$, called the \textit{category of representatives}.
\begin{leer}\label{4_1a}
Objects in $\cR_X$ are tuples
$$
(A;(K_i,L_i)_{i=1}^n)=(A;(K_1,L_1),\ldots,(K_n,L_n))
$$
consisting of an object $A$ 
in $\mathcal{M}(n)$, objects  $L_i$ in $\mathcal{L}$, and objects $K_i$ in $X(L_i)$. 

We visualize such an object as a planar rooted tree with one
node decorated by $A$ and $n$ inputs decorated by the $(K_i, L_i)$ in the given order:
$$
\xymatrix@C=2ex@R=1ex{
(K_1,L_1)  \ar@{-}[rrdd]+D*{\bullet}
&& \ldots &&
(K_n,L_n) \ar@{-}[lldd]+D*{\bullet}
\\
&& \\
&& {}\ar@{-}[uur]{}\ar@{-}[uul]&&
\\
&& A
}
$$
\end{leer}
\begin{leer}\label{4_1}
A morphism $(A_1; (K_i,L_i)_{i=1}^m) \to (A_2; (K'_j,L'_j)_{j=1}^n)$ in $\cR_X$ consists of 
\begin{enumerate}
\item  a map $\varphi:\{1,\ldots,m\}\to\{1,\ldots,n\}$ together with an ordering of $\varphi^{-1}(k)$ for 
each $k\in\{1,\ldots,n\}$. (If $m=0$, then $\varphi:\emptyset\to\{1,\ldots,n\})$,
\item  an $n$-tuple $(C_1,\ldots,C_n)$ of objects $C_k\in\mathcal{M}(|\varphi^{-1}(k)|)$, where $|S|$ denotes the cardinality of a set $S$,
\item  a morphism
$$
\gamma:A_1\longrightarrow A_2\ast(C_1\oplus\ldots\oplus C_n)\cdot\sigma
$$ 
in $\cM(m)$, where $\ast$ is the operad composition and $\sigma^{-1}$ is the underlying permutation of the unique order preserving map
$$
\{1<2<\ldots<m\}\longrightarrow\varphi^{-1}(1)\sqcup\ldots\sqcup\varphi^{-1}(n)
$$
into the ordered disjoint union of the ordered sets $\varphi^{-1}(1),\ldots,\varphi^{-1}(n)$,
\item for each $i=1,\ldots,m$ a morphism $\lambda_i:L_i\to L'_{\varphi(i)}$ in $\mathcal{L}$,
\item for each $k=1,\ldots,n$ a map
$$
f_k:C_k(\lambda_{i_1}K_{i_1},\ldots,\lambda_{i_r} K_{i_r})\longrightarrow K'_k
$$
in $X(L'_k)$ where $\lambda K$ is short for $X(\lambda)(K)$ and where $\{i_1<\ldots<i_r\}$ is the given ordering of $\varphi^{-1}(k)$, and 
$C_k(\lambda_{i_1}K_{i_1},\ldots,\lambda_{i_r}K_{i_r})\in X(L_k')$ is obtained by the action of $C_k\in\mathcal{M}(r)$ on $(\lambda_{i_1}K_{i_1},\ldots,\lambda_{i_r}K_{i_r})$. 
If $\varphi^{-1}(k)=\emptyset$, then $C_k\in\mathcal{M}(0)$ and $f_k$ is a map $C_k(\ast)\to K'_k$ in $X(L'_k)$; here $C_k(\ast)\in X(L'_k)$ is the image of the unique object of the trivial category.
\end{enumerate}
\end{leer}

\begin{leer}\label{4_1b} To define composition and to explain the relations that lead to $\hocolim\ X$ we will use the following pictorial description of a morphism:

$$
\input{22_1.tex}
$$

The tree on the right has $m$ inputs, and $\{ i_{11},\ldots ,i_{n,r_n}\}$ is a reordering of the set $\{1,\ldots,m\}$.
Here $\lambda$ stand for the collection of the $\lambda_i$.
The input labels of the node $C_k$ give $\varphi^{-1}(k)$ in its order from left to right. The tree on the left specifies the source of the morphism, 
the tree on the right together with the $f_j$'s and $\lambda_i$'s its target.
\end{leer}

\textbf{Composition:} Let
$
(A_1;(K_i,L_i)_{i=1}^m)  \longrightarrow  (A_2;(K'_j,L'_j)_{j=1}^n)$ be given by the picture
$$
\hspace*{5ex}\input{22_3.tex}
$$
and let
$
(A_2;(K'_1,L'_1)_{j=1}^n) \longrightarrow (A_3;(K''_l,L''_l)_{l=1}^p)$
be given by the picture
$$
\hspace*{5ex}\input{22_4.tex}
$$
with $\gamma_1:A_1\to A_2\ast(C_1\oplus\ldots\oplus C_n)\cdot\sigma$ and 
$\gamma_2:A_2\to A_3\ast (D_1\oplus\ldots\oplus D_p)\cdot\tau$. Let $\{1,\ldots,m\}\stackrel{\varphi}{\to}\{1,\ldots,n\}\stackrel{\psi}{\to}\{1,\ldots,p\}$ be the associated maps.

To define the composition we first combine the two trees on the right into one to obtain the ``composition picture''
$$
\input{22_5.tex}
$$
where
$$
\begin{array}{cc}
\raisebox{-4ex}{$T_k=$}
&
\xymatrix@C=1ex@R=1ex{
{}\ar@{-}[rrrddd]+D*{\bullet}_{i_{k1}} &&& \ldots &&& {} \ar@{-}[lllddd]+D*{\bullet}^{i_{kr_k}}\\
&&&\\
&&&\\
&&& {}\ar@{-}[uuur]{}\ar@{-}[uuul] &&\\
&&& C_k,f_k
}
\end{array}
$$
The composite is given by the picture
$$
\hspace*{5ex}\input{22_6.tex}
$$
whose data we are now going to specify:
\begin{itemize}
\item $E_k=D_k\ast(C_{j_{k1}}\oplus\ldots\oplus C_{j_{ks_k}})$
\item $l_{k1},\ldots, l_{kt_k}$ are the labels of the inputs of $T_{j_{k1}},\ldots,T_{j_{ks_k}}$ as they appear in the composition picture from left to right.
\item $\arraycolsep0.2ex\begin{array}[t]{rl}
\gamma_3=(\gamma_2\ast\id)\circ\gamma_1:\! A_1\!\to\! 
& A_2\ast(C_1\oplus\ldots\oplus C_n)\cdot\sigma\longrightarrow\\
& A_3\ast((D_1\oplus\ldots\oplus D_p)\cdot\tau)\ast(C_1\oplus\ldots\oplus C_n)\cdot\sigma
\end{array}$
\end{itemize}
We leave it to the reader to check that the target of $\gamma_3$ is $A_3\ast(E_1\oplus\ldots\oplus E_p)\cdot\xi$ where $\xi\in\Sigma_m$ is the 
permutation sending $l_{11},\ldots,l_{pt_p}$ in the given order to $1,\ldots,m$ (see also Example \ref{4_2}) below.
\begin{itemize}
\item $\mu_i=\rho_{\varphi(i)}\circ\lambda_i:L_i \to L''_{\psi\circ\varphi(i)}$
\item $h_k$: The definition of the $h_k$ is a notational challenge. Since we have a hierarchy of indices we denote $j_{kq}$ also by $j(k,q)$. 
It is easy to keep track of the indices by consulting the composition picture. For a subset $\{i_1,\ldots,i_r\}$ of $\{1,\ldots,m\}$ ordered by
$i_1<\ldots <i_r$ we define
$$
\lambda K(\{i_1,\ldots,i_r\})=(\lambda_{i_1}K_{i_1},\ldots,\lambda_{i_r}K_{i_r})
$$
and $\mu K(\{i_1,\ldots,i_r\})$ and $\rho K'(\{j_1,\ldots,j_s\})$ accordingly.
\end{itemize}

For a morphism $\kappa:L\to L'$ in $\mathcal{L}$ we have a lax morphism $X(\kappa):X(L)\to X(L')$ which we denote by $(\kappa,\overline{\kappa}):X(L)\to X(L')$, 
where $\overline{\kappa}$ is the associated 2-cell. We define
$$
h_k:E_k(\mu K((\psi\circ\varphi)^{-1}(k))\longrightarrow K''_k
$$
to be the composite
$$
\xymatrix{E_k(\mu K((\psi\circ\varphi)^{-1}(k)))
=D_k(C_{j(k,1)}(\mu K(\Phi_1),\ldots,C_{j(k,s_k)}(\mu K(\Phi_{s_k}))
\ar[d]^{D_k(\overline{\rho}_{j(k,1)},\ldots,\overline{\rho}_{j(k,s_k)})}
\\
D_k (\rho_{j(k,1)} C_{j_{k,1}}(\lambda K(\Phi_1),\ldots,
\rho_{j(k,s_k)} C_{j(k,s_k)}(\lambda K(\Phi_{s_k}))))
\ar[d]^{D_1(\rho_{j(k,1)}(f_{j(k,1)}),\ldots,\rho_{j(k,s_k)}(f_{j(k,s_k)}))}
\\
D_k(\rho K'(\psi^{-1}(k))) 
\ar[d]^{g_k}
\\
K''_k
}
$$
where $\Phi_t$ is the ordered set $\varphi^{-1}(j_{kt})$. We also make use of the explicit description of $\overline{\rho}_j$ given in \ref{3_3}.

The identity of $(A;(K_1,L_1),\ldots, (K_nL_n))$ is the morphism whose data consists solely of identities.
 
It is tedious but straight forward to prove that this composition is associative.

\begin{exam}\label{4_2}
Morphism 1: 

$(A_1;(K_i,L_i)_{i=1}^7) \to (A_2;(K'_j,L'_j)_{j=1}^4)$ with picture
$$
\hspace*{5ex}\input{Bild1.tex}
$$
The picture incorporates the following data:
\begin{enumerate}
\item A map $\varphi :\{1,\ldots,7\}\to \{1,\ldots,4\}$ given by $\begin{pmatrix}
1 & 2 & 3 & 4 & 5 & 6 & 7\\
2 & 1 & 4 & 1 & 2 & 1 & 4
\end{pmatrix}$. The preimages of $\varphi$ are in natural order.
\item A quadruple of objects $(C_1,\ldots,C_4)$ with $C_1\in \cM(3),\ C_2\in \cM(2),\ C_3\in \cM(0),\ C_4\in \cM(2)$.
 \item A morphism $\gamma_1:A_1\to A_2\ast(C_1\oplus\ldots\oplus C_4)\cdot\sigma$ in $\cM(7)$, where
$\sigma^{-1}=\begin{pmatrix}
1 & 2 & 3 & 4 & 5 & 6 & 7\\
2 & 4 & 6 & 1 & 5 & 3 & 7
\end{pmatrix}$.
\item Morphisms $\lambda_i:L_i\to L_{\phi(i)},\ i=1,\ldots,7$, in $\cL$.
\item Morphisms $f_1:C_1(\lambda_2K_2,\lambda_4K_4, \lambda_6K_6)\to K_1'$ in $X(L_1')$ and $f_2, f_3,f_4$ accordingly, where
$f_3:C_3(\ast)\to K_3'$ is a morphism in $X(L_3')$.
\end{enumerate}

Morphism 2:

$(A_2;(K'_j,L'_j)_{j=1}^4) \to (A_3;(K''_l,L''_l)_{l=1}^3)$ with picture
$$
\hspace*{5ex}\input{Bild2.tex}
$$
where $\gamma_2:A_2\to A_3\ast(D_1\oplus D_2\oplus D_3)\cdot \tau$ \quad with $\tau^{-1}=\begin{pmatrix}
1 & 2 & 3 & 4 \\
2 & 3 & 1 & 4 
\end{pmatrix}$

The composite picture is
$$
\input{Bild3.tex}
$$

The composite morphism 

 $(A_1;(K_i,L_i)_{i=1}^7) \to (A_3;(K''_l,L''_l)_{l=1}^3)$

has the picture
$$
\input{Bild4.tex}
$$

with the following data
\begin{enumerate}
\item $ 
E_1=D_1\ast(C_2\oplus C_3),\quad  E_2=D_2\ast(C_1\oplus C_4), \quad E_3=D_3
$
\item 
$\gamma_3: A_1\stackrel{\gamma_1}{\longrightarrow} 
A_2\ast(C_1\oplus \ldots\oplus C_4)\cdot\sigma \stackrel{\gamma_2\ast\id}{\longrightarrow}
\\
\hspace*{5ex}A_3\ast(D_1\oplus D_2\oplus D_3)\cdot\tau\ast (C_1\oplus \ldots\oplus C_4)\cdot\sigma$
\item 
$$\begin{array}{llll}
\mu_1=\rho_2\circ\lambda_1, &
\mu_5=\rho_2\circ\lambda_5, &
\mu_2=\rho_1\circ\lambda_2, &
\mu_4=\rho_1\circ\lambda_4,
\\
\mu_6=\rho_1\circ\lambda_6, &
\mu_3=\rho_4\circ\lambda_3, & 
\mu_7=\rho_4\circ\lambda_7  &
\end{array}
$$
\item $$\begin{array}{c}
h_1:E_1(\mu_1K_1,\mu_5K_5)=D_1(C_2(\rho_2\lambda_1K_1,\rho_2\lambda_5K_5)),C_3(\rho_3(\ast))) \stackrel{D_1(\overline{\rho}_2,\overline{\rho}_3)}{\longrightarrow}
\\

D_1(\rho_2C_2(\lambda_1K_1,\lambda_5K_5),\rho_3C_3(\ast)) \stackrel{D_1(\rho_2(f_2),\rho_3(f_3))}{\longrightarrow}
D_1(\rho_2K'_2,\rho_3K'_3) \stackrel{g_1}{\longrightarrow} K''_1
\end{array}
$$
$h_2$ is defined analogously while
$
h_3:E_3(\ast)=D_3(\ast)\stackrel{g_3}{\longrightarrow} K''_3.
$
\end{enumerate}
\end{exam}

We check that $\gamma_3$ has the correct target:
\begin{multline*}
(D_1\oplus D_2\oplus D_3)\cdot\tau\ast(C_1\oplus\ldots\oplus C_4)\cdot\sigma
\\
=(D_1\oplus D_2\oplus D_3)\ast (C_2\oplus C_3\oplus C_1 \oplus C_4)\cdot(\overline{\tau}\cdot\sigma)
\end{multline*}
where
$\overline{\tau}=\begin{pmatrix}
1 & 2 & 3 & 4 & 5 & 6 & 7\\
3 & 4 & 5 & 1 & 2 & 6 & 7
\end{pmatrix}$

is the block permutation given by $\tau$. Then $(\overline{\tau}\cdot\sigma)=\sigma_3$.

\begin{leer}\label{4_3}
We define functors 
$$
\cM(n)\times (\cR_X)^n\to \cR_X
$$
on objects by mapping $B\in \cM(n)$ and an n-tuple of objects $(Y_1,\ldots,Y_n)$ in $\cR_X$
with $Y_i=(A_i;(K_{i1},L_{i1}),\ldots,(K_{ir_i},L_{ir_i}))$ to
$$
B(Y_1,\ldots,Y_n)==(B\ast(A_1\oplus\ldots\oplus A_n); (K_{11},L_{11}),\ldots,(K_{nr_n},L_{nr_n})).
$$
We extend this definition to morphisms in the canonical way.

Note that these functors do not define an $\cM$-algebra structure on $\cR_X$
\end{leer}

\begin{defi}\label{4_4}
A morphism with data $(\varphi,(C_1,\ldots,C_n),\gamma,\{\lambda_i\}_{i=1}^m,\{f_k\}_{k=1}^n)$ like in \ref{4_1} 
is called \textit{permutation-free} if $\{1,\ldots,m\}=\varphi^{-1}(1)\sqcup\ldots\sqcup\varphi^{-1}(n)$ as ordered sets. 
In particular, the target of $\gamma$ is $A_2\ast (C_1\oplus\ldots\oplus C_n)$, and in the pictorial description the
inputs to the nodes $C_1,\ldots,C_n$ are labelled $1,\ldots,m$ from left to right.
\end{defi}

\begin{leer}\label{4_5}
(1) The composite of permutation-free morphisms is again permutation-free.

(2) A permutation-free morphism is built from elementary ones of the following types, called \textit{atoms}:
\begin{enumerate}
\item[(1)] $\gamma: (A;(K_1,L_1),\ldots,(K_n,L_n)) \to (B; (K_1,L_1),\ldots,(K_n,L_n))$\\
with $\gamma:A\to B$ in $\mathcal{M}(n)$. The other data of this morphism are identities.
\item[(2)] $\lambda : (\id;(K,L)) \to (\id;(\lambda K,L))$\\
for a map $\lambda :L\to L' $ in $\cL$.
The other data are identities.
\item[(3)] $\ev(C): (C;(K_1,L),\ldots, (K_n,L))\to (\id; (C(K_1,\ldots,K_n),L))$, \\
called \textit{evaluation}, for objects $C$ in $\cM(n)$.
Its picture is
$$
\input{Bild8.tex}
$$
\item[(4)] $f: (\id;(K,L))\to (\id;(K',L))$ for morphisms $f: K\to K'$ in $X(L)$.
\end{enumerate}
E.g. the morphism of Definition \ref{4_4} decomposes into morphisms obtained from atoms,
canonically up to order:
\begin{multline*}
(A_1;(K_i,L_i)_{i=1}^m)
\xrightarrow{\gamma} (B; (K_i,L_i)_{i=1}^m)    
\xrightarrow{B(\lambda_1,\ldots,\lambda_m)} (B;(\lambda_iK_i,L'_{\varphi(i)})_{i=1}^m)\\
\xrightarrow{A_2(\ev(C_1),\ldots,\ev(C_n))} (A_2;(C_j(\lambda K(\varphi^{-1}(j)),L'_j)_{j=1}^n)
\xrightarrow{A_2(f_1,\ldots,f_n)} (A_2;(K'_j,L'_j)_{j=1}^n))
\end{multline*}
where $B=A_2\ast(C_1\oplus\ldots\oplus C_n)$.
In this decomposition we could replace $B(\lambda_1,\ldots,\lambda_m)\circ \gamma$ by
$$ (A_1;(K_i,L_i)_{i=1}^m) \xrightarrow{A_1(\lambda_1,\ldots,\lambda_m)}
(A_1; (\lambda_iK_i,L'_{\varphi(i)})_{i=1}^m)
\xrightarrow{\gamma} (B;(\lambda_iK_i,L'_{\varphi(i)})_{i=1}^m)$$ 
\end{leer}

\textbf{The homotopy colimit construction}
 
 To turn the functors of \ref{4_3} into an action of $\cM$ on our homotopy colimit we have to impose relations on the objects and consequently on the morphisms
 of $\cR_X$. An additional relation is necessary for the expected universal property of the homotopy colimit.

\begin{leer}\label{4_7} \textbf{Relations on objects}
 $$
(A\cdot\sigma; (K_1,L_1),\ldots,(K_n,L_n)) \sim (A;(K_{\sigma^{-1}(1)},L_{\sigma^{-1}(1)})\ldots(K_{\sigma^{-1}(n)},L_{\sigma^{-1}(n)}))
$$
In terms of pictures this is
$$
\arraycolsep-0.1ex
\begin{array}{ccc}
\xymatrix@C=2ex@R=1ex{
(K_1,L_1)  \ar@{-}[rrdd]+D*{\bullet}
&& \ldots &&
(K_n,L_n) \ar@{-}[lldd]+D*{\bullet}
\\
&& \\
&& {}\ar@{-}[uur]{}\ar@{-}[uul]&&
\\
&& A\cdot \sigma
}
& \raisebox{-4ex}{$\sim$} &
\xymatrix@C=2ex@R=1ex{
(K_{\sigma^{-1}(1)},L_{\sigma^{-1}(1)})  \ar@{-}[rrdd]+D*{\bullet}
&& \ldots &&
(K_{\sigma^{-1}(n)},L_{\sigma^{-1}(n)})\ar@{-}[lldd]+D*{\bullet}
\\
&& \\
&& {}\ar@{-}[uur]{}\ar@{-}[uul]&&
\\
&& A
}
\end{array}
$$
Let $[A;(K_i,L_i)_{i=1}^n)]$ denote the equivalence class of $(A;(K_i,L_i)_{i=1}^n))$.
\end{leer}
\begin{leer}\label{4_8} \textbf{Relations on morphisms}

A morphism $(A_1;(K_1,L_1),\ldots,(K_m,L_m)\to  (A_2;(K'_1,L'_1),\ldots,(K'_n,L'_n))$ with picture
$$
\input{22_1.tex}
$$
is related to a morphism determined by changing the given picture by one of the following moves
\begin{enumerate}
 \item The tree on the right may be changed to 
 $$
\xymatrix@C=1ex@R=1ex{
T_{\tau(1)} \ar@{-}[rrrddd]+D*{\bullet} &&& &&& T_{\tau(n)} \ar@{-}[lllddd]+D*{\bullet}
\\
&&&\\
&&&\\
&&& {}\ar@{-}[uuur]{}\ar@{-}[uuul]&&
\\
&&& A_2\cdot\tau
}
$$
where $\tau\in\Sigma_n$ and
$$
\begin{array}{cc}
\raisebox{-4ex}{$T_k=$}
&
\xymatrix@C=1ex@R=1ex{
{}\ar@{-}[rrrddd]+D*{\bullet}_{i_{k1}} &&& \ldots &&& {} \ar@{-}[lllddd]+D*{\bullet}^{i_kr_k}\\
&&&\\
&&&\\
&&& {}\ar@{-}[uuur]{}\ar@{-}[uuul] &&\\
&&& C_k,f_k
}
\end{array}
$$
The remaining picture is unchanged.
\item A subtree $T_k$ of the tree on the right in the notation above may be replaced by
$$
\xymatrix@C=1ex@R=1ex{
{}\ar@{-}[rrrddd]+D*{\bullet}_{i_{k\rho(1)}} &&& \ldots &&& {} \ar@{-}[lllddd]+D*{\bullet}^{i_k\rho(r_k)}\\
&&&\\
&&&\\
&&& {}\ar@{-}[uuur]{}\ar@{-}[uuul] &&&\\
&&& C_k\cdot\rho,f_k
}
$$
where $\rho\in\Sigma_k$. The remaining picture is unchanged.
\item If $\gamma:A_1\to A_2\ast (C_1\oplus\ldots \oplus C_n)\cdot \sigma$ decomposes
$$
\gamma: A_1\xrightarrow{\delta} A_2\ast (D_1\oplus\ldots \oplus D_n)\cdot \sigma \xrightarrow{\id_{A_2}\ast (\alpha_1\oplus\ldots\oplus\alpha_n)\cdot\sigma}
(C_1\oplus\ldots \oplus C_n)\cdot \sigma
$$
then our morphism is related to 
$$
\hspace*{-2ex}\input{22_7.tex}
$$
with $g_k:D_k(\lambda K(\varphi^{-1}(k))\xrightarrow{\alpha_k} C_k(\lambda K(\varphi^{-1}(k))\xrightarrow{f_k}K_k'$.
\item For $\tau\in \Sigma_m$ our morphism is related to
$$
\hspace*{7ex}\input{22_2.tex}
$$
\end{enumerate}
\end{leer}

\begin{defi}\label{4_9}
The \textit{homotopy colimit} $\hocolim\ X$ of the diagram $X$ is the quotient of $\cR_X$ by these relations.
\end{defi}

\textbf{Comments:} (1) Relation 1  changes $\varphi$ and the permutation $\sigma$ by a block permutation $\tau(r_1,\ldots,r_n)$. But since
$$
A_2\ast(C_1\oplus\ldots\oplus C_n)=(A_2\cdot\tau)\ast(C_{\tau(1)}\oplus\ldots\oplus C_{\tau(n)}) \cdot\tau^{-1}(r_1,\ldots,r_n)
$$
an unchanged  $\gamma$ fits.\\
(2) Relation 2 changes $\sigma$ and the order of $\varphi^{-1}(k)$. By the equivariance properties of the 
operad structure and the action of $\mathcal{M}$ on $X(L'_k)$, 
the same $\gamma$ and $f_k$ fit.\\
(3) Our morphism and the morphism of Relation 4 have equivalent sources and the same target.\\
(4) Relations (1), (2) and (4) ensure that the functors of \ref{4_3} give $\hocolim\ X$ an $\cM$-algebra structure.\\
(5) Relation (3) is necessary for the universal property of $\hocolim\ X$ (see below).

\textbf{Composition in $\hocolim\ X$:} Let $U\xrightarrow{u} V\xrightarrow{v} W$ be two morphisms in $\hocolim\ X$.
Let $(A_1;(K_1,L_1),\ldots,(K_m,L_m))\to (A_2;(K_1',L_1'),\ldots,(K_n',L_n')$ and
$(B_1;(\overline{K_1},\overline{L_1}),\ldots,(\overline{K_n},\overline{L_n})\to (K_1'',L_1''),\ldots,(K_p'',L_p''))$
be representatives of $u$ and $v$ respectively. Since $\cM$ is $\Sigma$-free there is a unique permutation $\tau\in\Sigma_n$
such that $(B_1;(\overline{K_1},\overline{L_1})=(A_2\cdot \tau;(K_{\tau(1)},L_{\tau(1)}),\ldots,(K_{\tau(n)},L_{\tau(n)}))$.
By Relation (1) we can change the representative of $u$ such that its target is the source of the chosen representative of $v$,
alternatively, we can change the representative of $v$ by Relation (3) so that we can compose it with the chosen representative of $u$.
The composites in $\cR_X$ of both cases agree, they represent the composite $v\circ u$.

It is now straight forward to show that this composition is well-defined.

\begin{leer}\label{4_10} \textbf{Standard representatives:} (1) It is sometimes helpful to consider the quotient category $\cQ_X$ of 
$\cR_X$ by the relation \ref{4_7} and 
\ref{4_8}(1),(2), and (4). The functors \ref{4_3} define an $\cM$-algebra 
structure on $\cQ_X$ and the projection
 $\cQ_X\to \hocolim\ X$ is a map of $\cM$-algebras.\\
 By Relation \ref{4_8}(4), a morphism in $\cQ_X$ is represented by a permutation-free morphism in $\cR_X$. In particular,
 as an $\cM$-algebra, $\cQ_X$ is generated by the classes of the atoms in $\cR_X$. A permutation-free representative is in general not
 unique: If we apply Relation \ref{4_8}(1) or (2) to it, we obtain a representative which is not permutation-free and can be 
 turned into a new permutation-free representative by applying Relation \ref{4_8}(4).

(2) If we choose a complete set $\orb\cM(n)\subset \ob \cM(N)$ of representatives of the $\Sigma_n$-orbits of 
objects in $\cM(n)$, each object
of $\cQ_X$ has a unique representative of the form $(A;(K_i,L_i)_{i=1}^m)$ with $A\in \orb\cM(m)$, because $\cM$ is $\Sigma$-free. 
Given this choice of representatives for the objects 
there are canonical choices of representatives
$$
(\varphi,(C_1,\ldots,C_n),\gamma,\{\lambda_i\},\{f_j\}):
(A;(K_i,L_i)_{i=1}^m) \to (B;(K'_j,L'_j)_{j=1}^n)
$$ for the morphisms, namely those where the preimages $\varphi^{-1}(j)$ are given the natural order. So $\cQ_X$ is isomorphic
to the category given by these \textit{standard representatives}. The composition and the $\cM$-algebra structure
of this category is transferred from $\cQ_X$. 
Since compositions of standard representatives need not be standard representatives an explicit definition of the composition requires an application
of Relation \ref{4_8}(2) which leads to unpleasant formulas. An explicit description of the transferred $\cM$-structure is even more tedious since it
involves the relations \ref{4_7} and \ref{4_8}.

(3) For later use we observe that Relation \ref{4_8}(3) preserves permutation-freeness.
\end{leer}

\begin{leer}\label{4_11}
\textbf{The universal property:} Let $X:\mathcal{L}\to\Cat^\mathbb{M}\Lax$ be a diagram and let
$$
c(\hocolim\ X):\mathcal{L}\to\Cat^\mathbb{M}\Lax
$$ 
be the constant diagram on $\hocolim\ X$. The \textit{universal homotopy morphism} 
$$j: X\to c(\hocolim\ X)$$ 
in $\Func(\mathcal{L},\Cat^\mathbb{M}\Lax)$ is given by lax morphisms 
$$
j_L=(j_L,\overline{j}_L):X(L)\to \hocolim\ X
$$
and 2-cells $j_\lambda :j_{L_0}\Rightarrow j_{L_1}\circ X\lambda$ for $\lambda: L_0\to L_1$ defined as follows. The functor $j_L: X(L)\to \hocolim\ X$
sends an object $K$ to $[\id;(K,L)]$ and $f:K_1\to K_2$ to the morphism represented by the atom
$f:((\id; (K_1,L))\to (\id;(K_2,L))$.

These functors do not define a strict morphism of $\mM$-algebras: Given $A\in\mathcal{M}(n)$, then $j_L(A(K_1,\ldots,K_n))$ is represented by
$$
\xymatrix{
A(K_1,\ldots,K_2) \ar@{-}[d]-D*{\bullet}
\\
\id
}
$$
while $A(j_L(K_1),\ldots,j_L(K_n))$ is represented by
$$
\xymatrix@C=1ex@R=1ex{
(K_1,L) \ar@{-}[rrdd]+D*{\bullet}
&& &&
(K_n,L)  \ar@{-}[lldd]+D*{\bullet}
\\
&& \\
&& {}\ar@{-}[uur]{}\ar@{-}[uul]&&
\\
&& A
}
$$
The natural transformation
$$
\overline{j}_L(A; K_1,\ldots,K_n): A(j_L(K_1),\ldots,j_L(K_n)) \to j_L(A(K_1,\ldots,K_n))
$$
is represented by the evaluation $\ev(A)$. Then $\overline{j}_L(\id ; K)=\id_{j_L(K)}$. We verify condition \ref{3_3}(1):
For $\alpha:A\to B$ in $\cM(n)$ and $f_i:K_i\to K'_i$ in $X(L)$ we have to show that
\begin{multline*}
 j_L(\alpha(f_1,\ldots,f_n))\circ \overline{j}_L(A:K_1,\ldots,K_n)\\
 =\overline{j}_L(B:K'_1,\ldots,K'_n)\circ \alpha(j_L(f_1),\ldots,j_L(f_n)).
\end{multline*}
A representative of the left side is given by the picture
$$
\input{rect1.tex}
$$
a representative of the right side by the picture
$$
\input{rect2.tex}
$$
By Relation \ref{4_8}(3) these representatives are related.

Condition \ref{3_3}(2) clearly holds: The composite of the canonical representatives of the morphisms on the right
is the canonical representative of the morphism on the left (here the canonical representatives are the ones used
in the definitions of $(j_L,\overline{j}_L)$).

Given a  morphism $\lambda:L_0\to L_1$ in $\mathcal{L}$ we have to define the 2-cell $j_\lambda$
$$
\xymatrix{ 
X(L_0) \ar[rr]^{X\lambda}\ar[ddr]_{j_{L_0}} &\ar @{} [d]|{\stackrel{j_\lambda}{\Rightarrow}}&
X(L_1) \ar[ddl]^{j_{L_1}}
\\
&\\
& \hocolim\ X
}
$$
For $K$ in $X(L_0)$ we get $j_{L_1}\circ X\lambda(K)=[\id;(\lambda K,L_1)]$ and $j_{L_0}(K)=[\id;(K,L_0)]$. 
We define $j_\lambda(K):j_{L_0}(K)\to j_{L_0}\circ X(\lambda)(K)$ to be represented by the atom $\lambda$ in $\cR_X$.

We note that $j_\lambda$ has to be compatible with the natural transformations $\overline{X\lambda}, \overline{j_{L_0}}$, and $\overline{j_{L_1}}$, 
which are part of the homotopy morphisms $(X\lambda,\overline{X\lambda})$, $(j_{L_0},\overline{j_{L_0}})$ and $(j_{L_1},\overline{j_{L_1}})$. 
The verification of this is left to the reader and we refer to Appendix A for more details about the 2-category $\Func(\mathcal{L},\Cat^\mathbb{M}\Lax)$.
\end{leer}

\begin{theo}\label{4_12}
Let $S$ be an $\mM$-algebra and $cS:\mathcal{L}\to\Cat^\mathbb{M}\Lax$ the constant $\mathcal{L}$-diagram on $S$. Let $k:X\to cS$ be 
a morphism in $\Func(\mathcal{L},\Cat^\mathbb{M}\Lax)$. Then there is a unique strict homomorphism
$$
r:\hocolim\ X\to S
$$
of $\mM$-algebras such that $(r,\overline{r})\circ(j_L,\overline{j}_L)=(k_L,\overline{k}_L)$ in $\Cat^\mathbb{M}\Lax$  and $r\circ_1j_\lambda=k_\lambda$, 
where $\overline{r}$ is the identity.
\end{theo}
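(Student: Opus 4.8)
The plan is to define the homomorphism $r$ first on the category of representatives $\cR_X$ and then to show that it factors through the quotient $\hocolim X$. On objects we are forced to set
$$r[A;(K_i,L_i)_{i=1}^n]=A(k_{L_1}(K_1),\ldots,k_{L_n}(K_n)),$$
where $A(\cdots)$ denotes the action of $A\in\cM(n)$ on the tuple $(k_{L_1}(K_1),\ldots,k_{L_n}(K_n))$ under the $\mM$-algebra structure of $S$. On morphisms it suffices by \ref{4_5} to prescribe $r$ on the four types of atoms. I would send the atom $\gamma\colon(A;\ldots)\to(B;\ldots)$ of type (1) to the action $\gamma(k_{L_1}(K_1),\ldots,k_{L_n}(K_n))$ of the morphism $\gamma$ of $\cM(n)$ on $S$; the atom $\lambda$ of type (2) to the component $k_\lambda(K)\colon k_L(K)\to k_{L'}(X\lambda(K))$ of the $2$-cell $k_\lambda$; the evaluation $\ev(C)$ of type (3) to the lax-structure morphism $\overline{k}_L(C;K_1,\ldots,K_n)$ of $k_L$; and the atom $f$ of type (4) to $k_L(f)$. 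These assignments are dictated by the identities to be proved, so once existence is established no freedom will remain.

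The first substantial task is to verify that these atom-values assemble into a functor on $\cR_X$, i.e.\ that they respect the composition described explicitly via the $h_k$ formula in Section 4. The building blocks are the coherence conditions \ref{3_3}(1)--(3) for each lax morphism $k_L$, which say precisely that $r$ respects operad composition among evaluation atoms and naturality among type-(1) and type-(4) atoms; the naturality of the $2$-cells $k_\lambda$, which handles the interchange of $B(\lambda_1,\ldots,\lambda_m)\circ\gamma$ with $\gamma\circ A_1(\lambda_1,\ldots,\lambda_m)$ noted after \ref{4_5}; and the compatibility of $k_\lambda$ with the lax structures of $k_{L_0},k_{L_1}$ and of $X\lambda$ (the $2$-cell condition of \ref{3_2} together with the homotopy-morphism cocycle $k_{\lambda_1\circ\lambda_0}=(k_{\lambda_1}\circ_1 X\lambda_0)\circ_2 k_{\lambda_0}$, using $cS(\lambda)=\id_S$), which is exactly what matches the factors $D_k(\overline{\rho}_{\bullet},\ldots)$ occurring in $h_k$. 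I expect this verification---tracking the lax coherences through the explicit $h_k$---to be the main obstacle, since the bookkeeping is heavy even though each individual instance reduces to one of the stated axioms.

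Next I would show that $r$ descends to $\hocolim X$ by checking it respects the relations \ref{4_7} and \ref{4_8}. The equivariance relations \ref{4_7} and \ref{4_8}(1),(2),(4) are respected because the $\mM$-algebra structure of $S$ is $\Sigma$-equivariant, so permuting the inputs of $A$ and reindexing the $k_{L_i}(K_i)$ agree. Relation \ref{4_8}(3) follows from functoriality of the action of $\cM(n)$ on $S$ applied to the factorization of $\gamma$, together with the naturality condition \ref{3_3}(1) for $\overline{k}_L$. Strictness of $r$ as an $\mM$-homomorphism with respect to the functors of \ref{4_3} is then immediate from associativity of operad composition:
$$r(B(Y_1,\ldots,Y_n))=\bigl(B\ast(A_1\oplus\cdots\oplus A_n)\bigr)(k(\cdots))=B\bigl(A_1(k\cdots),\ldots,A_n(k\cdots)\bigr)=B(rY_1,\ldots,rY_n).$$

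Finally the two asserted identities are checked directly on atoms. Since $r$ is strict we have $\overline{r}=\id$, so the composite lax structure of $(r,\id)\circ(j_L,\overline{j}_L)$ is $r\circ\overline{j}_L=r(\ev(A))=\overline{k}_L$, while its underlying functor sends $K$ to $r[\id;(K,L)]=k_L(K)$ and $f$ to $k_L(f)$; this gives $(r,\overline{r})\circ(j_L,\overline{j}_L)=(k_L,\overline{k}_L)$. For the last identity, $j_\lambda(K)$ is represented by the atom $\lambda$, whence $r\circ_1 j_\lambda=k_\lambda$. Uniqueness follows because, by \ref{4_10}, $\hocolim X$ is generated as an $\mM$-algebra by the images $j_L(X(L))$ and the atoms $\lambda$: any strict homomorphism satisfying the two identities is forced to agree with $r$ on all these generators and hence coincides with $r$.
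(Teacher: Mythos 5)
Your proposal is correct and follows essentially the same route as the paper's proof: the values of $r$ on objects and on the four atom types are forced exactly as you list them, existence is verified through the canonical permutation-free decomposition \ref{4_5}(2) with the conditions \ref{3_3} on $(k_L,\overline{k}_L)$ absorbing Relation \ref{4_8}(3), functoriality is reduced to composites of atoms, and uniqueness follows because $\hocolim\ X$ is generated as an $\cM$-algebra by the classes of the atoms. The only cosmetic difference is that you define $r$ on $\cR_X$ and then descend, whereas the paper defines it directly on the quotient via permutation-free representatives; since the atoms generate only the permutation-free subcategory of $\cR_X$ (general morphisms carry a permutation $\sigma$ and only become composites of atoms after Relation \ref{4_8}(4)), your descent step through \ref{4_7} and \ref{4_8}(1),(2),(4) is in effect the same bookkeeping the paper performs when checking independence of the chosen permutation-free representative.
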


\begin{proof}
\textit{Uniqueness:} Suppose $r$ exists. Then the conditions on $r$ imply:
\begin{enumerate}
 \item $r([\id;(K,L)])=k_L(K)$.
 \item $r([f])=k_L(f)$ for the atom $f:(\id;(K_1,L))\to (\id;(K_2,L))$.
 \item $([\ev(A)])=\overline{k}_L(A;K_1,\ldots,K_m)$ for the atom 
 $$\ev(A):A((K_1,L),\ldots,(K_n,L))\to (\id;(A(K_1,\ldots,K_n),L).)$$
 \item $r([\lambda])=k_{\lambda}(K)$ for the atom $\lambda:(\id;(K,L_0))\to (\id;(\lambda K,L_1))$.
\end{enumerate}

Since $r$ is a homomorphism of $\cM$-algebras, we obtain from (1) that
$$r([A;(K_i,L_i)_{i=1}^n]=A(k_{L_1}(K_1),\ldots,k_{L_m}(K_n))$$
and that $r$ is determined on atoms $\gamma:(A;(K_i,L_i)_{i=1}^n)\to (B;(K_i,L_i)_{i=1}^n)$.
Since $\hocolim\ X$ is generated as an $\cM$-algebra by the classes of the atoms, $r$ is uniquely
determined by the conditions.

\textit{Existence:} On objects $r$ has to be defined like above. To extend $r$ to morphisms we represent a
morphism by a permutation-free morphism in $\cR_X$ which has a canonical decomposition \ref{4_5}(2). The above
conditions determine $r$ on the classes of the morphisms of this decomposition and hence on the morphism.
We leave it to the reader to check that this definition is independent of the choice of the permutation-free
representative. Since the lax morphisms $(k_L,\overline{k}_L)$ satisfy the conditions \ref{3_3} this definition
factors through Relation \ref{4_8}(3).

To prove functoriality, we observe that a diagram $U\xrightarrow{u} V\xrightarrow{v} W$ can be lifted to a diagram
in $\cR_X$ with permutation-free morphisms, so that it suffices to check functoriality for compositions of atoms, 
which is straight-forward.

By construction $r$ is a strict homomorphism of $\cM$-algebras.
\end{proof}

\begin{leer}\label{4_13} \textbf{Addendum:}
Let $k':X\to cS$ be another morphism in $\Func(\mathcal{L},\Cat^\mathbb{M}\Lax)$ and $s:k\Rightarrow k'$ a 2-cell. 
Let $r':\hocolim X\to S$ be the homomorphism induced by $k'$. Then there is a unique 2-cell $t:r\Rightarrow r'$ in $\Cat^\mathbb{M}\Lax$ such that
$$
t\circ_1 j_L=s_L.
$$
\end{leer}

\begin{proof}
Since $t$ is a 2-cell in $\Cat^\mathbb{M}$ it has to satisfy
$$
\begin{array}{rcl}
t([A;(K_1,L_1),\ldots,(K_n,L_n)])
&=& t(A([\id; (K_1,L_1)],\ldots,[\id;(K_n,L_n)]))\\
&=& A(t(j_{L_1}(K_1)),\ldots,t(j_{L_n}(K_n)))\\
&=& A(s_{L_1}(K_1),\ldots,s_{L_n}(K_n))
\end{array}
$$
Hence $t$ is uniquely determined, and it is easy to check that it is a natural transformation.
\end{proof}

The universal property and its addendum imply

\begin{theo}\label{4_14}
$\hocolim:\Func(\mathcal{L},\Cat^\mathbb{M}\Lax)\to\Cat^\mathbb{M}$ is a 2-functor which is left adjoint to the constant 
diagram functor.\hfill\ensuremath{\Box}
\end{theo}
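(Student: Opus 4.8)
The plan is to recognize the universal homotopy morphism $j = j^X : X \to c(\hocolim\ X)$ constructed in \ref{4_11} as the unit of the desired adjunction, and to extract the whole statement from the two universal properties already in hand. First I would read Theorem \ref{4_12} together with its Addendum \ref{4_13} as the single statement that, for every $\mM$-algebra $S$, the assignment
$$
\Phi_{X,S}\colon \Cat^\mathbb{M}(\hocolim\ X, S)\ \longrightarrow\ \Func(\mathcal{L},\Cat^\mathbb{M}\Lax)(X, cS),\qquad r\mapsto c(r)\circ j^X,\quad t\mapsto t\circ_1 j,
$$
is an \emph{isomorphism of categories}: the existence/uniqueness part of \ref{4_12} furnishes the bijection on objects (1-cells) together with its inverse $k\mapsto r$, and \ref{4_13} furnishes the bijection on morphisms (2-cells) together with its inverse $s\mapsto t$. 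Here one uses that a homotopy morphism $X\to cS$ into a constant diagram is exactly the data $(k_L,\overline{k}_L)$ and $k_\lambda$ occurring in \ref{4_12}, and that the constant diagram functor $c$ is manifestly a 2-functor.

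Next I would \emph{define} $\hocolim$ on cells by these universal arrows. For a 1-cell $\phi\colon X\to Y$, the composite $j^Y\circ\phi\colon X\to c(\hocolim\ Y)$ is a homotopy morphism into a constant diagram, so \ref{4_12} (with $S=\hocolim\ Y$) yields a unique strict homomorphism $\hocolim\ \phi\colon \hocolim\ X\to \hocolim\ Y$ with $c(\hocolim\ \phi)\circ j^X = j^Y\circ\phi$; for a 2-cell $s\colon\phi\Rightarrow\phi'$ the Addendum \ref{4_13} yields the unique 2-cell $\hocolim\ s$ corresponding to the whiskering $j^Y\circ_1 s$. The 2-functor axioms—strict preservation of identities and of vertical and horizontal composition—all follow from the uniqueness clauses. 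For example, since $c$ is a 2-functor,
$$
c(\hocolim\ \psi\circ \hocolim\ \phi)\circ j^X = c(\hocolim\ \psi)\circ j^Y\circ \phi = j^Z\circ(\psi\circ\phi),
$$
so uniqueness in \ref{4_12} forces $\hocolim(\psi\circ\phi)=\hocolim\ \psi\circ \hocolim\ \phi$, and the corresponding identities for 2-cells follow identically from \ref{4_13}.

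Finally I would assemble the adjunction. The isomorphism $\Phi_{X,S}$ is natural in $S$ because $c(g\circ r)\circ j = c(g)\circ c(r)\circ j$ for $g\colon S\to S'$, and natural in $X$ because, for $\phi\colon X\to Y$ and $r\colon\hocolim\ Y\to S$, the defining equation $c(\hocolim\ \phi)\circ j^X = j^Y\circ\phi$ gives
$$
\Phi_{X,S}(r\circ \hocolim\ \phi) = c(r)\circ c(\hocolim\ \phi)\circ j^X = c(r)\circ j^Y\circ\phi = \Phi_{Y,S}(r)\circ\phi;
$$
naturality with respect to 2-cells on both sides follows the same way, since $\Phi$ is given by composing with the fixed $j$ and composition in a 2-category respects 2-cells. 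Thus $\Phi$ is a 2-natural isomorphism of hom-categories, which is exactly a $\Cat$-enriched adjunction $\hocolim\dashv c$ with unit $j$; the counit $\varepsilon_S\colon\hocolim(cS)\to S$ is the homomorphism induced by $\id_{cS}$ via \ref{4_12}, and the triangle identities hold by uniqueness (this is the enriched analogue of the standard passage from universal arrows to an adjoint functor, as used for \ref{3_7}). I expect the only genuine work to be bookkeeping: checking that precomposition with $j$ lands in, and exhausts, homotopy morphisms into constant diagrams \emph{with the correct coherence $2$-cells} $\overline{k}_L$ and $k_\lambda$, and that the naturality squares commute as equalities of $2$-cells rather than merely of underlying $1$-cells. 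Both reduce to the uniqueness parts of \ref{4_12} and \ref{4_13} and to the coherence data recorded in Appendix A, so I anticipate no essential obstacle beyond careful matching of that data.
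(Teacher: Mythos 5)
Your proposal is correct and takes essentially the same route as the paper: the paper's entire proof is the one-line remark that ``the universal property and its addendum imply'' the theorem, i.e.\ it derives the $\Cat$-enriched adjunction from Theorem \ref{4_12} and Addendum \ref{4_13} with the universal morphism $j$ as unit, exactly as you do. Your write-up simply makes explicit the standard passage from these universal arrows to a 2-functor and enriched adjunction that the paper leaves implicit.
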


\begin{leer}\label{4_15}
 Let $c:\Cat^\mathbb{M}\to \Func(\mathcal{L},\Cat^\mathbb{M}\Lax)$ be the constant diagram functor.
 The unit of the adjunction \ref{4_14} is given 
by the universal map $j: X\to c(\hocolim\ X)$. The counit $r: \hocolim\ cS \to S$ is obtained from 
Theorem \ref{4_12} by taking $k=id_{cS}$. It is
explicitly given by the evaluation, which sends an object $[A;(K_i,L_i)_{i=1}^n]$ to $A(K_1,\ldots,K_n)$
 and which is the identity on the 
classes of the atoms \ref{4_5}.2.2 and \ref{4_5}.2.3, while classes of the atoms
 $\gamma :A\to B$ and $f:[id;(K,L)]\to [id:(K',L)]$ are sent to $\gamma :A(K_1,\ldots, K_n)\to
B(K_1,\ldots, K_n)$, respectively $f:K\to K'$.
\end{leer}

\begin{leer}\label{4_16}
\textbf{Change of indexing category:} Let $X:\cL \to \Cat^\mathbb{M}\Lax$ be a diagram and let $F:\cN\to \cL$ be a functor of small
categories. Then $F$ induces a functor
$$
F_\ast : \hocolim \ X\circ F \to  \hocolim \ X
$$
given on objects by
$$
F_\ast ([A;(K_1,N_1),\ldots, (K_n,N_n)] = [A;(K_1,F(N_1)),\ldots,(K_n,F(N_n))]
$$
and on atoms by 
$$
F_\ast (\mu) = F(\mu) \qquad \textrm{for } \mu:N\to N' \textrm{ in } \cN
$$
while all the other atoms of $\hocolim \ X\circ F$ are mapped identically to the corresponding ones in
$\hocolim \ X$.
\end{leer}

The construction implies:
\begin{prop}\label{4_17}
Let $\xymatrix{\cP\ar[r]^G & \cN \ar[r]^F & \cL}$ be functors of small categories. Then
$$
(F\circ G)_\ast = F_\ast\circ G_\ast: \hocolim \ X\circ F\circ G \to  \hocolim \ X
$$
\hfill\ensuremath{\Box}
\end{prop}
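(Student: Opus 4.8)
The plan is to exploit the fact that both $(F\circ G)_\ast$ and $F_\ast\circ G_\ast$ are maps of $\cM$-algebras from $\hocolim\ X\circ F\circ G$ to $\hocolim\ X$. By \ref{4_10}(1) the source is generated as an $\cM$-algebra by the classes of the atoms of $\cR_{X\circ F\circ G}$, and by the canonical decomposition \ref{4_5}(2) every morphism is a composite of morphisms built from atoms via the $\cM$-action. Since a map of $\cM$-algebras commutes with both composition and the $\cM$-action, it is determined by its values on objects and on the atoms; hence it will suffice to verify that $(F\circ G)_\ast$ and $F_\ast\circ G_\ast$ agree there.

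First I would compare the two maps on objects. Unravelling the formula in \ref{4_16}, $(F\circ G)_\ast$ sends $[A;(K_i,P_i)_{i=1}^n]$ to $[A;(K_i,(F\circ G)(P_i))_{i=1}^n]$, while $F_\ast\circ G_\ast$ sends it first to $[A;(K_i,G(P_i))_{i=1}^n]$ and then to $[A;(K_i,F(G(P_i)))_{i=1}^n]$. Here one notes that the object $K_i$ lives in $(X\circ F\circ G)(P_i)=X(F(G(P_i)))=(X\circ F)(G(P_i))$ throughout, so the labels change while the $K_i$ stay fixed; the two outputs coincide.

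Next I would compare them on the atoms, using the case distinction of \ref{4_16}. For an atom $\nu:(\id;(K,P))\to(\id;(\nu K,P'))$ arising from a morphism $\nu:P\to P'$ of $\cP$ one has $(F\circ G)_\ast(\nu)=(F\circ G)(\nu)=F(G(\nu))$, whereas $G_\ast(\nu)=G(\nu)$ is precisely the atom of $\cR_{X\circ F}$ attached to $G(\nu):G(P)\to G(P')$ in $\cN$, so $F_\ast(G_\ast(\nu))=F_\ast(G(\nu))=F(G(\nu))$ as well. For every other atom (types (1), (3), (4) of \ref{4_5}) the underlying object $K$ lies in $(X\circ F\circ G)(P)=X(F(G(P)))$, which is left fixed by both functors; since \ref{4_16} stipulates that $G_\ast$ and $F_\ast$ each carry such atoms identically to the corresponding atom, so does the composite, matching $(F\circ G)_\ast$.

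I do not expect a genuine obstacle here: the identity follows once agreement on objects and atoms is established. The one point demanding a line of justification is the claim that both maps are genuinely maps of $\cM$-algebras, so that agreement on the generating atoms propagates to all morphisms; this is immediate from the object-level formula of \ref{4_16} combined with the explicit description of the $\cM$-action in \ref{4_3}, under which relabelling $N\mapsto F(N)$ commutes with the operadic composition used to define $B(Y_1,\ldots,Y_n)$.
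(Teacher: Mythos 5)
Your proposal is correct and matches the paper, which offers no written argument beyond the remark ``The construction implies'' --- i.e., the identity is immediate from the definitions in \ref{4_16}, exactly as you verify. Your check on objects and on the four atom types, together with the observation that $(X\circ F\circ G)(P)=(X\circ F)(G(P))$ so the non-$\lambda$ atoms correspond identically, is precisely the verification the paper leaves to the reader.
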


\begin{leer}\label{4_18}
Suppose we are given two functors $F,H: \cN\to \cL$ and a natural transformation $\tau : F\Rightarrow H$, then
$\tau$ induces a natural transformation
$$
\tau_\ast: F_\ast \Rightarrow H_\ast
$$
defined by
$$
\tau_\ast ([A;(K_1,N_1),\ldots, (K_n,N_n)]) = A(\tau(N_1),\ldots,\tau(N_n))
$$
with the atoms $\tau(N_i): [\id;(K_i,F(N_i)]\to [\id;(K_i,G(N_i)]$ corresponding to the morphism
$\tau(N_i): F(N_i) \to  G(N_i)$ in $\cL$.
\end{leer}

\begin{leer}\label{4_19}
\textbf{Comparison with Thomason's homotopy colimit constructions:} 
In \cite{Thom1} Thomason constructed a homotopy colimit functor in the category of non-unital permutative
categories. Permutative categories are algebras over the $\Cat$-version $\widetilde{\Sigma}$ of the Barratt-Eccles
operad, whose definition we will recall in Section 8. Let $\widehat{\Sigma}\subset \widetilde{\Sigma}$
be the suboperad obtained from $\widetilde{\Sigma}$ by replacing $\widetilde{\Sigma}(0)$ by the empty category. Then
the non-unitary permutative categories are precisely the $\widehat{\Sigma}$-algebras.

Thomason proved the universal property for his homotopy colimit \cite[Prop. 3.21]{Thom1}. So our construction
 and his agree for $\widehat{\Sigma}$-algebras.
This can also be seen directly from the constructions as we will explain in Section 8.

Thomason introduced the category $\Func(\mathcal{L},\Cat^\mathbb{M}\Lax)$ later in \cite{Thom2} and established the adjunction
\ref{4_14} in \cite[(1.8.2)]{Thom2}.

Now let $\cM$ be the initial operad in $\Cat$, i.e. $\cM (1)=\{ \id\}$ and $\cM (n)=\emptyset $ for $n\neq 1$. Then
$\Cat^{\mM}=\Cat^{\mM}\Lax=\Cat$ and our homotopy colimit of a diagram $X:\cL \to \Cat^{\mM}\Lax=\Cat$ 
coincides with the Grothendieck construction $\cL\int X$, which is Thomason's homotopy colimit of $X$ in $\Cat$.

The universal property of $\cL\int X$ \cite[1.3.1]{Thom0} is a special case of our Theorem \ref{4_12}. As mentioned above Thomason
did not introduce the category $\Func(\mathcal{L},\Cat)$ of strict functors $X:\cL\to \Cat$
and homotopy morphisms in the early paper \cite{Thom0}, so there is no analogue of Theorem \ref{4_14}; instead he considered lax functors
$X:\cL\to \Cat$ and homotopy morphisms of such in \cite[Section 3]{Thom0}.
\end{leer}

\section{Strictification and consequences}
If $\mathcal{L}$ is the trivial category consisting of one object and its identity morphism,
 then $\Func(\mathcal{L},\Cat^\mathbb{M}\Lax)=\Cat^\mathbb{M}\Lax$, and Theorem \ref{4_14} translates to

\begin{theo}\label{5_1}
There is a strictification 2-functor
$$
\str:\Cat^\mathbb{M}\Lax\to\Cat^\mathbb{M}
$$
which is left adjoint to the inclusion functor.
\hfill$\square$
\end{theo}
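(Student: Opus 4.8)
The plan is to read off Theorem \ref{5_1} as the special case of the adjunction in Theorem \ref{4_14} where $\mathcal{L}$ is the trivial category $*$ with a single object and only its identity morphism. Under this specialization $\hocolim$ becomes the desired functor $\str$, so the entire task is to identify the two $2$-categories and the constant diagram functor occurring in Theorem \ref{4_14} with those in the present statement, namely to check that $\Func(*,\Cat^{\mathbb{M}}\Lax)=\Cat^{\mathbb{M}}\Lax$ and that the constant diagram functor $c$ agrees with the inclusion $i\colon\Cat^{\mathbb{M}}\hookrightarrow\Cat^{\mathbb{M}}\Lax$.

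First I would verify $\Func(*,\Cat^{\mathbb{M}}\Lax)=\Cat^{\mathbb{M}}\Lax$ as $2$-categories. An object of $\Func(*,\Cat^{\mathbb{M}}\Lax)$ is a functor $X\colon *\to\Cat^{\mathbb{M}}\Lax$, that is, a single $\mathbb{M}$-algebra $X(*)$. By the description in \ref{3_4}, a $1$-cell $j\colon D\to F$ consists of a $1$-cell $j_*\colon D(*)\to F(*)$ together with a $2$-cell $j_{\id}$ attached to the identity morphism of $*$; but the defining condition $j_{\id}=\id$ forces $j$ to be nothing more than a lax morphism in $\Cat^{\mathbb{M}}\Lax$, and the general coherence conditions for homotopy morphisms (Appendix A) collapse to those of Definition \ref{3_2}. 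Likewise a $2$-cell $s\colon j\Rightarrow k$ reduces to its single value $s_*$, and the compatibility required in \ref{3_4} becomes vacuous because it is indexed only by $\id_*$. Hence the two $2$-categories coincide on the nose.

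Next I would identify $c$ with $i$: for a strict $\mathbb{M}$-algebra $S$ the constant diagram $cS$ is just $S$ regarded in $\Cat^{\mathbb{M}}\Lax$, and $c$ sends a strict homomorphism $f$ to the homotopy morphism whose only $1$-cell is the lax morphism $(f,\id)$ with identity structure $2$-cell $\overline{f}$; this is precisely the inclusion $i$. With both identifications in hand, Theorem \ref{4_14} states that $\hocolim=\str$ is a $2$-functor left adjoint to $c=i$, which is the claim. The only step demanding genuine care is the first identification: one must confirm that for the trivial index category the elaborate coherence data for homotopy morphisms and their $2$-cells degenerate exactly to the $1$- and $2$-cell data of $\Cat^{\mathbb{M}}\Lax$. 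This is routine once one observes that every component of a homotopy morphism or of a $2$-cell attached to a non-identity morphism of $\mathcal{L}$ is absent when $\mathcal{L}=*$, leaving only the data over the identity.
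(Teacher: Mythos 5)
Your proposal is correct and is exactly the paper's argument: Theorem \ref{5_1} is stated there as the specialization of Theorem \ref{4_14} to the trivial indexing category, using precisely the identifications $\Func(\ast,\Cat^{\mathbb{M}}\Lax)=\Cat^{\mathbb{M}}\Lax$ and constant diagram functor $=$ inclusion that you verify. Your explicit check that the coherence data of homotopy morphisms and $2$-cells collapse over the identity morphism of $\ast$ is a fuller write-up of what the paper leaves implicit.
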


The unit of the adjunction is given by the lax morphism $j=(j,\overline{j}):X\to \str X$ of Theorem \ref{4_12}. Applying the universal property to
$\id_X$ we obtain a strict homomorphism
$$
r:\str X\to X
$$
such that $r\circ j=\id$. It follows that $r$ is the counit of the adjunction. In particular, both $(j,\overline{j})$ and $r$ are
natural in $X$.

\begin{leer}\label{5_2}
\textbf{Explicit description of $r$:} (See \ref{4_15})

Objects: $r$ maps $[A; (K_i)_{i=1}^n]$ to $A(K_1,\ldots, K_n)$

Atoms: $\gamma:[A;(K_i)_{i=1}^n]\to [B;(K_i)_{i=1}^n]$ is mapped to $\gamma(K_1,\ldots,K_n)$, the evaluation
 $\ev(C)$ is mapped to the identity, and $f:[\id,K]\to[\id; K']$ is mapped to $f:K\to K'$.
\end{leer}

The composite $j\circ r:\str X\to \str X$ maps $[A; (K_i)_{i=1}^n]$ to $[\id; A(K_1,\ldots,K_n)]$
 and we have a natural transformation
$$
s:\id_\str \Rightarrow j\circ r
$$
defined by
$$
s([A;(K_i)_{i=1}^n])=\ev(A):[A;(K_i)_{i=1}^n]\to [\id; A(K_1,\ldots,K_n)].
$$
It follows

\begin{prop}\label{5_3}
$r$ is a weak equivalence with section $j$.

\hfill$\square$
\end{prop}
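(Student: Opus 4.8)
The plan is to show $Br\colon B(\str X)\to BX$ is a homotopy equivalence by exhibiting $Bj$ as a two-sided homotopy inverse; since a weak equivalence of $\mM$-algebras is defined (Definition \ref{3_8}) purely in terms of the underlying functor, this suffices. One of the two required identities is already available: the relation $r\circ j=\id_X$ noted right after Theorem \ref{5_1} gives, upon applying $B$, the strict equality $Br\circ Bj=\id_{BX}$. This simultaneously records that $j$ is a section of $r$, which is the second assertion of the proposition.

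For the reverse composite I would use the natural transformation $s\colon\id_{\str X}\Rightarrow j\circ r$ displayed just above, whose component at $[A;(K_i)_{i=1}^n]$ is the evaluation atom $\ev(A)$. The key tool is the standard fact that the classifying space functor carries a natural transformation to a homotopy: a natural transformation $F\Rightarrow G$ of functors $\cA\to\cB$ is the same datum as a functor $[1]\times\cA\to\cB$ restricting to $F$ and $G$ on the two copies of $\cA$, where $[1]$ is the category $0\to 1$ as in the proof of Lemma \ref{3_9}. Since the nerve and realization functors preserve products (see \ref{2_7}) and $B([1])=[0,1]$, applying $B$ to the functor classifying $s$ yields a map $[0,1]\times B(\str X)\to B(\str X)$, that is, a homotopy from $\id_{B(\str X)}$ to $Bj\circ Br$.

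Combining the two facts, $Bj$ and $Br$ are mutually inverse homotopy equivalences; in particular $Br$ is a weak homotopy equivalence, so $r$ is a weak equivalence by Definition \ref{3_8}.

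The only point that requires care, and which I would verify first, is that $s$ is genuinely natural, i.e.\ that the evaluation atoms $\ev(A)$ fit into a commuting naturality square for every morphism of $\str X$. By \ref{4_5}(2) it is enough to check this on the generating atoms $\gamma$, $\ev(C)$, and $f$, using the explicit description of $r$ in \ref{5_2} together with the relations of \ref{4_8}; each case is a direct verification. I do not expect any genuine obstacle: the entire content of the proposition resides in the previously constructed data $j$, $r$, and $s$, and the conclusion is a formal consequence. Note in particular that $s$ need not be a natural \emph{isomorphism} — a plain natural transformation already delivers the homotopy.
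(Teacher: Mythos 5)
Your proposal is correct and is essentially the paper's own argument: the paper deduces the proposition directly from the identity $r\circ j=\id$ and the natural transformation $s:\id_{\str}\Rightarrow j\circ r$ given by the evaluation atoms $\ev(A)$, which upon applying $B$ yields exactly the homotopy $\id_{B(\str X)}\simeq Bj\circ Br$ you describe. Your additional care in checking naturality of $s$ on the generating atoms is a point the paper leaves implicit, but it introduces no divergence in method.
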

If $X$ is a free $\mM$-algebra we can do better.

\begin{prop}\label{5_4}
  Let $F_C: \Cat\to \Cat^{\mM}$ be the free algebra
functor \ref{2_2a} and let $X=F_CY$. Then the natural homomorphism 
$r:\str X\to X$ has a section $k:X\to \str X$ in $\Cat^{\mM}$, natural in $Y$,
and there is a natural transformation $\tau: k\circ r\Rightarrow \id_\str$.
\end{prop}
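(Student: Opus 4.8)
The plan is to produce $k$ from the universal property of the free algebra, to verify the section identity on generators, and to write $\tau$ explicitly in terms of the evaluation atoms.

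First I would define $k$. Since $X=F_CY$, the free--forgetful adjunction \ref{2_2a} identifies strict homomorphisms $k:X\to\str X$ with functors $Y\to U(\str X)$. I take the functor $y\mapsto [\id;\iota(y)]$, namely the composite of the unit $\iota:Y\to UF_CY$ with the underlying functor of the lax unit $j:X\to\str X$ (which sends an object $K$ to $[\id;K]$, cf.\ \ref{4_11}), and let $k$ be its strict adjoint. Evaluating $k$ on a general object $C(y_1,\ldots,y_n)=C(\iota y_1,\ldots,\iota y_n)$ of $X$ and using that $k$ is a strict algebra map together with the $\cM$-action \ref{4_3} on $\str X$ gives
$$
k\bigl(C(y_1,\ldots,y_n)\bigr)=[C;\iota y_1,\ldots,\iota y_n].
$$
Naturality of $k$ in $Y$ is immediate, since both $j$ and $\iota$ are natural.

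For the section property, note that $r\circ k$ and $\id_X$ are both strict homomorphisms $X\to X$, so by the universal property of $F_CY$ it suffices to compare them on the generators $\iota(Y)$. Using the explicit description of $r$ from \ref{5_2} we get $r(k(\iota y))=r([\id;\iota y])=\id(\iota y)=\iota y$, whence $r\circ k=\id_X$.

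Next I would define $\tau$. Writing an object of $\str X$ as $Z=[A;(K_i)_{i=1}^n]$ and each input in its canonical free form $K_i=C_i(y_{i1},\ldots,y_{ir_i})$, the computations above yield
$$
(k\circ r)(Z)=\bigl[A\ast(C_1\oplus\cdots\oplus C_n);(\iota y_{ij})_{i,j}\bigr],\qquad \id(Z)=[A;(K_i)_i].
$$
I set $\tau(Z)=A(\ev(C_1),\ldots,\ev(C_n)):(k\circ r)(Z)\to Z$, using the evaluation atoms $\ev(C_i):[C_i;\iota y_{i1},\ldots,\iota y_{ir_i}]\to[\id;K_i]$ of \ref{4_5} and the $\cM$-action on morphisms of $\str X$. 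By Relations \ref{4_7} and \ref{4_8}(2) this is independent of the chosen representatives $C_i$, so one may take standard representatives as in \ref{4_10}. The compatibility of $\tau$ with the algebra structure, i.e.\ the identity $\tau(B(Z_1,\ldots,Z_m))=B(\tau(Z_1),\ldots,\tau(Z_m))$ that makes $\tau$ a $2$-cell in $\Cat^{\mM}$ (cf.\ \ref{2_4}, \ref{4_13}), then follows at once from the operad associativity of $\ast$.

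The main work, and the step I expect to be the real obstacle, is the naturality of $\tau$ as a transformation of the underlying functors. Since $\str X$ is generated by the atoms of \ref{4_5}, it suffices to verify the naturality square on a morphism atom $f:[\id;K]\to[\id;K']$ with $f:K\to K'$ in $X$, on an atom $\gamma:[A;(K_i)]\to[B;(K_i)]$ coming from $\gamma:A\to B$ in $\cM(n)$, and on an evaluation $\ev(C)$. The first two cases reduce, via the explicit descriptions of $r$ and $k$ on morphisms, to Relations \ref{4_8}(1)--(2) together with naturality of the $\cM$-action. The evaluation case is the delicate one: it is exactly the move of Relation \ref{4_8}(3), which relates a decomposition of the operadic morphism to the evaluation nodes, and it is here that the various $\ast$-compositions must be tracked carefully. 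Granting these (tedious but mechanical) relation chases, $\tau$ is a natural transformation compatible with the $\cM$-structure, hence the desired $2$-cell $k\circ r\Rightarrow\id_{\str}$, which completes the proof.
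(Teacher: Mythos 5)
Your construction coincides with the paper's own proof: the paper defines $k$ directly by $k(A;y_1,\ldots,y_n)=[A;(\id,y_1),\ldots,(\id,y_n)]$ (precisely the strict adjoint of $y\mapsto[\id;\iota y]$, i.e.\ your $k$), notes $r\circ k=\id$, and defines $\tau$ on $[A;(B_1,\underline{y}_1),\ldots,(B_n,\underline{y}_n)]$ by applying $A$ to the evaluation atoms $\ev(B_i):[B_i;(\id,y_{i1}),\ldots,(\id,y_{ir_i})]\to[\id;(B_i(y_{i1},\ldots,y_{ir_i}))]$, which is exactly your $\tau$. If anything, you are more explicit than the paper, which leaves the well-definedness and naturality checks you outline (via Relations \ref{4_7} and \ref{4_8}) entirely to the reader.
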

\begin{proof}
 On objects we define $k(A;y_1,\ldots,y_n)=[A;(\id,y_1),\ldots,(\id,y_n)]$ and extend this
definition to morphisms in the canonical way. Then $k: F_CY\to \str F_CY$ is a homomorphism
such that $r\circ k=\id_{F_CY}$. Let $(B_i,\underline{y}_i)$ with $B_i\in \cM(r_i)$ and $\underline{y}_i=(y_{i1},\ldots,y_{ir_i})$
be an object in $F_CY$ and $A$ an object in $\cM(n)$.
Then
$$
k\circ r([A;(B_1,\underline{y}_1),\ldots ,(B_n,\underline{y}_n)])\ =\ [A\circ (B_1\oplus\ldots\oplus B_n);(\id,y_{11}),\ldots,(\id,y_{n,r_n})]
$$
 and the natural transformation $\tau([A;(B_1,\underline{y}_1),\ldots ,(B_n,\underline{y}_n)])$ is the map
$$ [A\circ (B_1\oplus\ldots\oplus B_n);(\id,y_{11}),\ldots,(\id,y_{n,r_n})\to
[A;(B_1,\underline{y}_1),\ldots ,(B_n,\underline{y}_n)]
$$
obtained by applying $A$ to the atoms $\ev(B_i):[B_i;(\id,y_{i1}),\ldots,(\id,y_{i,r_i})]\to [\id;(B_i;y_{i1},\ldots,y_{i,r_i})]$.
\end{proof}
\vspace{1ex}

$\Cat^\mathbb{M}$ is $\Cat$-enriched, tensored and cotensored \ref{2_5}.
 We now make use of this additional structure.

\begin{leer}\label{5_4a}
Let $X:\mathcal{L}\to\Cat^\mathbb{M}\Lax$ be an $\mathcal{L}$-diagram in $\Cat^\mathbb{M}\Lax$
and $Y:\mathcal{L}\to\Cat^\mathbb{M}$ an $\mathcal{L}$-diagram in $\Cat^\mathbb{M}$. Let
$i: \Cat^\mathbb{M}\to \Cat^\mathbb{M}\Lax$ be the inclusion functor.
 Then Proposition \ref{3_5}, Theorem \ref{5_1}, and Theorem \ref{3_10} provide the following chain of isomorphisms of categories
$$
\begin{array}{rcl}
\Func(\mathcal{L},\Cat^\mathbb{M}\Lax)(X,iY) & \cong & 
 \Cat^{\cL\times\mathcal{L}^{\op}}((-/\mathcal{L}/-),\Cat^\mathbb{M}\Lax(X,iY)) \\
&\cong& \Cat^{\cL\times\mathcal{L}^{\op}}((-/\mathcal{L}/-),\Cat^\mathbb{M}(\str X,Y))\\
&\cong& (\Cat^\mathbb{M})^{\cL}((-/\mathcal{L}/-)\otimes_\mathcal{L} \str X, Y)\\
& = & (\Cat^\mathbb{M})^{\cL}(R(\str X),Y)
\end{array}
$$
and, if $Y$ is the constant diagram $cS$ on $S\in \Cat^\mathbb{M}$, Theorem \ref{4_14} gives
$$
\begin{array}{rcl}
\Cat^\mathbb{M}(\hocolim\ X,S) &\cong& \Func(\mathcal{L},\Cat^\mathbb{M}\Lax)(X,cS)\\
&\cong & (\Cat^\mathbb{M})^{\cL}(R(\str X),cS)\\
&\cong & \Cat^\mathbb{M}(\colim R(\str X), S)\\
&\cong & \Cat^\mathbb{M}((-/\mathcal{L})\otimes_\mathcal{L} \str X, S).
\end{array}
$$
The last isomorphism is induced by the isomorphisms
$$
\begin{array}{rcl}
\colim R(Z) &=& \colim ((-/\mathcal{L}/-)\otimes_\mathcal{L}Z)=\ast\otimes_\mathcal{L}((-/\mathcal{L}/-)\otimes_\mathcal{L}Z)\\
&\cong& (\ast \times_\mathcal{L}(-/\mathcal{L}/-))\otimes_\mathcal{L}Z\cong (-/\mathcal{L})\otimes_\mathcal{L}Z
\end{array}
$$
for a diagram $Z:\cL\to \Cat^\mathbb{M}$.
\end{leer}

We obtain

\begin{theo}\label{5_5} The three functors 
$\hocolim$ and $(-/\mathcal{L})\otimes_\mathcal{L}\str$ and $\colim R\str$
$$ \Func(\mathcal{L},\Cat^\mathbb{M}\Lax)
\to \Cat^\mathbb{M}$$
sending $X$ to $\hocolim\ X$, to $(-/\mathcal{L})\otimes_\mathcal{L}\str X$, and to 
$\colim R\str X$ respectively are 
naturally isomorphic.
\end{theo}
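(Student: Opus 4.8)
The plan is to read off the three isomorphisms from the chain of representable isomorphisms already assembled in \ref{5_4a}, and then to invoke the Yoneda lemma to pass from isomorphisms of hom-functors to isomorphisms of the representing objects.

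First I would fix $X\in\Func(\mathcal{L},\Cat^\mathbb{M}\Lax)$ and let $S$ range over $\Cat^\mathbb{M}$. Theorem \ref{4_14} exhibits $\hocolim$ as left adjoint to the constant diagram functor, giving
\[
\Cat^\mathbb{M}(\hocolim\ X,S)\cong\Func(\mathcal{L},\Cat^\mathbb{M}\Lax)(X,cS),
\]
natural in $S$ and in $X$. The chain displayed in \ref{5_4a} prolongs this to
\[
\cong(\Cat^\mathbb{M})^{\mathcal{L}}(R(\str X),cS)\cong\Cat^\mathbb{M}(\colim R(\str X),S)\cong\Cat^\mathbb{M}((-/\mathcal{L})\otimes_\mathcal{L}\str X,S),
\]
the successive steps invoking Proposition \ref{3_5}, the strictification adjunction (Theorem \ref{5_1}), the rectification adjunction (Theorem \ref{3_10}), the universal property of $\colim$, and the coend identity $\colim R(Z)\cong(-/\mathcal{L})\otimes_\mathcal{L}Z$ recorded at the end of \ref{5_4a}. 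Each link is natural in $S$.

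Next I would observe that, for each fixed $X$, the three covariant functors $\Cat^\mathbb{M}(\hocolim\ X,-)$, $\Cat^\mathbb{M}(\colim R(\str X),-)$ and $\Cat^\mathbb{M}((-/\mathcal{L})\otimes_\mathcal{L}\str X,-)$ on $\Cat^\mathbb{M}$ are canonically isomorphic, so the ($\Cat$-enriched) Yoneda lemma yields canonical isomorphisms
\[
\hocolim\ X\cong\colim R(\str X)\cong(-/\mathcal{L})\otimes_\mathcal{L}\str X,
\]
namely the maps corresponding to the identity of the common representable.

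To finish I would upgrade these to natural isomorphisms of functors on $\Func(\mathcal{L},\Cat^\mathbb{M}\Lax)$. Because $\str$, $R$, and the isomorphism of Proposition \ref{3_5} are $(2$-$)$functorial in their diagram arguments, the whole chain is natural in $X$; hence for a morphism $X\to X'$ the induced squares of representable functors commute, and Yoneda transports the commutativity to the representing objects. The main obstacle I anticipate is purely one of bookkeeping: Proposition \ref{3_5} is stated $2$-natural in its \emph{target}, so I must track how its isomorphism combines with the strictification unit $X\to\str X$ to produce naturality in the \emph{source} diagram $X$ after strictification. Once that compatibility is verified, the Yoneda step is formal and the three functors are naturally isomorphic.
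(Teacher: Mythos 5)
Your proposal is correct and takes essentially the same route as the paper: Theorem \ref{5_5} is obtained there precisely by assembling the chain of representable isomorphisms in \ref{5_4a} (via Proposition \ref{3_5}, Theorems \ref{5_1}, \ref{3_10}, \ref{4_14}, and the coend identity $\colim R(Z)\cong(-/\mathcal{L})\otimes_\mathcal{L}Z$), all natural in $S$, and concluding by Yoneda. Your explicit attention to naturality in $X$ only spells out a bookkeeping step the paper leaves implicit.
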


If $\cM$ is the initial operad in $\Cat$ the strictification functor
$$\str : \Cat^\mathbb{M}\Lax= \Cat^\mathbb{M}=\Cat \to \Cat^\mathbb{M}=\Cat$$
is the identity functor. Since the tensor in $\Cat$ is just the product, Theorem \ref{5_5} and \ref{4_19} imply

\begin{prop}\label{5_6}
 Thomason's homotopy colimit functor $\cL\int -$ in $\Cat$ and the functor $(-/\mathcal{L})\times_\mathcal{L} -$ are
naturally isomorphic functors from the category $\Func(\mathcal{L},\Cat)$ of functors $X:\cL\to \Cat$
and homotopy morphisms to the category $\Cat$.
\end{prop}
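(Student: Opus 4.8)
The plan is to obtain the statement as a direct specialization of Theorem \ref{5_5} to the initial operad, after translating the three structural ingredients appearing there—the strictification functor $\str$, the tensor $\otimes_\mathcal{L}$, and the homotopy colimit $\hocolim$—into their elementary meanings in this case. No new construction is needed; the work is entirely bookkeeping on top of the general results already proved.

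First I would recall from \ref{4_19} that for the initial operad $\cM$ (with $\cM(1)=\{\id\}$ and $\cM(n)=\emptyset$ for $n\neq 1$) one has $\Cat^{\mathbb M}\Lax=\Cat^{\mathbb M}=\Cat$, so that $\Func(\mathcal{L},\Cat^{\mathbb M}\Lax)$ is exactly the category $\Func(\mathcal{L},\Cat)$ of functors $X:\cL\to\Cat$ and homotopy morphisms, and that under these identifications $\hocolim\ X$ coincides with the Grothendieck construction $\cL\int X$, i.e. Thomason's homotopy colimit in $\Cat$. Since there are no nontrivial lax data to rectify, the strictification $2$-functor $\str:\Cat^{\mathbb M}\Lax\to\Cat^{\mathbb M}$ of Theorem \ref{5_1} is the identity functor on $\Cat$, as already noted in the remark preceding the statement. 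Next I would invoke \ref{2_6a}, which records that in each of our categories $\cV$—in particular in $\Cat$—the tensor $\otimes$ is the categorical product; hence the coend $(-/\mathcal{L})\otimes_\mathcal{L}\str X$ appearing in Theorem \ref{5_5} reduces, with $\str X=X$, to $(-/\mathcal{L})\times_\mathcal{L} X$.

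Finally I would apply Theorem \ref{5_5}, which supplies a natural isomorphism $\hocolim\cong (-/\mathcal{L})\otimes_\mathcal{L}\str$ of functors $\Func(\mathcal{L},\Cat^{\mathbb M}\Lax)\to\Cat^{\mathbb M}$. Substituting the three identifications above turns this into a natural isomorphism $\cL\int X\cong (-/\mathcal{L})\times_\mathcal{L} X$ of functors $\Func(\mathcal{L},\Cat)\to\Cat$, which is precisely the asserted statement.

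The one point requiring care—rather than a genuine obstacle—is to confirm that the naturality furnished by Theorem \ref{5_5} is along \emph{homotopy} morphisms and not merely along strict natural transformations, since the statement ranges over all $1$-cells of $\Func(\mathcal{L},\Cat)$. This is guaranteed because $\hocolim$ is a $2$-functor on $\Func(\mathcal{L},\Cat^{\mathbb M}\Lax)$ by Theorem \ref{4_14}, so the Yoneda argument of \ref{5_4a} that produces the isomorphism of Theorem \ref{5_5} is automatically natural over the full morphism class of $\Func(\mathcal{L},\Cat^{\mathbb M}\Lax)$, hence over homotopy morphisms. Beyond this bookkeeping the proposition is a formal consequence of already-established results, with no further computation needed.
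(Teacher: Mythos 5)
Your proposal matches the paper's own argument essentially verbatim: the paper derives Proposition \ref{5_6} exactly by specializing Theorem \ref{5_5} to the initial operad, noting (as stated immediately before the proposition) that $\str$ becomes the identity and the tensor in $\Cat$ is the product, and invoking \ref{4_19} to identify $\hocolim$ with the Grothendieck construction $\cL\int -$. Your closing remark that the naturality of Theorem \ref{5_5} already ranges over homotopy morphisms (since $\hocolim$ is a $2$-functor on $\Func(\mathcal{L},\Cat^{\mathbb M}\Lax)$ by Theorem \ref{4_14}) is correct and consistent with the paper's statement of Theorem \ref{5_5}.
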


\begin{prop}\label{5_7}
 The free functor $F_C:\Cat \to \Cat^{\mM}$ preserves homotopy colimits up to weak equivalences.
\end{prop}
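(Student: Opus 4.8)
The plan is to reduce everything to the coend formula of Theorem~\ref{5_5} and then to exploit the extra rigidity that free algebras enjoy by Proposition~\ref{5_4}. Fix an object $Y$ of $\Func(\cL,\Cat)$, i.e.\ a strict diagram $Y:\cL\to\Cat$; then $F_CY:\cL\to\Cat^{\mM}$ is again a strict diagram, and Theorem~\ref{5_5} gives a natural isomorphism $\hocolim\,(F_CY)\cong(-/\cL)\otimes_\cL\str(F_CY)$. On the other side, $\hocolim\,Y$ is Thomason's Grothendieck construction, which by Proposition~\ref{5_6} is naturally isomorphic to $(-/\cL)\times_\cL Y$. So the assertion amounts to producing a natural weak equivalence between $(-/\cL)\otimes_\cL\str(F_CY)$ and $F_C\bigl((-/\cL)\times_\cL Y\bigr)$.

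First I would dispose of the right-hand coend by a purely formal computation. Since $F_C\dashv U$ is a $\Cat$-enriched adjunction (\ref{2_2a}), the left adjoint $F_C$ preserves tensors: for $K,W\in\Cat$ there is a natural isomorphism $K\otimes F_CW\cong F_C(K\times W)$ in $\Cat^{\mM}$, where $\otimes$ is the tensor of \ref{2_6a} and $\times$ the product in $\Cat$. This is the adjunction chain $\Cat^{\mM}(K\otimes F_CW,S)\cong\Cat(K,\Cat(W,US))\cong\Cat^{\mM}(F_C(K\times W),S)$. Being a left adjoint, $F_C$ also preserves coends, so
\[
(-/\cL)\otimes_\cL F_CY\;\cong\;F_C\bigl((-/\cL)\times_\cL Y\bigr)\;\cong\;F_C(\hocolim\,Y),
\]
naturally in $Y$, the last isomorphism being Proposition~\ref{5_6}.

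It therefore remains to show that the counit $r:\str(F_CY)\to F_CY$ of Theorem~\ref{5_1} induces a natural weak equivalence $(-/\cL)\otimes_\cL r$. Here I would invoke Proposition~\ref{5_4}: because $X=F_CY$ is free, $r$ admits a section $k$ with $r\circ k=\id$, together with a natural transformation $\tau:k\circ r\Rightarrow\id_{\str}$, all natural in $Y$ and hence compatible with the $\cL$-diagram structure. Tensoring the identity $r\circ k=\id$ with $(-/\cL)$ over $\cL$ shows that $(-/\cL)\otimes_\cL k$ is a section of $(-/\cL)\otimes_\cL r$. For the other composite, writing $Z=\str(F_CY)$, the family $\tau_L$ satisfies $Z(\lambda)\circ_1\tau_{L_0}=\tau_{L_1}\circ_1 Z(\lambda)$ for each $\lambda:L_0\to L_1$, which is exactly the hypothesis of Lemma~\ref{3_9}, now in the $\Cat^{\mM}$-valued variable. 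Running the argument of Lemma~\ref{3_9} with the interval $[1]$ tensored into the $\Cat^{\mM}$-factor — building a strict map $[1]\otimes^c Z\to Z$ of $\cL$-diagrams out of $\tau$, commuting the interval past the coend via $(-/\cL)\otimes_\cL([1]\otimes^c Z)\cong[1]\otimes^c\bigl((-/\cL)\otimes_\cL Z\bigr)$, and using \ref{2_7a} together with $B([1])=[0,1]$ — yields a homotopy of $B\mM$-algebra maps between $B\bigl((-/\cL)\otimes_\cL(k\circ r)\bigr)$ and the identity. Hence $(-/\cL)\otimes_\cL r$ is a weak equivalence, and stringing the isomorphisms together with it gives the natural weak equivalence $\hocolim\,(F_CY)\xrightarrow{\ \sim\ }F_C(\hocolim\,Y)$.

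The enriched computation and the identification of the coends are routine. The one place needing care — the main obstacle — is the third step: Lemma~\ref{3_9} is phrased with its $2$-cell on the $\cL^{\op}\to\Cat$ side, whereas the homotopy $\tau$ here lives on the $\Cat^{\mM}$-valued side, so one must verify that its proof transcribes verbatim to this dual setting, in particular that the interchange of the interval with the coend is legitimate for the tensor $\otimes^c$ of $\Cat^{\mM}$.
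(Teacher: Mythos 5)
Your proposal is correct and is essentially the paper's own proof: the same reduction via Theorem \ref{5_5} and Proposition \ref{5_6}, the same observation that the enriched left adjoint $F_C$ preserves tensors and coends so that $F_C((-/\cL)\times_\cL Y)\cong(-/\cL)\otimes_\cL F_CY$, and the same final appeal to the section $k$ and transformation $\tau$ of Proposition \ref{5_4} to see that $(-/\cL)\otimes_\cL r$ is a weak equivalence. The paper compresses that last step to ``this follows from \ref{5_4}''; your spelled-out homotopy, running the proof of Lemma \ref{3_9} with the interval tensored into the algebra-valued factor via $\otimes^c$ and \ref{2_7a}, is exactly the intended argument.
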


\begin{proof}
 Let $X: \cL\to \Cat$ be a diagram in $\Cat$. By \ref{5_5} and \ref{5_6} we have to show that the canonical map
$$(-/\cL)\otimes_{\cL} \str F_CX \to F_C((-/\mathcal{L})\times_\mathcal{L}X)$$
is a weak equivalence. Since $F_C$ is a left adjoint $\Cat$-functor it preserves coends and tensors, so that
$F_C((-/\mathcal{L})\times_\mathcal{L}X)\cong (-/\cL)\otimes_{\cL} F_CX $ and we are left to show that
the homomorphism $r:\str F_CX\to F_CX$ induces a weak equivalence
$$(-/\cL)\otimes_{\cL} \str F_CX \to (-/\cL)\otimes_{\cL} F_CX. $$
But this follows from \ref{5_4}.
\end{proof}

\begin{rema}\label{5_8} We have seen that Thomason's homotopy colimit of a functor 
$X:\mathcal{L}\to\Cat$ can be identified with $(-/\mathcal{L})\times_\mathcal{L}X$.
In comparison to this, the Bousfield-Kan homotopy colimit $\hocolim\ Z$ of a diagram  
$Z:\mathcal{L}\to\SSets$ is $N( -/\mathcal{L})\otimes_\mathcal{L}Z$ 
where $N$ is the nerve functor. Both $\Cat$ and $\SSets$ are self-enriched, and the tensor is given
 by the product. So both constructions are tensoring constructions involving $(-/\mathcal{L})$
respectively the nerve of $(-/\mathcal{L})$.
Theorem \ref{5_5} shows that our homotopy colimit construction fits into this set-up, 
the only difference being the objectwise strictification of our diagrams. The strictification
serves as a substitute of the cofibrant replacement functor which enters into the definition of
the homotopy colimit in the categories of simplicial or topological algebras over simplicial respectively
topological operads.
\end{rema}

\begin{prop}\label{5_9}
 Let $S$ be an $\mM$-algebra and $cS:\cL\to \Cat^{\mM}\Lax$ the constant diagram on $S$. Then in $\Cat^{\mM}$
$$\hocolim\ cS\cong \cL \otimes \str S,$$
and the counit of the adjunction \ref{4_14} corresponds to the composite
$$\cL \otimes \str S\to \ast \otimes \str S\cong \str S\to S.$$
\end{prop}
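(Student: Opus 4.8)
The plan is to avoid the explicit coend computation and instead identify both sides as representing the same functor of a test $\mM$-algebra $T$, reading off the counit from the same chain. First I would assemble the chain of bijections
\[
\Cat^{\mM}(\hocolim\ X, T)\bigg|_{X=cS}\ \cong\ \Func(\cL,\Cat^{\mM}\Lax)(cS, cT)\ \cong\ \Cat(\cL,\Cat^{\mM}\Lax(S,T))
\]
\[
\cong\ \Cat(\cL,\Cat^{\mM}(\str S,T))\ \cong\ \Cat^{\mM}(\cL\otimes\str S, T).
\]
Here the first bijection is the adjunction of Theorem \ref{4_14}; the second is the direct observation that, since $cS$ and $cT$ send every morphism of $\cL$ to an identity, a homotopy morphism $cS\to cT$ is exactly a functor $\cL\to\Cat^{\mM}\Lax(S,T)$ (its value at $L$ is the lax morphism $j_L\colon S\to T$, its value at $\lambda\colon L_0\to L_1$ is the $2$-cell $j_\lambda\colon j_{L_0}\Rightarrow j_{L_1}$, and the coherence conditions of \ref{3_4} collapse to $j_{\id}=\id$ and functoriality $j_{\lambda_1\circ\lambda_0}=j_{\lambda_1}\circ_2 j_{\lambda_0}$); the third is the enriched strictification adjunction of Theorem \ref{5_1}; and the last is the tensor adjunction of Corollary \ref{2_6}. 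Each bijection is natural in $T$, so Yoneda yields the asserted isomorphism $\hocolim\ cS\cong\cL\otimes\str S$ in $\Cat^{\mM}$.

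To identify the counit I would trace $\id_{cS}$ through the chain. By Theorem \ref{4_14} the counit $r\colon\hocolim\ cS\to S$ corresponds to $\id_{cS}$. Since $(\id_{cS})_L=\id_S$ and $(\id_{cS})_\lambda=\id$, the second bijection sends $\id_{cS}$ to the constant functor $\cL\to\Cat^{\mM}\Lax(S,S)$ with value the identity lax morphism $\id_S$. Under the strictification adjunction the object $\id_S$ corresponds to the counit $r_S\colon\str S\to S$ of \ref{5_2}, because $r_S\circ j=\id_S$; thus $\id_{cS}$ becomes the constant functor $\cL\to\Cat^{\mM}(\str S,S)$ with value $r_S$. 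This constant functor factors as $\cL\xrightarrow{\,!\,}\ast\to\Cat^{\mM}(\str S,S)$, the second arrow having value $r_S$, and since the tensor adjunction is natural in the $\Cat$-variable, the corresponding $\mM$-homomorphism $\cL\otimes\str S\to S$ factors as $\cL\otimes\str S\xrightarrow{!\otimes\id}\ast\otimes\str S\cong\str S\xrightarrow{r_S}S$. This is precisely the composite in the statement.

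I would then remark that this is consistent with, and may alternatively be deduced from, Theorem \ref{5_5}: applying $\str$ objectwise to $cS$ gives the constant diagram on $\str S$, and as $-\otimes\str S$ is a left adjoint it commutes with the coend over $\cL$, so $(-/\cL)\otimes_\cL\str(cS)\cong(\colim_{\cL^{\op}}(-/\cL))\otimes\str S$; the codomain functors $L/\cL\to\cL,\ (M,L\to M)\mapsto M$ form a cocone identifying $\colim_{\cL^{\op}}(-/\cL)$ with $\cL$, recovering $\cL\otimes\str S$. As a sanity check, the explicit formula $r([A;(K_i,L_i)])=A(K_1,\dots,K_n)$ of \ref{4_15} matches $r_S([A;(K_i)])=A(K_1,\dots,K_n)$ of \ref{5_2} after forgetting the indices $L_i$, exactly as the factorization through $!\otimes\id$ predicts.

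The main obstacle is bookkeeping rather than conceptual. One must check that the second bijection is genuinely natural in $T$ in the form needed to splice it onto the strictification and tensor adjunctions, and that the coherence data of a homotopy morphism between constant diagrams reduces to plain functoriality into $\Cat^{\mM}\Lax(S,T)$ with no residual conditions surviving. Once this naturality is pinned down, the Yoneda identification of the isomorphism and the trace of $\id_{cS}$ giving the counit are forced.
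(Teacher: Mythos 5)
Your proposal is correct, but it takes a different route than the paper. The paper's proof is a two-line specialization of Theorem \ref{5_5}: since $\str$ applied objectwise to $cS$ is the constant diagram on $\str S$, it writes $\hocolim\ cS\cong(-/\cL)\otimes_{\cL}\str S\cong(\colim F)\otimes\str S\cong\cL\otimes\str S$ with $F(L)=L/\cL$, and it identifies the counit in one sentence by observing that it factors through the change-of-indexing functor $F_\ast$ of \ref{4_16} for $F:\cL\to\ast$. You instead run a representability argument: the adjunction of Theorem \ref{4_14}, your observation that homotopy morphisms $cS\to cT$ between constant diagrams are precisely functors $\cL\to\Cat^{\mM}\Lax(S,T)$ (which is right --- the Appendix A compatibility condition on $j_\lambda$ collapses, when $X\lambda=\id_S$ and $Y\lambda=\id_T$ with identity lax structures, to $j_\lambda$ being a morphism in the hom-category), the enriched strictification adjunction of Theorem \ref{5_1}, and the tensor adjunction, followed by Yoneda. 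This is in effect a rerun, specialized to constant diagrams, of the chain \ref{5_4a} by which Theorem \ref{5_5} was itself proved, with the coend identification $\colim_{\cL^{\op}}(-/\cL)\cong\cL$ replaced by your second bijection; indeed your closing remark reconstructs the paper's actual proof, including the cocone of codomain functors. What your route buys is an honest trace of $\id_{cS}$ through the chain, making explicit the factorization of the counit as $\cL\otimes\str S\to\ast\otimes\str S\cong\str S\xrightarrow{r}S$ (using that $r\circ j=\id_S$ pins down $r$ under the strictification adjunction), exactly where the paper is terse; what the paper's route buys is brevity, since everything is quoted from \ref{5_5} and \ref{4_16}. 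The one point you rightly flag as needing care --- that the strictification adjunction holds as an isomorphism of hom-categories so it can be whiskered by functors out of $\cL$ --- is supplied by the Addendum \ref{4_13} and is used in the same enriched form in \ref{5_4a}, so no gap remains.
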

\begin{proof} Let $F:\cL^{\op}\to \Cat$ be defined by $F(L)=L/\cL$. Then
 $$\hocolim\ cS\cong (-/\cL)\otimes_{\cL} \str S\cong \colim F\otimes \str S\cong \cL\otimes \str S.$$
The second statement holds because the counit factors through the change of indexing category functor $F_\ast$ for
$F: \cL \to \ast$ (see \ref{4_16}).
\end{proof}

\section{On the homotopy type of the homotopy colimit}

In this section we compare our homotopy colimit with the homotopy colimit constructions of algebras in the categories $\SSets$  
and $\Top$. Let 
$$
N:\Cat\to\SSets, \qquad   |-|:\SSets \to \Top, \qquad \Sing:\Top\to\SSets
$$
be the nerve, the topological realization functor, and the singular functor respectively, let $B=|N|$ be the classifying space functor.
We recall

\begin{defi}\label{6_1}
 A map of topological spaces is called a \textit{weak equivalence} if it is a weak homotopy equivalence. A functor 
$F:\cA\to \cB$ of categories and a map $f:K\to L$ of simplicial sets is called a \textit{weak equivalence} if $BF: B\cA\to B\cB$ 
respectively $|f|:|K|\to |L|$ is a weak equivalence.
\end{defi}

The categories $\SSets$ and $\Top$ are cofibrantly generated simplicial model categories with these weak equivalences; in particular, they are simplicially
enriched (\ref{2_7}). The realization and the singular functor are simplicial functors and we have a $\SSets$-enriched Quillen
equivalence \cite{Quillen}
$$
|-|:\SSets\leftrightarrows\Top\ :\Sing.
$$
Let $\mathcal{M}$ be a $\Sigma$-free operad in $\Cat$. Then $N\mathcal{M}$ is 
a simplicial and $B\mathcal{M}$ a topological operad. Let $\SSets^{N\mathbb{M}}$ and $\Top^{B\mathbb{M}}$ denote the 
categories of $N\mM$-algebras in $\SSets$ respectively $B\mM$-algebras in $\Top$, where $N\mM$ and $B\mM$ are the
monads associated with the operads $N\mathcal{M}$ and $B\mathcal{M}$ respectively.
It is well-known that the Quillen model structures of $\SSets$ and $\Top$ lift to cofibrantly generated simplicial Quillen model structure on
 $\SSets^{N\mathbb{M}}$ and $\Top^{B\mathbb{M}}$, whose weak 
equivalences are those homomorphisms of algebras whose underlying maps are weak equivalences in $\SSets$
respectively $\Top$ (e.g. see \cite{SV}; for the simplicial enrichment and the simplicial tensor recall \ref{2_7}).
Since the singular functor $\Sing:\Top\to\SSets$ and the realization functor preserve products, the classical Quillen equivalence
$
|-|:\SSets\leftrightarrows\Top\ :\Sing
$
lifts to a $\SSets$-enriched Quillen equivalence
\begin{leer}\label{6_1a}
$$
|-|^{\alg}:\SSets^{N\mathbb{M}}\leftrightarrows\Top^{B\mathbb{M}}\ :\Sing^{\alg}.
$$
\end{leer}

\begin{defi}\label{6_2}
Let $X:\mathcal{L}\to\SSets^{N\mathbb{M}}$ be an $\mathcal{L}$-diagram in $\SSets^{N\mathbb{M}}$, then its homotopy 
colimit $\hocolim^{N\mathbb{M}}X$ is defined by
$$
\hocolim^{N\mathbb{M}}X=N(-/\mathcal{L})\otimes_\mathcal{L}QX  
$$
where $QX\to X$ is a fixed functorial objectwise cofibrant replacement.

If $X:\mathcal{L}\to\Top^{B\mathbb{M}}$ is an $\mathcal{L}$-diagram in $\Top^{B\mM}$ its homotopy colimit is defined by
$$
\hocolim^{B\mathbb{M}}X=B(\ast,\mathcal{L},\mathcal{L})\otimes_\mathcal{L}Q_tX
$$
where $Q_tX\to X$ again is a fixed functorial objectwise cofibrant replacement and $B(\ast,\mathcal{L},\mathcal{L})$ is the
2-sided bar construction (see \cite{HV}).
\end{defi}

From \cite[18.5.3]{Hirsch} we obtain

\begin{lem}\label{6_3}
Let $f:X\to Y$ be a strict morphism of $\cL$-diagrams $X,Y: \mathcal{L}\to\SSets^{N\mathbb{M}}$ such that
 $f(L):X(L)\to Y(L)$ is a weak equivalence for each object $L$ in $\cL$. Then
 $$
 \hocolim^{N\mathbb{M}}f: \hocolim^{N\mathbb{M}}X \to \hocolim^{N\mathbb{M}}Y
 $$
 is a weak equivalence.
\end{lem}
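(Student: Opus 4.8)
The plan is to reduce the statement to the standard homotopy invariance of the Bousfield--Kan homotopy colimit in a simplicial model category, as recorded in \cite[18.5.3]{Hirsch}. Recall from the discussion preceding Definition \ref{6_2} that $\SSets^{N\mathbb{M}}$ is a cofibrantly generated simplicial model category whose weak equivalences are the objectwise weak equivalences, and that by Definition \ref{6_2} we have $\hocolim^{N\mathbb{M}}X=N(-/\mathcal{L})\otimes_\mathcal{L}QX$. As noted in Remark \ref{5_8}, the coend $N(-/\mathcal{L})\otimes_\mathcal{L}(-)$ is exactly the Bousfield--Kan homotopy colimit functor of this simplicial model category, so that $\hocolim^{N\mathbb{M}}X$ is the Bousfield--Kan homotopy colimit of the objectwise cofibrant diagram $QX$.

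The first step is to pass from $f$ to $Qf$. Since $Q$ is a functorial cofibrant replacement on the category of $\mathcal{L}$-diagrams, the strict morphism $f:X\to Y$ induces a strict morphism $Qf:QX\to QY$ of objectwise cofibrant diagrams, together with commuting squares relating the natural augmentations $QX\to X$ and $QY\to Y$ to $f$ and $Qf$. For each object $L$ of $\mathcal{L}$ the maps $QX(L)\to X(L)$ and $QY(L)\to Y(L)$ are weak equivalences and $f(L)$ is a weak equivalence by hypothesis; the two-out-of-three property applied to the resulting commuting square then shows that $Qf(L):QX(L)\to QY(L)$ is a weak equivalence. Hence $Qf$ is an objectwise weak equivalence between objectwise cofibrant diagrams.

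The second step is to invoke \cite[18.5.3]{Hirsch}: for a simplicial model category and a small indexing category, an objectwise weak equivalence between objectwise cofibrant diagrams induces a weak equivalence on Bousfield--Kan homotopy colimits. Applying this to $Qf$ gives that
$$
N(-/\mathcal{L})\otimes_\mathcal{L}Qf:\ N(-/\mathcal{L})\otimes_\mathcal{L}QX\ \to\ N(-/\mathcal{L})\otimes_\mathcal{L}QY
$$
is a weak equivalence. By the definition of $\hocolim^{N\mathbb{M}}$ and the functoriality of $Q$, this map is precisely $\hocolim^{N\mathbb{M}}f$, which is therefore a weak equivalence.

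The only point requiring care --- and hence the main obstacle --- is the verification that the hypotheses of \cite[18.5.3]{Hirsch} are met in $\SSets^{N\mathbb{M}}$: namely that this category is a genuine simplicial model category (with the simplicial structure and tensor recalled in \ref{2_7}) and that the explicit coend $N(-/\mathcal{L})\otimes_\mathcal{L}(-)$ on objectwise cofibrant diagrams agrees with the homotopy colimit functor to which the cited theorem refers. Both are standard: the first has been recorded already, and the second follows from the identification of $N(-/\mathcal{L})$ as the Bousfield--Kan weight. Once these are in place the cited theorem applies verbatim.
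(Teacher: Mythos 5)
Your proof is correct and follows exactly the paper's route: the paper proves Lemma \ref{6_3} by a bare citation of \cite[18.5.3]{Hirsch}, the cofibrant replacement $Q$ having been built into Definition \ref{6_2} precisely so that this theorem applies (cf.\ Remark \ref{6_4}). You have merely spelled out the routine details (functoriality of $Q$, two-out-of-three to see that $Qf$ is an objectwise weak equivalence of objectwise cofibrant diagrams) that the paper leaves implicit.
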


\begin{rema}\label{6_4}
Hirschhorn defines the homotopy colimit as $N(-/\mathcal{L})\otimes_\mathcal{L}X$ without the cofibrant replacement 
\cite[18.1.2]{Hirsch}. Our definition has the advantage that Lemma \ref{6_3} holds.
Up to weak equivalences our definition is independent of the choice of $Q$ respectively
$Q_t$. For let $Q'$ be another objectwise cofibrant replacement functor, the morphisms $Q'X\leftarrow Q'QX\rightarrow QX$ 
are objectwise weak equivalences between objectwise cofibrant diagrams and hence induce weak equivalences
$$
N(-/\mathcal{L})\otimes_\mathcal{L}Q'X\leftarrow N(-/\mathcal{L})\otimes_\mathcal{L}Q'QX\rightarrow
N(-/\mathcal{L})\otimes_\mathcal{L}QX.
$$
by \ref{6_3}.
\end{rema}

\begin{prop}\label{6_5}
 For a diagram $X:\mathcal{L}\to\SSets^{N\mathbb{M}}$ there are
natural weak equivalences
$$
\hocolim^{B\mM}|X|\leftarrow \hocolim^{B\mM}|QX|\rightarrow |\hocolim^{N\mM} X|
$$
\end{prop}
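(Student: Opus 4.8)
The plan is to establish the two weak equivalences of the zig-zag independently: the left-hand arrow by homotopy invariance of $\hocolim^{B\mM}$, and the right-hand arrow by commuting the realization functor past the coend that defines $\hocolim^{N\mM}$. Throughout I shall use that both $\SSets^{N\mM}$ and $\Top^{B\mM}$ are cofibrantly generated simplicial model categories with underlying-object weak equivalences, that $|-|^{\alg}\colon\SSets^{N\mM}\leftrightarrows\Top^{B\mM}\colon\Sing^{\alg}$ is the Quillen equivalence \ref{6_1a}, and the natural isomorphism $\omega$ of \ref{2_7a}.

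For the left arrow, first I would note that $QX\to X$ is an objectwise cofibrant replacement, hence an objectwise weak equivalence in $\SSets^{N\mM}$. Since weak equivalences of algebras are created by the underlying objects and the underlying functor commutes with $|-|$, while $|-|$ preserves \emph{all} weak equivalences of simplicial sets (every simplicial set is cofibrant), the induced map $|QX|\to|X|$ of $\cL$-diagrams in $\Top^{B\mM}$ is again objectwise a weak equivalence. The $\Top^{B\mM}$-analogue of Lemma \ref{6_3} (valid because the definition of $\hocolim^{B\mM}$ in \ref{6_2} already incorporates the functorial cofibrant replacement $Q_t$, and both rest on \cite[18.5.3]{Hirsch}) then yields that $\hocolim^{B\mM}|QX|\to\hocolim^{B\mM}|X|$ is a weak equivalence.

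For the right arrow I would first produce a natural isomorphism $|\hocolim^{N\mM}X|\cong B(\ast,\cL,\cL)\otimes_\cL|QX|$. Writing $\hocolim^{N\mM}X=N(-/\cL)\otimes_\cL QX=\int^{L}N(L/\cL)\otimes^s QX(L)$, I use that $|-|^{\alg}$, being a left adjoint, preserves the coend, and that $\omega$ of \ref{2_7a} applied to the $\SSets$-operad $N\mM$ (for which $|N\mM|=B\mM$, so $\Top^{|N\mM|}=\Top^{B\mM}$) carries each tensor to $|N(L/\cL)|\otimes^t|QX(L)|$. As $\omega$ is natural, these identifications are compatible with the $\cL$-actions and descend to the coend, giving
\[
|\hocolim^{N\mM}X| \;\cong\; \textstyle\int^{L}\big|N(L/\cL)\otimes^s QX(L)\big| \;\cong\; \textstyle\int^{L}|N(L/\cL)|\otimes^t|QX(L)| \;=\; B(\ast,\cL,\cL)\otimes_\cL|QX|,
\]
where the last equality is the standard identification of the $\cL^{\op}$-diagram $|N(-/\cL)|$ with the two-sided bar construction $B(\ast,\cL,\cL)$, both having as $n$-simplices the chains $L\to M_0\to\cdots\to M_n$. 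Next, since $|-|^{\alg}$ is left Quillen, $|QX|$ is objectwise cofibrant in $\Top^{B\mM}$, so the counit $Q_t|QX|\to|QX|$ is an objectwise weak equivalence of objectwise cofibrant diagrams; by \cite[18.5.3]{Hirsch} it induces a weak equivalence $\hocolim^{B\mM}|QX|=B(\ast,\cL,\cL)\otimes_\cL Q_t|QX|\to B(\ast,\cL,\cL)\otimes_\cL|QX|$, which composed with the displayed isomorphism gives the right-hand arrow. All ingredients ($Q$, $Q_t$, $|-|^{\alg}$, $\omega$, and the bar–nerve identification) are natural in $X$, so the whole zig-zag is natural.

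The main obstacle is the middle paragraph: one must check that realization commutes simultaneously with the coend and with the tensor, and that the comparison $\omega$ is genuinely compatible with the $\cL$-diagram structure so that it passes to the coend, the delicate point being that the $B\mM$-algebra structures on the two sides agree. This rests entirely on $|-|$ preserving colimits and products as packaged in \ref{2_7a}, so once that compatibility is in hand the remaining steps are the formal homotopy-invariance arguments above.
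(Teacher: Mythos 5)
Your proof is correct and follows essentially the same route as the paper's: the left arrow by homotopy invariance of $\hocolim^{B\mM}$ via \cite[18.5.3]{Hirsch}, and the right arrow by using that realization is a simplicial left Quillen functor preserving coends, tensors, and cofibrancy to obtain $|\hocolim^{N\mM}X|\cong |N(-/\mathcal{L})|\otimes_\mathcal{L}|QX|$, followed by the objectwise weak equivalence $Q_t|QX|\to|QX|$ between objectwise cofibrant diagrams. Your additional care in invoking $\omega$ of \ref{2_7a} to check that the tensor comparison respects the $B\mM$-algebra structures is exactly the compatibility the paper packages into the phrase ``preserves coends, tensors.''
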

\begin{proof} Since $|QX|\to |X|$ is an objectwise weak equivalence the left map is a weak equivalence by
homotopy invariance.\\
Note that $B(\ast,\mathcal{L},\mathcal{L})=B(-/\mathcal{L})$. Since realization is a simplicial left Quillen
functor it preserves coends, tensors, and maps cofibrant objects to cofibrant objects. So
we have a natural isomorphism
$$
|N(-/\mathcal{L})\otimes_\mathcal{L}QX|\cong N(-/\mathcal{L})\otimes_\mathcal{L}^s|QX|=|N(-/\mathcal{L})|\otimes_\mathcal{L}|QX|
$$
and each $|QX(L)|$ is cofibrant. Since $Q_t|QX|\to |QX|$ is an objectwise weak equivalence
$$
\hocolim^{B\mM}|QX|=|N(-/\mathcal{L})|\otimes_\mathcal{L}Q_t|QX|\to |N(-/\mathcal{L})|\otimes_\mathcal{L}|QX|
$$
is a weak equivalence by \cite[18.5.3]{Hirsch}.
\end{proof}

Now let 
$$
X:\mathcal{L}\to\Cat^\mathbb{M}\Lax
$$
be a $\mathcal{L}$-diagram in $\Cat^\mathbb{M}\Lax$. Composing with the strictification and the nerve functor we have
$$
\mathcal{L}\stackrel{X}{\longrightarrow}\Cat^\mathbb{M}\Lax\stackrel{\str}{\longrightarrow}
\Cat^\mathbb{M} \stackrel{N}{\longrightarrow}\SSets^{N\mathbb{M}}
$$
By definition of the tensor there is a map
$$
\xymatrix@R=1pt{
\alpha (X):N(-/\mathcal{L})\otimes_\mathcal{L}QN\str X \ar[r] &
N(-/\mathcal{L})\otimes_\mathcal{L}N\str X \ar[r] &
N((-/\mathcal{L})\otimes_\mathcal{L}\str X)
\\
\parallel && \parallel\\
\hocolim^{N\mathbb{M}}N\str X && 
N(\hocolim^\mathbb{M} X)
}
$$
which is natural with respect to strict morphisms of $\cL$-diagrams in $\Cat^{\mM}\Lax$.

\begin{defi}\label{6_6}
 We say that a $\Sigma$-free operad $\cM$ in $\Cat$ \textit{has the homotopy colimit property} if for each 
 $\mathcal{L}$-diagram $X:\mathcal{L}\to\Cat^\mathbb{M}\Lax$ the map $\alpha(X)$ is a weak
 equivalence.
\end{defi}

Our proof that certain operads $\cM$ in $\Cat$ have the homotopy colimit property relies on a good choice of representing objects
and morphisms in $\cR_X$. Here Relation \ref{4_8}.3 causes trouble, and only additional assumptions on $\cM$ help us to resolve these. 

For objects $A\in \cM(m)$, $B\in \cM(n)$, and $r_1,\ldots ,r_n$ in $\mathbb{N}$ such that $r_1+\ldots +r_n=m$ we
define a \textit{factorization category} 
$\cC(A,B,r_1,\ldots ,r_n)$: its objects are tuples $(C_1,\ldots,C_n;\alpha)$ consisting of objects
$C_i\in \cM(r_i)$ and a morphism $\alpha:A\to B\ast (C_1\oplus \ldots \oplus C_n)$ in $\cM(m)$. A morphism
$$
\gamma:(C_1,\ldots,C_n;\alpha)\to (D_1,\ldots,D_n;\beta)
$$
is a tuple $\gamma=(\gamma_1,\ldots ,\gamma_n)$ with $\gamma_i:C_i\to D_i$ in $\cM(r_i)$ such that
$$
\xymatrix{
&& A \ar[dll]_\alpha \ar[drr]^\beta &&\\
B\ast (C_1,\ldots,C_n)\ar[rrrr]^{\id\ast(\gamma_1\oplus \ldots \oplus\gamma_n)} &&&& B\ast (D_1,\ldots,D_n)
}
$$
commutes.

\begin{defi}\label{6_7}
We say that a $\Sigma$-free operad $\cM$ in $\Cat$ \textit{satisfies the factorization condition} if 
each connected component of the category $\cC(A,B,r_1,\ldots ,r_n)$ has an initial object.
\end{defi}

\begin{exam}\label{6_8}
 Let $(\cK(n);n\in\mathbb{N})$ be a collection of categories such that $\Sigma_n$ operates freely on
 $\cK(n)$ from the right. Then the free operad $\mathcal{FK}$ on this collection satisfies the
 factorization condition. We leave the proof as an exercise.
\end{exam}

\begin{theo}\label{6_9}
Each $\Sigma$-free operad $\cM$ in $\Cat$ which satisfies the factorization condition has the homotopy colimit property,
i.e.
for each diagram $X:\mathcal{L}\to\Cat^\mathbb{M}\Lax$ the map
$$
\alpha(X):\hocolim^{N\mathbb{M}}N\str X\longrightarrow N(\hocolim^\mathbb{M} X)
$$
is a weak equivalence, natural with respect to strict morphisms of $\cL$-diagrams in $\Cat^{\mM}\Lax$.
\end{theo}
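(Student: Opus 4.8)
The plan is to show that each of the two maps composing $\alpha(X)$ is a weak equivalence. Recall that $\alpha(X)$ factors as
$$N(-/\mathcal{L})\otimes_\mathcal{L}QN\str X \xrightarrow{\ (a)\ } N(-/\mathcal{L})\otimes_\mathcal{L}N\str X \xrightarrow{\ (b)\ } N((-/\mathcal{L})\otimes_\mathcal{L}\str X),$$
where $(a)$ is induced by the cofibrant replacement $QN\str X\to N\str X$ and $(b)$ is the comparison map $\tau$ of \ref{2_7a}. Map $(a)$ I would dispose of by homotopy invariance: $QN\str X\to N\str X$ is an objectwise weak equivalence of $\mathcal{L}$-diagrams, $N(-/\mathcal{L})$ is the standard (cofibrant) weight for the Bousfield--Kan homotopy colimit, and one checks, using $\Sigma$-freeness, that $N\str X$ is objectwise cofibrant, so Lemma \ref{6_3} and Remark \ref{6_4} apply. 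The heart of the matter is therefore map $(b)$, and here it is decisive that $\tau$ is built from the natural transformation $\alpha$ of \ref{2_7a} comparing $\mathbb{S}N$ with $N\mathbb{M}$, which is a weak equivalence exactly because $\mathcal{M}$ is $\Sigma$-free.

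The obstacle is that $(-/\mathcal{L})\otimes_\mathcal{L}-$ is a coend, hence built from reflexive coequalizers of free algebras, and need not be homotopy invariant on the nose. To control it I would use the explicit descriptions of both sides: the target is the nerve of the quotient category $\mathcal{R}_X/\!\sim$, while the source is a Bousfield--Kan homotopy colimit in $\SSets^{N\mathbb{M}}$ expressible as a two-sided bar construction. I would then filter both by operad arity --- the number $m$ of inputs of an object $(A;(K_i,L_i)_{i=1}^m)$ of $\mathcal{R}_X$ on one side, the monad-length filtration of the free $\mathbb{S}$-algebra functor $\mathbb{S}(K)=\coprod_n \mathcal{M}(n)\times_{\Sigma_n}K^n$ on the other. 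The comparison map respects these filtrations, and on the associated graded the free $\Sigma_n$-actions provided by $\Sigma$-freeness turn the symmetric-group quotients into homotopically well-behaved pieces, reducing the graded comparison to the operad categories $\mathcal{M}(n)$ together with the data recording how morphisms of $\mathcal{R}_X$ are assembled from atoms \ref{4_5}.

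What remains on each filtration stage is governed entirely by Relation \ref{4_8}(3), the only defining relation of $\hocolim\ X$ that is neither an equivariance nor a reindexing: it quotients $\mathcal{R}_X$ by the factorizations parametrized by the factorization categories $\mathcal{C}(A,B,r_1,\ldots,r_n)$. I would isolate the homotopical content of this step as the crucial lemma, deferred to Appendix B: under the factorization condition each connected component of $\mathcal{C}(A,B,r_1,\ldots,r_n)$ has an initial object, hence contractible nerve, so that collapsing these components (passing to $\pi_0$) does not change the homotopy type. This is precisely what forces the associated graded of the arity filtration of $N(\mathcal{R}_X/\!\sim)$ to agree with that of the simplicial homotopy colimit, whence $(b)$ --- and with it $\alpha(X)$ --- is a weak equivalence; naturality with respect to strict morphisms is immediate since all the constructions involved are natural. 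The main obstacle is thus exactly this control of Relation \ref{4_8}(3): without the factorization condition the coend can fail to be homotopy invariant, and it is the contractibility of the components of the factorization categories that repairs this and drives the entire comparison.
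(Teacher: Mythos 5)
Your proposal has genuine gaps at both of its steps. First, your disposal of map (a) rests on the claim that $N\str X$ is objectwise cofibrant in $\SSets^{N\mM}$, ``checked using $\Sigma$-freeness.'' No such check is available: $\Sigma$-freeness gives that free algebras $F_S K$ are cofibrant and that $N^{\alg}\circ F_C=F_S\circ N$ (\ref{6_11}), but $\str X(L)$ is a quotient of the category of representatives, not a free algebra, and its nerve has no reason to be a cofibrant $N\mM$-algebra. This is precisely why the paper does not attack $\alpha(X)$ directly: after reducing to strict diagrams by a retract argument (Lemma \ref{6_10}), it resolves $X$ by the bar construction $\mM_\bullet X=B_\bullet(\mM,\mM,X)$, whose terms $\mM^{n+1}X$ \emph{are} free, proves the free case via Thomason's theorem $\hocolim NZ\simeq N(\cL\textstyle\int Z)$ together with $F_S$ preserving weak equivalences (Lemma \ref{6_12}), and then shows that the two augmentations $\varepsilon_1,\varepsilon_2$ on the bisimplicial diagonals are weak equivalences (Lemmas \ref{6_14}, \ref{6_15}).

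Second, your filtration argument for map (b) does not go through as stated. Arity is not monotone along morphisms of $\hocolim^{\mM}X$ (the maps $\varphi$ are arbitrary and nullary operations are allowed), so filtering by the number of inputs does not produce a filtration of the nerve by subcomplexes; more fundamentally, Relation \ref{4_8}(3) identifies \emph{morphisms} of $\cR_X$, and the nerve of a quotient category is not the quotient of the nerve, so contractibility of the components of $\cC(A,B,r_1,\ldots,r_n)$ does not by itself imply that imposing this relation preserves homotopy type. The paper uses the factorization condition in an entirely different way: to prove $\varepsilon_2$ is a weak equivalence it applies Quillen's Theorem A to the augmentation from the Grothendieck construction $H\int\Delta^{\op}$, reducing to contractibility of $G\int\Delta^{\op}$ for $G([n])=\varepsilon_n/T[P,\bar L]$; the initial objects of the factorization-category components supply \emph{canonical} factorizations that make an extra degeneracy $s_{-1}$ functorial and coherent, extending $G$ to a lax functor $G_+$ on $\Delta_+^{\op}$ (Lemma \ref{6_17}, proved in Appendix B), after which Street's rectification collapses $\parallel BG\parallel$ to the contractible $B(G_+([-1]))$. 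Your intuition that the homotopical content of the factorization condition is the contractibility and canonicity furnished by initial objects is correct, but the associated-graded collapse you propose is not a valid substitute for this extra-degeneracy mechanism, and the unproved cofibrancy claim leaves even the easy half of your factorization unestablished.
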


We prove the theorem in steps. All steps exept of one hold for any $\Sigma$-free operad in $\Cat$. So unless
stated otherwise $\cM$ denotes an arbitrary $\Sigma$-free operad. 
We first reduce the theorem to the case that $X$ is a diagram in $Cat^{\mM}$.

\begin{lem}\label{6_10}
 It suffices to prove the theorem for diagrams in $Cat^{\mM}$.
\end{lem}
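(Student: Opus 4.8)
The plan is to observe that the comparison map $\alpha(X)$ sees a lax diagram $X\colon\mathcal{L}\to\Cat^{\mM}\Lax$ only through its objectwise strictification. By Theorem \ref{5_5} there is a natural isomorphism $\hocolim^{\mM}X\cong(-/\mathcal{L})\otimes_{\mathcal{L}}\str X$, and unravelling Definition \ref{6_6} identifies $\alpha(X)$ with the composite
$$N(-/\mathcal{L})\otimes_{\mathcal{L}}QN\str X\longrightarrow N(-/\mathcal{L})\otimes_{\mathcal{L}}N\str X\longrightarrow N\bigl((-/\mathcal{L})\otimes_{\mathcal{L}}\str X\bigr).$$
For a \emph{strict} diagram $Y\colon\mathcal{L}\to\Cat^{\mM}$ let $\beta(Y)\colon\hocolim^{N\mM}NY\to N((-/\mathcal{L})\otimes_{\mathcal{L}}Y)$ denote the same construction; it is natural with respect to strict morphisms of $\mathcal{L}$-diagrams. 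Since $\str X$ is strict we have $\alpha(X)=\beta(\str X)$, so Theorem \ref{6_9} follows as soon as $\beta(Y)$ is a weak equivalence for every strict $Y$. The hypothesis — the theorem for diagrams in $\Cat^{\mM}$ — supplies this only in the shape $\alpha(iY)=\beta(\str(iY))$, so the remaining work is to pass from $\beta(\str(iY))$ to $\beta(Y)$.

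First I would fix a lax $X$, put $Y=\str X$, and compare $\beta(Y)$ with $\beta(\str(iY))$ by means of the counit $r\colon\str(iY)\to Y$ of the strictification adjunction (Theorem \ref{5_1}). Because $Y$ is strict, the comparison $r$ is compatible with the (strict) transition morphisms on the nose, so $r$ is a \emph{strict} morphism of $\mathcal{L}$-diagrams, and by Proposition \ref{5_3} each $r_L$ is a weak equivalence. Naturality of $\beta$ produces a commuting square with top edge $\beta(\str(iY))$, bottom edge $\beta(Y)$, left edge $\hocolim^{N\mM}Nr$ and right edge $N((-/\mathcal{L})\otimes_{\mathcal{L}}r)$. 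The top edge is a weak equivalence by hypothesis, and the left edge is one by Lemma \ref{6_3}, since $Nr$ is an objectwise weak equivalence in $\SSets^{N\mM}$. A two-out-of-three argument then shows $\beta(Y)=\alpha(X)$ to be a weak equivalence, provided the right edge is.

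The hard part is precisely this right edge: the homotopy invariance of $(-/\mathcal{L})\otimes_{\mathcal{L}}-$ for the map $r$, which involves no cofibrant replacement. I would extract it from the extra data of Proposition \ref{5_3}: a section $j$ with $r\circ j=\id$ and a natural transformation $s\colon\id\Rightarrow j\circ r$. The equation $r\circ j=\id$ already makes $(-/\mathcal{L})\otimes_{\mathcal{L}}j$ a section of $(-/\mathcal{L})\otimes_{\mathcal{L}}r$; it remains to see that $(-/\mathcal{L})\otimes_{\mathcal{L}}(j\circ r)$ is homotopic to the identity. This is the algebra-variable analogue of Lemma \ref{3_9}: encoding $s$ as a strict map $[1]\otimes^c\str(iY)\to\str(iY)$ of $\mathcal{L}$-diagrams, commuting the tensor $[1]\otimes^c-$ past the coend, and applying $B$ while using the compatibility of the tensors with the nerve and realization from \ref{2_7a}, one obtains a homotopy $[0,1]\otimes B((-/\mathcal{L})\otimes_{\mathcal{L}}\str(iY))\to B((-/\mathcal{L})\otimes_{\mathcal{L}}\str(iY))$ from the identity to $B((-/\mathcal{L})\otimes_{\mathcal{L}}(j\circ r))$. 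Hence $B((-/\mathcal{L})\otimes_{\mathcal{L}}r)$ is a homotopy equivalence and the right edge is a weak equivalence, completing the reduction. I expect this homotopy-invariance step, rather than the formal bookkeeping, to be the only real obstacle.
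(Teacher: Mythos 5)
Your reduction is set up sensibly (the identification $\alpha(X)=\beta(\str X)$, the naturality square along the counit $r\colon\str(iY)\to Y$, the use of Lemma \ref{6_3} for the left edge), but it is not the paper's argument, and your treatment of the right edge contains a genuine gap. The paper proves the lemma by a two-line retract argument: the unit $j\colon X\to\str X$ and counit $r\colon\str X\to X$ of the strictification adjunction are natural, hence give strict morphisms of $\mathcal{L}$-diagrams in $\Cat^{\mM}\Lax$ (with lax components) satisfying $r\circ j=\id_X$; since $\alpha$ is natural with respect to exactly such morphisms, $\alpha(X)$ is a retract of $\alpha(\str X)$ in the arrow category, and retracts of weak equivalences are weak equivalences. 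This entirely avoids having to show that $N\bigl((-/\mathcal{L})\otimes_{\mathcal{L}}r\bigr)$ is a weak equivalence.

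The gap in your route is the encoding step. You propose to realize $s\colon\id_{\str}\Rightarrow j\circ r$ as a strict map $[1]\otimes^{c}\str(iY)\to\str(iY)$ of diagrams in $\Cat^{\mM}$. But $j$ is only a \emph{lax} morphism: its underlying functor sends $K$ to $[\id;(K,L)]$ and does not commute with the $\cM$-action, since $A(jK_1,\ldots,jK_n)=[A;(K_i)_{i}]$ while $j(A(K_1,\ldots,K_n))=[\id;A(K_1,\ldots,K_n)]$, the two being related only by the evaluation atom $\ev(A)$. Hence $j\circ r$ is not a strict homomorphism, and $s$ is a $2$-cell in $\Cat^{\mM}\Lax$, not in $\Cat^{\mM}$. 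The tensor adjunction $\Cat^{\mM}([1]\otimes Z,W)\cong\Cat([1],\Cat^{\mM}(Z,W))$ classifies only $\mM$-compatible transformations between \emph{strict} homomorphisms, so the encoding is impossible and the algebra-variable analogue of Lemma \ref{3_9} does not apply. This is precisely the distinction the paper draws between Proposition \ref{5_3} (general case: only a lax section of $r$) and Proposition \ref{5_4} (free case: a strict section $k$ together with $\tau\colon k\circ r\Rightarrow\id$), which is why the homotopy argument you have in mind is carried out in the paper only for free algebras (Proposition \ref{5_7}, feeding into Lemma \ref{6_12}). Indeed, granting the strict case of Theorem \ref{6_9}, your two-out-of-three square shows that the right-edge claim is \emph{equivalent} to the conclusion you are trying to establish; in the paper it becomes available only a posteriori, after the factorization condition has done its work in Lemmas \ref{6_14} and \ref{6_15}. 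So nothing short of a correct independent proof of that claim would close your argument, and none is in sight — whereas the retract trick makes the whole issue moot.
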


\begin{proof}
 Let $X:\mathcal{L}\to\Cat^\mathbb{M}\Lax$ be a diagram in $\Cat^\mathbb{M}\Lax$. For each $\mM$-algebra $Z$ the natural
lax morphism $j_Z: Z\to \str Z$ and the natural homomorphism $r_Z: \str Z\to Z$ induce morphisms $j:X\to \str X$ and
$r:\str X\to X$ of diagrams in $\Cat^\mathbb{M}\Lax$ such that $r\circ j=\id$. Hence
$$ \alpha(X): \hocolim^{N\mathbb{M}}N\str X\longrightarrow N(\hocolim^\mathbb{M} X)$$
is a retract of
$$ \alpha(\str X): \hocolim^{N\mathbb{M}}N\str (\str X)\longrightarrow N(\hocolim^\mathbb{M} \str X).$$
If the latter is a weak equivalence so is the former, because retracts of weak equivalences are weak equivalences.
But $\str X$ is a diagram in $Cat^{\mM}$.
\end{proof}

\vspace{1ex}
We now deal with the free case. Consider the diagram
$$
\xymatrix{
\Cat^\mathbb{M} \ar[rr]^{N^\alg} \ar[d]^{U_C} &&
\SSets^{N\mathbb{M}} \ar@<1ex>[d]^{U_S} 
\\
\Cat \ar[rr]^{N} \ar@<1ex>[u]^{F_C} &&
\SSets \ar[u]^{F_S}
}
$$
where $U_C$ and $U_S$ are the forgetful and $F_C$ and $F_S$ the free algebra functors. Clearly, $U_S\circ N^\alg=N\circ U_C$, and since
 $\mathcal{M}$ is $\Sigma$-free we have

\begin{leer}\label{6_11}
$N^\alg\circ F_C=F_S\circ N.$
\end{leer}
The functors $U_C$ and $F_C$ are $\Cat$-enriched, while $U_S$ and $F_S$ are simplicial functors. Since $F_C$ and $F_S$ are
enriched left adjoints they preserve tensors and coends.

Since $N^\alg$ is just $N$ applied to some algebra we usually drop $\alg$ from the notation.
  By \ref{5_6},\ $-/\mathcal{L}\times_\mathcal{L}Z$ is isomorphic to Thomason's homotopy colimit of the diagram $Z: \cL \to\Cat$, 
  and the natural map
 $$
N(-/\mathcal{L})\times_\mathcal{L}NZ\stackrel{\sim}{\longrightarrow}N(-/\mathcal{L}\times_\mathcal{L}Z)
$$ 
corresponds to Thomason's map 
$$\eta:\hocolim\ NX\to N(\cL\int X)= N(\hocolim\ X).
$$
By \cite[Thm. 1.2]{Thom0} this map is a weak equivalence.
Since $\mathcal{M}$ is $\Sigma$-free the free functor $F_S$ preserves weak equivalences. 
So we obtain a weak equivalence
$$
F_S(N(-/\mathcal{L})\times_\mathcal{L}NZ)\stackrel{\sim}{\longrightarrow} F_S\circ N(-/\mathcal{L}\times_\mathcal{L}Z)
$$
Since 
$$
\begin{array}{rcl}
F_S(N(-/\mathcal{L})\times_\mathcal{L}NZ)
&\cong& N(-/\mathcal{L})\otimes_\mathcal{L}(F_S\circ N)Z\\
&=&  N(-/\mathcal{L})\otimes_\mathcal{L}(N\circ F_C)Z
\end{array}
$$
and
$$
\begin{array}{rcl}
F_S(N(-/\mathcal{L}\times_\mathcal{L}Z))
&=& N\circ F_C(-/\mathcal{L}\otimes_\mathcal{L}Z)\\
&\cong& N(-/\mathcal{L}\otimes_\mathcal{L} F_C Z)
\end{array}
$$
the lower horizontal map in following commutative diagram
$$\xymatrix
{
N(-/\mathcal{L})\otimes_\mathcal{L}Q N\str F_CZ \ar[rr]^\alpha \ar[d] && N(-/\mathcal{L}\otimes_\mathcal{L}\str F_C Z)
\ar[dd] \\
N(-/\mathcal{L})\otimes_\mathcal{L}Q N F_CZ \ar[d] && \\
N(-/\mathcal{L})\otimes_\mathcal{L} F_S NZ \ar[rr] && N(-/\mathcal{L}\otimes_\mathcal{L} F_C Z)
}
$$
is a weak equivalence. The two left vertical maps are induced by $r: \str F_C Z\to F_CZ$ and the functorial cofibrant
replacement map $QNF_C Z \to NF_C Z=F_SNZ$. Since these maps are objectwise weak equivalences between objectwise cofibrant
diagrams (for any Y in $\SSets$ the algebra
$F_SY$ is cofibrant), the two vertical maps of the diagram are weak equivalences by \cite[18.5.3]{Hirsch}. The right
vertical map is a weak equivalence by the proof of \ref{5_7}. Hence $\alpha $ is a weak equivalence. We summarize:

\begin{lem}\label{6_12}
Given a diagram $X:\mathcal{L}\stackrel{Z}{\longrightarrow}\Cat\stackrel{F_C}{\longrightarrow}\Cat^\mathbb{M}$, the natural map
$$
\alpha(X):\hocolim^{N\mathbb{M}}N\str X\longrightarrow N(\hocolim^\mathbb{M} X)
$$
is a weak equivalence for any $\Sigma$-free operad in $\Cat$.\hfill\ensuremath{\Box}
\end{lem}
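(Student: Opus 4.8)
The plan is to transport the statement across the free/forgetful adjunction so that it becomes a consequence of Thomason's theorem that the comparison map between the simplicial homotopy colimit and the nerve of the categorical homotopy colimit is a weak equivalence. Since $X=F_CZ$ is already a diagram in $\Cat^\mM$ consisting of free algebras, Lemma \ref{6_10} is not needed here; the whole argument can be run on the free level, and the point is to exploit that the free functors $F_C$ and $F_S$ are enriched left adjoints compatible with the nerve.

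First I would recall from \ref{5_6} that for a diagram $Z:\cL\to\Cat$ the object $(-/\cL)\times_\cL Z$ is Thomason's homotopy colimit $\cL\int Z$, and that the canonical map
$$N(-/\cL)\times_\cL NZ\longrightarrow N((-/\cL)\times_\cL Z)$$
is Thomason's comparison map $\eta$, which is a weak equivalence by \cite[Thm. 1.2]{Thom0}. Because $\cM$ is $\Sigma$-free, the free functor $F_S$ preserves weak equivalences, so applying $F_S$ to $\eta$ yields another weak equivalence. Using that $F_S$ and $F_C$ preserve tensors and coends together with the identity $N F_C=F_S N$ of \ref{6_11}, I would rewrite the source of $F_S\eta$ as $N(-/\cL)\otimes_\cL N F_CZ$ and its target as $N((-/\cL)\otimes_\cL F_CZ)=N(\hocolim^\mM X)$.

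Next I would assemble the commutative square whose bottom edge is this weak equivalence and whose top edge is $\alpha(X)$. Its left vertical maps are induced by the counit $r:\str F_CZ\to F_CZ$ and by the functorial cofibrant replacement $QN F_CZ\to N F_CZ=F_SNZ$. Since free algebras $F_SY$ are cofibrant, both are objectwise weak equivalences between objectwise cofibrant diagrams, so they induce weak equivalences on $N(-/\cL)\otimes_\cL(-)$ by \cite[18.5.3]{Hirsch}. The right vertical map is a weak equivalence by the argument of Proposition \ref{5_7}, which rests on \ref{5_4}: the counit $r$ admits a natural section $k$ together with a natural transformation $\tau:k\circ r\Rightarrow\id$, and these survive the tensor. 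With three edges of the square weak equivalences, the two-out-of-three property forces $\alpha(X)$ to be a weak equivalence as well.

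The main obstacle is the interaction between strictification and the free functor: the comparison map $\alpha$ is built from $\str F_CZ$, not from $F_CZ$ itself, and the two genuinely differ. The crux is therefore to know that $r:\str F_CZ\to F_CZ$ becomes a weak equivalence after applying $N(-/\cL)\otimes_\cL(-)$, which is exactly the content of Propositions \ref{5_4} and \ref{5_7}. Everything else reduces to formal manipulation of enriched left adjoints and a single invocation of Thomason's categorical comparison theorem.
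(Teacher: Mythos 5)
Your proposal is correct and follows essentially the same route as the paper's own proof: Thomason's comparison map from \cite[Thm.\ 1.2]{Thom0} transported along $F_S$ using $N^{\alg}\circ F_C=F_S\circ N$ (\ref{6_11}), the same commutative square with the left vertical maps handled by \cite[18.5.3]{Hirsch} (using cofibrancy of free algebras $F_SY$) and the right vertical map by the proof of Proposition \ref{5_7}, which rests on the section $k$ and transformation $\tau$ of Proposition \ref{5_4}. You have correctly identified that Lemma \ref{6_10} plays no role here and that the only non-formal input is that $r:\str F_CZ\to F_CZ$ remains a weak equivalence after tensoring.
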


Now let $X:\mathcal{L}\to\Cat^\mathbb{M}$ be an arbitrary diagram. We will resolve $X$ to analyze its homotopy colimit:

\begin{leer}\label{6_13}
Let $\Delta_+$ denote the category of ordered sets $\underline{n}=\{-1<0<\ldots<n\}$, $n\geq -1$, and monotone maps 
preserving $-1$. 
We have an obvious inclusion of the simplicial indexing category $\Delta\subset \Delta_+$. Giving a functor
$X: \Delta_+^{\op}\to \mathcal{C}$ amounts to giving a simplicial object $X_\bullet$ in $\cC$ together with an object
$X_{-1}$, a morphism $\varepsilon = d_0: X_0\to X_{-1}$, and additional degeneracy morphisms $s_{-1}:X_n\to X_{n+1}$
for $n=-1,\ 0,\ldots$ satisfying the extra simplicial identities
$$\begin{array}{rcll}
   \varepsilon d_0 & = & \varepsilon d_1 & \textrm{i.e.}\quad d_0d_0=d_0d_1\\
s_{-1}s_i &=&  s_{i+1}s_{-1} & \\
d_0s_{-1} &=& \id & \\
d_is_{-1} &=& s_{-1}d_{i-1} & \textrm{for}\quad i>0
  \end{array}
$$
Equivalently, a functor $X:\Delta^{\op}_+\to\mathcal{C}$ into any category $\mathcal{C}$ consists of a simplicial object
 $X_\bullet$ in $\mathcal{C}$ together with a simplicial map
$$
\varepsilon:X_\bullet \to X_{-1}
$$
to the constant simplicial object on $X_{-1}$, which in degree $n$ is defined by $\varepsilon_n= (d_0)^{n+1}$, and which we  also
denote by $\varepsilon$,
a simplicial section
$$
s:X_{-1}\to X_\bullet
$$
given in degree $n$ by $(s_{-1})^{n+1}$,
and a simplicial homotopy $s\circ\varepsilon\simeq\id$.
\end{leer}

The Godement resolution of $\mM=U_C\circ F_C$ \cite[App.]{Gro} is a functor
$$
\mathbb{M}^+_\bullet:\Cat^\mathbb{M} \to\Cat^{\Delta_+^{\op}}
$$
better known as the functorial 2-sided bar construction
$$
\xymatrix{
B_\bullet(\mathbb{M},\mathbb{M},X) \ar@<0.5ex>[r]^(.7)\varepsilon & X\ar@<0.5ex>[l]^(.3)s
}
$$
with its augmentation $\varepsilon$ and section $s$ (see \cite[Chap. 9]{May}). Recall that $B_\bullet(\mathbb{M},\mathbb{M},X)$ 
is a simplicial object in $\Cat^\mathbb{M}$ and $\varepsilon$ is a simplicial map in $\Cat^\mathbb{M}$ to the constant
 simplicial object on $X$. The morphism $s$ is a section of $\varepsilon$ in $\Cat^{\Delta^{\op}}$ but not in 
$(\Cat^\mathbb{M})^{\Delta^{\op}}$. For further details see \cite[2.2.2]{Thom2}.

Let $\mathbb{M}_\bullet=B_\bullet(\mM,\mM,-)$ denote the restriction of $\mathbb{M}_\bullet^+$ to $\Delta^{\op}$. 
Applying the natural map $\alpha$ dimensionwise we obtain a map of bisimplicial sets
$$
\alpha_\bullet: N(-/\mathcal{L})\otimes_\mathcal{L}QN\str\mathbb{M}_\bullet X \longrightarrow N(-/\mathcal{L}\otimes_\mathcal{L}\str\mathbb{M}_\bullet X)
$$
Consider the following commutative diagram
$$
\xymatrix{
\diag(N(-/\mathcal{L})\otimes_\mathcal{L}QN\str \mathbb{M}_\bullet X) \ar[rr]^{\diag (\alpha_\bullet)}\ar[d]_{\varepsilon_1}&&
\diag N(-/\mathcal{L}\otimes_\mathcal{L}\str\mathbb{M}_\bullet X) \ar[d]^{\varepsilon_2} 
\\
N(-/\mathcal{L})\otimes_\mathcal{L}QN\str X\ar[rr]^\alpha &&
N(-/\mathcal{L}\otimes_\mathcal{L}\str X)
}
$$
where the vertical maps are induced by the augmentation $\varepsilon: \mathbb{M}_\bullet X\to X$.

Recall that
$\mathbb{M}_n = \mathbb{M}^{n+1}$, so that $\mathbb{M}_nX$ is a free $\mM$-algebra for each $n\ge 0$. Hence the natural maps $\alpha_n$ are weak equivalences by \ref{6_8}, and
$\diag (\alpha_\bullet)$ is a weak equivalence by \cite[IV.1.7.]{JG}. To prove Theorem \ref{6_5} we show that the two vertical
maps of the diagram are weak equivalences.

\begin{lem}\label{6_14}
For any diagram $X:\cL\to \Cat^{\mM}$ and any $\Sigma$-free operad in $\Cat$ the map
$$
 \varepsilon_1 : \diag(N(-/\mathcal{L})\otimes_\mathcal{L} QN\str \mathbb{M}_\bullet X )\to N(-/\mathcal{L})\otimes_\mathcal{L}QN\str X
$$
is a weak equivalence.
\end{lem}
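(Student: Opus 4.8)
The plan is to isolate an objectwise statement by commuting the diagonal past the tensor, and then to remove the strictification functor~$\str$ before invoking the contraction coming from the Godement resolution. Removing $\str$ is the essential maneuver: the extra degeneracy that contracts $\mathbb{M}_\bullet X$ lives only in $\Cat^{\Delta^{\op}}$, i.e.\ it is a map of underlying categories and \emph{not} of algebras, so neither $\str$ nor the algebra-level tensor can see it. \textbf{Step 1 (commute $\diag$ and $\otimes_\mathcal{L}$).} Writing the diagonal of a bisimplicial set as the coend $\diag Y_{\bullet\bullet}\cong\int^{[n]}\Delta[n]\times Y_{n,\bullet}$, both $\diag$ and $N(-/\mathcal{L})\otimes_\mathcal{L}(-)$ are colimits and the simplicial tensor commutes with colimits, so there is a natural isomorphism
$$
\diag\bigl(N(-/\mathcal{L})\otimes_\mathcal{L}QN\str\mathbb{M}_\bullet X\bigr)\cong N(-/\mathcal{L})\otimes_\mathcal{L}\diag\bigl(QN\str\mathbb{M}_\bullet X\bigr),
$$
under which $\varepsilon_1$ corresponds to $N(-/\mathcal{L})\otimes_\mathcal{L}\tilde\varepsilon$, where $\tilde\varepsilon\colon\diag(QN\str\mathbb{M}_\bullet X)\to QN\str X$ is the augmentation (the diagonal of the constant simplicial object on $X$ being $X$ itself).

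\textbf{Step 2 (reduce to an objectwise weak equivalence).} The diagrams $\diag(QN\str\mathbb{M}_\bullet X)$ and $QN\str X$ in $\SSets^{N\mathbb{M}}$ are objectwise cofibrant ($QN\str X$ by construction, and the diagonal of an objectwise cofibrant simplicial object as the realization of a Reedy cofibrant one). By Remark~\ref{6_4}, i.e.\ \cite[18.5.3]{Hirsch}, it then suffices to prove that $\tilde\varepsilon$ is an objectwise weak equivalence; since weak equivalences in $\SSets^{N\mathbb{M}}$ are detected on underlying simplicial sets, we must show that for each $L\in\mathcal{L}$ the map $\diag(QN\str\mathbb{M}_\bullet X(L))\to QN\str X(L)$ is a weak equivalence. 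As $Q(-)\to(-)$ is a levelwise weak equivalence, homotopy invariance of the diagonal \cite[IV.1.7]{JG} lets us drop $Q$, reducing us to the map $\diag(N\str\mathbb{M}_\bullet X(L))\to N\str X(L)$.

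\textbf{Steps 3--4 (strip off $\str$, then use the extra degeneracy).} Fix $L$ and consider the square obtained by applying $N$ to $r\colon\str\Rightarrow\id$ of Proposition~\ref{5_3}:
$$
\xymatrix{
\diag(N\str\mathbb{M}_\bullet X(L)) \ar[r]\ar[d]_{\diag N r} & N\str X(L)\ar[d]^{N r}\\
\diag(N\mathbb{M}_\bullet X(L)) \ar[r] & NX(L).
}
$$
The right vertical map is a weak equivalence by Proposition~\ref{5_3} and Definition~\ref{6_1}; the left vertical map is one because $r\colon\str\mathbb{M}_n X(L)\to\mathbb{M}_n X(L)$ is a weak equivalence in each degree $n$, so $Nr$ is a levelwise weak equivalence of bisimplicial sets and \cite[IV.1.7]{JG} applies. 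By $2$-out-of-$3$ it remains to treat the bottom map. Now $\str$ is gone: by \ref{6_13} the underlying simplicial category $U_C\mathbb{M}_\bullet X(L)\to U_C X(L)$ extends to a functor $\Delta_+^{\op}\to\Cat$, the extra degeneracy being the section $s$ of the Godement resolution (which exists in $\Cat^{\Delta^{\op}}$). Applying the nerve degreewise preserves the extra simplicial identities, so $N\mathbb{M}_\bullet X(L)\to NX(L)$ is an augmented simplicial simplicial-set with an extra degeneracy, whence its augmentation is a simplicial homotopy equivalence on diagonals and in particular $\diag(N\mathbb{M}_\bullet X(L))\to NX(L)$ is a weak equivalence. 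This finishes the proof.

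\textbf{Main obstacle.} The heart of the matter is Steps 3--4: the contracting extra degeneracy of the bar resolution is merely a map of underlying categories, hence invisible to $\str$ and to the algebra-level homotopy colimit, so the argument is forced to pass through the natural weak equivalence $r$ to the un-strictified resolution $\mathbb{M}_\bullet X$, where the nerve records only underlying categories and the contraction becomes usable. A secondary technical point, which I would spell out separately, is the cofibrancy of the diagonal required to apply \cite[18.5.3]{Hirsch} in Step 2, handled by arranging Reedy cofibrancy of the simplicial object under $Q$.
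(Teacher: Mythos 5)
Your proposal is correct and follows essentially the same route as the paper's proof: commuting $\diag$ past the tensor (the paper does this via the internal-equals-external realization argument of \cite[4.4]{MSV}), reducing to an objectwise statement via \cite[18.5.3]{Hirsch}, stripping off $\str$ through the square built from $r:\str\Rightarrow\id$ using Proposition \ref{5_3} and \cite[IV.1.7]{JG}, and finally invoking the extra degeneracy of the Godement resolution, which lives only in $\Cat^{\Delta^{\op}}$, exactly as the paper observes via $N\circ\mathbb{M}^{n+1}=(N\mathbb{M})^{n+1}\circ N$. The only cosmetic differences are that you discard $Q$ earlier than the paper (harmless, since weak equivalences are detected on underlying simplicial sets) and that you flag the cofibrancy hypothesis for \cite[18.5.3]{Hirsch} explicitly, which the paper leaves implicit.
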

\begin{proof}
If $K_\bullet$ is a simplicial object in $\SSets$ and $\nabla:\Delta\to\SSets$ is the functor which maps $[n]$ to the 
standard simplicial $n$-simplex then 
$$
\diag K_\bullet\cong\nabla\times_{\Delta^{\op}}K_\bullet
$$
By the argument of \cite[4.4]{MSV} the ``internal realization'' in $\SSets^{N\mathbb{M}}$ coincides with the ``external realization''
in $\SSets$; in other words,
$$
\diag A_\bullet\cong\nabla\otimes_{\Delta^{\op}} A_\bullet
$$
in $\SSets^{N\mathbb{M}}$ for any simplicial object $A_\bullet$ in $\SSets^{N\mathbb{M}}$. Since two tensors commute we obtain
$$
\begin{array}{rcl}
\diag(N(-/\mathcal{L})\otimes_\mathcal{L}QN\str\mathbb{M}_\bullet X)
& \cong & \nabla\otimes_{\Delta^{\op}}( N(-/\mathcal{L})\otimes_\mathcal{L}QN\str\mathbb{M}_\bullet X)\\
& \cong & N(-/\mathcal{L})\otimes_\mathcal{L}(\nabla\otimes_{\Delta^{\op}}QN\str\mathbb{M}_\bullet X)\\
& \cong & N(-/\mathcal{L})\otimes_\mathcal{L}\diag (QN\str\mathbb{M}_\bullet X)
\end{array}
$$
in $\SSets^{N\mathbb{M}}$. For an object $L$ in $\cL$ we consider the commutative diagram
$$
\xymatrix{
\diag(QN\str \mM_\bullet X(L)) \ar[rr]^{\diag(QNr_\bullet)} \ar[d]_{QN\str(\varepsilon)} && \diag(QN \mM_\bullet X(L))\ar[d]^{QN\varepsilon}\\
QN\str X(L) \ar[rr]^{QNr} && QNX(L)
}
$$
where $r$ is the  homomorphism of \ref{5_2}. By \ref{5_3}, the maps $Nr$ and $Nr_n: N\str \mM_n X(L)\to N\mM_nX(L)$ are weak equivalences. Hence so are
$QNr$ and, by  \cite[IV.1.7.]{JG}, also $\diag(QNr_\bullet)$. 

From \ref{6_11} we deduce that 
$$
N\circ\mathbb{M}^{n+1}=(N\mathbb{M})^{n+1}\circ N.
$$
It follows that $N \mM_\bullet X(L)=(N\mM)_\bullet NX(L)$ is a resolution of $NX(L)$, so that $QN\varepsilon: \diag(QN \mM_\bullet X(L)\to QNX(L)$
is a weak equivalence. Since 
$$
\diag(QN\str \mM_\bullet X(L)) \to QN\str X(L)
$$ 
is a weak equivalence for each object $L$ in $\cL$, \cite[18.5.3]{Hirsch} implies that
$$
\varepsilon_1: \diag(N(-/\mathcal{L})\otimes_\mathcal{L} QN\str (\mathbb{M}_\bullet X )\to N(-/\mathcal{L})\otimes_\mathcal{L}QN\str X
$$
is a weak equivalence.
\end{proof}

Finally we show

\begin{lem}\label{6_15}
 If $\cM$ satisfies the factorization condition then
$$
\varepsilon_2: \diag N(-/\mathcal{L}\otimes_\mathcal{L}\str\mathbb{M}_\bullet X)\to N(-/\mathcal{L}\otimes_\mathcal{L} \str X).
$$
is a weak equivalence.
\end{lem}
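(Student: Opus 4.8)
The plan is to reduce the statement to a fibrewise contractibility assertion by means of the bisimplicial realization lemma, and then to feed in the factorization condition precisely where the algebra structure obstructs a naive contraction. Write $B_{q,p}=N_q(-/\mathcal{L}\otimes_\mathcal{L}\str\mathbb{M}_pX)$ for the bisimplicial set whose first index $q$ is the nerve direction and whose second index $p$ is the bar direction, so that the domain of $\varepsilon_2$ is $\diag B_{\bullet\bullet}$ and $\varepsilon_2$ is the augmentation induced in the $p$-direction by $\varepsilon\colon\mathbb{M}_\bullet X\to X$. By \cite[IV.1.7.]{JG} it suffices to show that for each fixed nerve degree $q$ the augmented simplicial set $[p]\mapsto B_{q,p}$ is, via the augmentation, weakly equivalent to the constant simplicial set $N_q(\hocolim^\mathbb{M}X)=N_q(-/\mathcal{L}\otimes_\mathcal{L}\str X)$; since the target is discrete in $p$, this amounts to exhibiting the augmented simplicial set as a disjoint union, indexed by $N_q(\hocolim^\mathbb{M}X)$, of weakly contractible pieces.

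Since $\hocolim^\mathbb{M}=(-/\mathcal{L})\otimes_\mathcal{L}\str(-)$ is a left adjoint (Theorem \ref{4_14}) it preserves the colimits defining realizations, so the source of $\varepsilon_2$ may be regarded as $\hocolim^\mathbb{M}$ applied to the bar resolution $\mathbb{M}_\bullet X=B_\bullet(\mathbb{M},\mathbb{M},X)$. The natural device is the extra degeneracy: the maps $s_{-1}$ built from the unit $\iota$ of the monad give a section of $\varepsilon$ and a simplicial contraction $s\circ\varepsilon\simeq\id$, but only in $\Cat^{\Delta^{\op}}$, that is after forgetting to underlying categories via $U_C$. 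First I would observe that this is harmless on objects: the objects of $\hocolim^\mathbb{M}Y$ are classes $[A;(K_i,L_i)]$ whose entries $K_i$ are objects of the underlying categories $Y(L_i)$, so the non-equivariant $s_{-1}$ do induce compatible assignments on objects. The difficulty lives entirely among the morphisms, where the evaluation atoms of \ref{4_5} and the operad elements $C_k$ bring in the $\cM$-action; because $s_{-1}$ does not commute with this action it does not extend to a functor on $\hocolim^\mathbb{M}$, and the underlying contraction cannot be promoted directly.

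This is exactly where the factorization condition is indispensable. Using the standard representatives of \ref{4_10} together with the initial objects of the connected components of the factorization categories $\cC(A,B,r_1,\ldots,r_n)$ furnished by Definition \ref{6_7}, I would put the operad data $(C_1,\ldots,C_n;\gamma)$ of each morphism of $\hocolim^\mathbb{M}\mathbb{M}_pX$ into a canonical normal form. Since Relation \ref{4_8}(3) preserves permutation-freeness (\ref{4_10}(3)) and the chosen factorization is initial, I expect each fibre of the augmentation over a fixed $q$-chain to be identifiable with a diagram assembled from these factorization categories, whose components carry initial objects and hence have contractible nerves; this would yield the fibrewise contractibility, or equivalently furnish an extra degeneracy on $[p]\mapsto B_{q,p}$ lifting $s$. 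The hard part will be the coherence check: verifying that the initial factorizations are respected by the faces and degeneracies of $\mathbb{M}_\bullet X$ and compatible with the non-algebra map $s_{-1}$, so that the resulting contraction is genuinely simplicial. Without the factorization condition the operad morphisms $\gamma$ cannot be normalized coherently across the bar direction and this argument collapses; this is the technical heart of the matter, and the reason the proof is deferred to Appendix B.
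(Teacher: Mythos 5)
Your reduction via \cite[IV.1.7]{JG} to a levelwise statement --- that for each fixed nerve degree $q$ the augmented simplicial set $[p]\mapsto N_q((-/\cL)\otimes_{\cL}\str\mM_pX)$ is weakly equivalent to the discrete set $N_q(\hocolim^{\mM}X)$ --- is a legitimate sufficient condition, and you correctly locate the obstruction: $s_{-1}=\iota$ is not a map of algebras, so it acts on objects but not on the morphism data of the homotopy colimit (indeed in $\mM^{p+2}X$ there is no morphism at all from $[C;\iota x_1,\ldots,\iota x_n]$ to $[\id;\,C(x_1,\ldots,x_n)]$, these lying in disjoint summands of the free construction). The genuine gap is in the proposed repair. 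What the factorization condition actually produces --- this is the content of Lemma \ref{6_17}, proved in Appendix B --- is an extra degeneracy $s_{-1}$ on the simplicial \emph{category} $[n]\mapsto\varepsilon_n/T[P,\bar{L}]$ satisfying all simplicial identities \emph{except} $d_0\circ s_{-1}=\id$: there one only obtains a natural transformation $\tau:\id\Rightarrow d_0\circ s_{-1}$, i.e.\ a lax functor $G_+:\Delta^{\op}_+\to\Cat$. A natural transformation does not preserve a fixed nerve degree; it becomes a contraction only after applying $B$ or taking diagonals, that is, after letting $q$ vary. So no strict simplicial contraction of $[p]\mapsto B_{q,p}$, or of its exact fibres over a $q$-chain, can be extracted from the factorization condition, and your hoped-for identification of those fibres with ``diagrams assembled from factorization categories with initial objects'' has nothing to act on: once $q$ is frozen the fibres are rigid \emph{sets} of $q$-chains, cut out by equalities of morphism classes in which Relation \ref{4_8}(3) ties the operad data $(\gamma,C_k)$ to the $f_k$, not nerves of categories, so ``initial objects give contractible nerves'' does not apply. (For the same reason even nonemptiness and connectedness of these exact fibres, which your disjoint-union decomposition presupposes, are unclear.)

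This rigidity is precisely why the paper takes a different route: it realizes (using that the simplicial space is a simplicial CW-complex, so realization agrees with fat realization and with $\hocolim^{\Top}_{\Delta^{\op}}BH$, \ref{6_16}), uses Thomason's comparison and the dual Grothendieck construction to reduce to the augmentation $\varepsilon:H\smallint\Delta^{\op}\to\cX$, and applies Quillen's Theorem A, reducing to contractibility of the \emph{comma} categories $\varepsilon/T[P,\bar{L}]\cong G\smallint\Delta^{\op}$ --- comma categories rather than fibres, which is exactly the slack needed for $\tau$ to exist --- and then converts the lax extra degeneracy into an honest contraction via Street's rectification \ref{2_9}, concluding with the terminal object of $G_+([-1])$. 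Your instinct about where the factorization condition enters is right, but the levelwise-in-$q$ framing demands a strict contraction that the condition does not and cannot supply; the argument has to pass through classifying spaces and the lax machinery, as in the paper.
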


Since $|\diag(N(-/\mathcal{L}\otimes_\mathcal{L}\str\mathbb{M}_\bullet  X)|\cong|B(-/\mathcal{L}\otimes_\mathcal{L}\str\mathbb{M}_\bullet X)|$ 
we may investigate
$$
|B(-/\mathcal{L}\otimes_\mathcal{L}\str\mathbb{M}_\bullet X )| \to B(-/\mathcal{L}\otimes_\mathcal{L}\str X)
$$
Since $B(-/\mathcal{L}\otimes_\mathcal{L}\str\mM_\bullet X)$ is a simplicial $CW$-complex, the inclusions of the spaces
of degenerate elements are closed cofibrations. Hence its realization is equivalent
 to its fat realization, which ignores degeneracies and which is known to be equivalent to its homotopy 
colimit along $\Delta^{\op}$ in $\Top$:

\begin{leer}\label{6_16}
$$\xymatrix{
|B(-/\mathcal{L}\otimes_\mathcal{L}\str\mathbb{M}_\bullet X )|& \parallel B(-/\mathcal{L}\otimes_\mathcal{L}\str\mathbb{M}_\bullet X )\parallel
\ar[l]_\simeq \ar[r]^(.63){\simeq} &\hocolim^{\Top}_{\Delta^{\op}}BH
}$$
\end{leer}
where $H$ is the functor
$$
H: \Delta^{\op}\to \Cat,\qquad [n]\mapsto (-/\mathcal{L})\otimes_\mathcal{L}\str\mathbb{M}_n  X.
$$ 
If $\cX=(-/\mathcal{L})\otimes_\mathcal{L}\str X$ we have to show that the augmentation $\varepsilon$ induces a weak equivalence
$$
 \hocolim^{\Top}_{\Delta^{\op}} BH\to B\cX.$$
We now follow in part Thomason's argument of \cite[p 1641ff]{Thom1}:

Let $F:\mathcal{L}\to\Cat$ be a functor. The dual Grothendieck construction $F\int\mathcal{L}$ is the 
category whose objects are pairs $(L,X)$ with an object $L$ of $\mathcal{L}$ and an object $X$ of $F(L)$. A morphism
$$
(l,x):(L,X)\to (L',X')
$$
consists of a morphism $l:L'\to L$ in $\mathcal{L}$ and a morphism $x:X\to F(l)X'$ in $F(L)$. Composition
 is defined by $(l_2,y)\circ(l_1,x)=(l_1\circ l_2, F(l_1)y\circ x)$. It is related to Thomason's 
Grothendieck construction $\mathcal{L}\int F$ by
$$
F\int\mathcal{L}=(\mathcal{L}\int F^{\op})^{\op}
$$
where $F^{\op}:\mathcal{L}\to\Cat$ sends $L$ to $F(L)^{\op}$. 

We have a commutative diagram
$$
\xymatrix{
B((\Delta^{\op}\int H^{\op})^{\op})\ar[d] & B(\Delta^{\op}\int H^{\op})\ar[l]_\tau \ar[d]& \hocolim^{\Top}_{\Delta^{\op}} B(H^{\op})
\ar[l]_h \ar[ld]\ar[r]^\tau & \hocolim^{\Top}_{\Delta^{\op}} BH\ar[d]\\
B\cX & B(\cX^{\op})\ar[l]_\tau \ar[rr]^\tau && B\cX
}
$$
where $h$ is the weak equivalence of Thomason's homotopy colimit construction in $\Cat$ \cite{Thom0}, the unspecified maps are induced by the
augmentation, and $\tau: B(\cC^{\op}) \cong B\cC$ is the well-known natural homeomorphism. Hence we are left to prove that
the augmentation induces a weak equivalence
$$
\varepsilon: H\int \Delta^{\op} \to \cX=\hocolim^{\Cat^{\mM}}X.
$$
By Quillen's Theorem A \cite[\S 1]{Quillen} it suffices to show that the comma category $\varepsilon/K$ is contractible 
for each object $K$ of $\cX$.

Let $F:\mathcal{L}\to\Cat$ be a functor and $\varepsilon:F\int\mathcal{L}\to\mathcal{C}$ be any functor. Let $j(L):F(L)\to F\int\mathcal{L}$ be 
the functor sending $X$ to $(L,X)$ and let $\varepsilon(L)=\varepsilon\circ j(L)$. Then for any object $C$ of $\mathcal{C}$ the comma category 
$\varepsilon/C$ is isomorphic to $(\varepsilon(\ )/C)\int\mathcal{L}$ where $\varepsilon(\ )/C:\mathcal{L}\to\Cat$ sends $L$ to $\varepsilon(L)/C$ \cite[Lemma 4.6]{Thom1}.

We apply this to our functor $H$: Define 
$$\varepsilon_n=\varepsilon\circ j([n]): H([n])\to H\int \Delta^{\op}\to \hocolim^{\Cat^{\mM}}X,
$$ 
so that $\varepsilon_n: (-/\cL)\otimes_{\cL}\str\mM_n X\to (-/\cL)\otimes_{\cL}\str X.$
Let $T[P,\bar{L}]$ stand for the object $[T;(P_i,\bar{L}_i)_{i=1}^r]$ in $\hocolim^{\Cat^{\mM}}X$, and let $G$ be the functor
$$ G: \Delta^{\op} \to\Cat, \quad [n]\to\varepsilon_n/T[P,\bar{L}].$$
We have to show that $G\int \Delta^{\op}$ is contractible. By \cite[Thm.1.2]{Thom0} and \ref{6_15} we have homotopy equivalences
$$B(G\int \Delta^{\op})\quad\simeq \quad\hocolim^{\Top}_{\Delta^{\op}} BG \quad\simeq \quad\parallel [n]\mapsto BG[n] \parallel
$$
If $\cM$ has nullary operations, then $[A]$ is an object in $\hocolim^{\Cat^{\mM}}X$ if $A$ is an object in $\cM(0)$. The category
$\varepsilon_n/[A]$ is isomorphic to $\cM(0)/A $ and hence contractible, which implies that $\varepsilon/[A]$ is contractible.
The case of $T[P,\bar{L}]$ as above follows from

\begin{lem}\label{6_17}
If $\cM$ satisfies the factorization condition, the functor $G$ extends to a lax functor $G_+:\Delta_+^{\op} \to \Cat$ 
in the sense of \ref{2_8} such that $G_+([-1])=\varepsilon_{-1}/T[P,\bar{L}]$,
where $\varepsilon_{-1}: (-/\cL)\otimes_{\cL}\str X\to (-/\cL)\otimes_{\cL}\str X  $ is the identity (recall that $\mathbb{M}_nX=\mathbb{M}^{n+1}X$).
\end{lem}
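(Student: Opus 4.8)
The plan is to exhibit $G_+$ as the lax functor obtained by equipping the augmented bar resolution $\mM_\bullet^+ X$ with its extra degeneracies and transporting this structure through $\str$ and $(-/\cL)\otimes_\cL(-)$ down to the comma categories. First I would set $G_+([-1])=\varepsilon_{-1}/T[P,\bar{L}]=\hocolim^{\Cat^{\mM}}X/T[P,\bar{L}]$, the ordinary slice over $T[P,\bar{L}]$; this has the terminal object $(T[P,\bar{L}],\id)$ and is therefore contractible, which is the feature we ultimately want. Since $B_\bullet(\mM,\mM,X)$ is a genuine simplicial object in $\Cat^{\mM}$, all the face and degeneracy functors indexed by $\Delta^{\op}$, together with the augmentation $\varepsilon\colon G([0])\to G_+([-1])$ induced by $\varepsilon\colon\mM X\to X$ (an algebra map), are honest functors and contribute identity coherence cells. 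The only genuinely new data are the extra degeneracies $s_{-1}$, and their source of non-strictness is that the section $s$ of the resolution is a map of underlying $\cL$-diagrams in $\Cat$ but \emph{not} in $\Cat^{\mM}$ (as recorded after \ref{6_13}); this is exactly what forces $G_+$ to be lax rather than strict.

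On objects the natural definition is forced: using $\varepsilon_{n+1}\circ s_{-1}=\varepsilon_n$ I would send $([A;(K_i,L_i)_{i=1}^s],\phi)$ to $([A;(s_{-1}K_i,L_i)_{i=1}^s],\phi)$, where $s_{-1}=\iota$ is applied objectwise to the $K_i\in\mM^{n+1}X(L_i)$, the morphism $\phi$ surviving because its source is unchanged. The difficulty is entirely in the morphisms. A morphism in $H([n])$ decomposes into the atoms of \ref{4_5}, and of these the operadic atoms $\gamma$, the base atoms $\lambda$, and the internal atoms $f$ commute with $s_{-1}$ by naturality of $\iota$ and of $s$; but the evaluation atoms $\ev(C)$ do not, precisely because $\iota$ is not an algebra map, so $C(\iota K_1,\dots,\iota K_m)$ and $\iota\big(C(K_1,\dots,K_m)\big)$ disagree. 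This obstruction is a morphism, not an equality, and it is the manifestation of the failure of strict naturality that becomes the coherence $2$-cells $\rho$ and $\sigma$ of \ref{2_8}.

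The key step is to use the factorization condition \ref{6_7} to pin these comparison morphisms down canonically. The comma datum $\phi$ carries a factorization $\gamma\colon A\to T\ast(C_1\oplus\dots\oplus C_r)\cdot\sigma$ of the type classified by the factorization categories $\cC(A,T,\dots)$ (see \ref{4_1}), and the hypothesis that each connected component has an initial object lets me choose, naturally in the object, a preferred (initial) representative of the relevant factorization. Defining the extra structure maps on these preferred representatives produces functors $G_+(\theta)$ together with canonical comparison $2$-cells for the extra simplicial identities of \ref{6_13}: $d_0s_{-1}=\id$ and $s_{-1}s_i=s_{i+1}s_{-1}$ hold strictly, while $d_is_{-1}=s_{-1}d_{i-1}$ for $i>0$ holds only up to the evaluation-induced $2$-cell, so that the laxness is concentrated exactly there.

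The main obstacle I expect is the coherence verification: one must check that the $\rho$ and $\sigma$ assembled from the evaluation discrepancies satisfy the two coherence diagrams of \ref{2_8}, and that the initial-object choices furnished by the factorization condition are compatible under composition of factorizations, so that the $2$-cells fit together across the iterated extra degeneracies $s_{-1}\colon G([n])\to G([n+1])$. This is a bookkeeping argument on the category of representatives $\cR$, delicate because standard representatives are not closed under composition and the $\cM$-action interacts with the relations \ref{4_8}; accordingly I would carry it out on explicit initial representatives and defer the full combinatorial check to Appendix B. Once $G_+$ is in hand, the desired contractibility of $\hocolim^{\Top}_{\Delta^{\op}}BG$ follows: after rectifying $G_+$ by \ref{2_9}, the augmented lax structure provides an extra-degeneracy contraction exhibiting the fat realization as homotopy equivalent to $BG_+([-1])$, which is contractible.
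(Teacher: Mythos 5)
There is a genuine gap at the core of your construction, namely in the definition of $s_{-1}$ itself. Your object-level assignment $([A;(K_i,L_i)];\alpha)\mapsto([A;(\iota K_i,L_i)];\alpha)$ cannot be extended to morphisms, even laxly. A morphism $(Z_1;\alpha_1)\to(Z_2;\alpha_2)$ in $\varepsilon_n/T[P,\bar L]$ carries evaluation data $g_j: B_j(\rho K^1(\psi^{-1}(j)))\to K^2_j$ in $\mM_n X(L^2_j)$, and your $s_{-1}$ would need corresponding morphisms $B_j(\iota\rho K^1(\ldots))\to \iota K^2_j$ in $\mM_{n+1}X(L^2_j)$. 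These do not exist: in the free construction $\mM(\mM_nX)=\coprod_k \cM(k)\times_{\Sigma_k}(\mM_nX)^k$ the object $B_j(\iota K'_1,\ldots,\iota K'_r)=(B_j;K'_1,\ldots,K'_r)$ lies in the summand $k=r$, while $\iota(K^2_j)=(\id;K^2_j)$ lies in the summand $k=1$, and morphisms never cross summands. So your claim that the discrepancy between $C(\iota K_1,\ldots,\iota K_m)$ and $\iota(C(K_1,\ldots,K_m))$ ``is a morphism, not an equality'' is false --- there is no comparison morphism at all --- and laxness cannot absorb this: a lax functor in the sense of \ref{2_8} still assigns an honest \emph{functor} to each morphism of $\Delta_+^{\op}$, in particular each $s_{-1}$ must be a genuine functor; only the compositional coherences $\sigma(h,g)$ and $\rho(L)$ are relaxed. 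The factorization condition, as you invoke it, cannot repair this, since initial objects in the factorization categories create no morphisms between distinct coproduct summands of $\mM_{n+1}X$.

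The paper's actual construction sidesteps exactly this obstruction by changing the underlying object, not just decorating it with $\iota$. Using the factorization condition one chooses, for each $(Z;\alpha)$, a representative of $\alpha$ whose factorization datum $(C_1,\ldots,C_r;\gamma\cdot\sigma^{-1})$ is \emph{initial} in its component of the factorization category; this yields a canonical decomposition $\alpha=(f_j)\circ\overline{\alpha}$, and one sets $s_{-1}(Z;\alpha)=([T;(\iota(C_j(\lambda K(\varphi^{-1}(j)))),\bar L_j)_{j=1}^r];(f_j)_{j=1}^r)$ --- i.e.\ one first evaluates inside $\mM_nX$, where the algebra structure is available, and only then applies $\iota$. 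Initiality then supplies the unique comparison maps $u_j$ needed to define $s_{-1}$ on morphisms and to verify functoriality. Your distribution of strict versus lax identities is also inverted relative to what this forces: in the paper all extra simplicial identities of \ref{6_13} hold strictly \emph{except} $d_0\circ s_{-1}=\id$, which holds only up to the natural transformation $\tau=\overline{\alpha}:\id\Rightarrow d_0\circ s_{-1}$, and the coherence cells $\sigma(h,g)$ of the lax structure are assembled from $\tau$ alone, whereas you claim $d_0 s_{-1}=\id$ strictly and locate the laxness in $d_i s_{-1}=s_{-1}d_{i-1}$. Your framing of $G_+([-1])$ and of the endgame (rectification via \ref{2_9} and contraction onto $BG_+([-1])$, which has a terminal object) matches the paper, but the central mechanism --- how the factorization condition actually enters --- is missing from your proposal.
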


The proof is deferred to Appendix B.

We now apply Street's first rectification construction \ref{2_9} with $k:\Delta^{\op}\subset \Delta^{\op}_+$. Since $G_+\circ k=G$
is a strict functor we obtain maps of simplicial spaces
$$
\xymatrix{
B((SG_+)|\Delta^{\op}) & B(SG)\ar[l]_(.4){B\xi} \ar[r]^{B\varepsilon} & BG
}
$$
which are dimensionwise homotopy equivalences, because an adjunction induces a homotopy equivalence of classifying spaces. Hence 
$$
\parallel BG\parallel \quad \simeq \quad \parallel B((SG_+)|\Delta^{\op}) \parallel \quad \simeq \quad B(SG_+([-1])\ \simeq \ \ast.
$$
The latter holds by definition of $\Delta_+^{\op}$ and the fact that 
$$B(SG_+([-1])) \simeq B(G_+([-1]))\simeq \ast$$
because $G_+([-1])$ has a terminal object. This completes the proof of Lemma \ref{6_15}.

Theorem \ref{6_9} has some immediate consequences:

\begin{leer}\label{6_18}
 \textbf{Cofinality Theorem:} Let $\cM$ be a $\Sigma$-free operad having the homotopy colimit property.
Let $X:\cL \to \Cat^\mathbb{M}\Lax$ be a diagram and let $F:\cN\to \cL$ be a functor of small
categories. Suppose that for each object $L$ in $\cL$ the space $B(L\downarrow F)$ is contractible. Then
$$
F_\ast : \hocolim \ X\circ F \to  \hocolim \ X
$$
is a weak equivalence.
\end{leer}

\begin{proof}
By \ref{6_9} the natural horizontal maps in the commutative diagram
$$
\xymatrix{
\hocolim_{\cN}^{B\mM}B(\str (X\circ F))\ar[rr] \ar[d]^{F_\ast} && B(\hocolim_{\cN}^{\mM}X\circ F)\ar[d]^{F_\ast}\\
\hocolim_{\cL}^{B\mM}B(\str X)\ar[rr]  && B(\hocolim_{\cL}^{\mM}X)
}
$$
are weak equivalences. By \cite[19.6.7]{Hirsch} the left vertical map is a weak equivalence, hence the result follows.
\end{proof}

\begin{leer}\label{6_19}
 \textbf{Homotopy invariance:}
 Let $\cM$ be a $\Sigma$-free operad having the homotopy colimit property.
  Let $f: X\to Y$ be a strict morphism of $\cL$-diagrams in $\Cat^{\mM}\Lax$ such that for each
object $L\in \cL$ the underlying map $f(L)$ of the lax morphism $(f(L),\overline{f(L)}):X(L)\to Y(L)$
is a weak equivalence. Then 
$$\hocolim\ f: \hocolim\ X\to \hocolim\ Y
$$
is a weak equivalence.
\end{leer}

\begin{proof}
By \ref{5_3} the morphism $\str f: \str X\to \str Y$ of diagrams is an objectwise weak equivalence. Hence $N\str f$ is objectwise
a weak equivalence, which in turn implies that $\hocolim^{N\mathbb{M}}(N\str f)$ is a weak equivalence by \ref{6_3}.
\end{proof}

\section{Equivalences of categories}

Throughout this section let $\cM$ be a $\Sigma$-free $\Cat$-operad. Then $N\cM$ is  a simplicial and 
$B\cM$ a topological operad. Let $\mM,\ N\mM$ and $B\mM$ be their associated monads.

For a category $\cC$ let $\cS\cC$ denote the category of simplicial objects in $\cC$. In par\-ticular,
$\cS^2\cS ets$ is the category of bisimplicial sets. We work with the diagram
\begin{leer}\label{7_1}
$$
\xymatrix{
\scatm \ar[rr]^{\cS N^\alg} \ar[dd]^{\hocolim} && \cS^2\cS ets^{N\mM}\ar[dd]^{\diag}\ar[rr]^{\cS |-|^{\alg}} &&
cw\stopbm\ar[r]^{\subset}\ar[dd]^{|-|^{\alg}} &\stopbm\ar[ddl]^{|-|^{\alg}}\\
& I && II &&\\
\Cat^\mathbb{M} \ar[rr]^{N^\alg} \ar[dd]^{U_C} &&
\SSets^{N\mathbb{M}} \ar[dd]^{U_S}\ar[rr]^{|-|^{\alg}}&&\Top^{B\mM}\ar[dd]^{U_T} &\\
& III&& IV &&\\
\Cat \ar[rr]^{N} &&
\SSets \ar[rr]^{|-|} && \Top &
}
$$
where $cw\stopbm$ is the full subcategory of $\stopbm$  of those simplicial $B\mM$-algebras whose 
underlying spaces are simplicial
$CW$-complexes with cellular structure maps. Again, $|-|^{\alg}$ is the usual topological realization functor,
which lifts to algebras because it is a product preserving left adjoint. The squares $III$ and $IV$ commute while
the square $II$ commutes up to natural isomorphisms. We will deal with square $I$ later.
\end{leer}

We define \textit{weak equivalences} in each of these categories to be morphisms which are mapped by the functors 
of the diagram to weak equivalences in $\Top$. For $\scatm$ we here choose the composition through $\stopbm$. The
weak equivalences in all categories except for $\scatm,\ \Cat^\mathbb{M}$ and $cw\stopbm$ are part of a Quillen model
structure so that the localizations of these categories with respect to their classes of weak equivalences exist. It is easy to see that
the localization $cw\stopbm [\we^{-1}]$ exists and that it can be considered as a full subcategory of $\stopbm [\we^{-1}]$.
We cannot show that the localizations of $\scatm$ and $\Cat^\mathbb{M}$ exist in the G\"odel-Bernay set theory setting. One way out
is to work in the setting of Grothendieck universes where the localizations exist in a possibly higher universe. But if
$\cM$ has the homotopy colimit property we can offer another remedy:

\begin{defi}\label{7_4} Let $\cC$ be a category and $\we\subset \mor \cC$ be a class of morphisms in $\cC$. A \textit{localization up to 
equivalence with respect to} $\we$ is a category $\cC [\widetilde{\we^{-1}}]$ together
with a functor $\gamma:\cC\to \cC [\widetilde{\we^{-1}}]$ having the following universal property:
if $F:\cC\to \cD$ is a functor which maps morphisms in $\we$ to isomorphisms then there is a functor
 $\bar{F}:\cC [\widetilde{\we^{-1}}]\to \cD$, unique up to natural equivalence, such that $F$ and $\bar{F}\circ \gamma$
 are naturally equivalent.
\end{defi}

\begin{leer}\label{7_2}
Let $\SSets^{N\mathbb{M}}_{\cat}[\we^{-1}] \subset \SSets^{N\mathbb{M}}[\we^{-1}]$ and 
$\cS^2\cS ets^{N\mM}_{\cat}[\we^{-1}]  \subset \cS^2\cS ets^{N\mM}[\we^{-1}]$ denote the full subcategories of objects in the image of
$N^{\alg}$ respectively $\cS N^{\alg}$. Let
$\gamma_S:\SSets^{N\mathbb{M}}\to \SSets^{N\mathbb{M}}[\we^{-1}]$ and $\gamma_{S^2}: \cS^2\cS ets^{N\mM}\to cS^2\cS ets^{N\mM}[\we^{-1}]$
denote the localizations.
 Define
$$
\gamma_C=\gamma_S\circ N^\alg:  \catm \to \SSets^{N\mathbb{M}}_{\cat}[\we^{-1}]$$
$$\gamma_{SC}=\gamma_{S^2}\circ \cS N^\alg: \scatm \to \cS^2\cS ets^{N\mM}_{\cat}[\we^{-1}] 
$$
\end{leer}

We will prove the following result in Appendix C.

\begin{prop}\label{7_3} Let $\cM$ be an operad in $\Cat$ having the homotopy colimit property. Then
\begin{enumerate}
 \item  in the setting of Grothendieck universes 
 $$\catm [\we^{-1}]\cong  \SSets^{N\mathbb{M}}_{\cat}[\we^{-1}] \quad\textrm{and}\quad \scatm [\we^{-1}]\cong \cS^2\cS ets^{N\mM}_{\cat}[\we^{-1}].
 $$
 In particular, the localizations exist in the universe we started with and are induced by the functors of \ref{7_2}.
 \item In the G\"odel-Bernay set theory setting the functors of \ref{7_2} are localizations up to equivalence. So we may choose
 $\SSets^{N\mathbb{M}}_{\cat}[\we^{-1}] $ as $\Cat^\mathbb{M}[\widetilde{\we^{-1}}]$ and $\cS^2\cS ets^{N\mM}_{\cat}[\we^{-1}]$
 as $\scatm [\widetilde{\we^{-1}}]$.
 \end{enumerate}
\end{prop}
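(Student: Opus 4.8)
The plan is to show that the functors $\gamma_C$ and $\gamma_{SC}$ of \ref{7_2} satisfy the universal property of a localization (for part (1)) respectively of a localization up to equivalence (for part (2)), by analysing them through the nerve functor and the model structure on $\cS^2\cS ets^{N\mM}$ declared to exist after \ref{7_1}. I would treat the bisimplicial functor $\gamma_{SC}$ in detail; the functor $\gamma_C$ is the special case of constant simplicial objects and is handled by the identical argument.

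\textbf{Formal preliminaries.} First I would record three easy facts. Both functors invert weak equivalences: by Definition \ref{6_1} a morphism $f$ in $\scatm$ is a weak equivalence exactly when $\diag\,\cS N^{\alg}f$ is a weak equivalence in $\SSets$, i.e. when $\cS N^{\alg}f$ is a weak equivalence in $\cS^2\cS ets^{N\mM}$, so $\gamma_{S^2}\circ\cS N^{\alg}$ sends it to an isomorphism. Next, $\cS N^{\alg}$ is fully faithful: since $\cM$ is $\Sigma$-free, the natural map $\alpha$ of \ref{2_7a} is an isomorphism, the nerve $N\colon\Cat\to\SSets$ is fully faithful, and an $N\mM$-algebra map between nerves of $\mM$-algebras is therefore forced to come from a unique $\mM$-algebra map. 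Finally, essential surjectivity of $\gamma_{SC}$ onto $\cS^2\cS ets^{N\mM}_{\cat}[\we^{-1}]$ is immediate, since that target is defined to be the full subcategory on the image of $\cS N^{\alg}$.

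\textbf{The morphism analysis.} The real content is to show that $\gamma_{SC}$ is full and faithful after localization: every morphism of $\cS^2\cS ets^{N\mM}[\we^{-1}]$ between objects $\cS N^{\alg}A_\bullet$ and $\cS N^{\alg}B_\bullet$ is represented by a zig-zag in $\scatm$ whose backward arrows are weak equivalences, and two such zig-zags induce the same morphism precisely when they already agree in the formal localization of $\scatm$. Here the homotopy colimit property is essential. Using the Godement resolution $\mM_\bullet$ of \ref{6_13}, I would replace $A_\bullet$ by the free simplicial resolution $\mM_\bullet A_\bullet$; by \ref{6_11} and iteration one has $\cS N^{\alg}\mM_n A_\bullet=(N\mM)_n\,\cS N^{\alg}A_\bullet$, so $\cS N^{\alg}\mM_\bullet A_\bullet$ is a levelwise free, hence cofibrant, resolution of $\cS N^{\alg}A_\bullet$ whose cofibrant replacement stays \emph{inside the image of the simplicial nerve}. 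Theorem \ref{6_9} then guarantees that this replacement carries the correct homotopy type, since it identifies the nerve of the categorical strictified homotopy colimit of $\mM_\bullet A_\bullet$ with the simplicial homotopy colimit. Consequently homotopy classes of maps out of this replacement are realised by genuine maps of simplicial $\mM$-algebras, which supplies the required zig-zags; the injectivity statement follows because the simplicial homotopies between such maps again lift to $\scatm$.

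\textbf{Assembling the two statements, and the obstacle.} In the Grothendieck-universe setting one may form the formal localization of $\scatm$ in a possibly higher universe; the morphism analysis shows that the induced functor to $\cS^2\cS ets^{N\mM}_{\cat}[\we^{-1}]$ is essentially surjective and fully faithful, hence an equivalence, and since the target is already a locally small category in the universe we started with, this proves part (1) (and the analogue for $\catm$). For part (2), where $\scatm[\we^{-1}]$ cannot be formed set-theoretically, I would instead read the same analysis as a direct verification of Definition \ref{7_4}: given $F$ inverting weak equivalences, the zig-zag representation determines $\overline{F}$ on objects and morphisms of the image, well defined and functorial up to natural equivalence by the (non-)uniqueness clauses above, and any two factorisations are naturally equivalent. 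The \textbf{main obstacle} is precisely the morphism analysis: showing that cofibrant replacements of nerve-type bisimplicial algebras can be chosen within the image of $\cS N^{\alg}$ and compute the correct mapping spaces. This is exactly where the factorization condition is used, via Theorem \ref{6_9}; without it the categorical bar resolution cannot be identified with a simplicial cofibrant replacement, and the transfer of homotopy theory between $\scatm$ and $\cS^2\cS ets^{N\mM}$ collapses.
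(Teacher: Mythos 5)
There is a genuine gap, and it sits exactly where you locate your ``main obstacle''. Your plan reduces everything to the claim that morphisms in $\cS^2\cS ets^{N\mM}[\we^{-1}]$ between objects of nerve type are realised by genuine maps out of the levelwise-free resolution $\cS N^{\alg}\mM_\bullet A_\bullet$, so that they lift along the fully faithful nerve to zig-zags in $\scatm$. This step fails: in a model category, hom-sets in the localization are homotopy classes of maps from a cofibrant replacement into a \emph{fibrant} replacement, and fibrant replacement does not stay inside the image of $\cS N^{\alg}$ (the nerve of an $\mM$-algebra is essentially never fibrant as a simplicial algebra; already in $\SSets$, genuine maps from the cofibrant simplicial circle $\Delta^1/\partial\Delta^1$ to $\partial\Delta^2$ are all constant, while the localized hom-set is infinite). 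So cofibrancy of the source alone does not compute localized hom-sets by genuine maps, and Theorem \ref{6_9} cannot repair this, since it identifies homotopy types, not mapping sets. In addition, your well-definedness criterion in the G\"odel--Bernay half (``two zig-zags agree iff they agree in the formal localization of $\scatm$'') is circular, because that formal localization is precisely what is unavailable there; and even granting a zig-zag representation, you have no way to apply a test functor $H:\scatm\to\cD$ to the intermediate bisimplicial objects of a zig-zag, which is what functoriality of $\overline{H}$ in Definition \ref{7_4} requires.

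The paper's proof (Appendix C) supplies exactly the ingredient your proposal lacks: an explicit functor in the opposite direction, $F=\hocolim^{\mM}\hat{Q}_\bullet(|-|):\SSets^{N\mM}\to\Cat^{\mM}$, built from the rectification functor $\hat{Q}_\bullet$ of \cite{FV}, together with chains of natural weak equivalences $\Id\sim\sim\sim N\circ F$ and $\Id\sim\sim\sim F\circ N$ (\ref{C_2}, \ref{C_3}); the homotopy colimit property is used only to produce these chains, via Theorem \ref{6_9} and Lemmas \ref{7_7}--\ref{7_9}. With $F$ in hand the rest is purely formal (Proposition \ref{C_4}): given $H$ inverting $\we$, one factors $H\circ F$ through the \emph{genuine} localization of the ambient (bi)simplicial category, which exists by the model structure, and sets $\overline{H}=\overline{HF}\circ j$, with uniqueness up to natural equivalence following from the quasi-inverse of the equivalence $j$. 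No calculus of fractions, no lifting of zig-zags, and no fibrancy questions ever arise. Any repair of your route would in effect require constructing such a backward functor, at which point you are reproducing the paper's argument.
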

\begin{conv}\label{7_3a} 
 Throughout this section we state results concerning localizations in the G\"odel-Bernay foundational setting so that
 localizations up to equivalence occur. In Grothendieck's setting of universes, these can be replaced by genuine 
 localizations.
\end{conv}

The main result of this section is 

\begin{theo}\label{7_5} If $\cM$ is a $\Sigma$-free operad in $\Cat$ having the homotopy co\-limit property, 
then the nerve and realization functor induce an equivalence of categories 
$$
\Cat^\mathbb{M}[\widetilde{\we^{-1}}]\simeq \SSets^{N\mathbb{M}}[\we^{-1}]\simeq \Top^{B\mathbb{M}} [\we^{-1}]
$$
\end{theo}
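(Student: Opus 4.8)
The plan is to prove the two displayed equivalences separately, the second being essentially formal and the first carrying all the content. For the right-hand equivalence $\SSets^{N\mathbb{M}}[\we^{-1}]\simeq\Top^{B\mathbb{M}}[\we^{-1}]$ I would simply invoke the $\SSets$-enriched Quillen equivalence $|-|^{\alg}:\SSets^{N\mathbb{M}}\leftrightarrows\Top^{B\mathbb{M}}:\Sing^{\alg}$ of \ref{6_1a}. A Quillen equivalence induces an equivalence of homotopy categories, and since the weak equivalences in both categories are by definition the maps defining these localizations, the localizations $\SSets^{N\mathbb{M}}[\we^{-1}]$ and $\Top^{B\mathbb{M}}[\we^{-1}]$ are precisely the homotopy categories, so the derived adjoint pair realizes the claimed equivalence. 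This step needs no further input.

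For the left-hand equivalence $\Cat^\mathbb{M}[\widetilde{\we^{-1}}]\simeq\SSets^{N\mathbb{M}}[\we^{-1}]$ I would first apply Proposition \ref{7_3}, which (in the G\"odel--Bernay setting, by Convention \ref{7_3a}) identifies $\Cat^\mathbb{M}[\widetilde{\we^{-1}}]$ with the full subcategory $\SSets^{N\mathbb{M}}_{\cat}[\we^{-1}]$ of $\SSets^{N\mathbb{M}}[\we^{-1}]$ spanned by the objects in the image of $N^{\alg}$, the identification being induced by $\gamma_C=\gamma_S\circ N^{\alg}$ of \ref{7_2}. Because $\SSets^{N\mathbb{M}}_{\cat}[\we^{-1}]$ is \emph{by its very definition} a full subcategory of the localization, the inclusion $\SSets^{N\mathbb{M}}_{\cat}[\we^{-1}]\hookrightarrow\SSets^{N\mathbb{M}}[\we^{-1}]$ is automatically fully faithful. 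Hence the whole theorem reduces to showing this inclusion is \emph{essentially surjective}, i.e.\ that every object $Y$ of $\SSets^{N\mathbb{M}}$ is weakly equivalent to $N^{\alg}(C)$ for some $\mM$-algebra $C$ in $\Cat$.

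To establish essential surjectivity I would exhibit $Y$ as a homotopy colimit of nerves and then apply Theorem \ref{6_9}. Take the two-sided bar (Godement) resolution $B_\bullet=B_\bullet(N\mM,N\mM,Y)$, a simplicial object in $\SSets^{N\mathbb{M}}$ whose augmentation to $Y$ carries the extra degeneracies of \ref{6_13}, so that $\diag B_\bullet\to Y$ is a weak equivalence. Each level is a free algebra $B_n=F_S(W_n)$ on a simplicial set $W_n$, and the classical weak equivalence $N(\Delta\downarrow W_n)\xrightarrow{\sim}W_n$ for the category of simplices, combined with $\Sigma$-freeness (so that $F_S$ preserves weak equivalences, as in the proof of \ref{6_12}) and the identity $N^{\alg}\circ F_C=F_S\circ N$ of \ref{6_11}, gives $B_n\simeq N^{\alg}\bigl(F_C(\Delta\downarrow W_n)\bigr)$. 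Assembling the categorical free algebras $F_C(\Delta\downarrow W_n)$ into a diagram $X:\Delta^{\op}\to\Cat^\mathbb{M}\Lax$ whose image $N\str X$ is objectwise weakly equivalent to $B_\bullet$, homotopy invariance (\ref{6_3}) together with the identification of the homotopy colimit over $\Delta^{\op}$ with the diagonal yields $\hocolim^{N\mM}_{\Delta^{\op}}N\str X\simeq\diag B_\bullet\simeq Y$ (using \cite[IV.1.7]{JG}), while Theorem \ref{6_9} applied with $\mathcal{L}=\Delta^{\op}$ (square $I$ of \ref{7_1}) gives $\hocolim^{N\mM}_{\Delta^{\op}}N\str X\simeq N^{\alg}(\hocolim^{\mM}X)$. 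Combining the two shows $Y\simeq N^{\alg}(\hocolim^{\mM}X)$, which lies in the image of $N^{\alg}$, as required.

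The main obstacle is the assembly step. The face operators of $B_\bullet$ are built from the monad multiplication and the action of $Y$; as maps of free algebras they correspond, under the free/underlying adjunction, to maps of \emph{generators} landing in $N\mM(W_\bullet)$ rather than in $W_\bullet$, so $W_\bullet$ is not literally a simplicial simplicial set and the nerve replacements $N(\Delta\downarrow W_n)$ do not patch together strictly. I expect to resolve this by exploiting that $\hocolim$ is defined on all of $\Func(\Delta^{\op},\Cat^\mathbb{M}\Lax)$ for \emph{lax} morphisms and is a left adjoint (Theorem \ref{4_14}), so that it suffices to produce $X$ as a diagram of lax morphisms; the required lifts of the structure maps come from the naturality of the category-of-simplices functor $\Delta\downarrow(-)$ and from the coherence already packaged into $\str$, exactly as the resolution $\mathbb{M}_\bullet X$ is handled in the proof of Theorem \ref{6_9}. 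Checking this coherence, rather than any homotopy-theoretic estimate, is the delicate point.
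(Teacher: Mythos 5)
Your treatment of the second equivalence is correct and identical to the paper's (it is exactly the Quillen equivalence \ref{6_1a}, whose derived functors give $\SSets^{N\mathbb{M}}[\we^{-1}]\simeq\Top^{B\mathbb{M}}[\we^{-1}]$), and your reduction of the first equivalence to essential surjectivity of the inclusion $\SSets^{N\mathbb{M}}_{\cat}[\we^{-1}]\hookrightarrow\SSets^{N\mathbb{M}}[\we^{-1}]$ via Proposition \ref{7_3} is sound. The genuine gap is in your proof of essential surjectivity, precisely at the step you flag as delicate. Your analogy with the treatment of $\mathbb{M}_\bullet X$ in the proof of Theorem \ref{6_9} does not transfer: there the input $X$ is already a diagram of categorical algebras, so $B_\bullet(\mathbb{M},\mathbb{M},X)$ is a \emph{strict} simplicial object in $\Cat^{\mathbb{M}}$ and no assembly problem arises. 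In your situation $Y$ lives only in $\SSets^{N\mathbb{M}}$; the inner faces of $B_\bullet(N\mathbb{M},N\mathbb{M},Y)$ restrict on generators to maps $W_n\to N\mathbb{M}(W_{n-1})$, and since the comparison $N(\Delta\downarrow W_{n-1})\to W_{n-1}$ points the wrong way, there is no canonical lift to a functor $\Delta\downarrow W_n\to \mathbb{M}F_C(\Delta\downarrow W_{n-1})$; any choice (last-vertex style or otherwise) satisfies the simplicial identities only up to noninvertible natural transformations. Note also that the objects of $\Func(\Delta^{\op},\Cat^{\mathbb{M}}\Lax)$ are \emph{strict} functors — the laxness in Section 4 lives only in the $1$-cells between diagrams, not in the diagrams themselves — so "producing $X$ as a diagram of lax morphisms" does not relax the strict functoriality that $\hocolim$ and Theorem \ref{6_9} require. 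The paper contains no rectification device for your situation: Proposition \ref{2_9} rectifies lax functors into $\Cat$ only, and Theorem \ref{3_10} rectifies homotopy morphisms between already-strict diagrams.

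This missing step is not a routine coherence check; it is the central difficulty of the whole program, and the paper deliberately does not solve it by your route. Instead it imports from the companion paper \cite{FV} the functor $\hat{Q}_\bullet:\Top^{B\mathbb{M}}\to\scatm$ (built there with substantial categorical and homotopical coherence theory) together with the natural zig-zag $\const_T\sim\sim\sim B\hat{Q}_\bullet(-)$, sets $F=\hocolim^{\mathbb{M}}\hat{Q}_\bullet(|-|)$, and then uses Theorem \ref{6_9}, Lemma \ref{7_7} and Theorem \ref{7_6} to obtain the natural equivalences $\Id\sim\sim\sim N\circ F$ and $\Id\sim\sim\sim F\circ N$ of \ref{C_2} and \ref{C_3}; Proposition \ref{C_4} then yields Proposition \ref{7_3} and the theorem simultaneously. (Incidentally, the paper's proof of Proposition \ref{7_3} already contains the essential surjectivity you set out to prove, so invoking \ref{7_3} while re-deriving surjectivity by hand is somewhat redundant even in spirit.) To repair your argument you would have to either re-prove the relevant part of \cite{FV} — an honestly strict object of $\scatm$ resolving $Y$ — or establish a rectification theorem for lax functors valued in $\Cat^{\mathbb{M}}\Lax$; neither is supplied by your proposal or by the tools in this paper.
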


The second equivalence has already been discussed in Section 6, and in \cite[2.7]{FV} it is shown that for a
$\Sigma$-free $\Cat$-operad the functors of diagram \ref{7_1} induce equivalences of
categories
$$
\scatm [\widetilde{\we^{-1}}]\simeq cw\stopbm [\we^{-1}]  \simeq \stopbm [\we^{-1}].
$$
We establish the theorem by showing that square I of the diagram commutes up to a broken arrow of weak equivalences
and by proving

\begin{theo}\label{7_6} For any $\Sigma$-free operad $\cM$ in $\Cat$ 
 $\hocolim^{\mM}: \scatm \to \Cat^{\mM}$ and the constant diagram functor $c: \Cat^{\mM}\to \scatm$ induce
an equivalence of categories
$$
 \cS^2\cS ets^{N\mM}_{\cat}[\we^{-1}]  \simeq \SSets^{N\mathbb{M}}_{\cat}[\we^{-1}].
$$
\end{theo}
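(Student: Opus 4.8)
The plan is to reduce the whole statement to the commutativity of square I of diagram \ref{7_1}, that is, to a natural zig-zag of weak equivalences $N^{\alg}\hocolim^{\mM}X\;\simeq\;\diag\,\cS N^{\alg}X$ for $X$ a diagram over $\cL=\Delta^{\op}$. First I would build this zig-zag. Theorem \ref{6_9} supplies the natural weak equivalence $\alpha(X)\colon\hocolim^{N\mM}N\str X\to N(\hocolim^{\mM}X)$. The homomorphism $r\colon\str X\to X$ of Proposition \ref{5_3} is an objectwise weak equivalence of $\Delta^{\op}$-diagrams, so by Lemma \ref{6_3} it induces a weak equivalence $\hocolim^{N\mM}N\str X\to\hocolim^{N\mM}NX$. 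Finally I would identify $\hocolim^{N\mM}_{\Delta^{\op}}NX$ with $\diag\,NX=\diag\,\cS N^{\alg}X$: viewing $NX$ as the simplicial object $[p]\mapsto N(X_{p})$ in $\SSets^{N\mM}$, the homotopy colimit over $\Delta^{\op}$ is computed by the diagonal, and the passage from the external realization in $\SSets$ to the internal one in $\SSets^{N\mM}$ is exactly the argument already used in Lemma \ref{6_14}, citing \cite[4.4]{MSV} and \cite[IV.1.7]{JG}. Composing these three weak equivalences yields the required natural zig-zag, which also shows that the weak equivalences of $\scatm$ (defined through $\stopbm$) agree with the diagonal weak equivalences of $\cS^{2}\cS ets^{N\mM}$.

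Next I would check that both functors descend to the localizations. The functor $\hocolim^{\mM}$ preserves weak equivalences by Homotopy Invariance \ref{6_19}, and the constant-diagram functor $c$ visibly does too, since it sends a weak equivalence $g$ to a diagram whose diagonal is again $Ng$. Using the descriptions $\gamma_{C}=\gamma_{S}\circ N^{\alg}$ and $\gamma_{SC}=\gamma_{S^{2}}\circ\cS N^{\alg}$ from \ref{7_2} and Proposition \ref{7_3}, the two functors therefore induce functors $\overline{\hocolim}$ and $\overline{c}$ between $\cS^{2}\cS ets^{N\mM}_{\cat}[\we^{-1}]$ and $\SSets^{N\mM}_{\cat}[\we^{-1}]$. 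Transported through the nerve, square I identifies $\overline{\hocolim}$ with the diagonal functor $\diag$ and $\overline{c}$ with the constant-embedding functor $d^{*}$ in the $\Delta^{\op}$-direction, for which $\diag\,d^{*}=\Id$.

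One composite is easy. By Proposition \ref{5_9}, $\hocolim^{\mM}cS\cong\Delta^{\op}\otimes\str S$ and the counit is $\Delta^{\op}\otimes\str S\to\ast\otimes\str S\cong\str S\to S$; since $\diag\,\cS N^{\alg}cS=\diag\,d^{*}NS=NS$, square I identifies the image of the counit under $\diag\,\cS N^{\alg}$ with $\Id_{NS}$. Hence the counit $\hocolim^{\mM}cS\to S$ is a weak equivalence, giving $\overline{\hocolim}\,\overline{c}\cong\Id$.

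The real work is the other composite $\overline{c}\,\overline{\hocolim}\cong\Id$. Here $\cS N^{\alg}(c\hocolim^{\mM}X)=d^{*}N\hocolim^{\mM}X\simeq d^{*}\diag\,\cS N^{\alg}X$ by square I, so I must produce a natural weak equivalence between $d^{*}\diag W$ and $W$ for $W=\cS N^{\alg}X$. The main obstacle is precisely this, and it is a genuinely diagonal phenomenon: the universal homotopy morphism $j\colon X\to c\hocolim^{\mM}X$ is only a homotopy morphism, not a levelwise weak equivalence (indeed $j_{[n]}\colon X_{n}\to\hocolim^{\mM}X$ is not a weak equivalence), so no objectwise argument can work. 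I would resolve it by invoking that $\diag$ induces an equivalence of the diagonal homotopy categories of bisimplicial sets, lifted to $N\mM$-algebras through the underlying-object functors exactly as in \ref{6_14}: since $\diag$ is conservative on the localizations and $\diag(d^{*}\diag W)=\diag W$ naturally, the comparison coming from $j$ (equivalently the counit of the $\diag\dashv d^{*}$ adjoint equivalence) is a diagonal weak equivalence. This yields $\overline{c}\,\overline{\hocolim}\cong\Id$ and completes the proof.
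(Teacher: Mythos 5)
Your final step contains the genuine gap, and you in fact located it yourself before papering over it. There is no ``$\diag\dashv d^{*}$ adjoint equivalence'': both functors are precompositions ($\diag=\delta^{*}$ along the diagonal $\delta:\Delta\to\Delta\times\Delta$, whose adjoints are the Kan extensions $\delta_{!}$ and $\delta_{*}$, while $d^{*}=\pi^{*}$ along the projection), and $\pi\circ\delta=\id$ yields only $\diag\circ d^{*}=\Id$, no adjunction. Worse, there is no natural morphism in either direction between $W$ and $d^{*}\diag W$ at all: $(d^{*}\diag W)_{m,n}=W_{n,n}$, and no structure map of a bisimplicial object connects $W_{n,n}$ with $W_{m,n}$ naturally in $[m]$. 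So conservativity of $\diag$ has nothing to be applied to, and ``the comparison coming from $j$'' does not exist as a morphism of bisimplicial algebras, since $j$ is only a homotopy morphism. The assertion that $\diag$ induces an equivalence of the diagonal localizations, true for bisimplicial \emph{sets}, is for $N\mM$-algebras precisely the statement in question; the internal-versus-external realization comparison of \ref{6_14} does not supply it. What the paper puts in its place is Lemma \ref{7_10}: a genuine zig-zag in $\scatm$, namely $c(\hocolim^{\mM}X)\leftarrow R\str X\rightarrow X$ through the rectification $R\str X=(-/\Delta^{\op}/-)\otimes_{\Delta^{\op}}\str X$ of \ref{3_10}, whose right leg $r\circ\beta(\str X)$ is an objectwise weak equivalence (\ref{3_10}, \ref{5_3}) and whose left leg $\mu(\str X)$ is proved to be a diagonal weak equivalence by the ``weak equivalence iff homotopy colimit is one'' criterion (\ref{7_7}, \ref{7_8}) combined with the counit Lemma \ref{7_9}. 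That construction is the missing content of your proposal; without it you have at best a one-sided inverse.

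Two secondary points. First, scope: you invoke Theorem \ref{6_9}, hence the factorization condition, to make square I commute, but Theorem \ref{7_6} is stated for an arbitrary $\Sigma$-free operad; the identification you actually need on the simplicial-set side, $\hocolim^{N\mM}N\str X\simeq\diag NX$, is the unconditional first zig-zag of Lemma \ref{7_7} (your own route via $r$ and Lemma \ref{6_3} also gives it; \ref{6_9} enters only if you insist on comparing with $N\hocolim^{\mM}X$). Second, your appeal to Homotopy Invariance \ref{6_19} for descent is the wrong tool: weak equivalences in $\scatm$ are diagonal, not objectwise, and \ref{6_19} moreover presupposes the homotopy colimit property; descent follows instead from the square-I identification itself. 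Your treatment of the easy composite is acceptable in outline, but you still owe the verification that the zig-zag carries the counit to the identity of $NS$; the paper sidesteps this by proving Lemma \ref{7_9} directly from \ref{5_9}, \ref{5_3} and \ref{3_9}, using that $\Delta^{\op}$ has an initial object. These last items are repairable; the first-paragraph gap is not, short of reconstructing Lemma \ref{7_10} or proving a diagonal-localization theorem for bisimplicial $N\mM$-algebras from scratch.
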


We start with investigating square I of diagram \ref{7_1}.

\begin{lem}\label{7_7}
 Let $A_\bullet$ be a simplicial $\mM$-algebra in $\Cat$ and $Q^{\Reedy}QN\str A_\bullet$ 
the functorial Reedy-cofibrant replacement of
$QN\str A_\bullet$. Then there are natural weak equivalences
$$\hocolim^{N\mM}N\str A_\bullet \leftarrow N(-/\Delta^{\op})\otimes_{\Delta^{\op}}Q^{\Reedy}QN\str A_\bullet
\rightarrow \diag NA_\bullet
$$
In particular, if $\cM$ is a $\Sigma$-free operad having the homotopy colimit property, then
square $I$ of diagram \ref{7_1} commutes up to a broken arrow of natural weak equivalences.
\end{lem}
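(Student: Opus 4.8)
The plan is to read the two natural weak equivalences off the fact that $Q^{\Reedy}QN\str A_\bullet$ is a Reedy cofibrant (hence objectwise cofibrant) replacement of the $\Delta^{\op}$-diagram $N\str A_\bullet\colon\Delta^{\op}\to\SSets^{N\mM}$, and then to compare the two standard coend models of the homotopy colimit over $\Delta^{\op}$: the Bousfield--Kan weight $N(-/\Delta^{\op})$ and the realization weight $\nabla=\Delta[\bullet]$, whose coend is the diagonal by the identification $\diag Y_\bullet\cong\nabla\otimes_{\Delta^{\op}}Y_\bullet$ established (via \cite[4.4]{MSV}) in the proof of Lemma \ref{6_14}.

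First I would dispose of the left-hand arrow. By Definition \ref{6_2} we have $\hocolim^{N\mM}N\str A_\bullet=N(-/\Delta^{\op})\otimes_{\Delta^{\op}}QN\str A_\bullet$, and the left arrow is $N(-/\Delta^{\op})\otimes_{\Delta^{\op}}(-)$ applied to the counit $Q^{\Reedy}QN\str A_\bullet\to QN\str A_\bullet$ of the Reedy cofibrant replacement. This counit is an objectwise weak equivalence between objectwise cofibrant $\Delta^{\op}$-diagrams (a Reedy cofibrant object is objectwise cofibrant, and $QN\str A_\bullet$ is objectwise cofibrant by construction of $Q$), so Lemma \ref{6_3}, i.e. \cite[18.5.3]{Hirsch}, makes the induced map a weak equivalence.

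For the right-hand arrow I would factor it through the diagonal of the Reedy cofibrant replacement,
$$
N(-/\Delta^{\op})\otimes_{\Delta^{\op}}Q^{\Reedy}QN\str A_\bullet
\xrightarrow{(a)} \diag\bigl(Q^{\Reedy}QN\str A_\bullet\bigr)
\xrightarrow{(b)} \diag NA_\bullet ,
$$
using $\diag Y_\bullet\cong\nabla\otimes_{\Delta^{\op}}Y_\bullet$ in $\SSets^{N\mM}$ for the target of $(a)$. Map $(b)$ is $\diag$ applied to the composite $Q^{\Reedy}QN\str A_\bullet\to QN\str A_\bullet\to N\str A_\bullet\to NA_\bullet$, whose last arrow is $N$ of the weak equivalence $r\colon\str A_n\to A_n$ of Proposition \ref{5_3}; each arrow is a degreewise weak equivalence of bisimplicial sets, so $(b)$ is a weak equivalence by \cite[IV.1.7]{JG} (the same tool used in Lemma \ref{6_14}). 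Map $(a)$ is the comparison from the Bousfield--Kan homotopy colimit to the realization: since $Q^{\Reedy}QN\str A_\bullet$ is \emph{Reedy} cofibrant, its realization $\nabla\otimes_{\Delta^{\op}}(-)=\diag$ computes the homotopy colimit over $\Delta^{\op}$, so $(a)$ is a weak equivalence. This is exactly where the extra Reedy cofibrant replacement is needed: the objectwise cofibrant $QN\str A_\bullet$ already suffices to present the homotopy colimit through the projectively cofibrant weight $N(-/\Delta^{\op})$, but identifying that construction with the diagonal (realization weight $\nabla$, which is only Reedy cofibrant) forces one to pass to a Reedy cofibrant diagram. All constructions involved ($Q$, $Q^{\Reedy}$, $\str$, $N$, $\diag$) are functorial, so the whole zigzag is natural in $A_\bullet$.

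The hard part will be step $(a)$: the honest verification that, for a Reedy cofibrant simplicial object, the realization (equivalently the diagonal, by \ref{6_14}) agrees up to weak equivalence with the bar-construction homotopy colimit over $\Delta^{\op}$. I would isolate this as the statement that $|-|\cong\nabla\otimes_{\Delta^{\op}}(-)$ is the left derived functor of the colimit over $\Delta^{\op}$ on Reedy cofibrant diagrams, and cite it from the Reedy machinery in \cite{Hirsch}; note that one cannot shortcut it through the constant weight, since the coend against the terminal weight is the ordinary colimit and is not homotopy invariant. Finally, for the ``In particular'' clause, when $\cM$ has the homotopy colimit property Theorem \ref{6_9} supplies a natural weak equivalence $\alpha(A_\bullet)\colon\hocolim^{N\mM}N\str A_\bullet\to N(\hocolim^{\mM}A_\bullet)$; prepending it to the zigzag just built gives a broken arrow of natural weak equivalences
$$
N(\hocolim^{\mM}A_\bullet)\xleftarrow{\ \alpha\ }\hocolim^{N\mM}N\str A_\bullet\xleftarrow{\ \ }N(-/\Delta^{\op})\otimes_{\Delta^{\op}}Q^{\Reedy}QN\str A_\bullet\xrightarrow{\ \ }\diag NA_\bullet
$$
connecting the two composites $N^{\alg}\circ\hocolim$ and $\diag\circ\cS N^{\alg}$ around square $I$ of diagram \ref{7_1}, which is the asserted commutativity up to a broken arrow of weak equivalences.
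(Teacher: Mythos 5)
Your proposal is correct and takes essentially the same route as the paper: the left arrow via objectwise cofibrancy of the Reedy replacement (\cite[15.3.11]{Hirsch}) and Lemma \ref{6_3}, and the right arrow factored through $\diag Q^{\Reedy}QN\str A_\bullet$ by the Bousfield--Kan comparison followed by \cite[IV.1.7]{JG} applied to the degreewise equivalence onto $NA_\bullet$. The ``hard part'' you flag in step $(a)$ is exactly what the paper cites as \cite[18.7.4]{Hirsch}, so nothing beyond locating that reference is missing.
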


\begin{proof}
 $Q^{\Reedy}X_\bullet $ is degreewise cofibrant for any $X_\bullet$ in $\cS^2\cS ets^{N\mM}$ by \cite[15.3.11]{Hirsch}. 
Hence the left morphism is a weak equivalence by the argument of Remark \ref{6_3}. 

By \cite[18.7.4]{Hirsch} the
Bousfield-Kan map defines a natural weak equivalence
$$
N(-/\Delta^{\op})\otimes_{\Delta^{\op}}Q^{\Reedy}QN\str A_\bullet\rightarrow \diag Q^{\Reedy}QN\str A_\bullet .
$$
Since the natural map $Q^{\Reedy}QN\str A_\bullet\rightarrow  NA_\bullet$ is degreewise a weak equivalence
$\diag Q^{\Reedy}QN\str A_\bullet\rightarrow \diag NA_\bullet$ is a weak equivalence by \cite[IV.1.7]{JG}.
\end{proof}

\begin{coro}\label{7_8} Let $\cM$ be a $\Sigma$-free operad having the homotopy colimit property. Then
 a morphism $f_\bullet:A_\bullet \to B_\bullet$ in $\cS\Cat^\mathbb{M}$ is a weak equivalence if and only if
$\hocolim\ f_\bullet$ is a weak equivalence. {\hfill\ensuremath{\Box}}
\end{coro}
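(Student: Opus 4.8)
The plan is to unwind the two notions of weak equivalence and bridge them through the natural zigzag of Lemma \ref{7_7}. First I would recall the definitions: by the convention following diagram \ref{7_1}, a morphism $f_\bullet$ is a weak equivalence in $\scatm$ precisely when its image under the realization through $\stopbm$ is a weak equivalence in $\Top^{B\mM}$, i.e.\ a weak homotopy equivalence of underlying spaces; and $\hocolim f_\bullet$ is a weak equivalence in $\catm$ exactly when $B(\hocolim^{\mM} f_\bullet)=|N(\hocolim^{\mM} f_\bullet)|$ is a weak homotopy equivalence (Definitions \ref{3_8} and \ref{6_1}). The whole statement is thus a comparison of these two detecting spaces.

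Next I would use the commutativity of square II up to natural isomorphism to rewrite the $\scatm$-side condition. Since $|-|^{\alg}\circ\cS|-|^{\alg}\cong |-|^{\alg}\circ\diag$, the image of $f_\bullet$ through $\stopbm$ agrees, up to natural isomorphism, with $|-|^{\alg}(\diag Nf_\bullet)$. Because realization both preserves and reflects weak equivalences of simplicial sets (this is built into Definition \ref{6_1}, and $U_T\circ|-|^{\alg}=|-|\circ U_S$ by square IV), I conclude that $f_\bullet$ is a weak equivalence in $\scatm$ if and only if $\diag Nf_\bullet$ is a weak equivalence in $\SSets^{N\mM}$.

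The heart of the argument is then Lemma \ref{7_7}: since $\cM$ has the homotopy colimit property, square I commutes up to a broken arrow of natural weak equivalences, so there is a zigzag of natural weak equivalences connecting $N(\hocolim^{\mM}A_\bullet)$ and $\diag NA_\bullet$ in $\SSets^{N\mM}$ (assembled from the two weak equivalences of Lemma \ref{7_7} together with the weak equivalence $\alpha$ of Theorem \ref{6_9}). Applying this zigzag to $f_\bullet$ produces naturality squares whose horizontal arrows are weak equivalences; a standard two-out-of-three argument across these squares then shows that $\diag Nf_\bullet$ is a weak equivalence if and only if $N(\hocolim^{\mM}f_\bullet)$ is. Finally, $N(\hocolim^{\mM}f_\bullet)$ is a weak equivalence in $\SSets^{N\mM}$ iff $|N(\hocolim^{\mM}f_\bullet)|=B(\hocolim^{\mM}f_\bullet)$ is a weak homotopy equivalence, which by Definition \ref{3_8} is exactly the assertion that $\hocolim f_\bullet$ is a weak equivalence in $\catm$. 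Chaining these three equivalences gives the result.

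I expect the only real subtlety to be bookkeeping rather than substance: one must check that the comparison weak equivalences furnished by Lemma \ref{7_7} are genuinely natural in the simplicial algebra $A_\bullet$, so that they can legitimately be applied to the morphism $f_\bullet$ to form the ladder, and that each of the functors $|-|^{\alg}$, $\diag$ and $N$ both preserves and reflects the relevant weak equivalences. All of this is already contained in the cited results and definitions, so no homotopy-theoretic input beyond Theorem \ref{6_9} (entering via Lemma \ref{7_7}) is required.
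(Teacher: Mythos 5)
Your proposal is correct and is exactly the argument the paper intends: Corollary \ref{7_8} is stated with no written proof precisely because it follows from Lemma \ref{7_7} (whose zigzag, combined with the map $\alpha$ of Theorem \ref{6_9} and the commutativity of squares II and IV of diagram \ref{7_1}, connects $\diag NA_\bullet$ to $N(\hocolim^{\mM}A_\bullet)$ by natural weak equivalences) via the definition-unwinding, naturality-ladder, and two-out-of-three argument you spell out. Your flagged subtlety is also the right one, and it is covered since $f_\bullet$ is a strict morphism of $\Delta^{\op}$-diagrams, so the naturality clauses of Lemma \ref{7_7} and Theorem \ref{6_9} apply.
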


For the proof of theorem \ref{7_6} we consider the adjunction

(A)\hspace{2.3cm} $\hocolim^{\mM}: \Func (\Delta^{\op},\Cat^{\mM}\Lax)\leftrightarrows \Cat^{\mM}:c$

The functor $c$ factors through the inclusion $\cS\Cat^{\mM}\subset \Func (\Delta^{\op},\Cat^{\mM}\Lax)$.
The theorem is a consequence of the next two lemmas.

\begin{lem}\label{7_9} Let $\cM$ be a $\Sigma$-free operad.
 The counit $\varepsilon(A):\hocolim^{\mM} cA\to A$ of adjunction (A) is a weak equivalence for all $A$ in $\Cat^{\mM}$.
\end{lem}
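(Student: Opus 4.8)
The plan is to reduce the statement to Proposition \ref{5_9} and then build an explicit homotopy from the initial object of $\Delta^{\op}$, mimicking the proof of Lemma \ref{3_9}. By Proposition \ref{5_9} applied with $\cL=\Delta^{\op}$ there is a natural isomorphism $\hocolim^{\mM}cA\cong\Delta^{\op}\otimes\str A$ under which the counit $\varepsilon(A)$ is identified with the composite
$$
\Delta^{\op}\otimes\str A\xrightarrow{\ p_\ast\ }\ast\otimes\str A\cong\str A\xrightarrow{\ r\ }A,
$$
where $p:\Delta^{\op}\to\ast$ is the unique functor and $r$ is the natural homomorphism of \ref{5_2}. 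Since $r$ is a weak equivalence by Proposition \ref{5_3}, it suffices to show that $p_\ast$ is a weak equivalence. Write $Z=\str A$.

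The key observation I would use is that $[0]$ is terminal in $\Delta$, hence initial in $\Delta^{\op}$. Let $i:\ast\to\Delta^{\op}$ be the functor with image $[0]$; then $p\circ i=\id_\ast$, and initiality of $[0]$ provides a natural transformation $\eta:i\circ p\Rightarrow\id_{\Delta^{\op}}$ whose component at $L$ is the unique morphism $[0]\to L$. Tensoring with $Z$ and using functoriality of the tensor in the $\Cat$-variable yields homomorphisms $i_\ast:Z\to\Delta^{\op}\otimes Z$ and $p_\ast:\Delta^{\op}\otimes Z\to Z$ with $p_\ast\circ i_\ast=\id_Z$. To turn $\eta$ into a homotopy I would regard it as a functor $H:\Delta^{\op}\times[1]\to\Delta^{\op}$ restricting to $i\circ p$ and to $\id$ on the two ends, and use the coherence isomorphism $(\Delta^{\op}\times[1])\otimes Z\cong[1]\otimes(\Delta^{\op}\otimes Z)$ of the tensoring (Corollary \ref{2_6}) to obtain $\tilde H:[1]\otimes(\Delta^{\op}\otimes Z)\to\Delta^{\op}\otimes Z$. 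Then the natural map of \ref{2_7a} gives
$$
[0,1]\otimes B(\Delta^{\op}\otimes Z)=B[1]\otimes B(\Delta^{\op}\otimes Z)\to B\bigl([1]\otimes(\Delta^{\op}\otimes Z)\bigr)\xrightarrow{B\tilde H}B(\Delta^{\op}\otimes Z),
$$
a homotopy in $\Top^{B\mM}$ from $B(i_\ast\circ p_\ast)$ to the identity. Combined with $p_\ast\circ i_\ast=\id_Z$, this shows that $Bp_\ast$ and $Bi_\ast$ are mutually inverse homotopy equivalences, so $p_\ast$ is a weak equivalence and hence so is $\varepsilon(A)=r\circ p_\ast$.

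I expect the main obstacle to be the bookkeeping in this last step, namely verifying that $H\otimes\id_Z$, after the associativity isomorphism and the comparison map of \ref{2_7a}, really restricts to $B(i_\ast\circ p_\ast)$ and to $B(\id)$ as homomorphisms of $B\mM$-algebras on the two ends of the cylinder $[0,1]$. This is precisely where the argument of Lemma \ref{3_9} serves as a template, and where the naturality of the comparison map $B[1]\otimes B(-)\to B([1]\otimes-)$ must be invoked to pin down the endpoints of the homotopy.
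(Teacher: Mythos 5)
Your proposal is correct and takes essentially the same route as the paper: both identify the counit via Proposition \ref{5_9} with $\Delta^{\op}\otimes \str A \to \ast\otimes\str A\cong \str A \xrightarrow{r} A$, invoke Proposition \ref{5_3} for $r$, and conclude that the projection is a weak equivalence because $[0]$ is initial in $\Delta^{\op}$. The only difference is cosmetic: where the paper simply cites Lemma \ref{3_9} for this last step, you unpack that lemma's proof (the section $i_\ast$, the natural transformation from the initial object, and the cylinder homotopy built from the comparison map of \ref{2_7a}) by hand.
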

\begin{proof}
 By \ref{5_9} the counit $\varepsilon(A)$ can be identified with the composition
$$
\Delta^{\op}\otimes \str A\xrightarrow{F\otimes \id} \str A\xrightarrow{r} A
$$
where $F:\Delta^{\op} \to \ast$ is the constant functor. Now $r$ is a weak equivalence by \ref{5_3}
and $F\otimes \id$ is a weak equivalence by Lemma \ref{3_9}, because 
$\Delta^{\op}$ has an initial object.
\end{proof}

For the next lemma we will need the following additional adjunctions:
$$
\begin{array}{lcrclc}
 \textrm{(B)}& \phantom{filler} & \Cat^{\mM}\Lax(A,i_1B)&\cong& \Cat^{\mM}(\str A,B)&\phantom{filler, filler} \\
\textrm{(C)}& & \Func (\Delta^{\op},\Cat^{\mM})(Y,i_2Z)&\cong & (\Cat^{\mM})^{\Delta^{\op}}(RY,Z)&\\
\textrm{(D)}& & (\Cat^{\mM})^{\Delta^{\op}}(U,\bar{c}S) &\cong & \Cat^{\mM}(\colim\ U,S)&
\end{array}
$$
where $i_1:\Cat^{\mM}\to \Cat^{\mM}\Lax$ and $i_2: (\Cat^{\mM})^{\Delta^{\op}}\to \Func (\Delta^{\op},\Cat^{\mM})$
are the inclusions, $RY=(-/\Delta^{\op}/-)\otimes_{\Delta^{\op}}Y$, and $\bar{c}: \Cat^{\mM}\to (\Cat^{\mM})^{\Delta^{\op}}$
is the constant diagram functor.

For an $X$ in $\scatm=(\Cat^{\mM})^{\Delta^{\op}}$ consider the diagram (where we suppress various inclusion functors)
$$
\xymatrix{
X\ar[r]^{\bar{j}} \ar[ddrr]_j & \str X \ar[r]^{\alpha(\str X)}& R\str X\ar[dd]^{\mu(\str X)}\ar[r]^{\beta(\str X)}
& \str X \ar[r]^r & X\\
\\
&&\colim\ R\str X=\hocolim^{\mM} X &
}
$$
where $j,\ \bar{j}, \ \alpha(\str X)$, and $\mu(\str X)$ are the units of the adjunctions 
(A),...,(D), and $\beta(\str X)$ and $r$ are
the counits of the adjunctions (C) and (D) ($\bar{j}$ and $r$ are applied degreewise). The triangle 
commutes where the maps are considered as $1$-cells in $\Cat^{\mM}\Lax$. We recall that $j$ and $\bar{j}$
are homotopy morphisms of simplicial objects in $\Cat^{\mM}\Lax$, that $\alpha(\str X)$ is a
 homotopy morphism of simplicial objects in $\Cat^{\mM}$, while the other three maps are strict
morphisms of simplicial objects in $\Cat^{\mM}$. 

\begin{lem}\label{7_10} Let $\cM$ be a $\Sigma$-free operad.
 For $X$ in $\scatm$ we have natural weak equivalences 
$$
c(\hocolim^{\mM} X)\xleftarrow{\mu(\str X)} R\str X\xrightarrow{r\circ\beta(\str X)} X
$$
in $\scatm$.
\end{lem}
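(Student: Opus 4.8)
The plan is to show that both structure maps are weak equivalences in $\scatm$. Recall from \ref{7_1} that the weak equivalences in $\scatm$ are detected by the composite to $\Top$ running through $\stopbm$; concretely, a morphism $f_\bullet$ of simplicial $\mM$-algebras is a weak equivalence if and only if the realization of the bisimplicial set underlying $f_\bullet$ is a weak homotopy equivalence, equivalently $\diag Nf_\bullet$ is a weak equivalence of simplicial sets. Since the simplicial spaces $[m]\mapsto B(-)_m$ coming from simplicial categories are proper (the inclusions of the degenerate parts are closed cofibrations, as in the discussion preceding \ref{6_16}), realization along $\Delta^{\op}$ preserves levelwise weak equivalences, and I will use this together with \cite[IV.1.7]{JG} throughout.

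For $r\circ\beta(\str X)\colon R\str X\to X$ this is immediate. Applying the Rectification Theorem \ref{3_10} with $\cL=\Delta^{\op}$, the counit $\beta(\str X)\colon R\str X\to\str X$ is a weak equivalence in each simplicial degree, and by Proposition \ref{5_3} the degreewise counit $r\colon\str X\to X$ is a weak equivalence as well. As $\beta(\str X)$ and $r$ are strict morphisms of simplicial objects in $\Cat^{\mM}$, their composite is a degreewise weak equivalence, hence a weak equivalence in $\scatm$ by \cite[IV.1.7]{JG}; naturality is clear.

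The map $\mu(\str X)\colon R\str X\to c(\colim R\str X)=c(\hocolim^{\mM}X)$ is the main point. It is the unit of $\colim\dashv c$, so in degree $[m]$ it is the canonical map $R\str X([m])\to\colim R\str X$ into the colimit; for a general simplicial object the comparison $\diag N(-)\to N\colim$ is nowhere near a weak equivalence, so one must use that $R\str X$ is the rectification of a $\Delta^{\op}$-diagram and that its colimit is the homotopy colimit (Theorem \ref{5_5}). I would pass to classifying spaces and compare over $\Delta^{\op}$ by bar constructions, mirroring the proof of Lemma \ref{7_7}. Because $\cM$ is $\Sigma$-free, the natural maps of \ref{2_7a} allow $B=|N|$ to pass through the coends defining $R\str X=(-/\Delta^{\op}/-)\otimes_{\Delta^{\op}}\str X$ and $\hocolim^{\mM}X=(-/\Delta^{\op})\otimes_{\Delta^{\op}}\str X$ up to weak equivalence, while Proposition \ref{5_3} gives $B\str X\simeq BX$ degreewise. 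Combining these with the Bousfield--Kan map and homotopy invariance \cite[18.7.4, 18.5.3]{Hirsch}, the realization of $BR\str X$ along $\Delta^{\op}$ is identified with the topological homotopy colimit $\hocolim^{B\mM}BX$ over $\Delta^{\op}$, whereas $B\,\hocolim^{\mM}X$ is identified with $B(-/\Delta^{\op})\otimes_{\Delta^{\op}}BX$; under these identifications $B\mu(\str X)$ becomes the canonical comparison between them, which is a weak equivalence.

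The hard part will be precisely this last identification: one must verify that all the intervening natural weak equivalences---the coend-commutation maps of \ref{2_7a}, the replacement $B\str X\simeq BX$, the cofibrant replacement built into $\hocolim^{B\mM}$, and the Bousfield--Kan map---are compatible with the simplicial structure in the $\Delta^{\op}$-direction and with $\mu(\str X)$, so that the resulting comparison really is $B\mu(\str X)$. I note that a purely formal derivation of this half from the counit computation of Lemma \ref{7_9} is not available here, since the criterion that $f_\bullet$ be a weak equivalence precisely when $\hocolim f_\bullet$ is (Corollary \ref{7_8}) presupposes the homotopy colimit property; routing the argument through the indexing category $\Delta^{\op}$ and bar constructions instead of through the comparison map of \ref{6_6} is what keeps the proof within the hypothesis of mere $\Sigma$-freeness and avoids the factorization condition. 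Finally, $\mu(\str X)$ and $r\circ\beta(\str X)$ are natural in $X$ by construction, yielding the asserted natural zig-zag.
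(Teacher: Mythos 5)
Your first half is correct and in fact more direct than the paper: $\beta(\str X)$ is degreewise a weak equivalence by the Rectification Theorem \ref{3_10}, $r$ is degreewise a weak equivalence by \ref{5_3}, both are strict, and weak equivalences in $\scatm$ are detected by $\diag N$, so \cite[IV.1.7]{JG} finishes this map without any appeal to Corollary \ref{7_8}. Your observation that a formal derivation of the $\mu(\str X)$ half via \ref{7_8} presupposes the homotopy colimit property is also a sharp and fair reading of the paper's bookkeeping --- the paper's own proof of this lemma, despite the lemma being stated for mere $\Sigma$-freeness, does proceed exactly that way: from $r\circ\beta(\str X)\circ\alpha(\str X)\circ\bar{j}=\id_X$ it deduces that $\hocolim(\alpha(\str X)\circ\bar{j})$ is a weak equivalence; the triangle identity of adjunction (A) together with Lemma \ref{7_9} shows $\hocolim^{\mM}j$ is a weak equivalence; since $j=\mu(\str X)\circ\alpha(\str X)\circ\bar{j}$, two-out-of-three makes $\hocolim^{\mM}\mu(\str X)$ a weak equivalence, and Corollary \ref{7_8} descends this to $\mu(\str X)$ itself.

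However, your substitute argument for $\mu(\str X)$ has a genuine gap, and it sits exactly where you located the ``hard part''. The step ``because $\cM$ is $\Sigma$-free, the natural maps of \ref{2_7a} allow $B=|N|$ to pass through the coends up to weak equivalence'' is unsupported: $\Sigma$-freeness only makes $\alpha(C)\colon\mS NC\to N\mM C$ an isomorphism on free algebras, while the induced map $\tau(K,C)\colon NK\otimes^s NC\to N(K\otimes^c C)$ on the coequalizers \ref{2_6a} defining the tensors is a bare natural transformation, since $N$ does not preserve those coequalizers. The assertion that $B$ commutes up to weak equivalence with the coends $(-/\Delta^{\op}/-)\otimes_{\Delta^{\op}}\str X$ and $(-/\Delta^{\op})\otimes_{\Delta^{\op}}\str X$ is precisely an instance of the comparison $\alpha(X)$ of Theorem \ref{6_9} being a weak equivalence, i.e.\ the homotopy colimit property, which the paper can only establish under the factorization condition via Lemmas \ref{6_15} and \ref{6_17} and Appendix B. So your route is circular: in trying to avoid the homotopy colimit property you assume a statement tantamount to it, and if your bar-construction identification held for every $\Sigma$-free operad it would prove the $\Delta^{\op}$-case of Theorem \ref{6_9} with no factorization condition, bypassing the technical heart of Section 6. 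Note also that no levelwise argument can rescue this half: degreewise, $\mu(\str X)$ is the map $R\str X([m])\to\hocolim^{\mM}X$, whose source is weakly equivalent to $X([m])$ by \ref{3_10} and \ref{5_3}, and this is in general not a weak equivalence. The honest repairs of the hypothesis mismatch you flagged are either to carry the homotopy colimit property as a hypothesis (it holds in all the applications of Section 8) or to produce a genuinely new comparison for $\diag NR\str X\to N\hocolim^{\mM}X$; your sketch supplies neither.
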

\begin{proof}
 By \ref{7_8} it suffices to show that the homotopy colimits of these morphisms are weak equivalences.

By \ref{3_10} and \ref{5_3} the homotopy colimits of $r$ and $\beta(\str X)$ are weak equivalences. It remains
to prove that $\mu(\str X)$ is a weak equivalence. By construction, $r\circ \beta(\str X)\circ \alpha(\str X)\circ \bar{j}=\id_X$.
Hence $\hocolim (\alpha(\str X)\circ \bar{j})$ is a weak equivalence. Since $j$ is the unit of the adjunction (A)
$$
\hocolim^{\mM} X\xrightarrow{\hocolim^{\mM}j} \hocolim^{\mM}c(\hocolim^{\mM} X)\xrightarrow{\varepsilon(\hocolim^{\mM}X)}
\hocolim^{\mM} X
$$
is the identity. Since $\varepsilon(\hocolim^{\mM}X)$ is a weak equivalence by \ref{7_9}, so is $\hocolim^{\mM} j$ and hence $\mu(\str X)$.
\end{proof}

\section{Applications}

Throughout this section we will use Convention \ref{7_3a}.

Since we will consider various different $\Cat$-operads $\cM$ we will shift from the categories $\Cat^{\mM}$
of algebras over the associated monads $\mM$ back to the isomorphic categories $\Cat^{\cM}$ of $\cM$-algebras.

The operads $\cM$ in this section will be \textit{reduced}, i.e. $\cM_0$ consists of a single element. Hence
$\cM$-algebras are naturally based by the images of the nullary operation.

For the comparison of categories of algebras recall the following result:

\begin{prop}\label{8_1}
 Let $\varepsilon:\mathcal{P}\to\mathcal{Q}$ be a morphism of $\Sigma$-free
simplicial or topological  operads such that
$\varepsilon(n):\mathcal{P}(n)\to\mathcal{Q}(n)$ is a weak equivalence for
each $n\in\mathbb{N}$. Then the forgetful functor and its left adjoint
define a Quillen equivalence
$$L:\SSets^{\cP}\rightleftarrows \SSets^{\cQ}:R\quad\textrm{respectively}\quad 
L:\mathcal{T}op^{\mathcal{P}}\rightleftarrows \mathcal{T}op^{\mathcal{Q}}:R
$$
\end{prop}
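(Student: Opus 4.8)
The plan is to realise the adjunction concretely, recognise it as a Quillen adjunction, reduce the equivalence assertion to a statement about the unit on cofibrant objects, and then bootstrap that statement from free algebras, where the hypothesis that $\mathcal{P}$ and $\mathcal{Q}$ are $\Sigma$-free does all the work. I treat the simplicial and topological cases simultaneously, writing $\cV$ for either $\SSets$ or $\Top$ and $U$ for the forgetful functor to $\cV$. The right adjoint $R\colon \cV^{\mathcal{Q}}\to\cV^{\mathcal{P}}$ is restriction along $\varepsilon$ (precompose the structure maps with the $\varepsilon(n)$) and $L$ is its left adjoint; by construction $U\circ R=U$. As recalled before \ref{6_2}, the lifted model structures on $\cV^{\mathcal{P}}$ and $\cV^{\mathcal{Q}}$ have their weak equivalences and fibrations created by $U$, so $R$ preserves fibrations and acyclic fibrations, whence $(L,R)$ is a Quillen adjunction; moreover $R$ both preserves and reflects all weak equivalences, since it does not alter underlying objects.

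Next I would reduce to the unit. Since $R$ reflects weak equivalences, $(L,R)$ is a Quillen equivalence as soon as the derived unit is a weak equivalence on cofibrant objects. Because $R$ preserves all weak equivalences and leaves underlying objects unchanged, $R$ applied to a fibrant replacement $LA\to(LA)^{f}$ is again a weak equivalence, so the derived unit is a weak equivalence precisely when the ordinary unit $\eta_A\colon A\to RLA$ is. Thus the statement reduces to showing: for every cofibrant $\mathcal{P}$-algebra $A$, the unit $\eta_A$ is a weak equivalence.

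I would establish this first for free algebras. Since $U\circ R=U$ forces $L\circ F_{\mathcal{P}}=F_{\mathcal{Q}}$ on passing to left adjoints, for $A=F_{\mathcal{P}}X$ the unit $\eta_{F_{\mathcal{P}}X}$ has underlying map the coproduct over $n$ of the maps
$$\mathcal{P}(n)\times_{\Sigma_n}X^{n}\longrightarrow\mathcal{Q}(n)\times_{\Sigma_n}X^{n}$$
induced by $\varepsilon(n)\times\mathrm{id}$. Here $\Sigma$-freeness enters: $\Sigma_n$ acts freely on $\mathcal{P}(n)$ and on $\mathcal{Q}(n)$, hence freely on $\mathcal{P}(n)\times X^{n}$ with the diagonal action, so the orbit projections model the homotopy orbits and the $\Sigma_n$-equivariant weak equivalence $\varepsilon(n)\times\mathrm{id}$ descends to a weak equivalence of orbits. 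Summing over $n$ shows $\eta_{F_{\mathcal{P}}X}$ is a weak equivalence.

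Finally I would propagate this to all cofibrant algebras. Every cofibrant $\mathcal{P}$-algebra is a retract of a cell algebra, that is, of a transfinite composite of pushouts of maps $F_{\mathcal{P}}(i)$ along generating (acyclic) cofibrations $i$ of $\cV$. As a left adjoint $L$ preserves these colimits and cofibrations, and I would propagate ``$\eta$ is a weak equivalence'' along the cellular filtration using the standard filtration of a pushout of a free-algebra map, together with the gluing lemma and compatibility with transfinite composition and retracts. The main obstacle is exactly this single cofibration step: controlling the unit on one pushout of a free-algebra map, whose associated-graded layers are again quotients of the form $(\cdots)\times_{\Sigma_n}(\cdots)$; it is precisely $\Sigma$-freeness that renders these symmetric-group quotients homotopically well behaved, so that the relevant equivariant equivalences continue to descend to orbits. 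Once this step is in hand, the colimit and retract bookkeeping is routine and identical in the simplicial and topological settings, completing the proof.
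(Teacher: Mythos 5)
Your proposal is correct in outline but takes a genuinely different route from the paper. The paper does not argue by cellular induction at all: it observes that the topological case is a model-categorical reformulation of May's classical change-of-operads theorem, where the derived functor of $L$ is computed by the functorial two-sided bar construction $B(\mathbb{Q},\mathbb{P},X)$ (citing \cite{May1} and \cite[Prop. 2.8]{FV}); the levelwise equivalences $\varepsilon(n)$ together with $\Sigma$-freeness make $X \leftarrow B(\mathbb{P},\mathbb{P},X) \rightarrow B(\mathbb{Q},\mathbb{P},X)$ weak equivalences, which is in effect your free-algebra step performed degreewise on a simplicial resolution by free algebras, so that no analysis of pushouts of free maps is ever needed. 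The simplicial case is then deduced from the topological one via the Quillen equivalence \ref{6_1a}, rather than run in parallel as you do. Your reduction to the unit on cofibrant objects (using that $R$ is the identity on underlying objects and so creates weak equivalences) and your free-algebra computation --- $L\circ F_{\mathcal{P}}=F_{\mathcal{Q}}$, and free $\Sigma_n$-actions so that orbits compute homotopy orbits (in $\Top$ because a free action of a finite group on a weak Hausdorff $k$-space yields a covering onto the quotient) --- are sound and exploit exactly the mechanism the bar-construction proof uses. The one place where your sketch is lighter than a complete proof is the cell-attachment step you yourself flag: the filtration quotients of a pushout along $F_{\mathcal{P}}(K)\to F_{\mathcal{P}}(L)$ are not of the form $\mathcal{P}(n)\times_{\Sigma_n}(-)$ but involve the enveloping operad $U_{\mathcal{P}}(A)(n)$ of the algebra being glued onto, so the induction must propagate both $\Sigma$-freeness and the comparison equivalence $U_{\mathcal{P}}(A)(n)\to U_{\mathcal{Q}}(LA)(n)$ through the layers; this is standard but nontrivial bookkeeping. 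What each approach buys: yours is self-contained within the lifted model structures and handles both settings uniformly, while the paper's bar-construction route avoids the enveloping-operad analysis entirely at the cost of invoking May's realization machinery, which is why it treats the topological case first and transfers to $\SSets$ afterwards.
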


The topological version is the reformulation in the language of model category structures of
the well-known change of operads  functor of \cite{May1}, defined by a
functorial two-sided bar construction. In fact, the two-sided bar construction is just
 the derived functor of the left adjoint (see also \cite[Prop. 2.8]{FV}). The simplicial version
 follows from the topological one by applying the Quillen equivalence \ref{6_1a}.

In connection with Theorem \ref{7_5} we obtain

\begin{prop}\label{8_2}
 Let $\varepsilon:\mathcal{P}\to\mathcal{Q}$ be a morphism of $\Sigma$-free
$\mathcal{C}at$-operads having the homotopy colimit property. Suppose that each
$\varepsilon(n):\mathcal{P}(n)\to\mathcal{Q}(n)$ is a weak equivalence.
Then the forgetful functor and its left adjoint induce an
equivalence of categories
$$
\xymatrix{\mathcal{C}at^{\mathcal{Q}}[\widetilde{we^{-1}}] \ar[rr] && 
\mathcal{C}at^{\mathcal{P}}[\widetilde{we^{-1}}]
}
$$
\end{prop}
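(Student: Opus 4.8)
The plan is to transport the statement across the nerve into the simplicial world, where Proposition \ref{8_1} provides a Quillen equivalence, and then to pull the conclusion back through the equivalences of Theorem \ref{7_5} applied to $\mathcal{P}$ and $\mathcal{Q}$ separately.

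First I would apply the nerve to $\varepsilon$. Since $N$ preserves products and carries $\Sigma$-free operads to $\Sigma$-free operads (\ref{2_7}, \ref{6_11}), the map $N\varepsilon\colon N\mathcal{P}\to N\mathcal{Q}$ is a morphism of $\Sigma$-free simplicial operads, and because $N$ preserves weak equivalences each $N\varepsilon(n)\colon N\mathcal{P}(n)\to N\mathcal{Q}(n)$ is a weak equivalence. Proposition \ref{8_1} then yields a Quillen equivalence
$$
L\colon\SSets^{N\mathcal{P}}\rightleftarrows\SSets^{N\mathcal{Q}}\colon R,
$$
whose right adjoint $R$ is restriction of the algebra structure along $N\varepsilon$. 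As weak equivalences in these algebra categories are created by the underlying-object functor, $R$ preserves \emph{all} weak equivalences; hence it descends directly to localizations without deriving, and as the right half of a Quillen equivalence this descent is an equivalence $R\colon\SSets^{N\mathcal{Q}}[\we^{-1}]\xrightarrow{\simeq}\SSets^{N\mathcal{P}}[\we^{-1}]$, with the derived left adjoint as quasi-inverse.

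Next I would record the compatibility of $N$ with restriction. Writing $R_{\Cat}$ and $R_{\SSets}$ for the forgetful functors along $\varepsilon$ and $N\varepsilon$, the square
$$
\xymatrix{
\Cat^{\mathcal{Q}} \ar[r]^{N^\alg} \ar[d]_{R_{\Cat}} & \SSets^{N\mathcal{Q}} \ar[d]^{R_{\SSets}} \\
\Cat^{\mathcal{P}} \ar[r]_{N^\alg} & \SSets^{N\mathcal{P}}
}
$$
commutes on the nose, since reinterpreting an algebra structure along $\varepsilon$ and then applying $N$ agrees with applying $N$ and reinterpreting along $N\varepsilon$ (both leave the underlying object untouched). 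As $N^\alg$ and both restriction functors preserve weak equivalences, the square descends to a commuting square of localizations; here I would invoke Proposition \ref{7_3} to identify $\Cat^{-}[\widetilde{\we^{-1}}]$ with the relevant localization on the simplicial side, and Convention \ref{7_3a} for the universe bookkeeping.

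Finally I would assemble the diagram. By Theorem \ref{7_5}, applied to $\mathcal{P}$ and $\mathcal{Q}$ (both of which have the homotopy colimit property by hypothesis), the two horizontal functors induce equivalences $\Cat^{\mathcal{Q}}[\widetilde{\we^{-1}}]\simeq\SSets^{N\mathcal{Q}}[\we^{-1}]$ and $\Cat^{\mathcal{P}}[\widetilde{\we^{-1}}]\simeq\SSets^{N\mathcal{P}}[\we^{-1}]$. In the resulting commuting square of localizations the two horizontal arrows and the right vertical arrow are equivalences, so by the two-out-of-three property for equivalences the left vertical arrow, which is the functor induced by the forgetful functor $R_{\Cat}$, is an equivalence as well; its quasi-inverse is induced by the left adjoint of $R_{\Cat}$. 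The step I expect to be most delicate is verifying the strict commutativity and the clean descent of the square to localizations, that is, confirming that the equivalence produced is genuinely the one induced by the forgetful functor and its left adjoint, and matching the localization-up-to-equivalence conventions of Proposition \ref{7_3} uniformly for $\mathcal{P}$ and $\mathcal{Q}$.
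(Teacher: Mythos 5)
Your proposal is correct and follows essentially the route the paper intends: Proposition \ref{8_2} is obtained there by combining Theorem \ref{7_5} (applied to both $\mathcal{P}$ and $\mathcal{Q}$) with the change-of-operads Quillen equivalence of Proposition \ref{8_1}, exactly as in your commuting square with two-out-of-three. Your added verifications --- that $N\varepsilon$ is a weak equivalence of $\Sigma$-free simplicial operads, that the restriction functor preserves all weak equivalences and so descends without deriving, and that the nerve square commutes strictly --- are precisely the details the paper leaves implicit.
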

\vspace{2ex}

\textbf{Iterated monoidal categories}

$k$-fold monoidal categories were introduced in \cite{BFSV}. Their structure is encoded
by the $\Sigma$-free operad $\mathcal{M}_k$ in $\Cat$ defined as follows: the category
$\mathcal{M}_k(m)$ is a poset  whose objects are words of length $m$ in the alphabet
$\{1,2,\dots,m\}$ combined together using $k$ binary operations $\square_1,\square_2,\dots,\square_k$,
which are strictly associative and have common unit 0.
We moreover require that the objects of $\mathcal{M}_k(m)$ are precisely those words where
each generator $\{1,2,\dots,m\}$ occurs exactly once. 

To give an explicit condition for the 
existence of a morphism $\alpha\to \beta$ we need some notation. For a subset $S\subset \{1,2,\dots,m\}$
we define $\alpha\cap S$ to be the word obtained from $\alpha$ by replacing each generator not in $S$ by the
unit 0 and reducing the word. There exists a morphism $\alpha\to \beta$ if and only if for any 
two element subset $\{ a,b \}$ of $\{1,2,\dots,m\}$ with $\alpha\cap\{a,b\}=a\square_i b$ we have 
either $\beta\cap\{a,b\}=a\square_j b$ with $j\ge i$ or $\beta\cap\{a,b\}=b\square_j a$ with
$j>i$. 

For objects $\alpha \in \mathcal{M}_k(m)$ and $\beta_i\in \mathcal{M}_k(r_i), \ 1\leq i\leq m$, 
operad composition $\alpha\ast (\beta_1\oplus \ldots \oplus \beta_m)$ is defined as follows: replace
the generator $s$ in $\beta_i$ by $r_1+\ldots +r_{i-1}+s $ and then substitute the generator $i$ in
$\alpha$ by this new $\beta_i$ to obtain a word in $\mathcal{M}_k(r_1+\ldots +r_m)$. This extends
canonically to morphisms. As an operad, $\cM_k$ is generated by the interchange morphisms
$$
\eta^{ij}: (1\square_j 2)\square_i(3\square_j 4)\to (1\square_i 3)\square_j (2\square_i 4)$$
with $1\leq i<j\leq k$. For more details see \cite{BFSV}.

We next compare our Definition \ref{3_3} of a lax morphism $(F,\overline{F}):\cA \to \cB$ of $2$-fold
monoidal categories with the definition of \cite[p. 282]{BFSV}. For $i=1,\ 2$ define
$$
\lambda^i_{A,B}=\overline{F}(1\square_i 2;A,B): F(A)\square_i F(B)\to F(A\square_i B).
$$
Since 
$$(1\square_i 2)\ast ((1)\oplus (1\square_i 2))= 1\square_i 2\square_i 3 = (1\square_i 2)\ast ((1\square_i 2)\oplus (1))
$$
both transformations satisfy the associativity condition of monoidal functor by \ref{3_3}(2), i.e. the diagram
$$
\diagram
F(A)\square_i F(B)\square_i F(C)
\rrto^{\lambda^i_{A,B}\square_i id_{F(C)}}
\dto^{id_{F(A)}\square_i \lambda^i_{B,C}}
&&F(A\square_i B)\square_i F(C)
\dto^{\lambda^i_{A\square_i B,C}}\\
F(A)\square_i F(B\square_i C)
\rrto^{\lambda^i_{A,B\square_i C}}
&&F(A\square_i B\square_i C)
\enddiagram
$$
commutes.

By \ref{3_3}(1) the hexagonal interchange  diagram
$$
\diagram
(F(A)\Box_2 F(B))\Box_1(F(C)\Box_2 F(D))
\xto[r]^{\eta^{12}F}
\dto^{\scriptstyle {\lambda^2_{A,B}\Box_1\lambda^2_{C,D}}}
&(F(A)\Box_1 F(C))\Box_2(F(B)\Box_1 F(D))
\dto^{\scriptstyle \lambda^1_{A,C}\Box_2\lambda^1_{B,D}}\\
F(A\Box_2 B)\Box_1 F(C\Box_2 D)
\dto^{\scriptstyle {\lambda^1_{A\Box_2 B,C\Box_2 D}}}
&F(A\Box_1 C)\Box_2 F(B\Box_1 D)
\dto^{\scriptstyle {\lambda^2_{A\Box_1 C,B\Box_1 D}}}\\
F((A\Box_2 B)\Box_1(C\Box_2 D))
\xto[r]^{ F\eta^{12}}
& F((A\Box_1 C)\Box_2(B\Box_1 D))
\enddiagram
$$
commutes, where $\eta^{12}F$ is short for $\eta^{12}(F(A),F(B),F(C),F(D))$ and $F\eta^{12}$ is short for
$F(\eta^{12}(A,B,C,D))$. Also observe that by \ref{3_3}(2) the left and right vertical maps equal
$$
\overline{F}((1\square_2 2)\square_1(3\square_2 4);A,B,C,D)\ \ \textrm{respectively}\ \ 
\overline{F}( (1\square_1 3)\square_2 (2\square_1 4)  ;A,B,C,D).
$$
Hence a lax morphism $(F,\overline{F})$ of $2$-fold monoidal categories in the sense of \cite{BFSV} is a lax
morphism in the sense of \ref{3_3} with the additional condition that $F$ preserves the 0 and that the
natural transformation $\overline{F}(0):0\to F(0)$ is the identity.

\begin{lem}\label{8_3}
 The operads $\mathcal{M}_k$ satisfy the factorization condition \ref{6_7} for all $k\geq 1$.
\end{lem}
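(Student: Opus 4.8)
The plan is to exploit that every $\mathcal{M}_k(m)$ is a \emph{poset}, which collapses the whole question to one about minima. First I would observe that, since there is at most one morphism between any two objects of a poset, the factorization category $\cC=\cC(A,B,r_1,\ldots,r_n)$ of Definition \ref{6_6} is itself a poset: the commutativity requirement on a morphism $(\gamma_1,\ldots,\gamma_n)$ is automatic, and because operad composition $\ast$ is a functor, hence monotone in each argument, the target morphism $\beta$ exists as soon as $C_i\leq D_i$ for all $i$. Thus an object of $\cC$ is just a tuple $(C_1,\ldots,C_n)$ with $C_i\in\mathcal{M}_k(r_i)$ and $A\leq B\ast(C_1\oplus\cdots\oplus C_n)$, ordered by $(C_\bullet)\leq(D_\bullet)$ iff $C_i\leq D_i$ for all $i$. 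In a poset an initial object of a connected component is exactly a minimum of that component, so it suffices to exhibit, for each nonempty $\cC$, a minimum.

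The candidate minimum comes from restricting $A$ to blocks. For $i=1,\ldots,n$ let $S_i=\{r_1+\cdots+r_{i-1}+1,\ldots,r_1+\cdots+r_i\}$ be the $i$-th block, and define $C_i^0\in\mathcal{M}_k(r_i)$ to be $A\cap S_i$ after reindexing $S_i$ by the order-preserving bijection $S_i\cong\{1,\ldots,r_i\}$; this is a well-formed object since each generator of $\{1,\ldots,r_i\}$ then occurs exactly once. My claim is that whenever $\cC$ is nonempty, $(C_1^0,\ldots,C_n^0)$ is its minimum, so in fact $\cC$ is connected with a single initial object, which is stronger than the factorization condition demands.

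The verification uses the pairwise criterion for the existence of a morphism in $\mathcal{M}_k$, split according to whether a pair $\{a,b\}$ lies inside one block or straddles two. For $a,b\in S_i$ the restriction $B\ast(C_\bullet)\cap\{a,b\}$ is read off from $C_i$ alone, because the generators of block $i$ carry precisely the word $C_i$ with labels shifted by $r_1+\cdots+r_{i-1}$; for $a\in S_i$, $b\in S_{i'}$ with $i\neq i'$ it is read off from the pair $\{i,i'\}$ in $B$ and is therefore independent of the choice of $C_\bullet$. Two consequences follow. First, for any object $(C_\bullet)\in\cC$, the inequality $A\leq B\ast(C_\bullet)$ restricted to within-block pairs says exactly $C_i^0\leq C_i$, so $(C^0_\bullet)\leq(C_\bullet)$. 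Second, $(C^0_\bullet)$ is itself an object: on within-block pairs $B\ast(C^0_\bullet)$ agrees with $A$ by construction, while on cross-block pairs the required inequality is the very constraint---depending only on $A$ and $B$---that already holds because $\cC$ is nonempty. Together these give that $(C^0_\bullet)$ is a minimum of $\cC$, completing the proof.

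The only genuine work is the bookkeeping in this splitting: one must confirm that within block $i$ the word $B\ast(C_\bullet)$ really is $C_i$ with its generators shifted (so that each operation index $\square_j$ is preserved under the shift and reindexing), and that across blocks the word depends only on $B$. I expect no true obstruction here, since the poset structure of $\mathcal{M}_k$ trivializes all coherence; the one point to keep straight is the preservation of the operation index $j$ under reindexing, which holds by the explicit definition of $\ast$ recalled before the lemma.
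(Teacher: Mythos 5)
Your proof is correct and is essentially the paper's own argument: the paper likewise takes the blockwise restrictions $\epsilon_i=\gamma\cap S_i$ of the source word and shows that any morphism $\nu:\gamma\to\alpha\ast(\beta_1\oplus\cdots\oplus\beta_m)$ factors uniquely through $\rho:\gamma\to\alpha\ast(\epsilon_1\oplus\cdots\oplus\epsilon_m)$, making $(\epsilon_\bullet;\rho)$ initial in the whole (hence connected) factorization category. Your write-up merely makes explicit, via the pairwise within-block/cross-block criterion, the verification the paper leaves to "the above description of morphisms," and your observation that the poset structure trivializes all commutativity is consistent with how the paper uses uniqueness of morphisms in $\mathcal{M}_k(m)$.
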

\begin{proof}
Let  $\cC(\gamma,\alpha,r_1,\ldots ,r_m)$ be a non-empty factorization category 
(see \ref{6_7}), where $\gamma\in \mathcal{M}_k(q)$ with $q=r_1+\ldots +r_m$. 
In particular, there is an object $(\beta_1,\ldots.\beta_m;\nu)$ with objects $\beta_i\in \mathcal{M}_k(r_i)$
and a morphism $\nu:\gamma\to \alpha\ast (\beta_1\oplus\ldots\oplus\beta_m)$.
 Represent the ordered
set $\{ 1<\ldots <q\}$ as the ordered disjoint union $S_1\sqcup\ldots\sqcup S_m$ with $|S_i|=r_i$ and
define $\epsilon_i=\gamma \cap S_i$. By the above description of morphisms there is a (unique)
morphism $\rho: \gamma\to \alpha\ast (\epsilon_1\oplus \ldots \oplus \epsilon_m)$ and the 
mophisms $\nu$ factors (uniquely) through $\rho$. Hence the object
$(\epsilon_1,\ldots,\epsilon_m;\rho)$ is initial in $\cC(\gamma,\alpha,r_1,\ldots ,r_m)$.
\end{proof}

 Let $\mathcal{C}_k$ denote the little $n$-cubes operad of \cite{BV1}.
 In \cite{BFSV} there is
constructed a $\Sigma$-free topological operad $\mathcal{D}_k$ and
maps of operads
$$
\xymatrix{B\mathcal{M}_k & \mathcal{D}_k \ar[r]^{\eta_k} 
\ar[l]_{\varepsilon_k} 
&\mathcal{C}_k}
$$
such that each $\varepsilon_k(n)$ and each $\eta_k(n)$ is a homotopy
equivalence.
Let $\iota_k : \mathcal{M}_k \to \mathcal{M}_{k+1}$ denote the canonical
inclusion functor and $\rho_k: \mathcal{C}_k \to \mathcal{C}_{k+1}$ 
the canonical
embedding. Although not stated explicitly, a quick check of 
\cite[Chapt. 6]{BFSV} shows that there is also an embedding of operads
$\delta_k: \mathcal{D}_k \to \mathcal{D}_{k+1}$ making the diagram
$$
\xymatrix{
B\mathcal{M}_k \ar[d]^{B\iota_k} & \mathcal{D}_k \ar[r]^{\eta_k} 
\ar[l]_{\varepsilon_k} \ar[d]^{\delta_k}
&\mathcal{C}_k \ar[d]^{\rho_k} \\
B\mathcal{M}_{k+1} & \mathcal{D}_{k+1} \ar[r]^{\eta_{k+1}}
\ar[l]_{\varepsilon_{k+1}} & \mathcal{C}_{k+1} 
}
$$
commute. If we define $\mathcal{M}_\infty = \textrm{colim}_k \mathcal{M}_k$
and $\mathcal{D}_\infty = \textrm{colim}_k \mathcal{D}_k$, we obtain
induced morphisms of topological operads
$$
\xymatrix{B\mathcal{M}_\infty & \mathcal{D}_\infty \ar[r]^{\eta_\infty}
\ar[l]_{\varepsilon_\infty}
&\mathcal{C}_\infty}
$$
Since $\iota_k$ is an inclusion, each $B\iota_k(n)$ is a cofibration.
Since each $\rho_k(n)$
is a cofibration by \cite[Proof of Lemma 2.50]{BV2} and each $\delta_k(n)$ is one by inspection,
each $\varepsilon_\infty (n)$ and each $\eta_\infty (n)$ is a homotopy
equivalence. The arguments of the proof of Lemma \ref{8_3} also apply to $\mathcal{M}_\infty$ so that
we obtain

\begin{lem}\label{8_4}
 The operad $\mathcal{M}_\infty$ satisfies the factorization condition \ref{6_7}.
\end{lem}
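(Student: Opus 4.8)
The plan is to show that the argument proving Lemma \ref{8_3} goes through verbatim for $\cM_\infty$, the only genuinely new feature being that words in $\cM_\infty(m)$ may involve the operations $\square_j$ for arbitrarily large $j$. First I would record that $\cM_\infty=\colim_k\cM_k$ is again $\Sigma$-free: it is a filtered colimit of the $\Sigma$-free operads $\cM_k$ along the $\Sigma$-equivariant inclusions $\iota_k$, so each $\Sigma_r$ still acts freely on $\cM_\infty(r)$ and Definition \ref{6_7} applies. The crucial combinatorial point I would emphasize is that every object of $\cM_\infty(m)$ is a \emph{finite} word, hence already lies in $\cM_k(m)$ for some finite $k$, and that the existence-of-morphisms criterion and the restriction operation $\alpha\mapsto\alpha\cap S$ are defined on $\cM_\infty$ by exactly the same formulas as on each $\cM_k$.

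Next, given a non-empty factorization category $\cC(\gamma,\alpha,r_1,\ldots,r_m)$ with $\gamma\in\cM_\infty(q)$ and $q=r_1+\ldots+r_m$, I would carry out the construction of Lemma \ref{8_3} unchanged: write $\{1<\ldots<q\}$ as the ordered disjoint union $S_1\sqcup\ldots\sqcup S_m$ with $|S_i|=r_i$, set $\epsilon_i=\gamma\cap S_i$, and exhibit the morphism $\rho:\gamma\to\alpha\ast(\epsilon_1\oplus\ldots\oplus\epsilon_m)$. Since $\gamma$ is a finite word, so are the $\epsilon_i$, and the candidate initial object $(\epsilon_1,\ldots,\epsilon_m;\rho)$ lives inside a single $\cM_k$ determined by $\gamma$, $\alpha$ and the $r_i$ alone.

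The verification that $\rho$ is a legal morphism, and that an arbitrary object $(\beta_1,\ldots,\beta_m;\nu)$ admits a unique factorization of $\nu$ through $\rho$, is entirely \emph{local}: it is tested on two-element subsets $\{a,b\}$ of $\{1,\ldots,q\}$. For $\{a,b\}$ inside a single block $S_i$ the comparison is governed by $\gamma\cap\{a,b\}=\epsilon_i\cap\{a,b\}$ together with the corresponding restriction of $\beta_i$, while for $\{a,b\}$ meeting two distinct blocks the comparison is forced by $\alpha$ and hence agrees with the one already witnessed by $\nu$. Each such test involves only the finitely many operation symbols occurring in the two letters being compared, so it is literally the test of Lemma \ref{8_3} and is insensitive to the unboundedness of the operation index in $\cM_\infty$. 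This produces the required morphisms $\eta_i:\epsilon_i\to\beta_i$ in each block and the factorization $\nu=(\id_\alpha\ast(\eta_1\oplus\ldots\oplus\eta_m))\circ\rho$, exhibiting $(\epsilon_1,\ldots,\epsilon_m;\rho)$ as the initial object of its connected component.

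The one place where I would stay alert — the main, and essentially the only, obstacle — is the passage from bounded to unbounded operation symbols: one must be certain that no step of Lemma \ref{8_3} secretly used a uniform bound on the operation index, or some global feature of $\cM_k$ that fails in the colimit. The resolution, which I would state explicitly, is that each check is a finite two-letter computation and each object is a finite expression, so every individual instance already takes place inside some $\cM_k$; consequently no new phenomenon arises in $\cM_\infty$ and the proof of Lemma \ref{8_3} applies without change.
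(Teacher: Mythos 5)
Your proposal is correct and is essentially the paper's own argument: the paper gives no separate proof of Lemma \ref{8_4}, merely remarking that the arguments of Lemma \ref{8_3} also apply to $\mathcal{M}_\infty$, which is exactly what you verify. Your explicit justification --- that every object is a finite word lying in some $\mathcal{M}_k$ and that the morphism criterion and the restriction $\gamma\cap S_i$ are tested on two-element subsets, hence are insensitive to the unbounded operation index --- is a careful spelling-out of that same observation.
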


Hence Theorem \ref{7_5} and Proposition \ref{8_1} imply 

\begin{theo}\label{8_5}
For $1\leq k\leq \infty$
the classifying space functor $B$ and the change of operads functors
induced by $\varepsilon_k$ and $\eta_k$ determine equivalences of categories
$$
\mathcal{C}at^{\mathcal{M}_k}[\widetilde{{\we}^{-1}}] \simeq
\mathcal{T}op^{B\mathcal{M}_k}[{\we}^{-1}] \simeq
\mathcal{T}op^{\mathcal{C}_k}[{\we}^{-1}]
$$
\end{theo}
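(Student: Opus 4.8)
The plan is to derive the two displayed equivalences separately: the first from Theorem~\ref{7_5}, and the second from Proposition~\ref{8_1} applied to the zig-zag of topological operads relating $B\cM_k$ and $\cC_k$.

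First I would check that $\cM_k$ satisfies the hypotheses of Theorem~\ref{7_5}. By construction $\cM_k$ is $\Sigma$-free, and Lemma~\ref{8_3} (for $1\le k<\infty$) together with Lemma~\ref{8_4} (for $k=\infty$) shows that $\cM_k$ satisfies the factorization condition~\ref{6_7}. Hence Theorem~\ref{6_9} gives that $\cM_k$ has the homotopy colimit property, and Theorem~\ref{7_5} applies. It yields the chain
$$
\Cat^{\cM_k}[\widetilde{\we^{-1}}]\ \simeq\ \SSets^{N\cM_k}[\we^{-1}]\ \simeq\ \Top^{B\cM_k}[\we^{-1}],
$$
whose composite is the first equivalence of the theorem, realized by the nerve and realization functors.

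For the second equivalence I would use the zig-zag of $\Sigma$-free topological operads
$$
B\cM_k\ \xleftarrow{\ \varepsilon_k\ }\ \cD_k\ \xrightarrow{\ \eta_k\ }\ \cC_k
$$
(and, for $k=\infty$, its colimit analogue constructed above), in which every $\varepsilon_k(n)$ and every $\eta_k(n)$ is a homotopy equivalence and hence a weak equivalence of spaces. Applying the topological version of Proposition~\ref{8_1} to each arrow yields Quillen equivalences
$$
\Top^{\cD_k}\ \rightleftarrows\ \Top^{B\cM_k}\qquad\textrm{and}\qquad \Top^{\cD_k}\ \rightleftarrows\ \Top^{\cC_k}.
$$
A Quillen equivalence induces an equivalence between the localizations at the weak equivalences, so composing these gives
$$
\Top^{B\cM_k}[\we^{-1}]\ \simeq\ \Top^{\cD_k}[\we^{-1}]\ \simeq\ \Top^{\cC_k}[\we^{-1}],
$$
which is the second equivalence. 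Stringing the two together and recalling Convention~\ref{7_3a} finishes the proof.

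Since all the substantive work is carried by the quoted results, the only points requiring care are bookkeeping ones: that the weak equivalences in each category are exactly the morphisms inverted in the corresponding localization, so that the derived adjoint equivalences are genuinely equivalences of the stated localized categories; and that in the case $k=\infty$ the operads $\cM_\infty$, $\cD_\infty$, $\cC_\infty$ remain $\Sigma$-free with each $\varepsilon_\infty(n)$ and $\eta_\infty(n)$ a homotopy equivalence, both of which were verified immediately before the statement. These are the mild obstacles; there is no further analytic difficulty.
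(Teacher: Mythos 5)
Your proposal is correct and follows essentially the same route as the paper, which likewise obtains the first equivalence from Theorem \ref{7_5} (with Lemmas \ref{8_3} and \ref{8_4} supplying the factorization condition, hence via Theorem \ref{6_9} the homotopy colimit property) and the second from Proposition \ref{8_1} applied levelwise to the zig-zag $B\cM_k \leftarrow \cD_k \rightarrow \cC_k$ and its colimit analogue for $k=\infty$. The paper records this deduction in one line after the preparatory lemmas; your write-up merely makes the same bookkeeping explicit.
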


\pagebreak
\textbf{Symmetric monoidal categories}

It is well-known that each symmetric monoidal category is equivalent to a permutative category. 
Permutative categories are algebras over the $\Cat$-version $\widetilde{\Sigma}$ of the Barratt-Eccles
operad with $\widetilde{\Sigma}(n)$ the translation categories of $\Sigma_n$
 (see \cite[1.1]{May1}) and $\widetilde{\Sigma}(0)$ the unit category. Let $\mathcal{P}erm$ 
denote the category of permutative categories. If $e_n$ denotes the unit in $\Sigma_n$
then $\widetilde{\Sigma}(2)$ is the category
$$
\bar{\tau}: e_2\leftrightarrows \tau:\bar{\tau}^{-1}
$$
where $\tau$ is the transposition.
The correspondence between a permutative category
$(\cA,\Box,0_{\cA},c)$ and its associated $\widetilde{\Sigma}$-algebra 
$\widehat{\cA}:\widetilde{\Sigma}\to \Cat$ is given by
 $$
A\Box B=\widehat{\cA}(e_2;A,B)\qquad 0=\widehat{\cA}(e_0;\ast)\qquad c_{A,B}=\widehat{\cA}(\bar{\tau};A,B):A\Box B \cong B\Box A.
$$
For more details see \cite[Section 4]{May1}.
 
If $(F,\overline{F}): \widehat{\cA}\to \widehat{\cB}$ is a lax morphism between $\widetilde{\Sigma}$-algebras with associated permutative
categories $(\cA,\Box,0_{\cA},c)$ and $(\cB,\boxtimes,0_{\cB},c')$, we have morphisms
$$
\overline{F}(e_2;A,B): FA\boxtimes FB\to F(A\Box B)\qquad \overline{F}(e_0;\ast):0_{\cB}\to F(0_{\cA}).
$$
Condition \ref{3_3}(1) ensures the commutativity of
$$
\xymatrix{
FA\boxtimes FB\ar[rr] \ar[d]_{c'_{FA,FB}} & & F(A\Box B)\ar[d]^{Fc_{A,B}}\\
FB\boxtimes FA\ar[rr] & & F(B\Box A)
}
$$
and \ref{3_3}(2),(3) the commutativity of
$$
\xymatrix{
FA\ar[r]^{\id}\ar[d]_{\id} & F(0_{\cA}\Box A) & & FA\ar[r]^{\id}\ar[d]_{\id} & F(A\Box 0_{\cA})\\
0_{\cB}\boxtimes FA\ar[r] & F(0_{\cA})\boxtimes FA\ar[u] & & FA \boxtimes 0_{\cB}\ar[r]& FA\boxtimes F(0_{\cA})\ar[u].\\
}
$$
So lax morphisms of $\widetilde{\Sigma}$-algebras define lax morphisms of permutative categories. Coherence implies that the converse 
also holds.

Since each object in $\widetilde{\Sigma}(n)$ is initial, we have

\begin{lem}\label{8_6}
 The operad $\widetilde{\Sigma}$ satisfies the factorization condition \ref{6_7}.
\end{lem}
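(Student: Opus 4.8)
The plan is to exploit the special shape of the hom-categories of $\widetilde{\Sigma}$. For $n\ge 1$ the category $\widetilde{\Sigma}(n)$ is the translation category of $\Sigma_n$: its objects are the elements of $\Sigma_n$ and there is a \emph{unique} morphism between any ordered pair of objects; and $\widetilde{\Sigma}(0)$ is the unit category. In either case $\widetilde{\Sigma}(n)$ is a non-empty indiscrete category, so every object is simultaneously initial and terminal and any two parallel morphisms coincide. This is exactly the observation recorded just before the lemma statement, and everything will be deduced from it.

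First I would fix data $A\in\widetilde{\Sigma}(m)$, $B\in\widetilde{\Sigma}(n)$ and $r_1,\ldots,r_n$ with $r_1+\ldots+r_n=m$, and analyze the factorization category $\cC(A,B,r_1,\ldots,r_n)$ of Definition \ref{6_7} directly. Its objects are tuples $(C_1,\ldots,C_n;\alpha)$ with $C_i\in\widetilde{\Sigma}(r_i)$ and $\alpha:A\to B\ast(C_1\oplus\ldots\oplus C_n)$ in $\widetilde{\Sigma}(m)$. Since $\widetilde{\Sigma}(m)$ is indiscrete, such an $\alpha$ always exists and is unique, so an object of $\cC(A,B,r_1,\ldots,r_n)$ is really just a choice of the tuple $(C_1,\ldots,C_n)$.

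The key step is to show that $\cC(A,B,r_1,\ldots,r_n)$ is itself indiscrete. Given two objects $(C_1,\ldots,C_n;\alpha)$ and $(D_1,\ldots,D_n;\beta)$, each $\widetilde{\Sigma}(r_i)$ supplies a unique morphism $\gamma_i:C_i\to D_i$, so the only candidate morphism in $\cC$ is $(\gamma_1,\ldots,\gamma_n)$. I would then verify that the coherence square of Definition \ref{6_7} commutes automatically: both $\beta$ and $(\id_B\ast(\gamma_1\oplus\ldots\oplus\gamma_n))\circ\alpha$ are morphisms $A\to B\ast(D_1\oplus\ldots\oplus D_n)$ in the indiscrete category $\widetilde{\Sigma}(m)$, hence equal. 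Thus there is exactly one morphism between any two objects, and $\cC(A,B,r_1,\ldots,r_n)$ is indiscrete.

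Consequently, if the factorization category is non-empty it is connected and each of its objects is initial; in particular each of its (at most one) connected components has an initial object, which is precisely the factorization condition, and if it is empty the condition holds vacuously. The argument uses no feature of $\widetilde{\Sigma}$ beyond the indiscreteness of its hom-categories, so the only point requiring any care — and it is a routine one — is the automatic commutativity of the coherence square; I expect that to be the sole, very minor, obstacle.
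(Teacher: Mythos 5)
Your proposal is correct and is essentially the paper's own argument: the paper's entire proof is the one-line observation that each object in $\widetilde{\Sigma}(n)$ is initial (the translation categories being indiscrete), from which the factorization categories inherit the same property. Your spelled-out verification that $\cC(A,B,r_1,\ldots,r_n)$ is itself indiscrete --- including the automatic commutativity of the coherence square, since both composites are parallel morphisms in the indiscrete category $\widetilde{\Sigma}(m)$ --- is exactly the detail the paper leaves implicit.
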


The projections
$$
B(\widetilde{\Sigma})\longleftarrow B(\widetilde{\Sigma})\times
\mathcal{C}_\infty\longrightarrow \mathcal{C}_\infty
$$
are weak equivalences of $\Sigma$-free operads. Hence, by Theorem \ref{7_6} and
Proposition \ref{8_1} we obtain

\begin{theo}\label{8_7} The classifying space functor and the change of
operads
functors induce equivalences of categories
$$
\mathcal{P}erm[\widetilde{{\we}^{-1}}]=\mathcal{C}at^{\widetilde{\Sigma}}[\widetilde{{\we}^{-1}}]
\simeq\mathcal{T}op^{B(\widetilde{\Sigma})}[{\we}^{-1}]
\simeq\mathcal{T}op^{\mathcal{C}_\infty}[{\we}^{-1}].
$$
\end{theo}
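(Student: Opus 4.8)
The plan is to obtain the theorem as a specialization of the general machinery, splitting the displayed chain into its two halves. For the first equivalence $\mathcal{P}erm[\widetilde{\we^{-1}}]=\Cat^{\widetilde{\Sigma}}[\widetilde{\we^{-1}}]\simeq\Top^{B\widetilde{\Sigma}}[\we^{-1}]$ I would start from Lemma \ref{8_6}, which states that $\widetilde{\Sigma}$ satisfies the factorization condition, so that Theorem \ref{6_9} guarantees $\widetilde{\Sigma}$ has the homotopy colimit property. This is precisely the hypothesis of Theorem \ref{7_5}, which applied to $\cM=\widetilde{\Sigma}$ gives the equivalences $\Cat^{\widetilde{\Sigma}}[\widetilde{\we^{-1}}]\simeq\SSets^{N\widetilde{\Sigma}}[\we^{-1}]\simeq\Top^{B\widetilde{\Sigma}}[\we^{-1}]$ induced by the nerve and realization functors; the outer two terms are the first equivalence sought. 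The identification $\mathcal{P}erm[\widetilde{\we^{-1}}]=\Cat^{\widetilde{\Sigma}}[\widetilde{\we^{-1}}]$ then follows from the discussion preceding Lemma \ref{8_6}, where permutative categories and their lax morphisms were shown to correspond exactly to $\widetilde{\Sigma}$-algebras and their lax morphisms; this is an isomorphism of categories preserving and reflecting weak equivalences, so it descends to the localizations.

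For the second equivalence $\Top^{B\widetilde{\Sigma}}[\we^{-1}]\simeq\Top^{\mathcal{C}_\infty}[\we^{-1}]$ I would apply the change-of-operads comparison of Proposition \ref{8_1}. First I would verify that both projections in $B\widetilde{\Sigma}\leftarrow B\widetilde{\Sigma}\times\mathcal{C}_\infty\rightarrow\mathcal{C}_\infty$ are levelwise weak equivalences of $\Sigma$-free topological operads: in each arity $n$ the discarded factor is contractible, since $\widetilde{\Sigma}(n)$ is the translation category of $\Sigma_n$, hence a contractible groupoid with $B\widetilde{\Sigma}(n)\simeq\ast$, while $\mathcal{C}_\infty(n)=\colim_k\mathcal{C}_k(n)$ is contractible; both operads are $\Sigma$-free. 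Proposition \ref{8_1} then yields Quillen equivalences between $\Top^{B\widetilde{\Sigma}\times\mathcal{C}_\infty}$ and each of $\Top^{B\widetilde{\Sigma}}$ and $\Top^{\mathcal{C}_\infty}$, which pass to equivalences of the localized categories $\Top^{B\widetilde{\Sigma}}[\we^{-1}]\simeq\Top^{B\widetilde{\Sigma}\times\mathcal{C}_\infty}[\we^{-1}]\simeq\Top^{\mathcal{C}_\infty}[\we^{-1}]$. Concatenating the two chains completes the proof.

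The genuinely routine points are the two contractibility verifications and the standard passage from a Quillen equivalence to an equivalence of homotopy categories. The only point deserving care is bookkeeping: by Convention \ref{7_3a} we are in the G\"odel--Bernays setting, so $\Cat^{\widetilde{\Sigma}}[\widetilde{\we^{-1}}]$ is a localization only up to equivalence, and all the symbols $\simeq$ above must be composed as equivalences determined up to natural equivalence rather than as honest equalities of localizations. I expect no real obstacle here, since Theorem \ref{7_5} already supplies its half of the chain in exactly this form, and Lemma \ref{8_6} provides the sole nontrivial input; the theorem is then essentially a formal consequence of Theorems \ref{6_9} and \ref{7_5} together with Proposition \ref{8_1}.
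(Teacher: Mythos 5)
Your proposal is correct and follows the paper's own proof essentially verbatim: Lemma \ref{8_6} plus Theorem \ref{6_9} give the homotopy colimit property, Theorem \ref{7_5} yields $\Cat^{\widetilde{\Sigma}}[\widetilde{\we^{-1}}]\simeq\Top^{B\widetilde{\Sigma}}[\we^{-1}]$, and the span $B\widetilde{\Sigma}\leftarrow B\widetilde{\Sigma}\times\mathcal{C}_\infty\rightarrow\mathcal{C}_\infty$ of levelwise weak equivalences of $\Sigma$-free operads combined with Proposition \ref{8_1} gives the second equivalence, exactly as in the paper (which cites Theorem \ref{7_6}, evidently a slip for \ref{7_5}, as in the parallel proof of Theorem \ref{8_5}). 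Your added contractibility verifications and the remark on localizations up to equivalence in the G\"odel--Bernays setting only make explicit what the paper leaves tacit.
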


Let $\widehat{\Sigma}\subset \widetilde{\Sigma}$
be the suboperad obtained from $\widetilde{\Sigma}$ by replacing $\widetilde{\Sigma}(0)$ by the empty category. Then
 the $\widehat{\Sigma}$-algebras are the non-unitary permutative categories. In \cite{Thom1} Thomason defined a
 homotopy colimit functor in the category of $\widehat{\Sigma}$-algebras which has the universal property \ref{4_12}.
Hence it has to be isomorphic to our construction. This can also be seen directly:
 Consider the category of standard representatives \ref{4_10}.2 for $\cQ_X$. There is just one $\Sigma_n$-orbit
of objects in $\widehat{\Sigma}(n)$ for $n\geq 1$ canonically represented by the unit $e_n\in \Sigma_n$. Thomason starts with this category
and denotes the canonical representative $(e_m;(K_i,L_i)_{i=1}^m)$ by $m[(L_1,K_1),\ldots,(L_m,K_m)]$. 
There is also a canonical choice of representatives
$$
(\varphi,(C_j)_{j=1}^n,\gamma,(\lambda_i)_{i=1}^m,(f_j)_{j=1}^n):(e_m;(K_i,L_i)_{i=1}^m)\to (e_n;(K'_j,L'_j)_{j=1}^n)
$$ 
Since there are no nullary operations $\varphi$ has to be surjective. We give each $\varphi^{-1}(k)$ its natural order. Let $r_k=|\varphi^{-1}(k)|$. Since any 
$$\gamma: e_m\to e_n\ast (C_1\oplus\ldots\oplus C_n)\cdot \sigma = (C_1\oplus\ldots\oplus C_n)\cdot \sigma$$
factors as
$$ e_m\xrightarrow{\sigma} (e_{r_1}\oplus\ldots\oplus e_{r_n})\cdot\sigma\xrightarrow{(C_1\oplus\ldots\oplus C_n)\cdot \sigma} (C_1\oplus\ldots\oplus C_n)\cdot \sigma$$
this representative is equivalent by Relation \ref{4_8}(3) to a representative of the form $(\varphi,(e_{r_j})_{j=1}^n,\sigma, (\lambda_i)_{i=1}^m,(g_j)_{j=1}^n)$.
Since $\sigma$ and the $r_k$ are determined by $\varphi$ we can suppress them and arrive at the definition of morphisms as given by Thomason
in \cite[Construction 3.22]{Thom1}.

\begin{rema}\label{8_8}
There is a minor flaw in Thomason's proof of our Theorem \ref{6_9} for non-unitary permutative categories
\cite[Theorem 4.1]{Thom1}.
 Using a spectral sequence argument he shows that the
map corresponding to our 
$$
\varepsilon_2: \diag N(-/\mathcal{L}\otimes_\mathcal{L}\str\mathbb{M}_\bullet X)\to
 N(-/\mathcal{L}\otimes_\mathcal{L} \str X)
$$ is a homology equivalence and concludes that it is a weak equivalence because the spaces involved are $H$-spaces
 \cite[pp.1641-1644]{Thom1}. This is not
true because he considers only non-unital permutative categories at that point. Since he applies a group
completion functor to his construction this flaw does not affect the main result of the paper.
\end{rema}

Recall from
\cite[Remark 1.9]{BFSV} that each permutative category is an infinite monoi\-dal category of a
special kind: take $\Box_i =\Box$ for all $i$ and $\eta^{ij}=\id\Box c \Box \id$.
There is an operadic interpretation of this remark
in terms of a morphism of operads 
$\lambda : \mathcal{M}_\infty \to \widetilde{\Sigma}$,
which is determined by sending an object
$ i_1\Box_{r_1} i_2\Box_{r_2}\ldots \Box_{r_{n-1}} i_n$
of $\mathcal{M}_\infty (n)$ with any form of bracketing to the
permutation in $\Sigma_n$ sending $k$ to $i_k$. As a corollary to
Theorem \ref{8_7} we obtain

\begin{coro}\label{8_9}
The forgetful functor $\mathcal{P}erm \to \mathcal{C}at^{\mathcal{M}_\infty}$
induced by $\lambda$ defines an equivalence of categories
$$
\mathcal{P}erm [\widetilde{\we^{-1}}]\simeq  
\mathcal{C}at^{\mathcal{M}_\infty}[\widetilde{\we^{-1}}]
$$
\end{coro}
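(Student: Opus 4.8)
The plan is to obtain the corollary as a direct application of Proposition \ref{8_2} to the operad morphism $\lambda:\mathcal{M}_\infty\to\widetilde{\Sigma}$. Recall that $\mathcal{P}erm=\mathcal{C}at^{\widetilde{\Sigma}}$ and that the forgetful functor in the statement is precisely restriction of algebra structures along $\lambda$, so it is the functor to which Proposition \ref{8_2} refers. Both operads are $\Sigma$-free, and by Lemmas \ref{8_4} and \ref{8_6} they satisfy the factorization condition, hence by Theorem \ref{6_9} both have the homotopy colimit property. Thus the only hypothesis of Proposition \ref{8_2} still requiring verification is that each $\lambda(n):\mathcal{M}_\infty(n)\to\widetilde{\Sigma}(n)$ is a weak equivalence in the sense of Definition \ref{3_8}, i.e.\ that $B\lambda(n)$ is a weak homotopy equivalence.

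To verify this I would show that both the source and the target of $B\lambda(n)$ are contractible, so that any map between them is automatically a weak homotopy equivalence. For the target, $\widetilde{\Sigma}(n)$ is the translation category of $\Sigma_n$; since any two of its objects are joined by a unique isomorphism, its nerve is the universal bundle $E\Sigma_n$, which is contractible. For the source, the operad maps $\varepsilon_\infty(n):\mathcal{D}_\infty(n)\to B\mathcal{M}_\infty(n)$ and $\eta_\infty(n):\mathcal{D}_\infty(n)\to\mathcal{C}_\infty(n)$ are homotopy equivalences, whence $B\mathcal{M}_\infty(n)\simeq\mathcal{C}_\infty(n)$; and $\mathcal{C}_\infty(n)=\colim_k\mathcal{C}_k(n)$ is weakly contractible, being a colimit of configuration-type spaces $\mathcal{C}_k(n)$ of increasing connectivity. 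Hence $B\mathcal{M}_\infty(n)$ is contractible as well, and $B\lambda(n)$ is a weak homotopy equivalence.

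With each $\lambda(n)$ thus a weak equivalence, Proposition \ref{8_2} yields that the forgetful functor induces the desired equivalence $\mathcal{P}erm[\widetilde{\we^{-1}}]\simeq\mathcal{C}at^{\mathcal{M}_\infty}[\widetilde{\we^{-1}}]$. (Consistency with Theorems \ref{8_5} and \ref{8_7}, which present both sides as equivalent to $\mathcal{T}op^{\mathcal{C}_\infty}[\we^{-1}]$, serves as a useful sanity check, but the direct route through Proposition \ref{8_2} is what identifies the comparison equivalence with the forgetful functor induced by $\lambda$.)

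The main obstacle is isolating and discharging the weak-equivalence condition on $\lambda(n)$; once the two contractibility statements are in hand the remainder is formal. I expect the contractibility of $B\mathcal{M}_\infty(n)$ to be the one point needing care, since it is not visible from the combinatorial poset description of $\mathcal{M}_\infty(n)$ and must instead be routed through the comparison with the little cubes operad established in \cite{BFSV}.
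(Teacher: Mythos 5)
Your proposal is correct and is essentially the paper's argument: the paper states Proposition \ref{8_2} precisely so that Corollary \ref{8_9} follows by applying it to $\lambda:\mathcal{M}_\infty\to\widetilde{\Sigma}$, with $\Sigma$-freeness and the homotopy colimit property supplied by Lemmas \ref{8_4} and \ref{8_6} together with Theorem \ref{6_9}, exactly as you argue. Your explicit verification that each $\lambda(n)$ is a weak equivalence --- $B\widetilde{\Sigma}(n)=E\Sigma_n$ is contractible, and $B\mathcal{M}_\infty(n)\simeq\mathcal{C}_\infty(n)\simeq\ast$ via the zig-zag through $\mathcal{D}_\infty$ established before Theorem \ref{8_5} --- is precisely the detail the paper leaves implicit when it presents the statement as a corollary to Theorem \ref{8_7}.
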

\vspace{2ex}

\textbf{Braided monoidal categories}

\begin{defi}\label{8_10}
A \textit{weak braided monoidal category} is a category $\mathcal{C}$
together with a functor $\Box : \mathcal{C}\times \mathcal{C} \to
\mathcal{C}$ which is strictly associative and has a strict $2$-sided
unit object $0$ and with a natural commutativity morphisms
$c_{A,B}: A\Box B \longrightarrow B\Box A $
such that $c_{A,0}=c_{0,A}=id_A$ and the diagrams
$$
\xymatrix{
A \Box B \Box C \ar[rrrr]^{c_{A,B}\Box C}
\ar[ddrr]_{c_{A,B\Box C}}
&&&& B \Box A \Box C \ar[ddll]^{B\Box c_{A,C}} \\
&&&& \\
&& B \Box C \Box A
}
$$

\vspace{3ex}

$$
\xymatrix{
A \Box B \Box C \ar[rrrr]^{A\Box c_{B,C}}
\ar[ddrr]_{c_{A\Box B,C}}
&&&& A \Box C \Box B \ar[ddll]^{c_{A,C}\Box B} \\
&&&&
\\
&& C \Box A \Box B
}
$$
commute.

If the $c_{A,B}$ are isomorphisms, we call $\mathcal{C}$ a \textit{
braided monoidal category}.
\end{defi}

\begin{rema}\label{8_11}
Our terminology regarding braided monoidal categories differs from
standard usage, as for example in \cite{MacLane}. The usual
notion of braided monoidal categories has
$\Box :\mathcal{C}\times\mathcal{C}\to\mathcal{C}$
only associative up to coherent natural isomorphisms and similarly with
the 2-sided unit $0$. Braided monoidal categories with a strictly
monoidal structure are called strict braided monoidal. Essentially
the same proof as for symmetric monoidal categories, given in \cite{May1},
shows that any braided monoidal category, in this sense, is equivalent
to a strict one. Weak braided monoidal categories, in our sense,
where the braidings are not required to be isomorphisms, have not
been much studied.
\end{rema}

By \cite[Remark 1.5]{BFSV} any weak braided monoidal category is
a special kind of a $2$-fold monoidal category. 
Let $B_n$ denote the braid group  on $n$ strings and
$p: B_n \to \Sigma_n$ its projection onto the symmetric group, whose
kernel is the pure braid group $P_n$. Recall the standard presentation
of $B_n$:
$$
B_n=<\sigma_1,\ldots,\sigma_{n-1}|\sigma_i\sigma_j=\sigma_j\sigma_i
\textrm{ if }|i-j|>1 \textrm{ and }\sigma_i\sigma_{i+1}\sigma_i=
\sigma_{i+1}\sigma_i\sigma_{i+1}>
$$
Let $B_n^+$ denote the braid monoid on $n$ strings defined as a monoid
by this presentation.

We define 
$\mathcal{C}at$-operads
 $\mathcal{B}r$ and $\mathcal{B}r^+$ as follows: 
$ob\mathcal{B}r(n) = \Sigma_n$
and the morphisms from $\sigma$ to $\pi$ in $\mathcal{B}r(n)$ are the elements
$b\in B_n$ such that $p(b)\circ \sigma = \pi$. Composition is the 
obvious one. The action of $\sigma\in\Sigma_n$ on
$\mathcal{B}r(n)$ sends an object $\pi$ to $\pi\circ \sigma$ and a morphism
$b:\pi_1\to \pi_2$ to $b:\pi_1\circ \sigma \to \pi_2\circ \sigma$. The operad composition
is defined as in $\widetilde{\Sigma}$. The definition of $\mathcal{B}r^+$ 
is the same with $B_n$ replaced by $B_n^+$ throughout and $p$ replaced by
the composition with the natural map $u: B_n^+ \to B_n$.
We leave it as an exercise to the reader to check that 
$\mathcal{C}at^{\mathcal{B}r}$ is the category of braided monoidal 
categories and $\mathcal{C}at^{\mathcal{B}r^+}$ the category of weak
braided monoidal categories and that our notion of lax morphisms of these algebras
coincide with the ones in the literature. 

\begin{lem}\label{8_12}
 The operad $\mathcal{B}r$ satisfies the factorization condition \ref{6_7}.
\end{lem}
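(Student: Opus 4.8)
The plan is to unravel the factorization category $\cC(A,B,r_1,\ldots,r_n)$ of Definition \ref{6_7} for $\cM=\mathcal{B}r$ and to show that it is a \emph{thin groupoid}, that is, a groupoid in which there is at most one morphism between any two objects; the factorization condition will then be immediate. (Note $\mathcal{B}r$ is indeed $\Sigma$-free: the right action $\pi\mapsto\pi\sigma$ on $\ob\mathcal{B}r(n)=\Sigma_n$ is free, hence so is the induced action on morphisms.) Write $m=r_1+\ldots+r_n$, so $A\in\Sigma_m$ and $B\in\Sigma_n$. An object of $\cC(A,B,r_1,\ldots,r_n)$ is a tuple $(C_1,\ldots,C_n;\alpha)$ with $C_i\in\Sigma_{r_i}$ and $\alpha\in B_m$ a braid whose underlying permutation satisfies $p(\alpha)\cdot A=B\ast(C_1\oplus\ldots\oplus C_n)$, and a morphism $(C_\bullet;\alpha)\to(D_\bullet;\beta)$ is a tuple $(\gamma_1,\ldots,\gamma_n)$ with $\gamma_i\in B_{r_i}$, $p(\gamma_i)\cdot C_i=D_i$, such that the factorization triangle commutes, which here reads
\[
\big(\id_B\ast(\gamma_1\oplus\ldots\oplus\gamma_n)\big)\circ\alpha=\beta\qquad\text{in }B_m.
\]
Since braid groups are groups, every such morphism is invertible, so $\cC(A,B,r_1,\ldots,r_n)$ is a groupoid.

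The key step is thinness. I would consider the map
\[
\Theta\colon B_{r_1}\times\ldots\times B_{r_n}\longrightarrow B_m,\qquad (\gamma_1,\ldots,\gamma_n)\longmapsto \id_B\ast(\gamma_1\oplus\ldots\oplus\gamma_n),
\]
where the braid $\id_B\ast(\gamma_1\oplus\ldots\oplus\gamma_n)$ is obtained by inserting $\gamma_i$ into the $i$-th of the $n$ disjoint blocks of strands (positioned according to $B$), no two strands of distinct blocks crossing. Thus $\Theta$ is the inclusion of a block subgroup $B_{r_1}\times\ldots\times B_{r_n}\le B_m$ and is in particular injective. Consequently the triangle identity forces $\Theta(\gamma_\bullet)=\beta\circ\alpha^{-1}$, and injectivity of $\Theta$ determines $(\gamma_1,\ldots,\gamma_n)$ uniquely from the source and target objects; hence there is at most one morphism between any two objects. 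The permutation-level condition $p(\gamma_i)C_i=D_i$ need not be imposed separately: applying $p$ to $\Theta(\gamma_\bullet)=\beta\alpha^{-1}$ and using $p(\alpha)A=B\ast C_\bullet$ and $p(\beta)A=B\ast D_\bullet$ yields $B\ast\big(p(\gamma_1)C_1\oplus\ldots\oplus p(\gamma_n)C_n\big)=B\ast(D_1\oplus\ldots\oplus D_n)$, and since $B\ast(-)$ is injective on $\prod_i\Sigma_{r_i}$ this gives $D_i=p(\gamma_i)C_i$ automatically.

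To conclude I would argue as follows. A thin groupoid is the disjoint union of its connected components, each of which is a \emph{contractible} groupoid: any two of its objects are joined by a zigzag of morphisms, which collapses to a single morphism because we are in a groupoid, and thinness makes this morphism unique. Hence in each connected component there is exactly one morphism between any two objects, so \emph{every} object of a component is initial; in particular each component has an initial object, and $\mathcal{B}r$ satisfies the factorization condition.

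The main obstacle is the verification underlying the key step: that $\Theta$ really is the injective inclusion of the block subgroup, and that the permutation condition $p(\gamma_i)C_i=D_i$ is forced rather than an extra constraint. Both rest on careful bookkeeping of the operad composition $\ast$ of $\mathcal{B}r$ on morphisms (braids) and its compatibility with the projection $p$ to the level of permutations. This is precisely the point at which $\mathcal{B}r$ behaves more delicately than $\widetilde{\Sigma}$, whose levels $\widetilde{\Sigma}(n)$ already consist entirely of initial objects (Lemma \ref{8_6}): here one genuinely uses that the $\gamma_i$ are invertible, so that the factorization category is a groupoid and thinness yields initial objects.
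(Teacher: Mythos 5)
Your proof is correct and is essentially the paper's own argument: both show that the factorization category is a groupoid (because block subgroups of braid groups are groups, so any zigzag collapses) in which the morphism between two given objects is unique (injectivity of the block-sum inclusion into $B_m$), whence every object is initial in its connected component. The only difference is cosmetic: the paper first uses that any two objects of $\mathcal{B}r(n)$ are isomorphic to reduce to $\cC(e_m,e_n,r_1,\ldots,r_n)$, which trivializes the block bookkeeping you flag as the main obstacle, since there $\Theta$ is literally $(\gamma_1,\ldots,\gamma_n)\mapsto\gamma_1\oplus\ldots\oplus\gamma_n$.
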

\begin{proof}
Since any two objects in $\mathcal{B}r(n)$ are isomorphic $\cC(\sigma,\pi,r_1,\ldots,r_n)$ is
isomorphic to $\cC(e_m,e_n,r_1,\ldots,r_n)$. An object $(\gamma_1,\ldots,\gamma_n;b)$ in
$\cC(e_m,e_n,r_1,\ldots,r_n)$ consists of permutations $\gamma_i\in\Sigma_{r_i}$ and a braid
$b\in B_m$ with $p(b)=\gamma_1\oplus\ldots\oplus\gamma_n$. Each object $(\gamma_1,\ldots,\gamma_n;b)$
is initial in its connected component. For if $(\delta_1,\ldots,\delta_n;b')$ is another object in its component 
there is a sequence of morphisms 
$$(b_1^i,\ldots,b_n^i)\in \mathcal{B}r(r_1)\times\ldots\times\mathcal{B}r(r_n)\subset \mathcal{B}r(m)
$$
connecting the two. Since $\mathcal{B}r(r_1)\times\ldots\times\mathcal{B}r(r_n)$ is a group, each of these morphisms 
is an isomorphism. Hence there is a morphism
$$
(b_1,\ldots,b_n):(\gamma_1,\ldots,\gamma_n;b) \to (\delta_1,\ldots,\delta_n;b').
$$ Since $(b_1\oplus\ldots\oplus b_n)\circ b = b'$ and $\mathcal{B}r(m)$ is a group, this
morphism is unique.
\end{proof}

By \cite[Remark 1.5]{BFSV} there are 
morphisms of operads
$$\mathcal{M}_2 \stackrel{\beta}{\rightarrow} \mathcal{B}r^+
\stackrel{\mu}{\rightarrow} \mathcal{B}r.
$$
The morphism $\beta$ sends an object 
$ i_1\Box_{r_1} i_2\Box_{r_2}\ldots \Box_{r_{n-1}} i_n$
of $\mathcal{M}_2 (n)$ with any form of bracketing to the
permutation in $\Sigma_n$ sending $k$ to $i_k$. A morphism
$\alpha\to \beta$ is sent to the braid whose strands join each generator $i$ in $\alpha$
with $i$ in $\beta$. If a generator $a$ is left of a generator $b$ in $\alpha$ but right of
$b$ in $\beta$ (which implies that $\alpha \cap \{ a,b\}=a\square_1 b$ and 
$\beta \cap \{ a,b\}=b\square_2 a$) then the strand connecting the generators $b$ is under
the strand connecting the generators $a$.

\begin{prop}\label{8_13}
There are topological operads $\mathcal{D}^+$ and $\mathcal{D}$
and a commutative diagram of morphisms of operads
$$
\xymatrix{
B(\mathcal{B}r^+) \ar[dd]_{B\mu} && \mathcal{D}^+
\ar[ll] \ar[drr] \ar[dd] && \\
&&&& \mathcal{C}_2 \\
B(\mathcal{B}r) && \mathcal{D}
\ar[ll] \ar[urr] && 
}
$$
which are equivalences.
\end{prop}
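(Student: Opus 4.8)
The plan is to realize both comparison operads as homotopy pullbacks built from a single geometric input, Fiedorowicz's identification of the fundamental groupoid operad of the little $2$-cubes operad with $\mathcal{B}r$, and then to isolate the one genuinely new homotopical fact: that passing from the braid monoid to the braid group does not change classifying spaces.

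First I would record the homotopy types. Since $\mathcal{B}r(n)$ is a connected groupoid whose vertex group is $\ker(p\colon B_n\to\Sigma_n)=P_n$, the pure braid group, we have $B\mathcal{B}r(n)\simeq BP_n$, a $K(P_n,1)$; on the other hand each $\mathcal{C}_2(n)$ is aspherical with $\pi_1\mathcal{C}_2(n)\cong P_n$, being homotopy equivalent to the ordered configuration space $F(\mathbb{R}^2,n)$. By the identification of the fundamental groupoid operad $\Pi_1\mathcal{C}_2$ with $\mathcal{B}r$ \cite{Fied}, the natural $1$-type maps $\mathcal{C}_2(n)\to B\Pi_1\mathcal{C}_2(n)=B\mathcal{B}r(n)$ assemble, since $\Pi_1$ preserves finite products and the $1$-type map is natural, into a morphism of operads $\phi\colon\mathcal{C}_2\to B\mathcal{B}r$; by asphericity each $\phi(n)$ is a weak equivalence. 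Factoring $\phi$ levelwise through its mapping path operad (operadic composition being performed pointwise in the path parameter, using the operad structure of $B\mathcal{B}r$) gives a $\Sigma$-free operad $\mathcal{D}$ together with an acyclic fibration $\mathcal{D}\twoheadrightarrow B\mathcal{B}r$ and a weak equivalence $\mathcal{D}\to\mathcal{C}_2$, i.e. the lower span of the diagram.

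Next I would construct $\mathcal{D}^+$ as the strict levelwise pullback $\mathcal{D}^+:=\mathcal{D}\times_{B\mathcal{B}r}B\mathcal{B}r^+$ of $B\mu\colon B\mathcal{B}r^+\to B\mathcal{B}r$ along the fibration $\mathcal{D}\twoheadrightarrow B\mathcal{B}r$. This is again a ($\Sigma$-free) operad, and the resulting square commutes strictly by construction. The projection $\mathcal{D}^+\to\mathcal{D}$ is the pullback of $B\mu$ along a fibration, so it is a weak equivalence once $B\mu$ is; and $\mathcal{D}^+\to B\mathcal{B}r^+$ is the pullback of the acyclic fibration $\mathcal{D}\twoheadrightarrow B\mathcal{B}r$, hence again an acyclic fibration and in particular a weak equivalence. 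The arrows to $\mathcal{C}_2$ are $\mathcal{D}\to\mathcal{C}_2$ and its precomposite $\mathcal{D}^+\to\mathcal{D}\to\mathcal{C}_2$. Granting that $B\mu$ is a levelwise equivalence, every arrow in the diagram is then a levelwise weak equivalence of spaces of the homotopy type of $CW$-complexes, hence a homotopy equivalence, and the diagram commutes on the nose.

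The main obstacle is exactly to show that $B\mu(n)\colon B\mathcal{B}r^+(n)\to B\mathcal{B}r(n)$ is a weak equivalence, i.e. that the translation category of the braid monoid $B_n^+$ acting on $\Sigma_n$ has the same classifying space as that of the braid group $B_n$. I would prove this by Quillen's Theorem A \cite{Quillen}: for each object $\pi\in\Sigma_n$ of $\mathcal{B}r(n)$ the comma category $\mu(n)/\pi$ is filtered, hence contractible, because $B_n$ is the group of fractions of the cancellative monoid $B_n^+$ and any two elements of $B_n^+$ admit a common multiple (the Ore/Garside property of the positive braid monoid), which lets one absorb the negative parts of the connecting braids into positive ones. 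Contractibility of all these comma categories yields the equivalence. (Alternatively, one may take for $\mathcal{D}^+$ the operad $\mathcal{D}_2$ of \cite{BFSV} equipped with its given map to $\mathcal{C}_2$ together with the composite $\mathcal{D}_2\to B\mathcal{M}_2\xrightarrow{B\beta}B\mathcal{B}r^+$; the same monoid-to-group comparison then reappears as the statement that $B\beta$ is a levelwise equivalence.) Finally, $\Sigma$-freeness of $\mathcal{C}_2$, $\mathcal{B}r$ and $\mathcal{B}r^+$ guarantees that $\mathcal{D}$ and $\mathcal{D}^+$ are $\Sigma$-free, so they are legitimate inputs for the change-of-operad results used afterwards.
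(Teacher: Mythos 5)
Your argument for the key homotopical fact is fine, and in fact takes a genuinely different route from the paper: where the paper proves $B(B_n^+)\simeq B(B_n)$ homologically (Proposition \ref{8_16}, via Deligne's results that $u\colon G^+\to G$ is injective and that $G$ is obtained from $G^+$ by inverting a central element, plus \cite[(X.4.1)]{CE}), you apply Quillen's Theorem A directly to $\mu(n)$ and verify that each comma category $\mu(n)/\pi$ is filtered using the Ore/Garside properties of $B_n^+$ (common multiples give a common target for any two objects, and injectivity of $B_n^+\to B_n$ makes parallel arrows equal). That is correct and arguably more elementary; note also that the paper needs the monoid-to-group comparison not for $B\mathcal{B}r^+(n)$ itself but for the universal cover $B(\widetilde{\mathcal{B}}^+)(n)$ of $B(B_n^+)$, whose contractibility it deduces from Proposition \ref{8_16}.

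However, the construction of the span itself has a genuine gap: everything hinges on the asserted operad morphism $\phi\colon\mathcal{C}_2\to B\mathcal{B}r$, obtained from an ``identification of the fundamental groupoid operad $\Pi_1\mathcal{C}_2$ with $\mathcal{B}r$.'' No such identification exists on the nose: $\Pi_1\mathcal{C}_2(n)$ has the points of $\mathcal{C}_2(n)$ as objects, while $\mathcal{B}r(n)$ has object set $\Sigma_n$, so the two are only levelwise equivalent as groupoids, and promoting that levelwise equivalence to a morphism of operads (in either direction) is a basepoint coherence problem. Concretely, a functor $\mathcal{B}r\to\Pi_1\mathcal{C}_2$ of operads would require chosen points $c_n\in\mathcal{C}_2(n)$ with $c_k\ast(c_{r_1}\oplus\cdots\oplus c_{r_k})=c_{r_1+\cdots+r_k}$ strictly, together with strictly composition-compatible paths realizing the braids, and this cannot be arranged in $\mathcal{C}_2$. (The lesser issue, that the natural $1$-type map only gives a zig-zag $\mathcal{C}_2\leftarrow|\mathrm{Sing}\,\mathcal{C}_2|\to B\Pi_1\mathcal{C}_2$ of operads, is repairable; the identification with $\mathcal{B}r$ is the real obstruction.) This coherence problem is precisely what the paper's proof is designed to avoid: it passes to braided operads --- the translation operads $\widetilde{\mathcal{B}}^+$, $\widetilde{\mathcal{B}}$ and the universal cover $\widetilde{\mathcal{C}_2}$ --- where all spaces are contractible, uses the product span $A\leftarrow A\times B\to B$ (which requires no map between $A$ and $B$, and whose projections are equivalences exactly because of the contractibility), and then debraids by $-\times_{B_n}\Sigma_n$ to land on $B\mathcal{B}r^{(+)}$, $\mathcal{C}_2$, and the operads $\mathcal{D}^{(+)}$. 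Note too that your product-free shortcut cannot be salvaged at the uncovered level: the projection $\mathcal{B}r\times\Pi_1\mathcal{C}_2\to\Pi_1\mathcal{C}_2$ is not a levelwise equivalence since $B\mathcal{B}r(n)\simeq BP_n$ is not contractible, which is exactly why the detour through covers is needed. Your mapping-path-operad and pullback mechanics (including right properness and the $\Sigma$-freeness bookkeeping) are sound, but they have nothing to feed on until a map or a contractible interpolating operad is produced.
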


We postpone the proof.

Like in the symmetric monoidal category case we get

\begin{theo}\label{8_14}
The classifying space functor and the change of operads functor induce
equivalences of categories
$$
\mathcal{C}at^{\mathcal{B}r}[\widetilde{{\we}^{-1}}]\simeq 
\mathcal{T}op^{B(\mathcal{B}r)}[{\we}^{-1}]\simeq
\mathcal{T}op^{B(\mathcal{B}r^+)}[{\we}^{-1}]\simeq
\mathcal{T}op^{\mathcal{C}_2}[{\we}^{-1}]
$$
\end{theo}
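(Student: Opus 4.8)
The plan is to follow the argument for permutative categories (Theorem \ref{8_7}) essentially verbatim, with $\mathcal{B}r$ in place of $\widetilde{\Sigma}$ and $\mathcal{C}_2$ in place of $\mathcal{C}_\infty$. The chain of equivalences breaks into two independent parts: the leftmost equivalence is produced by the general machinery of Section 7, while the two equivalences on the topological side come from comparing operads by means of Proposition \ref{8_1}.

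First I would produce the equivalence $\Cat^{\mathcal{B}r}[\widetilde{\we^{-1}}] \simeq \Top^{B(\mathcal{B}r)}[\we^{-1}]$. The operad $\mathcal{B}r$ is $\Sigma$-free by construction, since the right $\Sigma_n$-action $\pi \mapsto \pi\circ\sigma$ on objects, and the corresponding action on morphisms, are free. By Lemma \ref{8_12} it satisfies the factorization condition, so Theorem \ref{6_9} shows that $\mathcal{B}r$ has the homotopy colimit property. Theorem \ref{7_5} then applies and gives
$$
\Cat^{\mathcal{B}r}[\widetilde{\we^{-1}}] \simeq \SSets^{N\mathcal{B}r}[\we^{-1}] \simeq \Top^{B(\mathcal{B}r)}[\we^{-1}],
$$
which in particular yields the first equivalence of the theorem.

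For the remaining two equivalences I would feed the diagram of Proposition \ref{8_13} into Proposition \ref{8_1}. Every arrow in that diagram is a level-wise weak equivalence of $\Sigma$-free topological operads, so by Proposition \ref{8_1} each of the induced free--forgetful adjunctions is a Quillen equivalence and hence descends to an equivalence of the localized categories. Reading off the two zig-zags $B(\mathcal{B}r)\xleftarrow{\sim}\mathcal{D}\xrightarrow{\sim}\mathcal{C}_2$ and $B(\mathcal{B}r^+)\xleftarrow{\sim}\mathcal{D}^+\xrightarrow{\sim}\mathcal{C}_2$ gives
$$
\Top^{B(\mathcal{B}r)}[\we^{-1}] \simeq \Top^{\mathcal{D}}[\we^{-1}] \simeq \Top^{\mathcal{C}_2}[\we^{-1}] \simeq \Top^{\mathcal{D}^+}[\we^{-1}] \simeq \Top^{B(\mathcal{B}r^+)}[\we^{-1}],
$$
and composing these with the first equivalence completes the chain.

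The step requiring the most care is checking the hypothesis of Proposition \ref{8_1}, namely that every operad appearing in Proposition \ref{8_13} is $\Sigma$-free. Here $\mathcal{C}_2$ is $\Sigma$-free as the little $2$-cubes operad; $\mathcal{D}$ and $\mathcal{D}^+$ are $\Sigma$-free by their construction, just as for the operads $\mathcal{D}_k$ entering Theorem \ref{8_5}; and $B(\mathcal{B}r)$, $B(\mathcal{B}r^+)$ are $\Sigma$-free because $B$ takes a free $\Sigma_n$-action on the objects and morphisms of a $\Cat$-operad to a free action on the nerve in each simplicial degree, hence to a free cellular action on the classifying space. Granting this, and granting Proposition \ref{8_13} itself (whose proof is deferred), the theorem is a formal concatenation of the equivalences above, entirely parallel to Theorems \ref{8_5} and \ref{8_7}.
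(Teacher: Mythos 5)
Your proposal is correct and matches the paper's own (very terse) argument: the first equivalence comes from Lemma \ref{8_12} together with Theorems \ref{6_9} and \ref{7_5}, and the two topological equivalences come from feeding the zig-zags of Proposition \ref{8_13} into the change-of-operads equivalence of Proposition \ref{8_1}, exactly as in the permutative case of Theorem \ref{8_7}. Your extra check of $\Sigma$-freeness (for $B(\mathcal{B}r)$, $B(\mathcal{B}r^+)$, $\mathcal{D}$, $\mathcal{D}^+$, $\mathcal{C}_2$) is a point the paper leaves implicit, and it is correct, the only loose phrase being the analogy with the $\mathcal{D}_k$: here $\mathcal{D}$ and $\mathcal{D}^+$ are debraidings $\mathcal{P}(n)\times_{B_n}\Sigma_n$, which are $\Sigma$-free because the braid group acts freely on the relevant braided operads.
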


Since we do not know whether or not $\mathcal{B}r^+$ has the homotopy colimit property we do
not have the analogous result for $\mathcal{C}at^{\mathcal{B}r^+}$.

The proof that $B\mu$ is an equivalence uses an observation which might
be of separate interest.

\begin{defi}\label{8_15}
Let $S$ be a finite set.
A \textit{Coxeter matrix} $M=(m_{s,t})$ is a symmetric
$S\times S$-matrix with entries in $\mathbb{N}\cup \{\infty\}$ such
that $m_{s,s}=1$ and $m_{s,t}\geq 2$ for $s\neq t$.
A Coxeter matrix has an associated \textit{Artin monoid} $G^+$ and an
associated \textit{Artin group} $G$ both given by the presentation
$$
<S\;|\produ(m_{s,t};s,t)=\produ(m_{s,t};t,s) \textrm{ for } s\neq t,\;
m_{s,t}<\infty>,
$$
where $\produ(n;x,y)=xyxy\ldots$ ($n$ factors). [So $\produ(2n;x,y)=(xy)^n$ and
$\produ(2n+1;x,y)=(xy)^nx$].\\
We call $G^+$ and $G$ \textit{spherical}, if the associated \textit{Coxeter
group}, obtained from the Artin group by taking the quotient by the additional
relations ${\{s^2=1|s\in S\}}$, is finite.
\end{defi}

Note that the braid monoid is a spherical Artin monoid.

\begin{prop}\label{8_16}
Let $G^+$ be a spherical Artin monoid with Artin group $G$. Then the 
canonical homomorphism $u: G^+\to G$ is injective (see 
\cite[(4.14)]{Del}) and induces a homotopy equivalence
$$
Bu: B(G^+)\simeq B(G)
$$
\end{prop}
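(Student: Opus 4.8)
The injectivity of $u$ is Deligne's result, cited in the statement, so the real content is that $Bu$ is a homotopy equivalence. The plan is to reduce this to Quillen's Theorem~A by exploiting the Garside structure of a spherical Artin monoid. First I would recall (Brieskorn--Saito, \cite{Del}) that $G^+$ is cancellative and that any two elements of $G^+$ admit a common right multiple, in fact a right least common multiple. Together with cancellativity this is precisely the right Ore condition, so that $G$ is the group of right fractions of $G^+$: every $g\in G$ can be written as $g=u(a)\,u(b)^{-1}$ with $a,b\in G^+$.

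Next, regard $G^+$ and $G$ as one-object categories $\mathcal M$ and $\mathcal G$ whose endomorphism monoids are $G^+$ and $G$, so that $B\mathcal M=B(G^+)$, $B\mathcal G=B(G)$, and $u$ becomes a functor $u\colon\mathcal M\to\mathcal G$. Since $\mathcal G$ has a single object $\ast$, Quillen's Theorem~A \cite[\S 1]{Quillen} reduces the claim to showing that the comma category $u\downarrow\ast$ has weakly contractible nerve. Its objects are the elements $g\in G$, and a morphism $g\to g'$ is an $m\in G^+$ with $g=g'\,u(m)$; composition is multiplication in $G^+$, and two parallel morphisms $m,m'\colon g\to g'$ coincide, since $g'u(m)=g=g'u(m')$ forces $u(m)=u(m')$ and hence $m=m'$ by injectivity of $u$.

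The key step is then to verify that $u\downarrow\ast$ is cofiltered, for a cofiltered category has weakly contractible nerve (its opposite is filtered, and $N\mathcal C\cong N(\mathcal C^{\op})$). Nonemptiness is clear, and the previous paragraph disposes of the coequalizer condition. For the remaining condition, given objects $g_1,g_2$ I would write the right fraction $g_1^{-1}g_2=u(a)\,u(b)^{-1}$ and set $g_0=g_1\,u(a)=g_2\,u(b)$; the identities $g_0=g_1\,u(a)$ and $g_0=g_2\,u(b)$ exhibit morphisms $g_0\to g_1$ and $g_0\to g_2$ in $u\downarrow\ast$, so any two objects admit a common cone below them. Thus $u\downarrow\ast$ is cofiltered, its nerve is contractible, and Theorem~A yields that $Bu\colon B(G^+)\to B(G)$ is a homotopy equivalence.

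I expect the main obstacle to be purely the monoid-theoretic input: it is the spherical (Garside) hypothesis, through the existence of common multiples, that makes $G$ the group of fractions of $G^+$ and thereby forces the comma category to be cofiltered. Once the Ore condition and the injectivity of $u$ are in hand, the homotopical part (Theorem~A together with the contractibility of the nerve of a (co)filtered category) is entirely formal; the only care needed is to keep the left/right conventions consistent, which is harmless here because a spherical Artin monoid satisfies both the left and the right Ore condition.
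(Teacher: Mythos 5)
Your proof is correct, but it takes a genuinely different route from the paper's. The paper argues homologically: the group completion of any discrete monoid induces an isomorphism on $\pi_1$ of classifying spaces; by Deligne \cite[(4.17)]{Del} the Artin group $G$ is obtained from the spherical monoid $G^+$ by inverting a single \emph{central} element, so Cartan--Eilenberg \cite[(X.4.1)]{CE} gives $H_\ast(G^+;A)\cong H_\ast(G;A)$ for every $\mathbb{Z}[G]$-module $A$, and the $\pi_1$-isomorphism together with the homology isomorphism for all local coefficient systems yields the weak (hence, between CW complexes, genuine) homotopy equivalence. You instead exploit the Garside structure --- cancellativity and common right multiples, likewise available in Deligne and Brieskorn--Saito --- to present $G$ as the group of right fractions of $G^+$, and then run Quillen's Theorem A on the one-object functor, checking that the single comma category is cofiltered; your observation that injectivity of $u$ makes parallel arrows equal disposes of the equalizer condition, and right fractions supply the spans. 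Both arguments rest on Deligne's monoid theory but use different parts of it: the paper needs centrality of the Garside element plus injectivity, whereas you need cancellativity plus the Ore condition --- from which injectivity already follows by Ore's theorem, so your separate appeal to \cite[(4.14)]{Del} is a convenience rather than a necessity. Your route is homology-free and actually proves the more general statement that $BM\to BG$ is an equivalence for any cancellative monoid with common right multiples; the paper's route is shorter given its two precise references but implicitly invokes the Whitehead theorem for maps inducing isomorphisms on $\pi_1$ and on homology with all local coefficients. One cosmetic point: $N\mathcal{C}$ and $N(\mathcal{C}^{\op})$ are not isomorphic simplicial sets but are related by the front-to-back involution, whose realizations are homeomorphic --- harmless for your purpose.
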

\begin{proof}
 For a discrete monoid $M$ and
its algebraic group completion $UM$ the canonical map $u:M\to UM$
always induces an isomorphism $u_\ast:\pi_1(BM)\to \pi_1(BUM)$.
Now let $G^+$ be a spherical Artin monoid with associated Artin group
$G=UG^+$. By \cite[(4.17)]{Del}, we obtain $G$ from $G^+$ by inverting
a certain central element. Since $u$ is also injective \cite[(4.14)]{Del},
it induces an isomorphism
$$
H_\ast(G^+,A)\to H_\ast(G,A)
$$
for any $\mathbb{Z}[G]$-module $A$ by \cite[(X.4.1)]{CE}. 
Consequently, $B(G^+)\simeq B(G)$.
\end{proof}

It remains to prove Proposition \ref{8_13}.

A \textit{braided operad} is defined in the same way as an operad with the
symmetric group replaced by the braid group. To define composition, we
use the homomorphism $p: B_n\to \Sigma_n$. Examples of braided operads
in $\mathcal{C}at$ are
the translation operad $\widetilde{\mathcal{B}}$ of the braid groups and
$\widetilde{\mathcal{B}}^+$, for which $\widetilde{\mathcal{B}}^+(n)$
has the same objects as $\widetilde{\mathcal{B}}(n)$, but the morphisms are
elements of the braid monoid $B_n^+$ rather than the braid group
$B_n$. The inclusions $u:B_n^+ \to B_n$ define
a morphism of braided $\mathcal{C}at$-operads 
$$ \widetilde{\mathcal{B}}^+\rightarrow
\widetilde{\mathcal{B}}
$$
The operad $\widetilde{\mathcal{C}_2}$, where $\widetilde{\mathcal{C}_2}(n)$,
is the universal cover of $\mathcal{C}_2(n)$ is a braided topological
operad, for details see \cite{Fied}.
We have a commutative diagram of braided topological operads
$$
\xymatrix{
B(\widetilde{\mathcal{B}}^+) \ar[dd] && B(\widetilde{\mathcal{B}}^+)
\times \widetilde{\mathcal{C}_2}
\ar[ll] \ar[drr] \ar[dd] && \\
&&&& \widetilde{\mathcal{C}_2} \\
B(\widetilde{\mathcal{B}}) && B(\widetilde{\mathcal{B}})
\times \widetilde{\mathcal{C}_2} \ar[ll] \ar[urr] && 
}
$$
The morphisms are all equivalences, because the $n$-th space of each
operad is contractible for all $n$: this is clear for 
$B(\widetilde{\mathcal{B}})(n)$, for $B(\widetilde{\mathcal{B}}^+)(n)$
this follows from (\ref{8_13}),
 because $B(\widetilde{\mathcal{B}}^+)(n)$
is the universal cover of $B(B_n^+)$ (see proof of \cite[(4.4)]{Fied2}),
for $\widetilde{\mathcal{C}_2}(n)$ this follows from the well-known fact
that $\mathcal{C}_2(n)\simeq K(P_n,1)$, where $P_n$ is the pure braid 
group.

Given a braided operad $\mathcal{P}$, we can ``debraid'' it 
to obtain a genuine operad by replacing
$\mathcal{P}(n)$ by $\mathcal{P}(n)\times_{B_n}\Sigma_n$. 
Note that $B(\mathcal{B}r^+)$, $B(\mathcal{B}r)$, and
$\mathcal{C}_2$ are the debraidings of
$B(\widetilde{\mathcal{B}}^+)$, $B(\widetilde{\mathcal{B}})$, and
$\widetilde{\mathcal{C}_2}$.

The debraiding of the above diagram gives us the diagram required in \ref{8_10}
where $\mathcal{D}^+$ and $\mathcal{D}$ are the debraidings of
$B(\widetilde{\mathcal{B}}^+) \times \widetilde{\mathcal{C}_2}$
and $B(\widetilde{\mathcal{B}}) \times \widetilde{\mathcal{C}_2}$ respectively.

\vspace{2ex}
\textbf{Iterated loop spaces}

It is well-known that the group completion a $\cC_k$-algebra is weakly equivalent to a $k$-fold loop space. 
So the group completion of the classifying space of a $k$-fold monoidal, a braided monoidal, respectively
a permutative category is weakly equivalent to a $k$-fold, $2$-fold respectively infinite loop space.

Since the notion ``group completion of a $\cC_k$-algebra'' is ambiguous we will give an explicit reformulation of
this statement. We distinguish between loop spaces,
$k$-fold loop spaces with $1<k<\infty$, and infinite loop spaces.

\textbf{Loop spaces:} Let $\cA$ be a monoidal category, i.e. an $\cM_1$-algebra. Then its classifying space $B\cA$ is a monoid.

\begin{defi}\label{8_17}
 A homotopy associative $H$-space is called \textit{grouplike} if it has homotopy inverses, i.e. it is a group object in the category
 of spaces and homotopy classes of maps.
\end{defi}

A homotopy associative $H$-space $X$ of the homotopy type of a CW-complex is grouplike if and only if its $H$-structure induces a group
structure on $\pi_0(X)$ \cite[(12.7)]{DKP}.

Since loop spaces are grouplike we have to replace $B\cA$ by a grouplike space via a group completion functor inside $\Mon$, the category of well-based 
topological monoids and homomorphisms. 

Let $M$ be a well-based monoid and let $\Omega_M X$ be the Moore loop space of a well-based space $X$. The canonical map
$$ \iota(M): M\to \Omega_M BM $$
is a homotopy homomorphism whose underlying map is a strong (i.e. genuine) homotopy equivalence
if $M$ is grouplike. There is a functorial diagram in $\Mon$, which we will call \textit{group completion} of $M$ in $\Mon$,
$$ 
\xymatrix{M && U(M) \ar[ll]_{r(M)}\ar[rr]^{u(M)} && \Omega_M BM }
$$ 
such that the underlying map of $r(M)$ is a strong homotopy equivalence and $\iota(M)\circ r(M)\simeq u(M)$ in $\Top$. 
This is justified by a  homotopical universal property. Let $\we_s$ be the class of those homomorphisms in $\Mon$ which are strong homotopy
equivalences of underlying spaces. The localization $\Mon[\we_s^{-1}]$ exists and the group completion defines a natural morphism $\mu (M): M\to \Omega_M BM$ in $\Mon[\we_s^{-1}]$
with the following universal property:
 given a morphism $g:M\to N$ in $\Mon[\we_s^{-1}]$ into a grouplike monoid $N$ there is a unique morphism $\bar{g}:\Omega_M BM \to N$ in
$\Mon[\we_s^{-1}]$ such that $\bar{g}\circ \mu (M)=g$ (for details see \cite[Proposition 5.5]{Vogt2}).

\begin{leer}\label{8_18}
 \textbf{Notational conventions:} Let $\Omega_M\Top$ and $\Omega^k\Top$ denote the categories of Moore loop spaces and Moore loop maps, respectively
 $k$-fold loop spaces and $k$-fold loop maps,  and let $\Mon_{gl}\subset \Mon$ denote the subcategory of grouplike
 monoids. The existence of $\Mon_{gl}[\we^{-1}]$ follows from the existence of $\Mon[\we^{-1}]$: apply first the standard functorial $CW$-approximation
 and then the group completion construction to any sequence of broken arrows in $\Mon$ starting and ending at grouplike objects to obtain a sequence of broken
 arrows in $\Mon_{gl}$ representing the same morphism.
 We denote the full subcategory of $\Mon_{gl}[\we^{-1}]$ of Moore loop spaces by $\widetilde{\Omega_M\Top}$.\\
\end{leer}

Let $\we_g$ in $\Mon$ be the class of all morphisms $f$ for which
$\Omega_M Bf$ or, equivalently, $Bf$ is a weak equivalence, and let $\we_g$ in $\Cat^{\cM_1}$ be the class of those morphisms $f$ for which $Bf\in \Mon$ is
in $\we_g$. We will show later that $\Cat^{\cM_1} [\widetilde{\we_g^{-1}}]$ exists.

\begin{theo}\label{8_19}
 Let $\cA$ be a strict monoidal category, i.e. an $\cM_1$-algebra. Then the group completion $\Omega_M B(B\cA)$ of its classifying space $B\cA$ is
 homotopy equivalent to a loop space, namely to $\Omega B(B\cA)$. Conversely, given a loop space $\Omega Y$, then $\Omega_M Y$ is homotopy equivalent
 to $\Omega Y$ and there is a strict monoidal category $\cA_Y$ such that $B\cA_Y$ is weakly equivalent to $\Omega_M Y$. In particular, the classifying 
 space functor composed with the group completion induce an equivalence of categories
 $$ 
 \Cat^{\cM_1} [\widetilde{\we_g^{-1}}]\simeq \Mon_{gl}[we^{-1}] \simeq \widetilde{\Omega_M\Top}$$
 In the setting of Grothendieck universes we have
 $$\Cat^{\cM_1}[\we_g^{-1}]\simeq \Mon_{gl}[we^{-1}]\simeq \Omega_M\Top[we^{-1}] \simeq \Omega\Top[we^{-1}] $$
\end{theo}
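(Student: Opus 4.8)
The plan is to run the general machinery of Theorem~\ref{7_5} for the operad $\cM_1$ and then compose it with the reflective localization furnished by the group completion of \ref{8_17}. First I would record that $\cM_1$ is $\Sigma$-free: since for $k=1$ there is a single strictly associative operation, each object of $\cM_1(m)$ is an ordering of the generators $\{1,\ldots,m\}$, the category $\cM_1(m)$ is discrete, and $\Sigma_m$ permutes its $m!$ objects freely. By Lemma~\ref{8_3} it satisfies the factorization condition, hence has the homotopy colimit property, and its algebras are exactly the strict monoidal categories while the $B\cM_1$-algebras are precisely the well-based topological monoids. Theorem~\ref{7_5} (equivalently Theorem~\ref{8_5} for $k=1$) then gives an equivalence
$$B:\Cat^{\cM_1}[\widetilde{\we^{-1}}]\ \simeq\ \Top^{B\cM_1}[\we^{-1}]\ \simeq\ \Mon[\we^{-1}],$$
the last identification being routine once one replaces an arbitrary $B\cM_1$-algebra by a weakly equivalent well-based $CW$ monoid.

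The crux is to promote this to the group-completed setting. I would show that the group completion functor $U=\Omega_M B(-)$ of \ref{8_17} exhibits $\Mon_{gl}[\we^{-1}]$ as a reflective localization of $\Mon$ at $\we_g$. Concretely: by definition $f\in\we_g$ iff $Uf\in\we$; the unit $\mu(M):M\to\Omega_M BM$ lies in $\we_g$ for every $M$; and $U$ takes values in grouplike monoids and restricts on $\Mon_{gl}$ to a functor naturally $\we$-equivalent to the identity, since $\iota(M)$ is a strong equivalence when $M$ is grouplike. The homotopical universal property of $\mu(M)$ recorded in \ref{8_17} then identifies $U$ with the localization, yielding $\Mon[\we_g^{-1}]\simeq\Mon_{gl}[\we^{-1}]$ and, in particular, the existence up to equivalence of $\Mon[\we_g^{-1}]$. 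I expect this reflective-localization step to be the main obstacle, precisely because one must reconcile the strong equivalences $\we_s$ appearing in the universal property of \ref{8_17} with the weak equivalences $\we$ used throughout, and verify that the localizations exist in the G\"odel--Bernays setting by the transport argument of Appendix~C.

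With these two pieces in hand the first displayed equivalence follows by transport of structure: the class $\we_g$ on $\Cat^{\cM_1}$ is by definition the $B$-preimage of $\we_g$ on $\Mon$, so the equivalence of the first paragraph localizes to
$$\Cat^{\cM_1}[\widetilde{\we_g^{-1}}]\ \simeq\ \Mon[\we_g^{-1}]\ \simeq\ \Mon_{gl}[\we^{-1}],$$
and the existence of $\Cat^{\cM_1}[\widetilde{\we_g^{-1}}]$ is inherited from that of $\Mon[\we_g^{-1}]$. The second equivalence $\Mon_{gl}[\we^{-1}]\simeq\widetilde{\Omega_M\Top}$ is then immediate: by \ref{8_17} the map $\iota(M):M\to\Omega_M BM$ is a strong homotopy equivalence for grouplike $M$, so every object of $\Mon_{gl}[\we^{-1}]$ is isomorphic to a Moore loop space, i.e. the full subcategory $\widetilde{\Omega_M\Top}$ of \ref{8_18} is essentially surjective.

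Finally I would deduce the two concrete assertions and the universe-theoretic version. For a strict monoidal category $\cA$ the monoid $M=B\cA$ group-completes to $\Omega_M BM=\Omega_M B(B\cA)$, which is grouplike, and the standard comparison $\Omega_M Z\simeq\Omega Z$ for well-based $Z$, applied to $Z=B(B\cA)$, identifies it with the loop space $\Omega B(B\cA)$. Conversely, given a loop space $\Omega Y$, the Moore loop space $\Omega_M Y\simeq\Omega Y$ is a grouplike topological monoid, and essential surjectivity of $B$ from the first paragraph produces a strict monoidal category $\cA_Y$ with $B\cA_Y$ weakly equivalent to $\Omega_M Y$. In Grothendieck's setting of universes (Convention~\ref{7_3a}, Proposition~\ref{7_3}) the localizations up to equivalence become genuine localizations, and adjoining the natural weak equivalence $\Omega_M Y\simeq\Omega Y$, which induces $\Omega_M\Top[\we^{-1}]\simeq\Omega\Top[\we^{-1}]$, completes the chain.
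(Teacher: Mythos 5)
Your proposal is correct, and it reconstructs essentially the argument the paper intends: note that the paper gives no free-standing proof of Theorem \ref{8_19} at all — the statement is assembled, exactly as you do, from Theorem \ref{7_5} (via Lemma \ref{8_3} and Theorem \ref{8_5} with $k=1$, together with the routine identification of $B\cM_1$-algebras with topological monoids after well-based CW replacement), from the group-completion discussion of \ref{8_17}--\ref{8_18}, and from an existence statement for the $\we_g$-localizations. Where you genuinely diverge is in that last step. The paper's Lemma \ref{8_25} obtains $\Cat^{\cM_1}[\widetilde{\we_g^{-1}}]$ by passing to a cofibrant replacement $\cE$ of $N\cM_1$, invoking Stelzer's model structure on $\SSets^{\cE}$ whose weak equivalences are precisely $\we_g$, and transporting along the functors of Appendix C (Proposition \ref{C_4}); this route is uniform in the operad and is the same one used for $\cM_k$ with $k\geq 2$, for $\cB r$, and for $\widetilde{\Sigma}$. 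You instead present $\Omega_M B(-)$ followed by $\Mon_{gl}\to \Mon_{gl}[\we^{-1}]$ as a localization of $\Mon$ up to equivalence at $\we_g$, using the homotopical universal property of \ref{8_17}, and then pull existence back to $\Cat^{\cM_1}$ along $B$ via the transport mechanism of Proposition \ref{C_4}. Your route is more elementary and avoids the citation of Stelzer's model structure, but it is available only for $k=1$: as the paper remarks just before Definition \ref{8_20}, no group completion with such a universal property is known for $k\geq 2$ or $\cB r$, which is exactly why Lemma \ref{8_25} takes the model-categorical detour. Two verifications you should make explicit in your reflective-localization step: first, $\we\subset\we_g$ in $\Mon$ (for well-based monoids the bar construction is proper, so $Bf$ is a weak equivalence whenever the underlying map of $f$ is); second, that both legs of the zig-zag $M\leftarrow U(M)\rightarrow \Omega_M BM$ lie in $\we_g$ — for $u(M)$ this is not immediate from $\iota(M)\circ r(M)\simeq u(M)$, since $\iota(M)$ is only a homotopy homomorphism and the homotopy is in $\Top$, so one needs that $B$ descends along $\we_s$ and along homotopy homomorphisms, and that $B\iota(M):BM\to B\Omega_M BM$ is a weak equivalence because $BM$ is connected (all of which is contained in the cited reference for \ref{8_17}). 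With these points in place, your chain of equivalences, the two concrete assertions about $\Omega B(B\cA)$ and $\cA_Y$, and the universe-theoretic version via Convention \ref{7_3a} all come out as stated.
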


\textbf{$k$-fold loop spaces, $1<k<\infty$:} For $k$-fold monoidal categories with $k\geq 2$ or braided monoidal categories we do not know of a 
group completion construction with an analogous universal property. 
Here we use variants of the homological group completion of May \cite{May1}.

\begin{defi}\label{8_20}
 An H-space $X$ is called admissible if $X$ is homotopy associative and if left translation by any given element of 
$X$ is homotopic to right translation with the same element. An H-map $g: X\to Y$ between admissible H-spaces 
is called a \textit{homological group completion of $X$} if $Y$ is grouplike and the unique morphism of $k$-algebras
$\bar{g}_{\ast}:H_{\ast}(X;k)\left[\pi_0^{-1}X\right]\to H_{\ast}(Y;k) $
which extends $g_{\ast}$ is an isomorphism for all commutative coefficient rings
$k$.
\end{defi}
If $\pi_0X$ is a group (in particular, if $X$ is grouplike) then  $g: X\to Y$ is a weak equivalence of spaces \cite[Remark 1.5]{May1}.

Let $\cM$ be either $\cM_k$ with $k\geq 2$ or $\cB r$. Then $N\cM$ is reduced, i.e $N\cM (0)$ is a point. Let $\cE'$ be 
a cofibrant replacement of $N\cM$  with respect to the model structure
on the category of simplicial operads defined by Berger and Moerdijk \cite{Berger}, and let $\cE$ be its reduced version obtained from $\cE'$
by identifying all nullary operations to a point. Consider the sequence of categories and functors
with $k=2$ if $\cM=\cB r$.
\begin{leer}\label{8_21}
$$\Cat^{\cM}\xrightarrow{N} \SSets^{N\cM} \rightleftarrows \SSets^{\cE} \rightleftarrows \Top^{|\cE|}\rightleftarrows \Top^{\cC_k}\supset \Omega^k\Top$$
\end{leer}
 The canonical $\cC_k$-structure on $\Omega^k X$ makes $\Omega^k\Top$
 a subcategory of $\Top^{\cC_k}$. 
Since the realization functor preserves quotients the reduced operad $|\cE|$ is a quotient of the cofibrant
operad $|\cE'|$. In particular, since $N\cM$ and $\cC_k$ are reduced, there are maps of topological operads 
$$ \xymatrix{
B\cM &&  |\cE|\ar[ll]_{F}\ar[rr]^{G}&& \cC_k }$$
which are equivalences.
The unspecified maps of the diagram are the Quillen equivalences given by the change of operads constructions and by the pair $|-|^{\alg}$ and 
$\Sing^{\alg}$. If $\cA$ is an $\cM$-algebra then $B\cA$ is an $|\cE|$-algebra by pulling back the $B\cM$-structure via $F$, and each 
$k$-fold loop space is an $|\cE|$-algebra by pulling back the $\cC_k$-structure via $G$. The following diagram (cp. \cite[Theorem 13.1]{May})
defines the \textit{group completion}
in the category $\Top^{|\cE|}$ of a $|\cE|$-algebra $X$:
$$\xymatrix{
X & B(\mE,\mE,X)\ar[l]_(.63){\varepsilon}\ar[rr]^(.45){B(\alpha_k\bar{G},\id,\id)} & & B(\Omega^k S^k,\mE,X)\ar[r]^{\gamma_k} & \Omega^kB(S^k,\mE,X)
}
$$
where $\bar{G}:\mE\to \mC_k$ is the morphism of monads associated with the map of operads $G:|\cE|\to \cC_k$. The map $\varepsilon$ is a 
strong deformation retraction, $\gamma_k$ is a weak equivalence. May's argument in the proof of \cite[Theorem 2.2]{May1}
together with the homology calculations of Cohen in \cite{Cohen} imply that $\gamma_k\circ B(\alpha_k\bar{G},\id,\id)$ is a homological
group completion.

The classifying space functor, respectively the realization functor composed with the group completion define functors
$$\Cat^{\cM}\to \Omega^k\Top\qquad \SSets^{\cE}\to \Omega^k\Top.$$
Like above we define $\we_g$ to be the classes of those morphisms in $\Cat^{\cM},\ \SSets^{\cE}$ or $\Top^{|\cE|}$ which are mapped to weak
equivalences in $\Omega^k\Top.$ By \cite[Remark 1.5]{May1}
$\gamma_k\circ B(\alpha_k\bar{G},\id,\id)$ of the above diagram is in $\we_g$. 
We again denote by $\Top^{|\cE|}_{gl}$ the full subcategory of grouplike objects. By the argument above, $\Top^{|\cE|}_{gl}[\we^{-1}]$
exists. We define 
$\widetilde{\Omega^k\Top}$ to be the full subcategory of $\Top^{|\cE|}_{gl}$ of $k$-fold loop spaces. The existence of
$\Cat^{\cM}[\widetilde{\we_g^{-1}}]$ is shown below.
  
 \begin{theo}\label{8_22} 
Let $\cA$ be a $k$-fold monoidal respectively braided monoidal category. Then the group completion of $B\cA$ is the $k$-fold loop space $ \Omega^kB(S^k,\mE,B\cA)$
respectively the double loop space $ \Omega^2B(S^2,\mE,B\cA)$.
Conversely, if $\Omega^kY$ is a $k$-fold loop space respectively a double loop space there is a $k$-fold monoidal category respectively braided monoidal
category $\cA$ such that $B\cA$ is weakly equivalent to $\Omega^kY$ respectively $\Omega^2 Y$
in $\Top^{|\cE|}$. In particular, the classifying 
 space functor composed with the group completion induce equivalences of categories
 $$ \begin{array}{rcccl}
 \Cat^{\cM_k} [\widetilde{\we_g^{-1}}]&\simeq & \Top^{|\cE|}_{gl}[\we^{-1}]&\simeq & \widetilde{\Omega^k\Top} \\
 \Cat^{\cB r} [\widetilde{\we_g^{-1}}]&\simeq & \Top^{|\cE|}_{gl}[\we^{-1}]& \simeq &\widetilde{\Omega^2\Top}
 \end{array}
 $$
 In the setting of Grothendieck universes we have
 $$ \begin{array}{rcccl}
 \Cat^{\cM_k} [\we_g^{-1}] &\simeq & \Top^{|\cE|}_{gl}[\we^{-1}] &\simeq & \Omega^k\Top[we^{-1}] \\
 \Cat^{\cB r} [\we_g^{-1}] &\simeq &\Top^{|\cE|}_{gl}[\we^{-1}] &\simeq &  \Omega^2\Top[we^{-1}]
 \end{array}
 $$
\end{theo}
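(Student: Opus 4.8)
The plan is to assemble the three equivalences from ingredients already in place: the group completion diagram of \ref{8_21}, the homology calculations underlying the notion of homological group completion \ref{8_20}, the recognition principle for $\cC_k$-spaces, and the $\we$-localization equivalences of Theorem \ref{8_5} (respectively Theorem \ref{8_14}). Throughout write $\cM$ for $\cM_k$ (respectively $\cB r$, with $k=2$), and freely identify algebras over $\cC_k$, $B\cM$, and $|\cE|$ by means of the change of operads functors along $F\colon|\cE|\to B\cM$ and $G\colon|\cE|\to\cC_k$, which are equivalences of operads and hence induce Quillen equivalences by Proposition \ref{8_1}.

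First I would treat the forward statement. For an $|\cE|$-algebra $X$ the group completion diagram supplies the map
\[
\gamma_k\circ B(\alpha_k\bar G,\id,\id)\colon B(\mE,\mE,X)\to \Omega^k B(S^k,\mE,X),
\]
and by May's argument in the proof of \cite[Theorem 2.2]{May1} together with Cohen's homology computations \cite{Cohen} this map is a homological group completion in the sense of \ref{8_20}. Taking $X=B\cA$, where the classifying space of a $k$-fold monoidal (respectively braided monoidal) category $\cA$ carries its $|\cE|$-algebra structure by pullback along $F$, identifies the group completion of $B\cA$ with $\Omega^k B(S^k,\mE,B\cA)$; this is precisely the first assertion.

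Next I would establish the chain of localization equivalences. Since $\we\subseteq\we_g$ in each of $\Cat^{\cM}$, $\SSets^{\cE}$, and $\Top^{|\cE|}$, and since the functors of \ref{8_21} commute with the group completion up to the natural weak equivalences coming from the change of operads, they carry $\we_g$ to $\we_g$; localizing the $\we$-equivalences of Theorem \ref{8_5} (respectively \ref{8_14}) further at $\we_g$ therefore yields $\Cat^{\cM}[\widetilde{\we_g^{-1}}]\simeq\Top^{|\cE|}[\we_g^{-1}]$. The next equivalence is group completion itself: the group completion functor $\Gamma\colon\Top^{|\cE|}\to\Top^{|\cE|}_{gl}$ sends $\we_g$ to $\we$ by definition, while on a grouplike object the group completion map is already a weak equivalence, since its target has $\pi_0$ a group \cite[Remark 1.5]{May1}. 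Hence $\Gamma$ restricts on $\Top^{|\cE|}_{gl}$ to a functor naturally $\we$-equivalent to the identity, exhibiting $\Top^{|\cE|}_{gl}[\we^{-1}]$ as the localization $\Top^{|\cE|}[\we_g^{-1}]$. Finally, transporting along $G$ to $\cC_k$-algebras and invoking the recognition principle --- a grouplike $\cC_k$-algebra of the homotopy type of a CW complex is weakly equivalent to a $k$-fold loop space --- identifies $\Top^{|\cE|}_{gl}[\we^{-1}]$ with its full subcategory $\widetilde{\Omega^k\Top}$. Composing the three equivalences gives the asserted chain; in particular $\Cat^{\cM}[\widetilde{\we_g^{-1}}]$ exists because it is equivalent to the already-available localization $\Top^{|\cE|}_{gl}[\we^{-1}]$. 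The Grothendieck-universe statements follow by the same argument with genuine localizations replacing localizations up to equivalence.

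The converse is then formal: a $k$-fold (respectively double) loop space $\Omega^k Y$, regarded as an $|\cE|$-algebra via $G$, is grouplike, so running the equivalence $\Top^{|\cE|}_{gl}[\we^{-1}]\simeq\Cat^{\cM}[\widetilde{\we_g^{-1}}]$ backwards produces a $k$-fold monoidal (respectively braided monoidal) category $\cA$ with $B\cA$ weakly equivalent to $\Omega^k Y$ in $\Top^{|\cE|}$. I expect the main obstacle to be the coherent bookkeeping of the classes $\we_g$ along the chain \ref{8_21} --- verifying that each functor preserves $\we_g$ and that the homological group completion of \cite{May1,Cohen} genuinely produces a $k$-fold loop space at the $\cC_k$-level, so that the identification $\Top^{|\cE|}[\we_g^{-1}]\simeq\widetilde{\Omega^k\Top}$ is legitimate; the remaining steps are applications of results already assembled.
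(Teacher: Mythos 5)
Your overall architecture matches the paper's (the proof of Theorem \ref{8_22} is distributed over the discussion surrounding \ref{8_21} and Lemma \ref{8_25}): the forward statement is, as you say, May's argument for \cite[Theorem 2.2]{May1} combined with Cohen's computations \cite{Cohen}, applied to the group completion diagram of \ref{8_21} with $X=B\cA$, and the identification $\Top^{|\cE|}_{gl}[\we^{-1}]\simeq\widetilde{\Omega^k\Top}$ via the bar-construction recognition principle is also the paper's. But your converse is genuinely broken as written. Running the equivalence $\Cat^{\cM}[\widetilde{\we_g^{-1}}]\simeq\Top^{|\cE|}_{gl}[\we^{-1}]$ backwards cannot deliver the assertion, because the forward functor of that equivalence is $\cA\mapsto\Omega^kB(S^k,\mE,B\cA)$, not $\cA\mapsto B\cA$. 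Its essential surjectivity produces a category $\cA$ whose \emph{group completion} of $B\cA$ is weakly equivalent to $\Omega^kY$; to conclude that $B\cA$ itself is weakly equivalent to $\Omega^kY$ in $\Top^{|\cE|}$ you would need the resulting $B\cA$ to be already grouplike, which nothing guarantees. The correct source is the $\we$-level equivalence of Theorem \ref{8_5} (resp.\ \ref{8_14}): its essential surjectivity yields $\cA$ with $B\cA$ isomorphic to $\Omega^kY$ in the localization at $\we$ of a model category, i.e.\ joined to it by a zig-zag of genuine weak equivalences.

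The second weak point is your existence claim in the G\"odel-Bernay setting: ``$\Cat^{\cM}[\widetilde{\we_g^{-1}}]$ exists because it is equivalent to the already-available $\Top^{|\cE|}_{gl}[\we^{-1}]$'' is circular, since a localization up to equivalence is a universal property (Definition \ref{7_4}) and asserting an equivalence presupposes the object whose existence is in question; likewise your intermediate $\Top^{|\cE|}[\we_g^{-1}]$ is not known to exist in that setting. Verifying the universal property for the candidate functor $\Cat^{\cM}\to\Top^{|\cE|}_{gl}[\we^{-1}]$ requires a functor in the opposite direction, from spaces or simplicial sets back to $\cM$-algebras in $\Cat$, together with natural zig-zags of weak equivalences joining the two composites to the identities; ``localizing the equivalence of Theorem \ref{8_5} further at $\we_g$'' does not produce these. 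This is exactly what Lemma \ref{8_25} supplies: Stelzer's model structure on $\SSets^{\cE}$ with $\we_g$ as weak equivalences (giving a genuine localization there, and $\we\subset\we_g$), the backward functor built from $\hocolim^{\mM}\hat{Q}_\bullet(|-|)$ with the zig-zags \ref{C_2} and \ref{C_3}, and Proposition \ref{C_4}. In the Grothendieck-universe setting, where all localizations exist outright, your chain of identifications is essentially sound — including the observation that group completion sends $\we_g$ to $\we$ by definition and is a weak equivalence on grouplike objects by \cite[Remark 1.5]{May1} (note, though, that the hypothesis there is that the \emph{source} has group-valued $\pi_0$, not the target).
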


\textbf{Infinite loop spaces:} 
Throughout this subsection let $\cM$ be either the operad $\cM_\infty$ or $\widetilde{\Sigma}$. Let $\Omega^\infty\Top$ denote the
category of infinite loop sequences. Its objects are sequences $X=(X_i;\ i\geq 0)$ of based spaces such that $X_i=\Omega X_{i+1}$, its morphisms
are sequences $g=(g_i;\ i\geq 0)$ of based maps such that $g_i=\Omega g_{i+1}$. If $X\in \Omega^\infty\Top$ then $X_0$ has a canonical
$\cC_\infty$ structure. We hence have a functor $V: \Omega^\infty\Top \to \Top^{\cC_\infty}$.

We again use a variant of May's group completion construction \cite{May1}.

We apply the simplicial Boardman-Vogt $W$-construction \cite{Berger2} to the sequence of simplicial operads $N\cM_1 \subset N\cM_2\subset N\cM_3 \subset\ldots$.
By \cite[Section 5]{Berger2} the colimit $\colim W\cM_n$ is cofibrant. Let $\cE$ be the reduced version in the above sense of $\colim W\cM_n$ and let
$\cE_n\subset \cE$ be the reduced version of $W\cM_n$. We have categories and functors like in Diagram \ref{8_21}
with $k$ replaced by $\infty$. Since
$\cM_\infty,  \widetilde{\Sigma},\ \cC_n$ and  $\cC_\infty$ are reduced we have morphisms of operads
$$\xymatrix{B\cM& |\cE_n|\ar[l]_{F_n}\ar[r]^{G_n}& \cC_n & & B\cM& |\cE|\ar[l]_{F}\ar[r]^{G}& \cC_\infty}
$$
and for $X$ in $\Top^{|\cE|}$ we have maps of $|\cE_n|$-algebras (cp. \cite[p. 68]{May1})
$$\xymatrix{
X & B(\mE_n,\mE_n,X)\ar[l]_(.63){\varepsilon}\ar[rr]^(.46){B(\alpha_n\bar{G_n},\id,\id)} & & B(\Omega^n S^n,\mE_n,X)\ar[r]^{\gamma_n} & \Omega^nB(S^n,\mE_n,X)
}
$$
where again $\bar{G}:\mE_n\to \mC_n$ is the morphism of monads associated with the morphism $|G_n|$ of operads. We define a functor
$$
B_{\infty}:\Top^{|\cE|}\to \Omega^\infty\Top, \quad X\mapsto (B_iX=\colim \Omega^nB(S^{i+n},\mE_{i+n},X),\ i\geq 0)
$$
Passing to colimits we obtain the \textit{group completion} of $|\cE|$-algebras
$$\xymatrix{
X & B(\mE,\mE,X)\ar[l]_{\varepsilon}\ar[rr]^(.45){B(\alpha_\infty\bar{G},\id,\id)} & & B(\Omega^\infty S^\infty,\mE,X)\ar[r]^(.65){\gamma_\infty} & B_0X
}
$$
where $\varepsilon$ is a deformation retraction, $B(\alpha_\infty\bar{G},\id,\id)$ is a homological group completion, and $\gamma_\infty$ is a weak
equivalence \cite[Theorem 2.3]{May1}.

As before we define $\we_g$ to be the class of those morphisms in $\Cat^{\cM},\ SSets^{\cE}$ and $\Top^{|\cE|}$ which are mapped to weak equivalences
in $\Omega^\infty\Top$. We use notation analogous to the one in Theorem \ref{8_22}.

\begin{theo}\label{8_23}
 Let $\cA$ be a infinite monoidal respectively permutative category. Then the group completion of $B\cA$ is the $0$-th space of the
 infinite loop space $B_\infty(B\cA)$.
Conversely, if $Y$ is an infinite loop space there is a permutative 
category (and hence an infinite monoidal category) $\cA$ such that $B\cA$ is weakly equivalent to the $0$-th space of $Y$.
In particular, the classifying 
 space functor composed with the group completion induce equivalences of categories
 $$ \begin{array}{rcccl}
 \Cat^{\cM_\infty} [\widetilde{\we_g^{-1}}]&\simeq & \Top^{|\cE|}_{gl}[\we^{-1}]&\simeq & \widetilde{\Omega^\infty\Top} \\
 \Cat^{\widetilde{\Sigma}} [\widetilde{\we_g^{-1}}]&\simeq & \Top^{|\cE|}_{gl}[\we^{-1}]& \simeq &\widetilde{\Omega^\infty\Top}
 \end{array}
 $$
 In the setting of Grothendieck universes we have
 $$ \begin{array}{rcccl}
 \Cat^{\cM_\infty} [\we_g^{-1}] &\simeq & \Top^{|\cE|}_{gl}[\we^{-1}] &\simeq & \Omega^\infty\Top[we^{-1}] \\
 \Cat^{\widetilde{\Sigma}} [\we_g^{-1}] &\simeq &\Top^{|\cE|}_{gl}[\we^{-1}] &\simeq &  \Omega^\infty\Top[we^{-1}]
 \end{array}
 $$
\end{theo}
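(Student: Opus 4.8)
The plan is to follow the proof of Theorem~\ref{8_22} essentially verbatim, replacing the operads $\cM_k$ and $\cC_k$ by their colimits $\cM_\infty$ and $\cC_\infty$ and May's $k$-fold delooping machine by his infinite one. The four ingredients are: the group completion diagram in $\Top^{|\cE|}$ constructed immediately above the statement; the equivalences of Theorem~\ref{8_5} (with $k=\infty$) and Theorem~\ref{8_7}; the change of operads equivalence of Propositions~\ref{8_1} and~\ref{8_2}, applicable since $\cM_\infty$ and $\widetilde{\Sigma}$ satisfy the factorization condition by Lemmas~\ref{8_4} and~\ref{8_6}; and May's identification of the homological group completion in \cite{May1}.

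First I would establish the two homotopy-theoretic assertions. For an infinite monoidal or permutative category $\cA$, view $B\cA$ as an $|\cE|$-algebra via $F$ and insert it into the group completion diagram, in which $\varepsilon$ is a deformation retraction, $B(\alpha_\infty\bar{G},\id,\id)$ is a homological group completion, and $\gamma_\infty$ is a weak equivalence. Reading off the diagram exhibits $B_0(B\cA)$, the $0$-th space of the infinite loop space $B_\infty(B\cA)$, as a homological group completion of $B\cA$. For the converse, an infinite loop space $Y=(Y_i)$ has its $0$-th space $Y_0$ canonically a $\cC_\infty$-algebra through the functor $V$; Theorem~\ref{8_7} (equivalently Theorem~\ref{8_5} with $k=\infty$) then supplies a permutative category $\cA$ with $B\cA$ weakly equivalent to $Y_0$ as $\cC_\infty$-algebras, hence as spaces, and Corollary~\ref{8_9} lets us regard $\cA$ as an $\cM_\infty$-algebra when needed.

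Next I would assemble the categorical equivalences. For the right-hand equivalence $\Top^{|\cE|}_{gl}[\we^{-1}]\simeq\widetilde{\Omega^\infty\Top}$, observe that on a grouplike $|\cE|$-algebra $X$ the homological group completion map $X\to B_0X$ is already a weak equivalence by \cite{May1}; hence $B_\infty$ carries $\Top^{|\cE|}_{gl}$ into infinite loop spaces and induces the asserted equivalence onto the full subcategory $\widetilde{\Omega^\infty\Top}$ after inverting $\we$. For the left-hand equivalences, Theorem~\ref{8_5} (for $\cM_\infty$) and Theorem~\ref{8_7} (for $\widetilde{\Sigma}$), combined with the equivalences $\Top^{\cC_\infty}[\we^{-1}]\simeq\Top^{|\cE|}[\we^{-1}]\simeq\Top^{B\cM}[\we^{-1}]$ furnished by $G$ and $F$ through Proposition~\ref{8_1}, identify $\Cat^{\cM_\infty}[\widetilde{\we^{-1}}]$ and $\Cat^{\widetilde{\Sigma}}[\widetilde{\we^{-1}}]$ with $\Top^{|\cE|}[\we^{-1}]$. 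Post-composing with the group completion functor, which sends $\we_g$ to $\we$ and is essentially surjective onto grouplike algebras, upgrades $\widetilde{\we^{-1}}$ to $\widetilde{\we_g^{-1}}$ and simultaneously establishes the existence of $\Cat^{\cM}[\widetilde{\we_g^{-1}}]$ promised earlier. The Grothendieck universe statements then follow by replacing each localization up to equivalence with a genuine one, exactly as in Convention~\ref{7_3a} and Appendix~C.

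The main obstacle is the passage to the colimit in the infinite loop space machine: one must verify that $B_\infty X$, whose $i$-th term is $\colim_n\Omega^nB(S^{i+n},\cE_{i+n},X)$, is well defined as an infinite loop sequence, that it commutes with the relevant realizations, and that the homological group completion property survives the colimit, so that the levelwise statements of \cite{May1} indeed assemble in the limit. A secondary technical point is checking that the group completion functor descends to the localizations, that is, that it is homotopically well behaved enough to induce a functor on $[\widetilde{\we_g^{-1}}]$ and to be fully faithful there; this is precisely where the homotopical universal property of group completion, rather than the bare bar construction, must be invoked.
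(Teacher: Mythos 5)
Your overall assembly coincides with the paper's own: Theorem \ref{8_23} receives no separate proof there, being exactly the combination of the group completion diagram displayed just above it, the converse via Theorem \ref{8_7} (with Corollary \ref{8_9} supplying the infinite monoidal structure), and the observation that on grouplike algebras the homological group completion is a weak equivalence \cite[Remark 1.5]{May1}, so that $\Top^{|\cE|}_{gl}[\we^{-1}]\simeq\widetilde{\Omega^\infty\Top}$ reduces to essential surjectivity of a full inclusion, exactly as you argue. Your ``main obstacle'' about the passage to the colimit in the machine is in fact discharged the same way the paper discharges it: the properties of $\varepsilon$, $B(\alpha_\infty\bar{G},\id,\id)$ and $\gamma_\infty$ after the colimit are quoted directly from \cite[Theorem 2.3]{May1}, whose statement is already the colimit statement; no new verification is needed.

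The genuine gap is in your treatment of the left-hand equivalences. ``Post-composing with the group completion functor \ldots\ upgrades $\widetilde{\we^{-1}}$ to $\widetilde{\we_g^{-1}}$'' is not an argument: an equivalence after inverting $\we$ does not yield one after inverting the strictly larger class $\we_g$, essential surjectivity onto grouplike objects says nothing about full faithfulness, and above all the existence of the localization up to equivalence $\Cat^{\cM}[\widetilde{\we_g^{-1}}]$ is precisely what has to be proved. Your proposed repair --- invoking ``the homotopical universal property of group completion'' --- is unavailable here: the paper possesses such a universal property only for topological monoids, i.e.\ the $k=1$ case treated in Theorem \ref{8_19} via \cite{Vogt2}; for $E_\infty$-structures the completion used is merely a \emph{homological} group completion in May's sense and carries no universal property. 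The paper closes this gap with Lemma \ref{8_25}: Stelzer's theorem \cite{Stelzer} that $\SSets^{\cE}$ admits a Quillen model structure whose weak equivalences are exactly $\we_g$ (so that $\SSets^{\cE}[\we_g^{-1}]$ exists), fed into Proposition \ref{C_4} together with the natural zig-zags \ref{C_2} and \ref{C_3}, which are available because $\cM_\infty$ and $\widetilde{\Sigma}$ satisfy the factorization condition and hence have the homotopy colimit property (Lemmas \ref{8_4} and \ref{8_6}). Without this model-categorical input, or some equivalent device, your chain stops at $\Cat^{\cM}[\widetilde{\we^{-1}}]\simeq\Top^{|\cE|}[\we^{-1}]$, and the displayed equivalences with $\we_g$ inverted remain unestablished.
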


\begin{rema}\label{8_24}
Theorem \ref{8_23} for permutative categories in the setting of Gro\-thendieck universes is the main result of Thomason's paper \cite{Thom2}.
Our result is stronger in so far as it explains what
happens before group completion. Mandell has obtained a similar result by completely
different means \cite{Mandell}.
\end{rema}

It remains to prove the existence of the localizations up to equivalence with respect to $\we_g$
occuring in \ref{8_19}, \ref{8_22} and \ref{8_23}.

\begin{lem}\label{8_25} Let $\cM$ be one of the operads $\cB r,\ \widetilde{\Sigma}$ or $\cM_k,\ 1\leq k\leq \infty$. Then
 there is a localization up to equivalence $\delta:\Cat^{\cM}\to\Cat^{\cM}[\widetilde{\we_g^{-1}}]$.
\end{lem}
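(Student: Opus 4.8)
The plan is to exhibit $\delta$ as a composite of two localizations up to equivalence and to invoke a transitivity principle for such localizations. First I would record that each of the operads $\cB r$, $\widetilde{\Sigma}$, and $\cM_k$ ($1\le k\le\infty$) satisfies the factorization condition (\ref{8_3}, \ref{8_4}, \ref{8_6}, \ref{8_12}) and hence has the homotopy colimit property by Theorem \ref{6_9}. Thus Theorem \ref{7_5} applies, and the classifying space functor followed by the localization $\Top^{B\cM}\to\Top^{B\cM}[\we^{-1}]$ is a localization up to equivalence $\gamma_1\colon\Cat^{\cM}\to\Top^{B\cM}[\we^{-1}]$ with respect to the class $\we$ of \ref{3_8} (for $\cM=\cM_1$ the classifying space lands directly in topological monoids, and one uses $\Mon[\we^{-1}]$ in the role of $\Top^{B\cM}[\we^{-1}]$, cf.\ \ref{8_19}). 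Since $\we\subseteq\we_g$, this target is where I intend to localize further.

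Next I would transfer to the operad $|\cE|$ that carries the group completion: by Proposition \ref{8_1} the equivalence $F$ identifies $\Top^{B\cM}[\we^{-1}]$ with $\Top^{|\cE|}[\we^{-1}]$. The group completion construction of \ref{8_21} (resp.\ \ref{8_18} when $\cM=\cM_1$) attaches to every object a map lying in $\we_g$ whose target is grouplike, while on grouplike objects a map belongs to $\we_g$ exactly when it is a weak equivalence. Hence the inclusion of the full subcategory of grouplike objects becomes reflective after localization, and the group completion exhibits $\Top^{|\cE|}_{gl}[\we^{-1}]$ (resp.\ $\Mon_{gl}[\we^{-1}]$)---whose existence is established in \ref{8_21} (resp.\ \ref{8_18})---as the localization $\gamma_2$ of $\Top^{|\cE|}[\we^{-1}]$ at the image of $\we_g$.

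The formal glue is the following transitivity statement, which I would prove in a line or two: if $\gamma_1\colon\cC\to\cC_1$ is a localization up to equivalence at a class $\we_1$ and $\gamma_2\colon\cC_1\to\cC_2$ is one at the class $\gamma_1(\we_2)$, where $\we_1\subseteq\we_2$, then $\gamma_2\circ\gamma_1$ is a localization up to equivalence at $\we_2$ in the sense of \ref{7_4}. Indeed, given $F\colon\cC\to\cD$ inverting $\we_2$, it inverts $\we_1$, so $F\simeq\bar F\circ\gamma_1$ for an essentially unique $\bar F$; since a natural equivalence preserves invertibility, $\bar F$ inverts $\gamma_1(\we_2)$ and therefore $\bar F\simeq\bar{\bar F}\circ\gamma_2$, whence $F\simeq\bar{\bar F}\circ\gamma_2\circ\gamma_1$. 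Uniqueness up to natural equivalence follows by applying the two uniqueness clauses in turn. Taking $\we_1=\we$, $\we_2=\we_g$ and $\delta=\gamma_2\circ\gamma_1$ then lets me set $\Cat^{\cM}[\widetilde{\we_g^{-1}}]:=\Top^{|\cE|}_{gl}[\we^{-1}]$ (resp.\ $\Mon_{gl}[\we^{-1}]$).

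The hard part will be the second step: verifying that $\gamma_2$ really is the localization at the image of $\we_g$, that is, that group completion is a reflection onto the grouplike objects in the localized setting and that the maps it inverts are precisely those in $\we_g$. This rests on May's group completion theorem together with Cohen's homology computations, as recalled around \ref{8_21}, and on the fact that a homological group completion between objects whose $\pi_0$ is a group is already a weak equivalence \cite[Remark 1.5]{May1}.
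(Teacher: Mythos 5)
Your two-stage architecture (localize at $\we$ via Theorem \ref{7_5}, then reflect onto grouplike objects, glued by a transitivity lemma) is coherent, and the transitivity lemma itself is fine, but there is a genuine gap where you apply it: the two occurrences of $\we_g$ do not match. Your transitivity statement requires $\gamma_2$ to be a localization of $\Top^{|\cE|}[\we^{-1}]$ at precisely the image under $\gamma_1$ of the class $\we_g$ of $\Cat^{\cM}$. What the group-completion analysis around \ref{8_21} and \cite[Remark 1.5]{May1} can deliver is that $\gamma_2$ is a localization at the image of the much larger class $\we_g$ of $\Top^{|\cE|}$: the reflection is witnessed by the zigzag $X \leftarrow B(\mE,\mE,X) \to \Omega^k B(S^k,\mE,X)$, whose right-hand leg is a purely topological $\we_g$-map that is not of the form $Bf$ for a morphism $f$ of $\cM$-algebras (not even when $X=B\cA$), nor conjugate to one by anything the bare equivalence of Theorem \ref{7_5} provides. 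Consequently, in the existence clause of Definition \ref{7_4}, the functor $\bar F_1$ you obtain from a given $F$ inverting $\we_g$ in $\Cat^{\cM}$ is only known to invert the images of those categorical $\we_g$-maps, and you cannot conclude that it inverts the group-completion maps, which is exactly what factoring through $\gamma_2$ requires. Note that fullness of the equivalence in \ref{7_5} does not rescue this: morphisms in the localized category are zigzags with backward legs in $\we$, and invertibility of $\bar F_1$ on a composite zigzag cannot be checked leg by leg. To close the gap you would need \emph{functorial} comparison data connecting an arbitrary $|\cE|$-algebra to the classifying space of an $\cM$-algebra --- precisely the natural zigzags $\Id \sim\sim\sim B\hat{Q}_\bullet(-)$ and the functor $\hocolim^{\mM}\hat{Q}_\bullet(|-|)$ of Appendix C; with those, every $\we_g$-map of $|\cE|$-algebras becomes conjugate in $\Top^{|\cE|}[\we^{-1}]$ to $\gamma_1$ of a $\we_g$-map of $\cM$-algebras, and your argument goes through. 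Incidentally, what you flag as ``the hard part'' (that group completion gives a reflection onto grouplike objects after localization) is essentially fine given the facts recorded in \ref{8_18} and \ref{8_21}; the real obstruction is the class-transport issue above.

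For comparison, the paper's own proof takes a completely different and shorter route that avoids group completion and grouplike subcategories altogether: it invokes Stelzer's result \cite{Stelzer} that $\SSets^{\cE}$ carries a Quillen model structure whose weak equivalences are exactly $\we_g$ (with $\we\subset\we_g$), so the localization $\SSets^{\cE}[\we_g^{-1}]$ genuinely exists, and then transfers this to $\Cat^{\cM}$ along the pair $N':\Cat^{\cM}\to\SSets^{\cE}$, $F':\SSets^{\cE}\to\Cat^{\cM}$ together with the natural sequences of weak equivalences $\Id\sim\sim\sim N'\circ F'$ and $\Id\sim\sim\sim F'\circ N'$ of \ref{C_2} and \ref{C_3}, via Proposition \ref{C_4}. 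Those natural zigzags are exactly the class-transport device your proposal is missing; if you incorporate them, your two-step construction should also work, but at that point you are using the same machinery as the paper with extra steps.
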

\begin{proof}
 Let $\lambda:\cE\to N\cM$ be a cofibrant replacement of $N\cM$. In \cite{Stelzer} the second author proved that 
 $\SSets^{\cE}$ has a Quillen model structure with $\we_g$ as class of weak equivalences and that $\we\subset \we_g$. In particular,
 the localization $\gamma: \SSets^{\cE}\to \SSets^{\cE}[\we_g^{-1}]$ exists. Since $\lambda:\cE\to N\cM$ is a weak equivalence of operads,
 there is a Quillen equivalence $V: \SSets^{N\cM}\rightleftarrows \SSets^{\cE}:U$. Combining this with the nerve functor $N: \Cat^{\cM}
 \to \SSets^{N\cM}$ and the functor $F:\SSets^{N\cM}\to \Cat^{\cM}$ constructed in \ref{C_2} we obtain a pair of functors
 $$ N':\Cat^{\cM} \to \SSets^{\cE}\qquad\textrm{and}\qquad F':\SSets^{\cE}\to \Cat^{\cM}$$
 and by \ref{C_2} and \ref{C_3} functorial sequences of weak equivalences joining $\Id$ and $N'\circ F'$ respectively $\Id$ and $F'\circ N'$.
 Since $\we\subset \we_g$ the
 assumptions of Proposition \ref{C_4} are satisfied. Hence the required localization up to equivalence exists.
\end{proof}

\makeatletter
\renewcommand\appendix{\par
  \setcounter{section}{0}%
  \setcounter{subsection}{0}%
  \setcounter{equation}{0}
  \gdef\thefigure{\@Alph\c@section.\arabic{figure}}%
  \gdef\thetable{\@Alph\c@section.\arabic{table}}%
  \gdef\thesection{\@Alph\c@section}
  \@addtoreset{equation}{section}%
  \gdef\theequation{\@Alph\c@section.\arabic{equation}}%
  \section*{Appendix}    
}
\makeatother

\begin{appendix}
\section{An  explicit description of $\Func(\mathcal{L},\Cat^\mathbb{M}\Lax)$ }

\textbf{Objects:} Objects in $\Func(\mathcal{L},\Cat^\mathbb{M}\Lax)$
 are strict functors
$$
X: \mathcal{L}\to \Cat^\mathbb{M}\Lax.
$$
So for $\lambda:L_0\to L_1$ in $\mathcal{L}$ we have a morphism
$$
(X\lambda,\overline{X\lambda}):XL_0\to XL_1
$$
depicted in the square
$$
\xymatrix{\ar @{} [drr] |{\stackrel{\overline{X\lambda}}{\Longleftarrow}}
\mathbb{M}XL_0 \ar[rr]^{\mathbb{M}X\lambda} \ar[d]_{\xi_{XL_0}} &&
\mathbb{M}XL_1 \ar[d]^{\xi_{XL_1}}
\\
XL_0 \ar[rr]_{X\lambda} &&
XL_1
}
$$
satisfying
\begin{enumerate}
\item $\overline{X\lambda}\circ_1 \mu (XL_0)=(\overline{X\lambda}\circ_1\mM\xi_{XL_0})\circ_2 (\xi_{XL_1}\circ_1
\mM \overline{X\lambda})$
\item $\overline{X\lambda}\circ_1 \iota(XL_0)=\id_{X\lambda}$
\end{enumerate}
where $\xi_{XL_0}$ and $\xi_{XL_1}$ are the structure maps of the $\mathcal{M}$-algebras $XL_0$ and $XL_1$ and
$\mu$ and $\iota$ are the multiplication and unit of the monad $\mM$.

\textbf{Morphisms:} A morphism $j:X\to Y$ in
$\Func(\mathcal{L},\Cat^\mathbb{M}\Lax)$ is a lax natural transformation.

Given $L_0\stackrel{\lambda_0}{\rightarrow} L_1\stackrel{\lambda_1}{\rightarrow} L_2$, we have a diagram
$$
\xymatrix{\ar @{} [drr] |{\stackrel{j_{\lambda_0}}{\Longrightarrow}}
XL_0 \ar[rr]^{X{\lambda_0}} \ar[d]^{j_{L_0}} 
&& \ar @{} [drr] |{\stackrel{j_{\lambda_1}}{\Longrightarrow}}
XL_1 \ar[rr]^{X{\lambda_1}} \ar[d]^{j_{L_1}} 
&& 
XL_2  \ar[d]^{j_{L_2}}
\\
YL_0 \ar[rr]_{Y{\lambda_0}}
&&
YL_1 \ar[rr]_{Y{\lambda_1}}
&&
YL_2
}
$$
such that

$$
j_{\lambda_1\circ\lambda_2}=(j_{\lambda_1}\circ_1 X\lambda_0)\circ_2(Y\lambda_1\circ_1 j_{\lambda_0})
$$
Moreover, $j_{\lambda_0}:Y\lambda_0\circ j_{L_0}\Rightarrow j_{L_1}\circ X\lambda_0$ (and accordingly $j_{\lambda_1}$)
has to satisfy
$$
(j_{\lambda_0}\circ_1\xi_{XL_0})\circ_2(\overline{Y\lambda_0\circ j_{L_0}})= (\overline{j_{L_1}\circ X\lambda_0}) 
\circ_2(\xi_{YL_1}\circ_1\mathbb{M}j_{\lambda_0})
$$
where
$$
\begin{array}{rcl}
(\overline{Y\lambda_0\circ j_{L_0}}) &=& (Y\lambda_0\circ_1 \overline{j_{L_0}})\circ_2 
(\overline{Y\lambda_0}\circ_1 \mathbb{M}j_{L_0})\\
(\overline{j_{L_1}\circ X\lambda_0}) &=& (j_{L_1}\circ_1\overline{X\lambda_0}) \circ_2 (\overline{j_{L_1}}\circ_1 \mathbb{M}X\lambda_0)
\end{array}
$$

\textbf{2-cells:} 
$\xymatrix{
X\ar@/_1pc/[rr]_{k} \ar@/^1pc/[rr]^{j}
& {\Downarrow}^s
&
Y
}$ 
consists of 2-cells $s_L:j_L\Rightarrow k_L$ in $\Cat^{\mathbb{M}}\Lax$, one for each object $L$ of $\mathcal{L}$ such that
$$
k_\lambda\circ_2(Y\lambda\circ_1 s_{L_0})=(s_{L_1}\circ_1 X\lambda)\circ_2 j_\lambda
$$
for each morphism $\lambda:L_0\to L_1$ in $\mathcal{L}$. As a 2-cell in $\Cat^{\mathbb{M}}\Lax$ the natural transformation $s_L$ has to satisfy
$$
\overline{k}_L\circ_2(\xi_{YL}\circ_1\mathbb{M}s_L)=(s_L\circ_1\xi_{XL})\circ_2 \overline{j}_L.
$$

\section{Proof of Lemma \ref{6_17}}
Evaluation defines maps $\varepsilon: \mM_nX\to X$ for each $n\geq 0$ -- we use the same symbol $\varepsilon$ for all of them. In the same manner
we denote all natural transformations $\iota\mM^n : \mM^n\to \mM^{n+1}$, where $\iota :\Id \to \mM$ is the unit of the monad $\mM$, by $\iota$.

To extend $G:\Delta^\op\to\Cat$ together with the augmentation $\varepsilon:G([0])\to G([-1])$ to a lax functor $G_+:\Delta^\op_+\to\Cat$ we have to define functors
$$
s_{-1}:\varepsilon_n/T[P,\bar{L}]\to\varepsilon_{n+1}/T[P,\bar{L}]\quad n=-1,0,1,\ldots
$$
satisfying the simplicial identities \ref{6_13} up to coherent natural transformations.

An object $(Z;\alpha)$ of $\varepsilon_n/T[P,\bar{L}]$ is a pair consisting of an object $Z=[A;(K_i,L_i)_{i=1}^m]$ in
$\hocolim^{\mathbb{M}}\mathbb{M}_nX$ and a morphism
$$
\alpha:[A;(\varepsilon K_i,L_i)_{i=1}^m]\to T[P,\bar{L}]
$$
in $\hocolim^{\mathbb{M}}X$. To define the functors $s_{-1}$ we choose a representative in each $\Sigma_n$-orbit of the objects 
of $\cM(n)$ and use these representatives to obtain ``canonical'' representatives for the objects in our homotopy colimits. We now choose
for each object $(Z;\alpha)$ a representative $(\varphi,(C_j)_{j=1}^r,\gamma,(\lambda_i)_{i=1}^m,(f_j)_{j=1}^r)$ in the notation of \ref{4_1}
for $\alpha$ such that
\begin{itemize}
 \item each $\varphi^{-1}(k)$ has the natural order, i.e. the one inherited from $1<2<\ldots <m$.
 \item if $\gamma :A\to T\ast(C_1\oplus \ldots\oplus C_r)\cdot \sigma$, then $(C_1 , \ldots , C_r; \gamma\cdot \sigma^{-1})$ is an initial
 object in its connected component of the factorization category $\cC(A\cdot \sigma^{-1},T,|\varphi^{-1}(1)|,\ldots,|\varphi^{-1}(r)|)$.
\end{itemize}
Recall that $\sigma^{-1}$ is the underlying permutation of the unique order preserving map of $\underline{m}=\{1<\ldots <m \}$ into the ordered
disjoint union $\varphi^{-1}(1)\sqcup\ldots\sqcup\varphi^{-1}(r)$.

Once these choices are made we have a well-defined decomposition of $\alpha$
$$
[A;(\varepsilon K_i,L_i)_{i=1}^m]\xrightarrow{\overline{\alpha}} [T;(C_j(\lambda\varepsilon K(\varphi^{-1}(j)),\bar{L}_j)_{j=1}^r]
\xrightarrow{(f_j)_{j=1}^r} T[P,\bar{L}]
$$
(we use the notation of \ref{4_1}) with $\overline{\alpha}$ represented by $(\varphi,(C_j)_{j=1}^r,\gamma,(\lambda_i)_{i=1}^m,(\id)_{j=1}^r)$.
We define
$$
s_{-1}(Z;\alpha)=([T;(\iota(C_j(\lambda K(\varphi^{-1}(j))),\bar{L}_j)_{j=1}^r];(f_j)_{j=1}^r).
$$
Now consider two morphisms
$$
(Z_1;\alpha_1)\xrightarrow{\beta_1} (Z_2;\alpha_2)\xrightarrow{\beta_2} (Z_3;\alpha_3)
$$
in $\varepsilon_n/T[P,\bar{L}]$. Then $\beta_i$ is a morphism in $\hocolim^{\mathbb{M}}\mathbb{M}_nX$ such that $\alpha_2\circ \varepsilon(\beta_1)=\alpha_1$
and $\alpha_3\circ \varepsilon(\beta_2)=\alpha_2$. Let $(\varphi_k,(C_j^k)_{j=1}^r,\gamma_k,(\lambda_i^k)_{i=1}^{m_k},(f_j^k)_{j=1}^r),\ k=1,2,3,$  be the chosen
representative of $\alpha_k$ with $ \gamma_k :A_k\to T\ast (C^k_1\oplus\ldots\oplus C^k_r)\cdot \sigma_k$.

We represent $\beta_k, \ k=1,2,$ by $(\psi_k,(B^k_j)_{j=1}^{m_{k+1}},\delta_k,(\rho^k_i)_{i=1}^{m_k},(g^k_j)_{j=1}^{m_{k+1}})$
where each $\psi_k^{-1}(l)$ has the natural order and $\delta_k: A_k\to A_{k+1}\ast (B^k_1\oplus\ldots\oplus B^k_{m_{k+1}})\cdot \tau_k$. 
From this we obtain a representative of $\varepsilon(\beta_k)$ by replacing
$(g^k_j)_{j=1}^{m_{k+1}}$ by $(\varepsilon(g^k_j))_{j=1}^{m_{k+1}}$.
Since we have chosen representatives of the objects, these representatives are uniquely
determined by $\beta_k$ up to Relation \ref{4_8}.3. 

The composite $\alpha_2\circ \varepsilon(\beta_1)$ is represented by $(\varphi_2\circ \psi_1, (E^1_j)_{j=1}^r,(\gamma_2\ast \id)\circ \delta_1,(\mu^1_i)_{i=1}^{m_1},
\linebreak
(h^1_j)_{j=1}^r)$
where
$$
E^1_j=C^2_j\ast\bigoplus_{i\in \varphi_2^{-1}(j)}B^1_i \qquad\qquad \mu^1_i=\lambda^2_{\psi(i)}\circ \rho^1_i,
$$
$h^1_j$ is the composite
$$
\xymatrix{
C^2_j((B^1_i(\mu\varepsilon K^1(\psi_1^{-1}(i))))_{i\in \varphi_2^{-1}(j)}) \ar[rrr]^(.67){h^1_j} \ar[d]_{C^2_j((\overline{\rho}_i)_{i\in  \varphi_2^{-1}(j)})}
&&& P_j & (\ast)\\
C^2_j((\lambda^2_i B^1_i(\rho \varepsilon K^1(\psi_1^{-1}(i))))_{i\in \varphi_2^{-1}(j)})\ar[rrr]^(.58){C^2_j((\rho_i(\varepsilon(g^1_i))_{i\in \varphi_2^{-1}(j)})}
&& &
C^2_j(\lambda^2\varepsilon K^2(\varphi_2^{-1}(j)) \ar[u]^{f^2_j}
}
$$
and
$$
(\gamma_2\ast \id)\circ \delta_1:A_1\to (((T\ast \bigoplus_{j=1}^r C^2_j)\cdot \sigma_2)\ast \bigoplus_{i=1}^{m_2}B^1_i)\cdot\tau_1.
$$
Now

$(((T\ast \bigoplus_{j=1}^r C^2_j)\cdot \sigma_2)\ast \bigoplus_{i=1}^{m_2}B^1_i)\cdot\tau_1$ \\
$$=(T\ast \bigoplus_{j=1}^r( C^2_j\ast \bigoplus_{i\in \varphi_2^{-1}(j)}B^1_i)
\cdot \tau_{1j})\cdot \sigma_1\\
= (T\ast \bigoplus_{j=1}^r E^1_j\cdot\tau_{1j} )\cdot \sigma_1
$$
where $\tau_{1j}^{-1}$ is the underlying permutation of the order preserving map $\varphi_1^{-1}(j)\cong \coprod_{i\in \varphi_2^{-1}(j)} \psi_1^{-1}(i)$.
Hence $\alpha_2\circ \varepsilon(\beta_1)$ is also represented by $(\varphi_2\circ \psi_1, (E^1_j\cdot \tau_{1j})_{j=1}^r,
\linebreak
(\gamma_2\ast \id)\circ \delta_1,(\mu^1_i)_{i=1}^{m_1},
(h^1_j)_{j=1}^r)$.

Since $\alpha_1=\alpha_2\circ \varepsilon (\beta_1)$, this representative has to be related to the chosen representative for $\alpha_1$ by an iterated application of Relation
\ref{4_8}.3. In particular, 
$(C^1_1 , \ldots , C^1_r;\gamma_1\cdot \sigma_1^{-1})$ and $(E^1_1\cdot \tau_{11},\ldots , E^1_r\cdot \tau_{1r}; 
((\gamma_2\ast \id)\circ \delta_1)\cdot \sigma_1^{-1})$ 
are in the same connected component of the factorization category $\cC(A_1\cdot \sigma_1^{-1},T,|\varphi_1^{-1}(1)|,
\linebreak
\ldots,|\varphi_1^{-1}(r)|)$. 
Since $(C^1_1 , \ldots , C^1_r; \gamma_1\cdot \sigma_1^{-1})$ is initial in this component a single
application suffices. So
there are morphisms $u^1_j:C^1_j\to E^1_j\cdot \tau_{1j}$ such that the following diagram commutes
$$
 \xymatrix{[A_1;(\varepsilon K^1_i,L^1_i)_{i=1}^{m_1}]\ar[r]^(.4){\overline{\alpha}_1}\ar[dd]_{\varepsilon(\beta_1)}
 & [T;(C^1_j(\lambda^1\varepsilon K^1(\varphi_1^{-1}(j)),\bar{L}_j)_{j=1}^r]
 \ar[d]_{(u^1_j(\lambda^1\varepsilon K^1(\varphi_1^{-1}(j))_{j=1}^r}\ar[dr]^(.55){(f^1_j)_{j=1}^r} &(\ast\ast)\\
 & [T;(E^1_j\cdot \tau_{1j}(\lambda^1\varepsilon K^1(\varphi_1^{-1}(j)),\bar{L}_j)_{j=1}^r]\ar[d]_{(2)}\ar[r]^(.7){(1)} & T[P,\bar{L}]\\
 [A_2;(\varepsilon K^2_i,L^2_i)_{i=1}^{m_2}]\ar[r]^(.4){\overline{\alpha}_2} &
 [T;(C^2_j(\lambda^2\varepsilon K^2(\varphi_2^{-1}(j)),\bar{L}_j)_{j=1}^r]\ar[ru]_(.55){(f^2_j)_{j=1}^r}
 } 
$$
where (1) is the map $(h_j)_{j=1}^r$ and (2) is given by diagram $(\ast)$ (recall that \linebreak $E^1_j\cdot \tau_{1j}(\lambda^1\varepsilon K^1(\varphi_1^{-1}(j)))=
C^2_j((B^1_i(\mu\varepsilon K^1(\psi_1^{-1}(i))))_{i\in \varphi_2^{-1}(j)})$).

Recall that $s_{-1}(Z_i,\alpha_i)=
([T;(\iota(C^i_j(\lambda^1 K^i(\varphi_i^{-1}(j))),\bar{L}_j)_{j=1}^r];(f^i_j)_{j=1}^r),\ i=1,2.$
We define
$$
s_{-1}(\beta_1): s_{-1}(Z_1,\alpha_1)\to s_{-1}(Z_2,\alpha_2)
$$
to be induced by the composition of the two vertical maps on the right of diagram $(\ast\ast)$.

Clearly, $s_{-1}$ maps identities to identities. To prove that $s_{-1}(\beta_2\circ \beta_1)=s_{-1}(\beta_2)\circ s_{-1}(\beta_1)$ let
$$
(\varphi_3\circ \psi_2, (E^2_j\cdot \tau_{2j})_{j=1}^r,(\gamma_3\ast \id)\circ \delta_2,(\mu^2_i)_{i=1}^{m_2},
(h^2_j)_{j=1}^r)
$$
and
$$
(\varphi_3\circ \psi_2\circ \psi_1, (E^3_j\cdot \tau_{3j})_{j=1}^r,(\gamma_3\ast \id)\circ (\delta_2\ast \id)\circ \delta_1,(\mu^3_i)_{i=1}^{m_1},
(h^3_j)_{j=1}^r)
$$
be constructed as above and represent $\alpha_3\circ\beta_2$ respectively $ \alpha_3\circ\beta_2\circ \beta_1$. Let 
$$
u^2_j:C^2_j\to E^2_j\cdot \tau_{2j}\quad\textrm{and}\quad u^3_j:C^1_j\to E^3_j\cdot \tau_{3j}
$$ 
be defined like $u^1_j$. Then the shuffle maps $\tau_{ij},\ i=1,2,3,$ fit together to yield a commutative diagram
$$
\xymatrix{
C^1_j\ar[rr]^{u^1_j}\ar[rrd]^{u^3_j} && E^1_j\cdot \tau_{1j}=(C^2_j\ast\bigoplus_{i\in \varphi_2^{-1}(j)}B^1_i)\cdot \tau_{1j}\ar[d]\\
&&
E^3_j\cdot \tau_{3j}=(C^3_j\ast (\bigoplus_{k\in \varphi_3^{-1}(j)}(B^2_k)\ast \bigoplus_{i\in \psi_2^{-1}(k)}B^1_k))\cdot\tau_{3j}
}
$$
where the vertical map is induced by $u^2_j$. It is now straight forward to verify the functoriality of $s_{-1}$.

By construction, $s_{-1}$ satisfies all simplicial identities \ref{6_13} except for $d_0\circ s_{-1}=\id$. But as diagram $(\ast\ast)$ shows we  have a natural
transformation $\tau:\id\to d_0\circ s_{-1}$ given by
$$
\tau ([A; (K_i,L_i)_{i=1}^m],\alpha)=\overline{\alpha}.
$$
By inspection, $\tau$ has the following properties
$$
\begin{array}{rcll}
\tau\circ d_i &=& d_i\circ\tau & \textrm{ for all } i\\
\tau\circ s_i &=& s_i\circ\tau & \textrm{ for }i\geq 0\\
\tau\circ s_{-1} &=& \id_{s_{-1}}=s_{-1}\circ\tau
\end{array}\eqno(\ast\ast\ast)
$$
Given morphisms $g$ and $h$ in $\Delta^\op_+$ we present them in the generators in standard form
$$
\begin{array}{rcl}
g &=& s_{k_r}\circ\ldots\circ s_{k_1}\circ d_{l_p}\circ\ldots\circ d_{l_1}\\
h &=& s_{j_t}\circ\ldots\circ s_{j_1}\circ d_{i_s}\circ\ldots\circ d_{i_1}\\
\end{array}
$$
with $k_r>\ldots>k_1\ge -1$, $l_p>\ldots>l_1\ge 0$, $j_t>\ldots> j_1\ge-1$, $i_s>\ldots>i_1\ge 0$.
In the notation of \ref{2_8} we define
$$
\sigma(h,g):G_+(h\circ g)\to G_+(h)\circ G_+(g)
$$
by
$$
\sigma(h,g)=G_+(s_{j_t}\ldots s_{j_1} d_{i_s}\ldots d_{i_2})\circ_1\tau\circ_1 G_+(s_{k_r-1}\ldots s_{k_2-1} d_{l_p}\ldots d_{l_1})
$$
if $i_1=0$ and $k_1=-1$, and to be the identity otherwise. Here we use the relation
$$
s_{k_r}\circ\ldots\circ s_{k_2}\circ s_{-1}=s_{-1}\circ s_{k_r-1}\circ\ldots\circ s_{k_2-1}.
$$
Since $G_+$ preserves identities, we define $\rho([n])=\id_{G_+([n])}$. The equations $(\ast\ast\ast)$ for $\tau$ ensure that the coherence diagrams \ref{2_8} commute. 
\hfill\ensuremath{\Box}

\section{Proof of Proposition \ref{7_3}}

\begin{nota}\label{C_1}
 Let $\cB$ be a category with a subcategory $\we$ of weak equivalences. Let
 $F,\ G:\cA \to \cB$ be functors. We denote a sequence of functors and
 natural weak equivalences joining $F$ and $G$ by
 $$F \sim\sim\sim G$$
\end{nota}

In \cite[Section 5]{FV} we constructed a functor
$$
\hat{Q}_\bullet :\Top^{B\mM} \longrightarrow \scatm $$
for any monad associated with a $\Sigma$-free operad in $\Cat$.
Moreover we showed that if we restrict to the full subcategory of objects
in $\Top^{B\mM}$ whose underlying objects are CW-complexes there is a sequence 
of natural weak equivalences in $\stopbm$
$$
\const_T \sim\sim\sim B\hat{Q}_\bullet(-)
$$
where $\const_T:\Top^{B\mM}\to \stopbm$ is the constant simplicial object functor \cite[6.3]{FV}.
Applying the Quillen equivalence
$$
|-|^{\alg}:\SSets^{N\mathbb{M}}\leftrightarrows\Top^{B\mathbb{M}}\ :\Sing^{\alg}.
$$
to this sequence and adding the weak equivalences 
$$
K\to \Sing |K|\qquad \textrm{and} \qquad N\hat{Q}_\bullet(|K|)\to \Sing B\hat{Q}_\bullet(|K|)
$$
given by the unit of the adjunction, we obtain a sequence of natural weak equivalences in 
$\cS^2\cS ets^{N\mM}$ joining the functors
$$\const_S,\ N\hat{Q}_\bullet(|-|):\SSets^{N\mathbb{M}}\to \cS^2\cS ets^{N\mM}
$$
where $\const_S$ is the constant bisimplicial set functor. Since the diagonal preserves weak
equivalences this sequence induces the first sequence of the following diagram in $\SSets^{N\mathbb{M}}$
$$
\Id \sim\sim\sim \diag N\hat{Q}_\bullet(|-|) \sim\sim\sim \hocolim^{N\mM}N\str \hat{Q}_\bullet(|-|)
$$
while the second sequence is given by \ref{7_7}. 
If $\cM$ has the homotopy colimit property we have an additional weak equivalence
$$\hocolim^{N\mM}N\str \hat{Q}_\bullet(|-|)\to N(\hocolim^{\mM}\hat{Q}_\bullet(|-|)).
$$
We summarize:

\begin{leer}\label{C_2}
 Let $F=\hocolim^{\mM}\hat{Q}_\bullet(|-|):\SSets^{N\mathbb{M}}\to \Cat^\mathbb{M}$. If $\cM$ has the 
 homotopy colimit property there is a sequence
 $$
 \Id \sim\sim\sim N\circ F
 $$
 of natural weak equivalences in $\SSets^{N\mathbb{M}}$.
\end{leer}
By \cite[6.6]{FV} there is a sequence of natural weak equivalences in $\scatm$ 
$$
\const_C \sim\sim\sim \hat{Q}_\bullet(|N(-)|)
$$
where again $\const_C$ is the constant simplicial object functor. We apply the homotopy colimit functor
to this sequence and by \ref{7_8} and \ref{7_9} obtain

\begin{leer}\label{C_3}
 If $\cM$ has the  homotopy colimit property there is a sequence 
 $$\Id \sim\sim\sim F\circ N
 $$
 of natural weak equivalences in $\Cat^\mathbb{M}$. 
\end{leer}

Since levelwise weak equivalences in $\scatm$ and $\cS^2\cS ets^{N\mM}$ are weak equivalences 
we have analogous functors for $\scatm$ and $\cS^2\cS ets^{N\mM}$ by levelwise prolongation.

Proposition \ref{7_3} and Lemma \ref{8_25} are applications of the following result.

\begin{prop}\label{C_4}
Let $\cA$ be a category with a subcategory $\we_{\cA}$ of weak equivalences containing all 
isomorphisms and satisfying
the (2 out of 3)-condition and suppose that the localization $\gamma: \cA\to \cA [\we_{\cA}^{-1}]$ exists.
Let $N:\cB\rightleftarrows \cA:F$ be a pair of functors. Define $\we_{\cB}$ to consist
of all morphisms $f$ in $\cB$ for which $N(f)\in \we_{\cA}$. Let $\widetilde{\cB}\subset \cA [\we_{\cA}^{-1}]$
be the full subcategory of objects $N(B)$ with $B\in \cB$. Then $\gamma\circ N$ factors
$$
\xymatrix{
\cB \ar[r]^N \ar[d]^\delta & \cA \ar[d]^\gamma \\
\widetilde{\cB}\ar[r]^(.4)j & \cA [\we_{\cA}^{-1}]
}
$$
where $j$ is the inclusion. Suppose there are sequences of natural weak equivalences
 $$
 \varepsilon: \Id \sim\sim\sim N\circ F \qquad \textrm{and}\qquad \eta: \Id \sim\sim\sim F\circ N.
 $$
 Then\\
 (1) in the setting of Grothendieck universes, $\delta:\cB\to \widetilde{\cB}$ is the localization
 with respect to $\we_{\cB}$, provided $N$ is injective on objects.\\
 (2) in the G\"odel-Bernay set theory setting, $\delta:\cB\to \widetilde{\cB}$ is a localization
 up to equivalence  with respect to $\we_{\cB}$.\\
In both cases $j:\widetilde{\cB}\to \cA [\we_{\cA}^{-1}]$ is an equivalence of categories.
\end{prop}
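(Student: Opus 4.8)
The plan is to isolate the purely formal facts first, then run two parallel arguments for the two settings, the common engine being the zig-zags $\varepsilon,\eta$ together with the (assumed) universal property of $\gamma\colon\cA\to\cA[\we_{\cA}^{-1}]$. First I would do the bookkeeping about weak equivalences. By the definition of $\we_{\cB}$ the functor $N$ sends $\we_{\cB}$ into $\we_{\cA}$ tautologically, and conversely $F$ sends $\we_{\cA}$ into $\we_{\cB}$: for $g\in\we_{\cA}$ one propagates ``$g$ is a weak equivalence'' along the finitely many naturality squares of $\varepsilon\colon\Id\sim\sim\sim NF$, each step using that the components of a natural weak equivalence lie in $\we_{\cA}$ and the two-out-of-three property, to get $NF(g)\in\we_{\cA}$, i.e. $F(g)\in\we_{\cB}$. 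Consequently $\delta$ inverts $\we_{\cB}$ (because $\gamma N$ does), and $HF$ inverts $\we_{\cA}$ whenever $H$ inverts $\we_{\cB}$. I would then settle the last assertion: $j$ is fully faithful since $\widetilde{\cB}$ is a full subcategory, and essentially surjective since $\varepsilon$ makes each object $A$ of $\cA[\we_{\cA}^{-1}]$ isomorphic to $NF(A)\in\widetilde{\cB}$; hence $j$ is an equivalence.

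For (2) I would verify Definition \ref{7_4} directly for $\delta$, which avoids ever forming $\cB[\we_{\cB}^{-1}]$. Given $H\colon\cB\to\cD$ inverting $\we_{\cB}$, the functor $HF$ inverts $\we_{\cA}$ and so factors as $\overline{HF}\circ\gamma$; putting $\bar H=\overline{HF}|_{\widetilde{\cB}}$ gives $\bar H\circ\delta=HFN$, and applying $H$ to $\eta\colon\Id\sim\sim\sim FN$ yields a natural isomorphism $H\cong\bar H\circ\delta$. For uniqueness up to natural equivalence I would fix a quasi-inverse $k$ of $j$ characterized by $k\gamma=\delta F$ (legitimate because $\delta F$ inverts $\we_{\cA}$), with $jk\cong\Id$ and $kj\cong\Id$ coming from $\varepsilon,\eta$ and the equivalence $j$. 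If also $\bar H'\circ\delta\cong H$, then $\bar Hk\gamma=\bar H\delta F\cong HF\cong\bar H'\delta F=\bar H'k\gamma$, so the two-dimensional universal property of $\gamma$ gives $\bar Hk\cong\bar H'k$, whence $\bar H\cong\bar Hkj\cong\bar H'kj\cong\bar H'$.

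For (1), working in a higher universe in which $\cB[\we_{\cB}^{-1}]$ exists, I would pass to the induced functors $\bar N\colon\cB[\we_{\cB}^{-1}]\to\cA[\we_{\cA}^{-1}]$ and $\bar F$ in the reverse direction; the zig-zags $\varepsilon,\eta$ induce $\bar N\bar F\cong\Id$ and $\bar F\bar N\cong\Id$, so $\bar N$ is an equivalence. Since $\bar N\gamma_{\cB}=\gamma N$ sends each object $\gamma_{\cB}B$ to $NB$, it corestricts to $\bar N_0\colon\cB[\we_{\cB}^{-1}]\to\widetilde{\cB}$ with $j\bar N_0=\bar N$ and, $j$ being an equivalence, $\bar N_0$ is again an equivalence. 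Here injectivity of $N$ on objects is essential: it makes $\bar N_0$ bijective on objects (surjectivity being the definition of $\widetilde{\cB}$), and a fully faithful functor that is bijective on objects is an isomorphism of categories. Faithfulness of $j$ gives $\bar N_0\circ\gamma_{\cB}=\delta$, so $\delta$ differs from the localization $\gamma_{\cB}$ by an isomorphism of categories and therefore is itself a localization of $\cB$ at $\we_{\cB}$.

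The main obstacle, and the reason the statement bifurcates, is precisely the gap between \emph{equal} and \emph{naturally isomorphic}: in (2) one can only expect $\bar H\circ\delta\cong H$, so everything must be phrased through Definition \ref{7_4} and the uniqueness clause supplied by hand via the two-dimensional universal property of $\gamma$ and the equivalence $j$; in (1) the strict universal property is recovered only because injectivity of $N$ on objects upgrades the equivalence $\bar N_0$ to an honest isomorphism. A secondary point to treat with care is that $\varepsilon$ and $\eta$ are zig-zags rather than single natural transformations, so each propagation of a weak equivalence must be argued as an ``if and only if'' across a natural weak equivalence.
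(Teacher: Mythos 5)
Your proof is correct and follows essentially the same route as the paper's: $j$ is an equivalence, existence of $\bar{H}$ comes from factoring $H\circ F$ through $\gamma$ and applying $H$ to $\eta$, uniqueness from a quasi-inverse of $j$ plus the two-dimensional universal property of $\gamma$ (the paper cites \cite[1.2.2]{Hovey}), and in (1) injectivity of $N$ on objects upgrades the comparison functor, which is fully faithful and bijective on objects, to an isomorphism. The only cosmetic differences are that your explicit 2-out-of-3 propagation along $\varepsilon$ supplies the justification the paper compresses into ``$F$ preserves weak equivalences by our assumptions,'' and your $k$ characterized by $k\circ\gamma=\delta\circ F$ plays the role of the paper's quasi-inverse $p$ (with $p\circ j=\id$) taken from \cite[IV.4 Prop. 2]{MacLane}, while in (1) you exhibit inverse equivalences $\bar{N},\bar{F}$ between the localizations where the paper instead gets $q$ with $q\circ\gamma_{\cB}=\delta$ from the universal property of $\gamma_{\cB}$.
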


\begin{proof}
 By definition of $\widetilde{\cB}$, $j$ is fully faithful, and by assumption, each object $X$ in
 $\cA [\we_{\cA}^{-1}]$ is isomorphic to $NF(X)=jF(X)$. Hence $j$ is an equivalence of categories.
 By \cite[IV.4 Prop. 2]{MacLane} there is a functor $p:\cA [\we_{\cA}^{-1}]\to \widetilde{\cB}$
 such that $p\circ j=\id$ and $j\circ p$ is naturally isomorphic to $\id$.
 
 Let $H:\cB\to \cD$ be a functor mapping weak equivalences to isomorphisms. Then $H\circ F$ maps
  weak equivalences to isomorphisms because $F$ preserves weak equivalences by our assumptions. Hence
  there is a unique functor $\overline{HF}:\cA [\we_{\cA}^{-1}]\to \cD$ such that $\overline{HF}\circ 
  \gamma= H\circ F$. Let $G_1=\overline{HF}\circ j$. Then $\eta$ induces a natural isomorphism
  $$
  G_1\circ \delta = \overline{HF}\circ j\circ \delta = \overline{HF}\circ \gamma \circ N 
  =H\circ F\circ N\cong H.
  $$
 Suppose $G_2: \widetilde{\cB}\to \cD$ is a second functor such that $G_2\circ \delta\cong H$.
 Then for $i=1,\ 2$
 $$
 H\circ F\cong G_i\circ \delta \circ F=G_i\circ p\circ j\circ \delta \circ F
 =G_i\circ p\circ \gamma\circ N\circ F\cong G_i \circ p\circ \gamma.
 $$
 Since $\gamma$ is a localization this implies that $G_1\circ p\cong G_2\circ p$ (see \cite[1.2.2]{Hovey}).
 From this we get
 $$
 G_1=G_1\circ p\circ j \cong G_2\circ p\circ j=G_2.
 $$
 Hence $\delta$ is a localization up to equivalence.
 
 In the setting of Grothendieck universes the localization $\gamma_{\cB}:\cB\to \cB[\we_{\cB}^{-1}]$ exists,
 and there is a unique functor
 $$
 q: \cB[\we_{\cB}^{-1}]\to \widetilde{\cB}
 $$
 such that $q\circ \gamma_{\cB} = \delta$. Since $\gamma_{\cB}$ also is a localization up to equivalence
 $q$ is an equivalence of categories. Since $q$ is bijective on objects it is an isomorphisms.
  
\end{proof}

\end{appendix}


\begin{thebibliography}{999999}

\bibitem{Gro}
M. Artin, A. Grothendieck, J.-L. Verdier, Th\'eorie des topos et
cohomologie \'etale des sch\'emas, Springer Lecture Notes in Mathematics 269 
(1972).

\bibitem{BFSV}
C. Balteanu, Z. Fiedorowicz, R. Schw\"anzl, R.M. Vogt, Iterated monoidal
categories, Adv. in Math. 176 (2003), 277-349.

\bibitem{Berger}
C. Berger, I. Moerdijk, Axiomatic homotopy theory for operads, Comment. Math. Helv. 78 (2003) 805-831.

\bibitem{Berger2}
C. Berger, I. Moerdijk, The Boardman-Vogt Resolution Of Operads In Monoidal Model Categories, Topology 45 (2006) 807-849.

\bibitem{Bor2}
F. Borceux, Handbook of categorical algebra 2, Encyclopedia of Mathematics and its Applications 51,
Cambridge University Press 1994.

\bibitem{BV1}
J.M. Boardman, R.M. Vogt, Homotopy-everything H-spaces, Bull.  Amer. Math.
Soc. 74 (1968), 1117-1122.

\bibitem{BV2}
J.M. Boardman, R.M. Vogt, Homotopy invariant algebraic structures on
topological spaces, Springer Lecture Notes in Mathematics 347 (1973).

\bibitem{CE}
H. Cartan, S. Eilenberg, Homological algebra, Princeton University Press,
1956.

\bibitem{Cohen}
F. Cohen, The homology of $\mathcal{C}_{n+1}$ spaces $n\geq 0$, Springer Lecture Notes in Mathematics 533 (1976), 207-351. 

\bibitem{Del}
P. Deligne, Les immeubles des groupes de tresses g\'en\'eralis\'es,
Inventiones math. 17 (1972), 273-302.

\bibitem{DKP} 
T. tom Dieck, K.H. Kamps, D. Puppe, Homotopietheorie,
Springer Lecture Notes in Mathematics 157 (1970).

\bibitem{EKMM}
A.D. Elmendorf, I. Kriz, M.A. Mandell, J.P. May,
Rings, modules, and algebras in stable homotopy theory, 
Mathematical Surveys Monograms 47, Amer. Math. Soc., Providence, RI, 1996.

\bibitem{Fied}
Z. Fiedorowicz, The symmetric bar construction, Preprint, available under www.math.osu.edu/~fiedorowicz.1/

\bibitem{Fied2}
Z. Fiedorowicz, Classifying spaces of topological monoids and categories,
Amer. J. Math. 106 (1984), 301-350.

\bibitem{FV}
Z. Fiedorowicz, R.M. Vogt, Simplicial $n$-fold monoidal
categories model all loop spaces, Cahier Topologie G\'eom.
Diff\`erentielle 44 (2003), 105-148.

\bibitem{GZ}
P. Gabriel, M. Zisman, Calculus of fractions and homotopy theory, Springer Verlag 1967.

\bibitem{God}
R. Godement, Topologie alg\'ebrique et th\'eorie des faisceaux,
 Paris: Hermann, 1973.

\bibitem{JG}
P.G. Goers, J.F. Jardine, Simplicial homotopy theory, Progress in Mathematics 174,
Birkh\"auser Verlag, Basel (1999). 

\bibitem{Hirsch}
P.S. Hirschhorn, Model categories and their localizations, Mathematical Surveys
and Monographs 99, Amer. Math. Soc. (2002).

\bibitem{HV}
J. Hollender and R.M. Vogt, Modules of topological spaces, applications to homotopy limits and
$E_\infty$ structures, Archiv der Mathematik 59 (1992), 115-129.

\bibitem{Hovey}
M. Hovey, Model categories, Mathematical Surveys and Monographs 63, Amer. Math. Soc. (1999).

\bibitem{Kelly} 
G.M. Kelly, Basic concepts of enriched category theory,
London Math. Soc. Lecture Note Series 64, Cambridge University Press (1982).

\bibitem{MacLane}
S. Mac Lane, Categories for the working mathematician, $2^{nd}$ edition, Springer Verlag
1998.

\bibitem{Mandell}
M.A. Mandell, An inverse $K$-theory functor, Documenta Math. 15 (2010), 765-791.

\bibitem{May}
J.P. May, The geometry of iterated loop spaces, Springer Lecture Notes in Mathamatics 171
(1972).

\bibitem{May1}
J.P. May, $E_\infty$ spaces, group completion, and permutative categories, 
London Lecture Notes in Mathematics 11 (1974), 61-93.

\bibitem{MSV}
J.E. McClure, R. Schw\"anzl, R.M. Vogt,
$THH(R)\cong R\otimes S^1$ for $E_\infty$ ring spectra,
J. Pure Appl. Algebra 140 (1990), 23-32.

\bibitem{Quillen}
D.G. Quillen, Homotopical algebra, Springer Lecture Notes in Mathematics 43 (1967).

\bibitem{SV} 
R. Schw\"anzl, R.M. Vogt, The categories of $A_\infty$ and
$E_\infty$ monoids and ring spaces as closed simplicial and topological model
categories, Arch. Math. 56 (1991), 405-411

\bibitem{Stelzer}
M. Stelzer, A model categorical approach to group completion of $E_n$-algebras, J. 
Homotopy and Related Structures 7 (2012), 207-221.

\bibitem{Street}
R. Street, Two constructions on lax functors, Cahier Topologie G\'eom.
Diff\`erentielle 13 (1972), 217-264.

\bibitem{Thom0}
R. W. Thomason, Homotopy colimits in the category of small
categories, Math. Proc.  Cambridge Phil. Soc. 85 (1979), 91-109.

\bibitem{Thom3}
R.W. Thomason, Cat as a closed model category, Cahiers
Topol. G\'eom. Diff. 21 (1980), 305-324.

\bibitem{Thom1}
R. W. Thomason, First quadrant spectral sequences in algebraic $K$-theory via homotopy
colimits, Communications in Algebra 10 (1982), 1589-1668.

\bibitem{Thom2}
R. W. Thomason, Symmetric monoidal categories model all connective spectra,
Theory and Appl. of Categories 1 (1995), 78-11.

\bibitem{Vogt}
R.M. Vogt,  Convenient categories of topological spaces for homotopy theory,
Arch. Math. 22 (1971), 545 - 555.

\bibitem{Vogt2}
R.M. Vogt, Homotopy homomorphisms and the classifying space functor, Preprint arXiv:1203.4978v3 [math.AT] (2012).

\end{thebibliography}
\end{document}